\newtheorem{thm}{Theorem}[subsection]
\newtheorem*{thm*}{Theorem}
\newtheorem{cor}[thm]{Corollary}
\newtheorem*{cor*}{Corollary}
\newtheorem{lem}[thm]{Lemma}
\newtheorem*{lem*}{Lemma}
\newtheorem{prop}[thm]{Proposition}
\newtheorem*{prop*}{Proposition}
\theoremstyle{definition}
\newtheorem{defn}[thm]{Definition}
\newtheorem*{defn*}{Definition}
\newtheorem*{conjecture*}{Conjecture}
\newtheorem*{condition*}{Condition}
\newtheorem*{assumption*}{Assumption}
\theoremstyle{remark}
\newtheorem{rem}[thm]{Remark}
\newtheorem*{rem*}{Remark}
\newtheorem{example}[thm]{Example}
\newtheorem*{problem*}{Problem}
\numberwithin{equation}{subsection}
\newcommand{\toiso}{\xrightarrow{\sim}}
\newcommand{\BQ}{\mathbb Q}
\newcommand{\BR}{\mathbb R}
\newcommand{\BC}{\mathbb C}
\newcommand{\BF}{\mathbb F}
\newcommand{\BZ}{\mathbb Z}
\newcommand{\BP}{\mathbb P}
\newcommand{\CE}{\mathcal E}
\newcommand{\CF}{\mathcal F}
\newcommand{\CP}{\mathcal P}
\newcommand{\CX}{\mathcal X}
\newcommand{\GIT}{\mathrm {GIT}}
\newcommand{\sm}{\mathrm {sm}}
\newcommand{\sing}{\mathrm {sing}}
\newcommand{\reg}{\mathrm {reg}}
\newcommand{\uni}{\mathrm {uni}}
\newcommand{\ol}{\overline}
\newcommand{\z}{z}
\newcommand{\const}{\mathrm{const}}
\newcommand{\parab}{\mathrm{par}}
\newcommand{\ad}{\mathrm{ad}}
\newcommand{\nondeg}{\mathrm{nondeg}}
\DeclareMathOperator{\codim}{codim}
\DeclareMathOperator{\Gr}{Gr}
\DeclareMathOperator{\GL}{GL}
\DeclareMathOperator{\PGL}{PGL}
\DeclareMathOperator{\MH}{MH}
\DeclareMathOperator{\MHC}{MHC}
\DeclareMathOperator{\Hom}{Hom}
\DeclareMathOperator{\Tot}{Tot}
\DeclareMathOperator{\Var}{Var}
\DeclareMathOperator{\QVar}{\mathbb{Q}Var}
\DeclareMathOperator{\CVar}{\mathcal{C}Var}
\DeclareMathOperator{\Mat}{Mat}
\DeclareMathOperator{\rot}{rot}
\DeclareMathOperator{\pLog}{Log}
\DeclareMathOperator{\trace}{Tr}
\DeclareMathOperator{\tr}{tr}
\DeclareMathOperator{\inv}{inv}
\DeclareMathOperator{\noinv}{noinv}
\newcommand{\Br}{\mathrm{Br}}
\DeclareMathOperator{\cone}{cone}
\DeclareMathOperator{\kernel}{Ker}
\DeclareMathOperator{\image}{Im}
\DeclareMathOperator{\cokernel}{Coker}
\DeclareMathOperator{\sign}{sign}
\DeclareMathOperator{\Id}{Id}
\title[Cell decompositions of character varieties]{Cell decompositions of character varieties}
\author{Anton Mellit}
\email{anton.mellit@univie.ac.at}
\address{Faculty of Mathematics, University of Vienna, \\
Oskar-Morgenstern-Platz 1, 1090 Vienna, Austria}
\date{\today}
\begin{document}
\onehalfspacing

\begin{abstract}
	We establish curious Lefschetz property for generic character varieties of Riemann surfaces conjectured by Hausel, Letellier and Rodriguez-Villegas. Our main tool applies directly in the case when there is at least one puncture where the local monodromy has distinct eigenvalues. We pass to a vector bundle over the character variety, which is then decomposed into cells, which look like vector bundles over varieties associated to braids by Shende-Treumann-Zaslow. These varieties are in turn decomposed into cells that look like $\BC^{*d-2k}\times \BC^k$. The curious Lefschetz property is shown to hold on each cell, and therefore holds for the character variety. To deduce the general case, we introduce a fictitious puncture with trivial monodromy, and show that the cohomology of the character variety where one puncture has trivial monodromy is isomorphic to the sign component of the $S_n$ action on the cohomology for the character variety where trivial monodromy is replaced by regular semisimple monodromy. This involves an argument with the Grothendieck-Springer sheaf, and analysis of how the cohomology of the character variety varies when the eigenvalues are moved around.
\end{abstract}

\maketitle

\tableofcontents

\section{Introduction}
\subsection{Character varieties}
Let $g\geq 0$, $k>0$ be integers, and let $C_1,\ldots,C_k$ be a collection of elements of $G=\GL_n(\BC)$. The corresponding character variety $X$ is the space which parametrizes local systems on the Riemann surface of genus $g$ with $k$ punctures with local monodromies around the punctures required to be conjugate to $C_1$,\ldots, $C_k$. Following \cite{hausel2011arithmetic}, we restrict our attention to the case when $C_i$ are diagonal (i.e. \emph{semisimple}), and the following \emph{genericity} assumption holds (Definition \ref{def:generic}):
\begin{enumerate}
	\item We have $\prod_{i=1}^k \det C_i=1$. This is certainly a necessary condition for $X$ to be non-empty.
	\item For any $n'$, $1< n'< n$, and any choice of $n'$ eigenvalues $\lambda^{(i)}_1, \ldots, \lambda^{(i)}_{n'}$ of $C_i$ for each $i$, we have $\prod_{i=1}^k\prod_{j=1}^{n'} \lambda^{(i)}_j\neq 1$.
\end{enumerate}
This guarantees that the character variety is an affine non-singular algebraic variety.

One example is given by $k=1$, $C_1=\zeta_n\Id_n$, where $\zeta_n=e^{\frac{2\pi i}{n}}$. This is the so-called \emph{twisted character variety} of genus $g$, studied in \cite{hausel2008mixed}. Another example, perhaps the most well-studied case, is $g=0$, $n=2$, $k=4$, $C_i=\begin{pmatrix}\lambda_i & 0 \\ 0 & \lambda_i^{-1} \end{pmatrix}$ goes back to Fricke (see \cite{fricke1965vorlesungen}). Denote $a_i=\lambda_i+\lambda_i^{-1}$. Then the character variety can be presented by a single equation in $3$ variables $x,y,z$:
\begin{multline}
x y z + x^{2} + y^{2} + z^{2} - (a_{1} a_{2} + a_{3} a_{4}) x - (a_{2} a_{3} +  a_{1} a_{4}) y - (a_{1} a_{3} + a_{2} a_{4}) z \\ + a_{1} a_{2} a_{3} a_{4}  +  a_{1}^{2} + a_{2}^{2} + a_{3}^{2} + a_{4}^{2} - 4=0.
\end{multline}
One can easily find this equation using computer algebra, and then verify that the surface is singular in the following cases: $\lambda_1=\pm 1$, $\lambda_1 \lambda_2=\lambda_3 \lambda_4$, $\lambda_1 \lambda_2 \lambda_3=\lambda_4$, $\lambda_1\lambda_2\lambda_3\lambda_4=1$, the complete list is obtained by permuting the indices. Cohomology of this variety together with some interesting group action was studied in \cite{goldman2005homological}. The analogous case with $5$ punctures has been studied by Simpson (\cite{simpson2017explicit}), by Donagi and Pantev, and perhaps by many others.

\subsection{Cohomology}
Following \cite{hausel2008mixed}, we want to understand the mixed Hodge structure on the cohomology of $X$. We work with \emph{cohomology with compact supports}, but since the character varieties we study are smooth, cohomology can be obtained from Poicar\'e duality. One way to package the numerical information about this mixed Hodge structure goes as follows. Let us define the \emph{weight polynomial} of $X$ by
\[
W(X;q,t) = \sum_{i,j} (-1)^j \dim \Gr^W_i H^j_c(X,\BC) q^{\frac{j}{2}} t^{\frac{j-i}{2}} \in \BQ[q^{\frac12}, t^{\frac12}].
\]
For instance, in the example with $g=0,n=2,k=4$ above we have $W(X;q,t)=q(q+t+4)$. In \cite{hausel2011arithmetic}, it was conjectured that the cohomologies of character varieties are Hodge-Tate, i.e. all Hodge numbers $h^{j,p,q}(X)$ for $p\neq q$ vanish. A conjectural closed form expression for $W$ of all semisimple generic character varieties was suggested. The formula was proven for the specialization $W(X;q,q^{-1})$, which gives the number of points of $X$ over $\BF_q$ and also the so-called Euler polynomial.

For fixed $g$ and $k$ the formula is
\begin{multline}\label{eq:hlv formula}
-(q-1)(t-1) \pLog \sum_{\lambda} \frac{\prod_{a,l} \left(q^{a+\frac12} - t^{l+\frac12}\right)^{2g} \prod_{i=1}^k \tilde H_\lambda[X_i;q,t]}{\prod_{a,l} (q^{a+1} - t^{l})(q^{a}-t^{l+1})}
\\
=\sum_{n=1}^\infty \sum_{|\mu^{(1)}|=\cdots=|\mu^{(k)}|=n} q^{-\frac{\dim}{2}} W(X^{g,n}_{\mu^{(1)},\ldots,\mu^{(k)}}; q,t) \prod_{i=1}^k m_{\mu^{(i)}}(X_i).
\end{multline}
On the left hand side we have the \emph{plethystic logarithm} of a sum over all partitions $\lambda$. For each $\lambda$, a product over $a,l$ means the products over the cells of $\lambda$, and for each cell $a$ is the arm length and $l$ is the leg length. Then we have the modified Macdonald polynomials $\tilde H_\lambda[X_i;q,t]$, each evaluated in its own set of variables $X_i$. On the right hand side, we have a sum over $k$-tuples of partitions $\mu^{(1)}, \ldots, \mu^{(k)}$ of same size $n$. The monomial symmetric function corresponding to a partition $\mu$ is denoted by $m_\mu$. The space $X^{g,n}_{\mu^{(1)},\ldots,\mu^{(k)}}$ is the rank $n$ character variety of the Riemann surface of genus $g$ with $k$ punctures, where the multiplicities of the eigenvalues of $C_i$ are prescribed by $\mu^{(i)}$ and assumed to be generic. $\dim$ stands for the complex dimension of the character variety.

The formula \eqref{eq:hlv formula} has also been established for the specialization $t=1$ by the author in \cite{mellit2017poincarea}.

Because of the symmetry $\tilde H_\lambda[X;q,t] = \tilde H_{\lambda'}[X;t,q]$ where $\lambda'$ denotes the conjugate partition, the left hand side of \eqref{eq:hlv formula} is symmetric in $q$, $t$. This gives a non-trivial implication that that $q^{-\frac{\dim}{2}} W(X^{g,n}_{\mu^{(1)},\ldots,\mu^{(k)}}; q,t)$ is symmetric in $q$ and $t$. For the dimensions of cohomology groups this implies the \emph{curious Poincar\'e duality}
\[
\dim \Gr_i^W H_c^j(X,\BC) = \dim \Gr_{2\dim-i}^W H_c^{j-i+\dim}(X,\BC).
\]

The cohomology ring of the twisted character variety is known to be generated by certain tautological classes by a result of Markman \cite{markman2002generators}. In particular, there is certain tautological class $\omega\in H^2(X,\BC)$. In \cite{hausel2008mixed}, it was suggested that the curious Poincar\'e duality would follow from the \emph{curious hard Lefschetz}, which claims that for each $m,j\geq 0$, the operator $(\cap\omega)^m$ induces an isomorphism
\[
	\Gr_{\dim-2m}^W H_c^j(X,\BC) \xrightarrow{\sim} \Gr_{\dim+2m}^W H_c^{j+2m}(X,\BC).
\]

It is believed that in the case with punctures, the cohomology also should be generated by some tautological classes, but details on construction of these classes and proof that the classes generate the cohomology have not been spelled out.

\subsection{$P=W$ conjecture}
By Simpson's non-abelian Hodge correspondence \cite{simpson1990harmonic}, the character variety is homeomorphic to a certain moduli space of stable Higgs bundles. Assume that the eigenvalues of $C_i$ have absolute value $1$ for all $i$. Denote the compact Riemann surface by $\Sigma$ and the divisor of punctures by $S=s_1+\cdots s_k$, $s_i\in \Sigma$. The corresponding moduli space is the moduli space of stable triples $(\CE, \theta, \CF)$ where $\CE$ is a holomorphic vector bundle on $\Sigma$, $\theta:\CE\to \CE\otimes\Omega(S)$ is a linear map, and $\CF=(\CF^{(1)},\ldots,\CF^{(k)})$ specifies an increasing filtration $\CF^{(i)}$ on the fiber $\CE(s_i)$ for each puncture $s_i$ satisfying $\theta(s_i) \CF^{(i)}_j \subset \CF^{(i)}_{j+1}$. The dimensions of the steps in $\CF^{(i)}$ match the multiplicities of the eigenvalues of $C_i$, and the eigenvalues themselves are translated into a stability condition. Denote this moduli space by $X_{D}$, where $D$ stands for ``Dolbeault'' to distinguish it from the character variety $X=X_B$, where $B$ stands for ``Betti''. The moduli space $X_D$ comes equipped with a $\BC^*$-action which scales $\theta$, and a characteristic polynomial map (Hitchin's map)
\[
\chi:X_D \to \BC^{\dim/2},
\]
which is proper. 

In \cite{cataldo2010topology}, in the case of twisted character variety, it was conjectured for arbitrary rank and proved in the rank $2$ case that the weight filtration of $H^i(X_B)$ coincides with the \emph{perverse Leray} filtration on $H^i(X_D)$. The filtration is induced by the map $X_D \to \BC^{\dim/2}$.

The perverse Leray filtration is constructed using the decomposition Theorem of \cite{beuilinson1982faisceaux} as follows. The derived push-forward of the constant sheaf $R \chi_* \BC_{X_D}$ admits a decomposition
\[
R \chi_* \BC_{X_D} \cong \bigoplus_m i_{m*} j_{m *!} L_m[-w_m],
\]
where $L_m$ is a local system on $U_m\subset Z_m\subset \BC^{\dim/2}$ where $U_m$ is an affine Zariski open subset of an irreducible Zariski closed subvariety $Z_m$, $j_m:U_m\to Z_m$, $i_m:Z_m\to \BC^{\dim/2}$ are natural embeddings, and $j_{m *!}$ is the intermediate extension functor. This induces a decomposition
\[
H^j_c(X_D,\BC) \cong \bigoplus_m IH_c^{j-w_m}(Z_m,L_m),
\]
where $IH_c(Z_m,L_m)$ is the intersection cohomology with compact supports of $Z_m$ with coefficients in $L_m$. The \emph{perversity} of each pair $L_m, Z_m$ is the number 
\[
p_m = w_m-\codim Z_m.
\]
This is the cohomological degree of  $i_{m*} j_{m *!} L_m[-w_m]$ with respect to the perverse t-structure. The perverse filtration is defined by
\[
P_i H^j_c(X_D,\BC) = \bigoplus_{m:p_m\leq i} IH_c^{j-w_m}(Z_m,L_m).
\]
The $P=W$ conjecture is the statement that under the identification $H^j_c(X_D,\BC)\cong H^j_c(X_B,\BC)$ we have
\[
P_i H^j_c(X_D,\BC)\cong W_{2i} H^j_c(X_B,\BC).
\]
The analogue of the curious hard Lefschetz property on the $X_D$ side is a direct consequence of the relative hard Lefschetz theorem (see \cite{beuilinson1982faisceaux}, \cite{cataldo2009hodge}).
\begin{thm}
Let $\omega$ be the Chern class of a relative ample line bundle for $\chi:X_D\to \BC^{\dim/2}$. Then for each $m,j\geq 0$, the operator $(\cap\omega)^m$ induces an isomorphism
\[
\Gr_{\dim/2-m}^P H_c^j(X_D,\BC) \xrightarrow{\sim} \Gr_{\dim/2+m}^P H_c^{j+2m}(X_D,\BC).
\]
\end{thm}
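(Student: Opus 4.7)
The theorem is a direct consequence of the relative hard Lefschetz theorem of Beilinson--Bernstein--Deligne--Gabber (Théorème~5.4.10 of \cite{beuilinson1982faisceaux}), and my plan is simply to articulate the reduction carefully. First I would verify the hypotheses: the map $\chi$ is proper by the properties of Hitchin's map recalled in the excerpt, and because $X_D$ is quasi-projective it carries an ample line bundle $\mathcal L$; since the restriction of $\mathcal L$ to any closed subvariety is again ample, $\mathcal L$ is automatically $\chi$-relatively ample. Setting $\omega = c_1(\mathcal L)\in H^2(X_D,\BC)$ and interpreting $\cup\omega$ as a degree-two endomorphism $R\chi_*\BC_{X_D}\to R\chi_*\BC_{X_D}[2]$ in $D^b_c(\BC^{\dim/2})$, the relative hard Lefschetz theorem asserts that for every $m\geq 0$ the induced map on perverse cohomology,
\[
\omega^m\colon{}^{p}\mathcal H^{-m}\!\bigl(R\chi_*\BC_{X_D}[\dim_\BC X_D]\bigr)\;\xrightarrow{\ \sim\ }\;{}^{p}\mathcal H^{m}\!\bigl(R\chi_*\BC_{X_D}[\dim_\BC X_D]\bigr),
\]
is an isomorphism of perverse sheaves on $\BC^{\dim/2}$.

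Next I would transport this isomorphism to graded pieces of the perverse filtration. Regrouping the summands $i_{m*}j_{m*!}L_m[-w_m]$ of the excerpt's decomposition according to their perverse degree identifies the perverse cohomology sheaf ${}^{p}\mathcal H^{k}(R\chi_*\BC_{X_D}[\dim_\BC X_D])$ with the direct sum of those summands whose perversity equals $k+\dim/2$; the shift by $\dim/2$ reflects that the middle perversity for a map whose generic fibre has complex dimension $\dim/2$ is centred there. Passing to compactly supported hypercohomology on $\BC^{\dim/2}$ then identifies $\Gr^P_{p}H^j_c(X_D,\BC)$ with the hypercohomology of ${}^{p}\mathcal H^{p-\dim/2}$ of $R\chi_*\BC_{X_D}[\dim_\BC X_D]$ in an appropriately shifted cohomological degree, and cup product with $\omega^m$ on the left intertwines with the relative hard Lefschetz isomorphism obtained in the first step.

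Substituting perverse indices $-m$ and $+m$ then yields the stated isomorphism $(\cap\omega)^m\colon\Gr^P_{\dim/2-m}H^j_c(X_D,\BC)\xrightarrow{\sim}\Gr^P_{\dim/2+m}H^{j+2m}_c(X_D,\BC)$. The only mildly delicate step is bookkeeping the conventional shifts linking the perversity $p_m=w_m-\codim Z_m$ used in the excerpt with perverse degree in the $t$-structure sense; this is purely formal once the conventions are fixed. No substantive obstacle arises, since all the content is supplied by BBDG, as the author observes.
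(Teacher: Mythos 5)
Your proposal is correct and follows exactly the route the paper intends: the paper offers no proof of this statement beyond remarking that it is "a direct consequence of the relative hard Lefschetz theorem" with citations to \cite{beuilinson1982faisceaux} and \cite{cataldo2009hodge}, and your write-up simply spells out that reduction (properness of $\chi$, relative ampleness of the restricted bundle, passage from the perverse cohomology sheaves to the graded pieces of the perverse filtration on $H^\bullet_c$, and the index bookkeeping). Nothing further is needed.
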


\subsection{Cell decompositions}
The standard approach to compute mixed Hodge structures is to compactify and resolve singularities. In \cite{totaro1996configuration} it is explained that sometimes even a bad compactification is good enough.  In this paper we avoid compactifications at all, and use cell decompositions instead.

The notion of a cell decomposition we use is a rather weak one. Suppose $X$ is a union of Zariski locally closed subsets $X_\alpha$ indexed by elements $\alpha$ of a partially ordered set $\CP$. We call it a cell decomposition if for any $\alpha_0\in\CP$ the union $\bigcup_{\alpha\leq \alpha_0} X_\alpha$ is closed.

The main ingredient in our results is a construction of a cell decomposition of the character variety $X$ under the following
\begin{assumption*}
	Suppose $C_k$ has distinct eigenvalues.
\end{assumption*}

In Sections \ref{sec:braid variety} and \ref{sec:decomposing} we show the following:
\begin{thm}\label{thm:cell decomposition}
	Under the above assumption, there is a vector bundle $\tilde X \to X$ of rank $\binom{n}2$ and a cell decomposition of $\tilde X$ such that each cell is isomorphic to $(\BC^*)^{\dim - 2 i} \times \BC^{i+\binom{n}2}$ for some $i$.
\end{thm}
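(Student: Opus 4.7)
The plan is a two-layer decomposition. The outer layer is the rank-$\binom{n}{2}$ vector bundle $\tilde X \to X$, which I construct by refining the monodromy datum at the $k$-th puncture. Fix a Borel $B_0 \subset \GL_n$ whose diagonal torus contains $C_k$, and let $U_0$ be its unipotent radical; since $C_k$ is regular semisimple, the centralizer of $C_k$ in $U_0$ is trivial, so the $U_0$-orbit $\mathcal O := U_0 C_k U_0^{-1}$ is an affine space of dimension $\binom{n}{2}$ naturally identified with $U_0$ itself. I let $\tilde X$ parametrise the same local-system data as $X$ but with the constraint $\rho(c_k) \in \mathcal O$ replacing the conjugacy class condition on $\rho(c_k)$, with the quotient taken by the residual stabiliser. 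The forgetful map $\tilde X \to X$ is then a Zariski-locally trivial $U_0$-torsor, i.e.\ a vector bundle of rank $\binom{n}{2}$ via the exponential isomorphism $U_0 \cong \BC^{\binom{n}{2}}$.

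The inner layer decomposes $\tilde X$ via the fundamental-group relation $\prod[a_i,b_i]\prod c_j=1$. I apply a Bruhat decomposition to each of the factors $\rho(a_i), \rho(b_i), \rho(c_j)$ for $j<k$; the choices of Bruhat cells stratify $\tilde X$ by tuples of Weyl-group elements, whose concatenation determines a positive braid $\beta$ on $n$ strands. Each stratum is an affine-space bundle over the Shende--Treumann--Zaslow braid variety $X(\beta)$. The variety $X(\beta)$ itself admits a combinatorial cell decomposition whose cells have the shape $(\BC^*)^{d-2i}\times\BC^i$, obtained by inductively resolving the crossings of $\beta$ and recording for each either an ``ascent'' contributing a $\BC^*$-factor or a ``descent'' contributing a $\BC$-factor. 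Pulling back through the two affine-bundle layers produces cells of $\tilde X$ of the required form $(\BC^*)^{\dim-2i}\times\BC^{i+\binom{n}{2}}$.

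The principal difficulty is the coordinated bookkeeping of dimensions. A naive iterated Bruhat decomposition of the character variety would produce cells of the form $(\BC^*)^a \times \BC^b$ with no a priori reason for the exponents to match $\dim-2i$ and $i+\binom{n}{2}$ for a single parameter $i$. This balance must be engineered by the specific presentation and stratification carried out in Sections \ref{sec:braid variety} and \ref{sec:decomposing}; the regular semisimplicity of $C_k$ does the essential work, since it both fixes the fibre dimension of $\tilde X \to X$ at $\binom{n}{2}$ and ensures that the remaining monodromy factors contribute no stray torus parameters beyond those already tracked by the braid-variety cells.
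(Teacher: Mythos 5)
Your architecture is the paper's: the outer layer (relaxing the conjugacy condition at the $k$-th puncture to membership in the $U$-orbit $UC_kU^{-1}=C_k+\mathfrak{n}$, an affine space of dimension $\binom{n}{2}$ because $C_k$ is regular semisimple) is exactly the horocycle transform of Section \ref{sec:decomposing}, and the inner layer (Bruhat cells for the remaining monodromies concatenated into a positive braid, then a combinatorial decomposition of the braid variety) is Sections \ref{sec:braid variety} and \ref{sec:decomposing}. However, two steps are genuinely wrong or missing. First, your rule for decomposing the braid variety --- each crossing resolved as an ``ascent'' giving $\BC^*$ or a ``descent'' giving $\BC$ --- is not the correct one, and with it the statement you are proving would fail: every cell would have dimension $l(\beta)$, so $a+b$ would be constant across cells rather than $a+2b$, which is what the form $(\BC^*)^{\dim-2i}\times\BC^{i+\binom{n}{2}}$ requires. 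The actual rule (Propositions \ref{prop:walk}, \ref{prop:cells}) is three-way: tracking the Bruhat position $p_k$ of the partial product, a crossing that would raise $p_k$ is a forced ascent contributing $\BC$; a crossing that would lower it is either a descent contributing \emph{nothing} (the closed condition $z=-z_0(x)$) or a stay contributing $\BC^*$. Since ascents and descents pair off, $|S_p|+2|U_p|=l(\beta)$, and this identity is the engine producing the single parameter $i$. The remaining bookkeeping you defer is then the cancellation of $l(b_{\bar\pi})=2\sum_i l(\pi_i)$ against the affine-bundle ranks $\sum_i(l_i-l(\pi_i))$ of the strata over the braid varieties, carried out after Theorem \ref{thm:stratification genus 0}.

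Second, you never handle the residual torus. After passing to the eigenbasis of $C_k$, a $T/\BC^*$-action survives, and the relevant braid variety is a \emph{fiber} of a map $g:C_p\to T$ further quotiented by this action (Definition \ref{def:sympred}); your sketch quotients only ``by the residual stabiliser'' in the outer layer and then forgets the torus. To get honest cells of the form $(\BC^*)^a\times\BC^b$ one must (i) discard the walks whose associated Seifert surface (equivalently, graph) is disconnected, which is where the genericity of $C_1,\ldots,C_k$ enters via Proposition \ref{prop:generic implies connected}, and (ii) verify that on the surviving cells the action is free and the torus factor splits off as $(\BC^*)^{|S_p|-n+1}\times T_1$ with $g$ the projection (Propositions \ref{prop:connected surface}, \ref{prop:Y quotient}). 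Without this the fiber can contain torus-fixed points and the quotient is not a variety of the stated shape --- see the $T(2,4)$ example at $t=1$.
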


The main idea of the construction goes as follows. The character variety classifies tuples of matrices $\alpha_1,\ldots,\alpha_g,\beta_1,\ldots,\beta_g,\gamma_1\ldots,\gamma_k$ satisfying
\[
\prod_{i=1}^g \alpha_i \beta_i \alpha_i^{-1} \beta_i^{-1} \prod_{i=1}^k \gamma_i^{-1} C_i \gamma_i = \Id,
\]
up to conjugation action of $\GL_n$ on $\alpha_i, \beta_i$ and right action on $\gamma_i$. Since $C_k$ has distinct eigenvalues, we can use the eigenvectors to form a basis. So $X$ is isomorphic to the variety classifying tuples $\alpha_1,\ldots,\alpha_g,\beta_1,\ldots,\beta_g,\gamma_1\ldots,\gamma_{k-1}$ satisfying
\[
\prod_{i=1}^g \alpha_i \beta_i \alpha_i^{-1} \beta_i^{-1} \prod_{i=1}^{k-1} \gamma_i^{-1} C_i \gamma_i = C_k,
\]
up to the action of diagonal matrices. Let $\tilde X$ be the variety obtained by replacing the above condition with the condition
\[
\left(\prod_{i=1}^g \alpha_i \beta_i \alpha_i^{-1} \beta_i^{-1} \prod_{i=1}^{k-1} \gamma_i^{-1} C_i \gamma_i\right) - C_k  \;\text{is strictly upper-triangular.}
\]
It is not hard to see that $\tilde X$ is a vector bundle over $X$ of rank $\binom{n}{2}$. The benefit of passing from $X$ to $\tilde X$ is that the difficult quotient $\GL_n/_\ad \GL_n$ is replaced by the easier double quotient $U\backslash \GL_n / U / T$ where we can apply the Bruhat decomposition, where $U$ is the subgroup of unipotent upper-triangular matrices and $T$ is the torus of diagonal matrices. Passing from $X$ to $\tilde X$ can be thought of as the \emph{horocycle transform}. 

The cell decomposition of $\tilde X$ is performed in two steps. For each point of $\tilde X$ we have a tuple of permutations $\pi^\alpha_1,\ldots,\pi^\alpha_g, \pi^\beta_1,\ldots,\pi^\beta_g, \pi^\gamma_1\ldots,\pi^\gamma_{k-1}$ telling us in which Bruhat cell each of the matrices $\alpha_i, \beta_i, \gamma_i$ lies. This defines a cell decomposition of $\tilde X$ indexed by the tuples of permutations
\[
\{\pi^\alpha_1,\ldots,\pi^\alpha_g, \pi^\beta_1,\ldots,\pi^\beta_g, \pi^\gamma_1\ldots,\pi^\gamma_{k-1}\;:\pi^\alpha_i,\pi^\beta_i\in S_n,\;\pi^\gamma_i\in S_n/S_{\mu^{(i)}} \}.
\]
Here $S_n$ is the permutation group, $\mu^{(i)}$ is the partition specifying the multiplicities of the eigenvalues of $C_i$, and $S_\mu=S_{\mu_1}\times\cdots S_{\mu_l}$. Elements in the quotient $S_n/S_{\mu^{(i)}}$ are represented by minimal representatives.

Each cell looks like the product of affine space and a certain variety associated to the Artin braid $b$ obtained by composing the positive lifts of the permutations 
\[
\pi^\alpha_1, \pi^\beta_1, (\pi^\alpha_1)^{-1}, (\pi^\beta_1)^{-1}, \ldots, \pi^\alpha_g, \pi^\beta_g, (\pi^\alpha_g)^{-1}, (\pi^\beta_g)^{-1}, (\pi^\gamma_1)^{-1}, \pi^\gamma_1,\ldots,(\pi^\gamma_{k-1})^{-1}, \pi^\gamma_{k-1}.
\]
When $g>0$ there are also certain genus corrections which only introduce factors of the form $\BC^*$. These braid varieties are very similar to the varieties studied in \cite{shende2017legendrian}. In particular, they admit cell decompositions that look like products $(\BC^*)^i \times \BC^j$ for $i,j\geq 0$. These cell decompositions are indexed by objects that we call \emph{walks}. A walk associated to a braid is a sequence of permutations where for each intersection of the braid we can choose if we apply the corresponding transposition or we skip it, satisfying the rule
\[
\text{if we would go up, we must go up, but if we would go down we can skip.}
\]
Here ``go up'', ``go down'' refers to the Bruhat order. The walk begins at the identity permutation and must end in the identity. The details are explained in  Section \ref{sec:braid variety}.

For example, in the case of $g=0, k=4, n=2$, we have $8$ possible triples of permutations
\[
(\tau, \tau, \tau),\; (\tau, \tau, \Id),\; (\tau, \Id, \tau),\; (\tau, \Id, \Id),\; (\Id, \tau, \tau),\; (\Id, \tau, \Id),\; (\Id, \Id, \tau),\;(\Id,\Id,\Id),
\]
where $\tau\in S_2$ is the transposition. Because $\tau=\tau^{-1}$, from $(\tau,\tau,\tau)$ we obtain the positive braid on $2$ stings with $6$ crossings. From the remaining tuples we obtain $3$ braids with $4$ crossings, $3$ braids with $2$ crossings, and $1$ trivial braid. For each braid we enumerate walks, for instance for a braid with $2$ crossings there is only one walk $\Id, \tau, \Id$. For a braid with $4$ crossings we have two walks:
\[
(\Id, \tau, \tau, \tau, \Id)\quad\text{(go up, stay, stay, go down),}
\]
\[
(\Id, \tau, \Id, \tau, \Id)\quad\text{(go up, go down, go up, go down).}
\]
For a braid with $6$ crossings we have the following walks:
\[
(\Id, \tau, \tau, \tau, \tau, \tau, \Id)\quad\text{(go up, stay, stay, stay, stay, go down),}
\]
\[
(\Id, \tau, \tau, \tau, \Id, \tau, \Id)\quad\text{(go up, stay, stay, go down, go up, go down),}
\]
\[
(\Id, \tau, \tau, \Id, \tau, \tau, \Id)\quad\text{(go up, stay, go down, go up, stay, go down),}
\]
\[
(\Id, \tau, \Id, \tau, \tau, \tau, \Id)\quad\text{(go up, go down, go up, stay, stay, go down),}
\]
\[
(\Id, \tau, \Id, \tau, \Id, \tau, \Id)\quad\text{(go up, go down, go up, go down, go up, go down).}
\]

Each braid variety $X_b$ naturally comes equipped with two structures: an action of the torus $T$ and a map to the torus $f:X_b\to T$. The collection of matrices $C_1,\ldots,C_k$ gives a point $t\in T$, and to obtain the corresponding cell of $\tilde X$ we need to take the fiber $f^{-1}(t)$, and divide by the $T$-action. The genericity assumption on $C_i$ translates into the condition that $t\in T$ does not belong to certain natural subtori. Some cells of the cell decomposition of the braid variety are mapped to these subtori, so we can ignore them. For each walk we have a graph on vertices $1,2,\ldots,n$. Each time we stay we connect the corresponding indices $i,j$ which would have been swapped. The walks producing disconnected graphs can be ignored.

In our running example, we can ignore all walks without stays. Thus we obtain $4$ walks for the braid with $6$ crossings and one walk for each of the $3$ braids with $4$ crossings. The first walk for the braid with $6$ crossings produces a $3$-dimensional cell isomorphic to $\BC^*\times \BC^* \times \BC$. Each of the other six walks produces a $2$-dimensional cell isomorphic to $\BC\times\BC$. So we obtain that a rank $1$ vector bundle over $X$ is isomorphic to $\BC^*\times \BC^* \times \BC\sqcup 6\; \BC\times\BC$. In this case (and in all cases of rank $2$) it is possible to cancel out $\BC$, and obtain
\[
X \cong \BC^*\times \BC^* \sqcup 6\; \BC.
\]

It is not clear if the vector bundle $\tilde{X}\to X$ can be ``cancelled'' with $\BC^{\binom{n}2}$ to obtain a cell decomposition for $X$. From the point of view of cohomology there is no difference between $X$ and $\tilde X$.

A direct corollary of Theorem \ref{thm:cell decomposition} and the long exact cohomology sequence is 
\begin{cor}
	If $C_k$ has distinct eigenvalues, the cohomology groups $H^j(X,\BC)$ are Hodge-Tate, i.e. the pure Hodge structure on $\Gr^W_i H^j(X,\BC)$ has only $p,q$-parts with $p=q$.
\end{cor}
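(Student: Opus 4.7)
The plan is to deduce the Hodge--Tate property of $H^*(X,\BC)$ from the cell decomposition of $\tilde X$ supplied by Theorem \ref{thm:cell decomposition} in three reductions: from cells to $\tilde X$, from $\tilde X$ to $X$ for compactly supported cohomology, and from compactly supported cohomology to ordinary cohomology via Poincar\'e duality.

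First I would check the base case: each cell is isomorphic to $(\BC^*)^{\dim - 2i}\times \BC^{i+\binom{n}{2}}$, and by K\"unneth it suffices to note that $H^*_c(\BC)$ and $H^*_c(\BC^*)$ are Hodge--Tate. Explicitly, $H^2_c(\BC)=\BC(-1)$, $H^1_c(\BC^*)=\BC(0)$, $H^2_c(\BC^*)=\BC(-1)$, all of type $(p,p)$, so every cell has pure Hodge--Tate compactly supported cohomology.

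Next, choose any linear extension $\alpha_1,\ldots,\alpha_N$ of the partial order $\CP$ indexing the cells of $\tilde X$, and set
\[
Y_k \eq \bigsqcup_{i\leq k} X_{\alpha_i}.
\]
Since a prefix $\{\alpha_1,\ldots,\alpha_k\}$ of a linear extension is a lower set in $\CP$, it is a finite union of principal lower sets $\{\beta:\beta\leq\alpha_i\}$, each of which indexes a Zariski closed subvariety by the cell decomposition axiom; hence $Y_k$ is Zariski closed in $\tilde X$ for every $k$. Then $X_{\alpha_k}=Y_k\setminus Y_{k-1}$ is open in $Y_k$ with closed complement $Y_{k-1}$, producing a long exact sequence in compactly supported cohomology
\[
\cdots \to H^j_c(X_{\alpha_k},\BC) \to H^j_c(Y_k,\BC) \to H^j_c(Y_{k-1},\BC) \to H^{j+1}_c(X_{\alpha_k},\BC) \to \cdots
\]
which is strict for the weight filtration and respects the mixed Hodge structure. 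The Hodge--Tate property is preserved under extensions of mixed Hodge structures, so induction on $k$ starting from $Y_0=\emptyset$ shows that $H^*_c(\tilde X,\BC) = H^*_c(Y_N,\BC)$ is Hodge--Tate.

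Finally, because $\tilde X \to X$ is an algebraic vector bundle of rank $\binom{n}{2}$, the Thom isomorphism gives an isomorphism of mixed Hodge structures
\[
H^j_c(\tilde X,\BC) \;\cong\; H^{j-2\binom{n}{2}}_c(X,\BC)\bigl(-\tbinom{n}{2}\bigr),
\]
so $H^*_c(X,\BC)$ is Hodge--Tate. Since $X$ is smooth, Poincar\'e duality yields
\[
H^j(X,\BC) \;\cong\; H^{2\dim-j}_c(X,\BC)^{\vee}(-\dim),
\]
and dualization together with a Tate twist preserves the Hodge--Tate property, giving the corollary. The only non-formal ingredient in this argument is Theorem \ref{thm:cell decomposition} itself; the potential technical pitfall is the verification that the weak cell decomposition axiom suffices to extract the closed-open pair structure needed for the inductive long exact sequence, but this is resolved by the lower-set observation above.
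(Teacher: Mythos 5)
Your proposal is correct and follows essentially the same route as the paper: each cell $(\BC^*)^{\dim-2i}\times\BC^{i+\binom{n}{2}}$ is Tate, the linear refinement of the poset together with the long exact sequences in compactly supported cohomology and the closure of Tate mixed Hodge structures under extensions (Proposition \ref{prop:cell decomposition tate}) gives that $\tilde X$ is Tate, and the vector bundle $\tilde X\to X$ plus Poincar\'e duality for the smooth variety $X$ transfer the property to $H^j(X,\BC)$. The only difference is that you spell out the lower-set/Thom-isomorphism/duality steps that the paper leaves implicit, which is fine.
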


\subsection{Main results}
Before proceeding to the main results, we clarify the situation with tautological classes for the character varieties with punctures. There is a tautological $\PGL_n$-bundle $\CE$ on $X\times \Sigma$. The usual way to produce cohomology classes on $X$ is to integrate the characteristic classes of $\CE$ along homology classes of $\Sigma$. This works well for the twisted character variety. In \cite{shende2016weights}, Shende showed that the tautological classes constructed in this way have expected Hodge weights. In the situation with punctures, we have the following kinds:
\begin{enumerate}
	\item Each puncture produces a principal bundle with structure group $Z(C_i)/\BC^*$, where $Z(C_i)$ denotes the centralizer of $C_i$. We can take the characteristic classes of this bundle.
	\item By computing the monodromy along a closed loop $\gamma$ on $\Sigma$, we have a map of stacks $X \to \GL_n/_\ad\GL_n$, and the cohomology classes on $\GL_n$ are obtained by transgression from the classifying space $*/\GL_n$. From the point of view presented in Section \ref{sec:symplectic form}, it makes sense to speak of a shifted map $X[1]\to */\GL_n$, and cohomology classes on $X$ can be obtained by the pullback $H^{2i}(*/\GL_n,\BC)\to H^{2i-1}(X,\BC)$.
	\item As explained in Section \ref{ssec:parabolic char stack}, there is a stack corresponding to the mapping cone
	\[
	BG_\parab := \cone\left(\bigsqcup_{i=1}^k */Z(C_i)[1] \to */\GL_n\right)
	\]
	together with a map $X[2]\to BG_\parab$. The cohomology of $BG_\parab$ fits into the exact sequence
	\[
	0\to \bigoplus_{m=1}^k H^{2i-2}(*/Z(C_i), \BC) \to H^{2i}(BG_\parab, \BC) \to H^{2i}(*/\GL_n,\BC) \to 0
	\]
	for each $i$. If we have split this sequence, for instance using the Hodge filtration, we obtain a map $H^{2i}(*/\GL_n,\BC)\to H^{2i-2}(X,\BC)$. This map does not have to preserve the mixed Hodge structure. So forms obtained in this way do not need to have pure weight.
\end{enumerate}
In Section \ref{sec:symplectic form} we follow the construction (iii) to obtain an explicit holomorphic closed $2$-form $\omega$ on $X$, representing a tautological class in $H^2(X,\BC)$. Our expression for the form is essentially the same as given in \cite{guruprasad1997group}. We show that its class lies in $F^2 H^2(X,\BC^*)$.

Most of the work done in Sections \ref{sec:braid variety}---\ref{sec:decomposing} is devoted to verification that the restriction of $\omega$ to each cell in our cell decomposition is given by an expression
\[
\sum_{i,j} \omega_{i,j} \frac{dt_i}{t_i}\wedge \frac{dt_j}{t_j}
\]
in some coordinates on $(\BC^*)^m$, where the anti-symmetric matrix $(\omega_{i,j})$ is \emph{non-degenerate}. This is achieved by following the suggestion of Shende to compare this matrix with the intersection matrix on $H_1$ of a certain \emph{Seifert surface}. We give a combinatorial construction of this surface from the data of a tuple of permutations and a walk. 

As explained in Section \ref{sec:generalities}, this implies
\begin{thm}[Curious hard Lefschetz under Assumption] \label{thm:curious hard lefschetz}
	If $C_k$ has distinct eigenvalues, then for each $m,j\geq 0$, the operator $(\cap\omega)^m$ induces an isomorphism
	\[
	\Gr_{\dim-2m}^W H_c^j(X,\BC) \xrightarrow{\sim} \Gr_{\dim+2m}^W H_c^{j+2m}(X,\BC).
	\]
\end{thm}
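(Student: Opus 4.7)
The plan is to use Theorem \ref{thm:cell decomposition} together with the explicit shape of $\omega$ on each cell to prove a cell-by-cell hard Lefschetz and then patch via long exact sequences. Since $\pi\colon \tilde X \to X$ is a vector bundle of rank $r = \binom{n}{2}$, the Thom isomorphism $H^J_c(\tilde X,\BC) \cong H^{J-2r}_c(X,\BC)(-r)$ is an iso of mixed Hodge structures compatible with cup product by $\pi^*\omega$, so it is enough to prove the analogous statement on $\tilde X$: for all $J,m \geq 0$,
\[
(\cap \pi^*\omega)^m \colon \Gr^W_{D-2m} H^J_c(\tilde X,\BC) \toiso \Gr^W_{D+2m} H^{J+2m}_c(\tilde X,\BC),
\]
where $D := \dim X + 2r$.

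Fix a cell $X_\alpha \cong (\BC^*)^a \times \BC^b$ with $a = \dim X - 2i$ and $b = i + r$. By the assertion of Sections \ref{sec:braid variety}--\ref{sec:decomposing}, $\pi^*\omega|_{X_\alpha}$ is pulled back from the torus factor and has the form $\sum_{p<q} \omega_{pq}\, \frac{dt_p}{t_p}\wedge \frac{dt_q}{t_q}$ with non-degenerate matrix $(\omega_{pq})$; in particular $a$ is even. Identifying $H^\bullet((\BC^*)^a,\BC) \cong \wedge^\bullet V$ with $V = H^1((\BC^*)^a,\BC)$, the restricted form corresponds to a symplectic form on $V$, and the classical $\mathfrak{sl}_2$-triple $(L_\omega,\Lambda,H)$ on $\wedge^\bullet V$ produces the hard Lefschetz isomorphisms $L_\omega^m\colon \wedge^{a/2-m}V \toiso \wedge^{a/2+m}V$. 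Poincar\'e duality transfers this to $H^\bullet_c((\BC^*)^a,\BC)$, and K\"unneth with $\BC^b$ (which shifts both degree and weight by $2b$) yields
\[
(\cap \pi^*\omega)^m \colon \Gr^W_{D-2m} H^J_c(X_\alpha,\BC) \toiso \Gr^W_{D+2m} H^{J+2m}_c(X_\alpha,\BC)
\]
for every $J,m\geq 0$, since $a + 2b = D$. Each cell is Hodge--Tate, so no further subtlety arises.

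Now totally order the cells as $\alpha_1,\ldots,\alpha_N$ extending the partial order on $\CP$, so that $U_k := \bigcup_{i \leq k} X_{\alpha_i}$ is closed in $\tilde X$ for every $k$. Proceed by induction on $k$, with $U_0 = \emptyset$ as the trivial base. At each step $U_{k+1} = U_k \sqcup X_{\alpha_{k+1}}$ has $U_k$ closed and $X_{\alpha_{k+1}}$ open in $U_{k+1}$, producing the long exact sequence of mixed Hodge structures
\[
\cdots \to H^J_c(X_{\alpha_{k+1}},\BC) \to H^J_c(U_{k+1},\BC) \to H^J_c(U_k,\BC) \to H^{J+1}_c(X_{\alpha_{k+1}},\BC) \to \cdots.
\]
By strictness of morphisms of mixed Hodge structures, exactness is preserved after passing to $\Gr^W_w$. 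Cup product with $\pi^*\omega$ provides a morphism from the $\Gr^W_{D-2m}$ sequence to the $\Gr^W_{D+2m}$ sequence (shifted in cohomological degree by $2m$), and the five lemma combines the cell-level isomorphism with the inductive hypothesis on $U_k$ to extend the property to $U_{k+1}$. Taking $k = N$ yields the theorem on $\tilde X$, hence on $X$.

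The main obstacle is the non-degeneracy of $(\omega_{pq})$ on each cell, which is the content of Sections \ref{sec:braid variety}--\ref{sec:decomposing} and is established by comparing $(\omega_{pq})$, following Shende's suggestion, with the intersection form on $H_1$ of a combinatorially-constructed Seifert surface. Once this input is in hand, the cell-level Lefschetz reduces to the classical $\mathfrak{sl}_2$ action on an exterior algebra, and the patching is a formal five-lemma argument.
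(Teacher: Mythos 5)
Your proposal is correct and follows essentially the same route as the paper: pass to the vector bundle $\tilde X$, use the cell decomposition into pieces $(\BC^*)^{\dim-2i}\times\BC^{i+\binom{n}{2}}$ on which $\omega$ is log-canonical and non-degenerate (the Seifert-surface input), prove the cell-level statement via the $\mathfrak{sl}_2$-action on the exterior algebra (the paper's Proposition \ref{prop:torus lefschez}), check that all cells share the middle weight $\dim X + 2\binom{n}{2}$, and patch with the long exact sequence, strictness of MHS morphisms, and the five lemma (the paper's Proposition \ref{prop:weak cell lefschetz}). The only difference is organizational: the paper routes the middle-weight computation through the braid varieties $Y_{b_{\bar\pi}}(C_{\bar\pi})$ and Euler characteristics of the Seifert surfaces rather than directly from the exponents in Theorem \ref{thm:cell decomposition}, but the content is the same.
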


In Section \ref{sec:monodromic action}, we deduce the general case from the situation under the assumption that $C_k$ has distinct eigenvalues. In \cite{letellier2015character} Letellier studied the $W$-action ($W=S_n$) on the cohomology of the unipotent character variety. So the monodromy around the $k$-th puncture is unipotent and we have the extra data of a compete flag preserved by this monodromy. Letellier's conjectures imply that the cohomology of the character variety for $C_k=\Id$ equals to the sign component of this action. We prove this statement and deduce the curious hard Lefschetz in the general case. We consider the family of character varieties $\CX\to T_1$, where for each $t\in T_1=\{t\in T:\det t=1\}$ the monodromy around the $k$-th puncture has eigenvalues prescribed by $t$. The main observation (Proposition \ref{prop:locally constant} is that each cell in our decomposition factors as a direct product where one factor is $T_1$ and the map to $T_1$ is the projection. So the derived $!$-pushforward of the constant sheaf from the cell to $T_1$ is constant. Thus the derived $!$-pushforward of the constant sheaf from $\CX$ to $T_1$ has locally constant cohomology over the locus where the collection $C_1,\ldots,C_{k-1},t$ is generic, which includes $1$ if $C_1,\ldots,C_{k-1},1$ is generic.

We obtain
\begin{thm}
	For arbitrary generic semisimple $C_1,\ldots,C_k$, the cohomology groups $H^j(X,\BC)$ are Hodge-Tate.
\end{thm}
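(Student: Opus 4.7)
The strategy is to reduce the general case to Theorem~\ref{thm:cell decomposition} by introducing a fictitious $(k+1)$-th puncture whose monodromy is allowed to degenerate to the identity. Concretely, I would set up the family $f:\CX\to T_1$ whose fiber $X^t$ over $t\in T_1 = \{t\in T:\det t=1\}$ is the character variety with $k+1$ punctures, the last of which has semisimple monodromy with eigenvalues prescribed by $t$. Two elementary observations are that $X^1\cong X$, because a puncture with trivial monodromy imposes no new relation and its conjugator has full stabilizer $Z(\Id)=\GL_n$, and that $(C_1,\ldots,C_k,\Id)$ is generic iff $(C_1,\ldots,C_k)$ is, since inserting $1$'s into the genericity products never creates a coincidence. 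Hence $1$ lies in the Zariski open locus $U\subset T_1$ over which the extended tuple is generic.

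The key step is to perform the cell decomposition construction of Sections~\ref{sec:braid variety}--\ref{sec:decomposing} in the family, treating the extra puncture as the one providing the distinct-eigenvalues assumption (which holds on a dense open subset of $T_1$). Since the cells are indexed by purely combinatorial data (tuples of permutations and walks), the same recipe yields a decomposition of the rank-$\binom{n}{2}$ vector bundle $\tilde\CX\to\CX$ over all of $U$. The crucial content, which I would phrase as Proposition~\ref{prop:locally constant}, is that each cell splits as an algebraic direct product $T_1\times (\BC^*)^{\dim-2i}\times\BC^{i+\binom{n}{2}}$, with projection to the first factor given by $f$. Consequently the fiber of each cell at every $t\in U$ is of the same product form $(\BC^*)^{\dim-2i}\times\BC^{i+\binom{n}{2}}$, and together these give a cell decomposition of $\tilde X^t$.

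The long exact sequence in compactly supported cohomology, together with the fact that $(\BC^*)^a\times\BC^b$ has Hodge--Tate mixed Hodge structure in each weight, then shows that $H^*_c(\tilde X^t,\BC)$ is Hodge--Tate; descending through the vector bundle $\tilde X^t\to X^t$ of rank $\binom{n}{2}$ gives the same for $H^*_c(X^t,\BC)$. Taking $t=1$ and using $X^1\cong X$ yields the theorem. The main obstacle is establishing Proposition~\ref{prop:locally constant}: one must verify that the explicit cell construction of Section~\ref{sec:decomposing} genuinely factors through the $T_1$-coordinate as a direct product rather than merely as a fibration, by inspecting how $t$ enters the Bruhat and walk combinatorics and confirming that, once the discrete data are fixed, $t$ participates only through the $T_1$-factor. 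Note that the $W$-action argument advertised in the introduction is needed only for the curious hard Lefschetz property in the general case; for the Hodge--Tate statement the specialization argument above suffices.
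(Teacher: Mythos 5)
There is a genuine gap at the heart of your argument: the cell decomposition of Theorem~\ref{thm:cell decomposition} cannot be specialized to $t=1$. The construction of Sections~\ref{sec:braid variety}--\ref{sec:decomposing} uses the distinct-eigenvalue hypothesis on the last puncture in an essential way: the eigenbasis of $C_k$ is used to gauge away the $\GL_n$-conjugation and reduce to a double quotient where the Bruhat decomposition applies, and the vector bundle $\tilde X\to X$ is built from the substitution $u\mapsto uC_ku^{-1}C_k^{-1}$, which is invertible on $U$ precisely because the eigenvalues of $C_k$ are pairwise distinct. Genericity in the sense of Definition~\ref{def:generic} is a different condition from regularity: the point $t=1$ is generic for the extended tuple (as you correctly observe), but it lies outside $T_\reg$, so the cells simply do not exist over it. Worse, the naive family you describe is not even equidimensional at $t=1$: for regular $t$ the extra puncture contributes $n^2-n$ to the fiber dimension (its centralizer is $T$), while at $t=1$ the centralizer is all of $\GL_n$ and the contribution vanishes, so the fiber dimension drops by $2\dim G/B$ and the compactly supported cohomology sheaves cannot be locally constant through $t=1$. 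The product splitting of the cells (the content of Proposition~\ref{prop:locally constant}) is therefore unavailable for your family at the one point where you need it.

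This is exactly the difficulty that forces the paper into the Grothendieck--Springer detour. One replaces the fiber by the smooth character variety $X_\sm$, in which the last monodromy is only required to preserve a chosen flag with prescribed diagonal part $t$; with the flag carried along as auxiliary data, the reduction to the Borel, the cell decomposition, and the factorization of each cell through $V$ all go through at every $t\in V$, including $t=1$. The price is that $X_\sm^1$ is not $X$: it lies over the whole unipotent locus, and only the closed subvariety $X_\sm^{=1}$, a $G/B$-bundle over $X$, sees the original variety --- and the cohomology of a closed subvariety is not a priori a subquotient of that of the ambient space. Recovering $H^\bullet_c(X)$ requires the Springer $W$-action on $R\pi_{X*}\BC_{X_\sm}$ and the identification of $H^\bullet_c(X)$, up to shift and twist, with the sign component of $H^\bullet_c(X_\sm^1)$; Tate-ness then follows because Tate mixed Hodge structures form a full abelian subcategory closed under subquotients. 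So, contrary to your closing remark, the $W$-action argument is needed already for the Hodge--Tate statement, not only for curious hard Lefschetz.
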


\begin{thm}[Curious hard Lefschetz]
	For arbitrary generic semisimple $C_1,\ldots,C_k$ and for each $m,j\geq 0$, the operator $(\cap\omega)^m$ induces an isomorphism
	\[
	\Gr_{\dim-2m}^W H_c^j(X,\BC) \xrightarrow{\sim} \Gr_{\dim+2m}^W H_c^{j+2m}(X,\BC).
	\]
\end{thm}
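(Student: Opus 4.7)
If some $C_i$ already has distinct eigenvalues, I would permute the punctures so that $C_k$ does and invoke Theorem~\ref{thm:curious hard lefschetz} directly. The interesting case is the complementary one, where every $C_i$ has a repeated eigenvalue. There I plan to introduce an additional ``fictitious'' $(k+1)$-th puncture whose monodromy varies over $T_1=\{t\in T:\det t=1\}$, apply Theorem~\ref{thm:curious hard lefschetz} at a generic regular semisimple $t_0$, and transport the conclusion back to $t=1$ via the monodromic/Springer analysis of Section~\ref{sec:monodromic action}.

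Concretely, the first step is to set up a family $\pi\colon\CX\to T_1$ whose fiber $\CX_t$ is the enhanced (flag-bundle) character variety with the extra puncture carrying monodromy of eigenvalue type $t$, together with the tautological $2$-form $\omega$ extended flatly across the family. For generic $t$ the extra puncture has distinct eigenvalues, so $\CX_t$ satisfies curious hard Lefschetz by Theorem~\ref{thm:curious hard lefschetz}. The key technical input is Proposition~\ref{prop:locally constant}: each cell in the decomposition of $\CX$ factors as a direct product with the base $T_1$ over the generic locus, which forces $R\pi_!\BC_\CX$ to be locally constant on the open set where $(C_1,\ldots,C_k,t)$ is generic. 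By the genericity hypothesis on $(C_1,\ldots,C_k)$ this open set contains $t=1$, so $H_c^\ast(\CX_t,\BC)$ is constant as a variation of mixed Hodge structures near $1$, and the Lefschetz action of $\cap\omega$ extends flatly. Curious hard Lefschetz therefore passes from $\CX_{t_0}$ to $\CX_1$.

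The second step is to recognize $H_c^\ast(X,\BC)$ inside $H_c^\ast(\CX_1,\BC)$. The $W=S_n$-action permuting the eigenvectors of the extra puncture at generic $t$ extends to $t=1$ through the Grothendieck--Springer sheaf, and the analysis of Section~\ref{sec:monodromic action} identifies its sign isotypic component with the cohomology of the original character variety $X$. Because $\omega$ is pulled back from $BG_{\parab}$ and is insensitive to the flag data added at the extra puncture, it is $S_n$-invariant; hence $\cap\omega$ commutes with the $W$-action and preserves isotypic components. Restricting the curious hard Lefschetz isomorphism on $\CX_1$ to the sign component then yields the desired isomorphism on $H_c^\ast(X,\BC)$.

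The hard part is the second step: verifying, via the Grothendieck--Springer sheaf in the parabolic character-variety setting, that the $W$-action on the generic fiber extends compatibly to $t=1$, that its sign isotypic component genuinely recovers $H_c^\ast(X,\BC)$, and that this identification is compatible with the mixed Hodge structure and with $\omega$. By contrast, the locally-constant argument is a direct consequence of the cell decomposition produced in Theorem~\ref{thm:cell decomposition} and is largely bookkeeping; and once $\omega$ is known to be $W$-equivariant, the passage to the sign component is formal.
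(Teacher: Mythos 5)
Your plan follows the paper's proof essentially step for step: reduce to $C_k=\Id$ by adding a fictitious puncture, pass to the smooth (flag-enhanced) family $X_\sm\to V$, use the locally-constant property of $R\chi_{F!}\BC_{X_\sm}$ coming from the cell decomposition to extend the Springer $W$-action to $t=1$, and identify $H_c^\ast(X^{=1})$ with the sign component via the Gysin map for $X_\sm^{=1}\to X^{=1}$. Both your identification of the hard step (the Springer-theoretic extension and its compatibility with the MHS) and your two routes to curious Lefschetz on the central fiber (cell decomposition versus parallel transport from a regular value) match the paper.

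One assertion in your second step is literally false as stated and needs to be weakened: the class of $\omega$ does \emph{not} extend flatly over the family, and it is not simply ``insensitive to the flag data.'' The Gauss--Manin covariant derivative is computed in the paper to be
\[
\nabla[\omega]=2\sum_{i=1}^n d\log t_i\otimes c_1(L_i),
\]
where $L_i=F_i/F_{i-1}$ are the tautological line bundles on $X_\sm$ coming from the added flag; this is nonzero. What saves the argument is that $c_1(L_i)$ is pure, so $\nabla[\omega]\in W_2$, and hence parallel transport (and therefore the $W$-action) preserves the class of $\omega$ only in $\Gr^W_4 H^2$. Since the curious Lefschetz statement concerns only the operator induced by $\cup\omega$ on the weight-associated graded, this weaker flatness suffices, but it is a genuine computation you must carry out rather than an automatic invariance; as written, your claim that ``$\cap\omega$ extends flatly'' and ``$\omega$ is $S_n$-invariant'' on the nose would not survive scrutiny without this correction.
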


As an interesting corollary of the fact that $\omega\in F^2 H^2(X,\BC)$, we have (see Proposition \ref{prop:tate and lefschetz})
\begin{cor}
	The canonical filtration induced by the nilpotent operator $\cap\omega$ coincides with the Hodge 
	filtration and splits the weight filtration. The mixed Hodge structure splits, i.e. $F^i H^j_c(X,\BC)=\bar F^i H^j_c(X,\BC)$ for all $i,j$, if and only if $\omega$ has pure weight, i.e. $\omega \in \bar F^2 H^2(X,\BC)\cap F^2 H^2(X,\BC)$.
\end{cor}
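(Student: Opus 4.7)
The plan is to combine the curious hard Lefschetz just proved with the fact, from Section~\ref{sec:symplectic form}, that $\omega\in F^2 H^2(X,\BC)$. Setting $N=\cap\omega$, these two inputs say that $N$ is a nilpotent endomorphism of $H^*_c(X,\BC)$ which shifts cohomological degree by $2$, sends $F^p$ into $F^{p+2}$, and satisfies
\[
N^m:\Gr^W_{\dim-2m}H^j_c\xrightarrow{\sim}\Gr^W_{\dim+2m}H^{j+2m}_c.
\]
Thus on $\Gr^W H^*_c$ one has a Lefschetz $sl_2$, with primitive decomposition $\Gr^W H^*_c=\bigoplus_{k,m\geq 0}N^k P^{*-2k}_m$, where $P^j_m\subset\Gr^W_{\dim-2m}H^j_c$ denotes $\ker(N^{m+1})$. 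The canonical filtration of the statement is the decreasing filtration obtained by lifting this decomposition to $H^*_c(X,\BC)$ and collecting the terms carrying $N^k$ with $k\geq p$.

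First I would identify this canonical filtration with $F^\bullet$ and establish the splitting of $W$ at the same time. Using that $(F,W)$ is a MHS on $H^*_c$ and that $N$ preserves $F$ with shift $2$, one checks that $N$ is bihomogeneous with respect to Deligne's canonical bigrading $I^{p,q}$ of the MHS, so there is a unique $(F,W)$-compatible lift of the primitive decomposition of $\Gr^W H^*_c$ to a direct-sum decomposition of $H^*_c(X,\BC)$ refining both $F$ and $W$. This lift splits $W$, and the induced decreasing filtration is then forced to coincide with $F$. I expect this uniqueness and compatibility step to be the main technical obstacle: it is essentially the polarized Hodge--Lefschetz formalism of Deligne--Saito, but it would need to be packaged in a form applicable directly to our mixed (not limit) Hodge structure.

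For the final equivalence, note that $\omega\in F^2 H^2$ is automatic, so the pure-weight condition is really the extra inclusion $\omega\in \bar F^2 H^2$. If this holds, then $N$ also respects $\bar F$ with shift $2$, and the uniqueness of the canonical lift above, applied with $\bar F$ in place of $F$, forces $\bar F$ to coincide with the same canonical filtration; hence $F=\bar F$. Conversely, $F=\bar F$ immediately yields $\omega\in F^2=\bar F^2$, so $\omega$ has pure weight. The reformulation ``MHS splits $\Leftrightarrow F^i H^j_c=\bar F^i H^j_c$ for all $i,j$'' is a straightforward translation via the Deligne bigrading: $F=\bar F$ is equivalent to $I^{p,q}=0$ for $p\neq q$, i.e.\ the Hodge--Tate property, which is the notion of splitting chosen by the statement.
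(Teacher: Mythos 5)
Your overall strategy is the paper's (reduce to linear algebra of the nilpotent operator $N=\cup\,\omega$ against the filtrations, then use uniqueness of the Lefschetz filtration twice to get both halves of the statement), but there is a genuine gap at the central step. You assert that because $N$ preserves $F$ with shift $2$ and $W$ with shift $4$, it is bihomogeneous for Deligne's bigrading $I^{p,q}$. For a general MHS this is false: since $N$ is \emph{not} assumed to preserve $\bar F$ (whether it does is precisely the content of the second half of the corollary), one only gets
\[
N(I^{p,q})\ \subset\ F^{p+2}\cap W_{p+q+4}\ =\ \bigoplus_{p'\geq p+2,\ p'+q'\leq p+q+4} I^{p',q'},
\]
which is an "upper-triangular" condition, not a single bigraded piece. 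The missing input is the Hodge--Tate property of $H^\bullet_c(X)$, which is established in the theorem immediately preceding the corollary and appears explicitly as a hypothesis in Proposition \ref{prop:tate and lefschetz}. Once $I^{p,q}=0$ for $p\neq q$, the right-hand side collapses to $I^{p+2,p+2}$ and your lift exists; equivalently, and more economically, Tate-ness says $F^\bullet$ splits $W_{2\bullet}$, the grading $G_k=F^k\cap W_{2k}$ is shifted by $N$ by exactly one (even) step, so $N$ satisfies the \emph{split} Lefschetz property, and the uniqueness of the monodromic filtration (Proposition \ref{prop:monodromic}) then identifies that intrinsic filtration with $F^\bullet$. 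In particular no polarized Hodge--Lefschetz formalism is needed: Section \ref{ssec:operator with filtration} does all of this with elementary linear algebra, and your worry about the "main technical obstacle" dissolves once the Tate hypothesis is put in.

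A second, smaller error: your closing sentence equates $F=\bar F$ with $I^{p,q}=0$ for $p\neq q$. The latter is the Tate condition (a condition on the pure graded pieces) and holds here unconditionally; the former is the strictly stronger split Tate condition. For instance $V=\BC^2$ with $W_0=\langle e_0\rangle\subset W_2=V$, $F^1=\langle e_1\rangle$, $\bar F^1=\langle e_0+e_1\rangle$ is Tate but has $F^1\neq\bar F^1$. The statement already defines "splits" as $F^i=\bar F^i$, so no reformulation is needed there; the actual equivalence with $\omega\in F^2\cap\bar F^2$ is handled correctly by your two-filtration uniqueness argument in the preceding sentences (again modulo the Tate input), and that is exactly part (ii) of Proposition \ref{prop:tate and lefschetz}.
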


It is clear from the way the class of $\omega$ was constructed as a pull back from $BG_\parab$, that in the case when all the eigenvalues of $C_i$ have absolute value $1$ $\omega$ has pure weight. We expect that this is not true in general, see Remark \ref{rem:split or does not}.

The proof of the curious hard Lefschetz property opens a way to the following line of attack on the $P=W$ conjecture: Suppose we have a proof of $P_i\subset W_{2i}$ or $W_{2i}\subset P_i$. Then the Lefschetz properties imply $P_i=W_{2i}$.

\section{Generalities}\label{sec:generalities}

\subsection{}\label{ssec:poset}
Any partially ordered set (poset) $\CP$ is naturally endowed with a topology so that
\[
U\subset \CP \;\text{is open if and only if}\; x\in U, y>x \Rightarrow y\in U,
\]
\[
Z\subset \CP \;\text{is closed if and only if}\; x\in Z, y<x \Rightarrow y\in Z.
\]
Note that continuous maps between posets are precisely order-preserving maps.

\begin{defn}
Let $X$ be a topological space. A \emph{cell decomposition} of $X$ is a pair $(\CP, \varphi)$ where $\CP$ is a finite poset and $\varphi:X\to \CP$ is a continuous map. 
\end{defn}

Explicitly, the data of a map $\varphi:X\to\CP$ is equivalent to a decomposition of $X$ into a union of subsets indexed by $\CP$. Denote
\[
\varphi_\sigma=\varphi^{-1}(\sigma),\; \varphi_{\leq \sigma}=\bigcup_{\alpha\leq \sigma} \varphi^{-1}(\alpha),\; \varphi_{\geq \sigma}=\bigcup_{\alpha\geq \sigma} \varphi^{-1}(\alpha),
\]
and similarly $\varphi_{<\sigma}$, $\varphi_{>\sigma}$. We have
\begin{prop}
	Let $X$ be a topological space, let $\CP$ be a finite poset, and let $\varphi:X\to\CP$ be a map. The following conditions are equivalent:
	\begin{enumerate}
		\item $(\CP,\varphi)$ is a cell decomposition;
		\item $\varphi_{\leq\sigma}$ is closed for every $\sigma\in\CP$;
		\item $\varphi_{\geq\sigma}$ is open for every $\sigma\in\CP$.
	\end{enumerate}
\end{prop}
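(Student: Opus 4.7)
The plan is to observe that in the Alexandrov topology on $\CP$, for every $\sigma\in\CP$ the principal down-set $D_\sigma=\{\alpha\in\CP:\alpha\leq\sigma\}$ is closed and the principal up-set $U_\sigma=\{\alpha\in\CP:\alpha\geq\sigma\}$ is open, and conversely that every closed (resp.\ open) subset of $\CP$ is a union of such principal down-sets (resp.\ up-sets). Since $\CP$ is finite, these unions are finite, which is what turns the three conditions into equivalent statements.

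First I would dispose of the easy direction (i)$\Rightarrow$(ii) and (i)$\Rightarrow$(iii): if $\varphi$ is continuous, then $\varphi_{\leq\sigma}=\varphi^{-1}(D_\sigma)$ is the preimage of a closed set, hence closed, and $\varphi_{\geq\sigma}=\varphi^{-1}(U_\sigma)$ is the preimage of an open set, hence open.

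For (ii)$\Rightarrow$(i), let $Z\subseteq\CP$ be an arbitrary closed subset, i.e.\ a down-set. For each $\sigma\in Z$ the whole principal down-set $D_\sigma$ lies in $Z$ by down-closedness, while $\sigma\in D_\sigma$, so
\[
Z=\bigcup_{\sigma\in Z}D_\sigma,\qquad \varphi^{-1}(Z)=\bigcup_{\sigma\in Z}\varphi_{\leq\sigma}.
\]
The right-hand side is a finite union of closed subsets of $X$, hence closed, so $\varphi$ is continuous. The implication (iii)$\Rightarrow$(i) is completely analogous, decomposing an arbitrary open $U\subseteq\CP$ as $\bigcup_{\sigma\in U}U_\sigma$ and using that $\varphi^{-1}(U)=\bigcup_{\sigma\in U}\varphi_{\geq\sigma}$ is a finite union of open sets.

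There is no real obstacle here; the only thing to keep track of is the finiteness of $\CP$, which guarantees that the unions appearing in the converse direction are finite and therefore preserve closedness and openness. One could alternatively deduce (ii)$\Leftrightarrow$(iii) by complementation, observing that the complement of $D_\sigma$ in $\CP$ is open but not in general a principal up-set, so the finite-union argument through (i) is the cleanest route.
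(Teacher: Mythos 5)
Your proof is correct and is exactly the intended argument; the paper states this proposition without proof, and your decomposition of an arbitrary closed (resp.\ open) subset of $\CP$ into finitely many principal down-sets (resp.\ up-sets), using finiteness of $\CP$, is the standard way to fill it in. Nothing is missing.
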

Suppose $(\CP,\varphi)$ is a cell decomposition. Then sets of the form $\varphi_{<\sigma}$ are also closed, and sets of the form $\varphi_{>\sigma}$ are open. The \emph{cells} $\varphi_\sigma$ are locally closed. Thus $X$ is decomposed into a disjoint union of locally closed cells. Note that we do not require that $\overline{\varphi}_\sigma=\varphi_{\leq \sigma}$, which is usually required from a cell decomposition. An example to keep in mind when this is not true is given by 
\[
X=\{(x,y)\in \BC^2:\, xy=0\},\;\CP=\{0<1\},\;\varphi_0=\{(x,y):\, x=0\},\; \varphi_1=X\setminus \varphi_0.
\]
In this example $\overline{\varphi}_1=\{(x,y):\, y=0\}\neq X$. It is also not true in general that $\alpha\leq \beta$ implies $\dim\varphi_\alpha\leq \dim\varphi_\beta$. For an example with same $\CP$ as above, take $X$ the union of the plane $z=0$ and the line $x=y=0$ in $\BC^3$, take $\varphi_0$ the plane and $\varphi_1$ its complement.

\subsection{} Given two cell decompositions $(\CP, \varphi)$, $(\CP',\varphi')$ of $X$, we can construct their \emph{common refinement} as a product $(\CP\times\CP', \varphi\times\varphi')$.

Another useful construction goes as follows. Suppose $(\CP, \varphi)$ is a cell decomposition of $X$, and suppose each cell $\varphi_\alpha$ has a cell decomposition $(\CP^{(\alpha)}, \varphi^{(\alpha)})$. Let $\tilde\CP$ be the poset of pairs $(\alpha, \beta)$ where $\alpha\in\CP$, $\beta\in\CP^{(\alpha)}$ with lexicographic order:
\[
(\alpha, \beta)\leq (\alpha',\beta) \;\Leftrightarrow\; \alpha<\alpha'\;\text{or}\; \alpha=\alpha',\beta\leq\beta'.
\]
There is a natural continuous map $\tilde\varphi:X\to \tilde\CP$ so that cells of the cell decomposition $(\tilde\CP,\tilde\varphi)$ are precisely the cells of the cell decompositions of the cells of $(\CP, \varphi)$.

Note that we can always replace $\CP$ by $\image(\varphi)$ for a cell decomposition $(\CP,\varphi)$. We will consider the resulting cell decomposition indistinguishable from the original one.

Finally, we note that any partial order can be refined to a linear one, usually in many ways. Let $[n]$ denote the poset $\{1<\cdots<n\}$. Linear refinements are continuous maps $\CP\to [|\CP|]$. Each such map can be composed with $\varphi$ to obtain a cell decomposition indexed by $[|\CP|]$. The data of a cell decomposition indexed by $[n]$ is the same thing as the sequence of open subsets of $X$
\[
U_1\subset U_2 \subset \cdots \subset U_n=X,
\]
where the successive complements $U_i\setminus U_{i-1}$ are the cells of $(\CP, \varphi)$.

\subsection{} For $X$ an algebraic variety over $\BC$, $U\subset X$ a Zariski open subset and $Z=X\setminus U$, we have a long exact sequence in compactly supported cohomology (with coefficients in $\BC$ or $\BQ$:
\[
\cdots\to H^i_c(U) \to H^i_c(X) \to H^i_c(Z) \to H^{i+1}_c(U) \to \cdots. 
\]
For a cell decomposition indexed by $[n]$ we can combine all these exact sequences into a spectral sequence that looks as shown on Figure \ref{fig:spectral sequence}.
\begin{figure}
\[
\begin{tikzcd}
\vdots & \vdots & \vdots & \vdots \\
H^2_c(\varphi_1) \arrow[r] \arrow[rrd] & H^3_c(\varphi_2) \arrow[r] & H^4_c(\varphi_3) & \cdots\\
H^1_c(\varphi_1) \arrow[r] \arrow[rrd] & H^2_c(\varphi_2) \arrow[r] & H^3_c(\varphi_3) & \cdots \\
H^0_c(\varphi_1) \arrow[r] \arrow[rrd] & H^1_c(\varphi_2) \arrow[r] & H^2_c(\varphi_3) & \cdots \\
 & H^0_c(\varphi_2) \arrow[r] & H^1_c(\varphi_3) & \cdots\\
 & & H^0_c(\varphi_3) & \cdots
\end{tikzcd}
\]
\caption{The spectral sequence}
\label{fig:spectral sequence}
\end{figure}

\subsection{Weight filtration} Cohomology and compactly supported cohomology of a complex algebraic variety is equipped with a mixed Hodge structure (\cite{deligne1974theorie}, \cite{peters2008mixed}). Since we do not study any rationality questions here, all cohomologies are assumed with complex coefficients. A pure Hodge structure of weight $k$ is a $\BC$-vector space $V$ equipped with a direct sum decomposition
\[
V\times\BC = \sum_{p+q=k} V^{p,q}.
\]
A \emph{mixed Hodge structure} (MHS) is a $\BC$-vector space $V$ equipped with an increasing weight filtration $W_\bullet V$ and two decreasing filtrations $F^\bullet V$, $\overline{F}^\bullet V$ on $V$ such that the filtrations induced by $F^\bullet$, $\bar F^\bullet$ on $W_k/W_{k-1}\otimes \BC$ are $k$-opposed, i.e. $W_k/W_{k-1}$ is a pure Hodge structure of weight $k$,
\[
W_k/W_{k-1}\otimes \BC = \bigoplus_{p+q=k} V^{p,q},
\]
and we have
\[
(W_k\cap F^i + W_{k-1})/W_{k-1} = \bigoplus_{p+q=k,\,q\geq i} V^{p,q},
\]
\[
(W_k\cap \bar F^i + W_{k-1})/W_{k-1} = \bigoplus_{p+q=k,\,p\geq i} V^{p,q}.
\]
Note that we do not require that $\bar F^i$ is complex conjugate to $F^i$. The main properties of the category of mixed Hodge structure are summarized in 

\begin{prop}\label{prop:MHSs}
The category of MHSs, viewed as the full subcategory of the category of vector spaces endowed with three filtrations, is abelian. The functors $V\to V^{p,q}$ are exact for all $p,q$.
\end{prop}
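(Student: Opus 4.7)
My plan is to reduce the proposition to the claim that any morphism $f: V \to V'$ of MHSs is strictly compatible with each of the three filtrations $W$, $F$, $\bar F$ (that is, $f(V) \cap G^k V' = f(G^k V)$ for each of $G = W, F, \bar F$). Once strictness is in hand, a standard diagram chase shows that the kernel and cokernel of $f$, equipped with the induced (respectively, quotient) filtrations, satisfy the $k$-opposedness axiom on their associated weight-graded pieces, so they lie in the category of MHSs, and the canonical morphism from coimage to image is an isomorphism of MHSs. Moreover, strictness with respect to $W$ implies that the functor $\Gr^W_k$ is exact, and since on a pure Hodge structure of weight $k$ the assignment $V \mapsto V^{p,q}$ is tautologically exact by the direct sum decomposition, composing gives the exactness of $V \mapsto V^{p,q}$.

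To establish strictness, I would introduce Deligne's canonical bigrading
\[
I^{p,q}(V) := \bigl(W_{p+q} \cap F^p\bigr) \cap \Bigl(W_{p+q} \cap \bar F^q + \sum_{i \geq 1} W_{p+q-i-1} \cap \bar F^{q-i}\Bigr),
\]
and prove the key lemma that $V = \bigoplus_{p,q} I^{p,q}(V)$, with $W_k V = \bigoplus_{p+q \leq k} I^{p,q}(V)$, $F^p V = \bigoplus_{p' \geq p, \, q} I^{p',q}(V)$, and with the natural projection $I^{p,q}(V) \to V^{p,q}$ an isomorphism. Because the defining formula for $I^{p,q}$ involves only the three filtrations and elementary operations, any morphism of MHSs automatically sends $I^{p,q}(V)$ into $I^{p,q}(V')$. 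Strictness in all three filtrations then follows immediately from decomposing elements into their $I^{p,q}$-components and comparing which components can lie in a given filtration step.

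The technical heart of the argument is the bigrading lemma itself, and this is where I expect the main work to go. It proceeds by induction on the length of the weight filtration, the base case being the Hodge decomposition of a pure Hodge structure. The inductive step requires carefully verifying that the ``lower-weight correction'' terms $W_{p+q-i-1} \cap \bar F^{q-i}$ are precisely what is needed to make $I^{p,q}(V)$ a complement inside $W_{p+q} \cap F^p$ that reduces correctly modulo $W_{p+q-1}$ to the $(p,q)$-piece of $\Gr^W_{p+q}(V)$. The full bookkeeping is worked out in \cite{deligne1974theorie} and \cite{peters2008mixed}; since the only axiom we use is $k$-opposedness on each $\Gr^W_k V$ and not any conjugation hypothesis relating $\bar F$ to $F$, the standard proofs apply verbatim in the slightly more general setting adopted here.
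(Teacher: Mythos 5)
The paper gives no proof of this proposition, treating it as the standard result of Deligne (Th\'eor\`eme 1.2.10 of \cite{deligne1974theorie}; see also \cite{peters2008mixed}), and your sketch is exactly that standard argument: Deligne's bigrading $I^{p,q}$, preservation of $I^{p,q}$ by morphisms, hence strictness in all three filtrations, hence abelianness and exactness of $\Gr^W$ and of $V\mapsto V^{p,q}$. Your formula for $I^{p,q}$ matches Deligne's (after reindexing $j=i+1$), and you correctly flag the one point that actually needs checking in this paper's setting --- that the argument uses only $k$-opposedness of $F$ and $\bar F$ on $\Gr^W_k$ and never that $\bar F$ is the complex conjugate of $F$ --- so the proposal is correct and in line with what the paper implicitly relies on.
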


Useful implications of this are

\begin{cor}
If $V, V'$ are MHSs and $f:V\to V'$ is a map preserving the three filtrations such that $f(W_i V) \subset W_{i-1} V'$, then $f=0$.
\end{cor}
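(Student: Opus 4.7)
The plan is to exploit Proposition \ref{prop:MHSs}: both the abelian structure on the category of MHSs and the exactness of the functors $V \mapsto V^{p,q}$. The strategy is to first show that $V^{p,q}(f) = 0$ for all $p,q$, then apply $(-)^{p,q}$ to the image factorization of $f$ inside the abelian category to deduce $\image(f)^{p,q}=0$ for all $p,q$, and finally observe that an MHS whose $(p,q)$-pieces all vanish must itself be zero.

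For the first step, I would note that since $f$ is a morphism in the abelian category of MHSs, it induces a map on the associated graded $\Gr^W_k(f)\colon \Gr^W_k V \to \Gr^W_k V'$, which is a morphism of pure Hodge structures of weight $k$. The hypothesis $f(W_k V)\subset W_{k-1} V'$ forces $\Gr^W_k(f) = 0$ for every $k$. Decomposing $\Gr^W_k V = \bigoplus_{p+q=k} V^{p,q}$ and similarly for $V'$, this translates directly into $V^{p,q}(f) = 0$ for all $p,q$.

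For the second step, I would form the canonical factorization $V \twoheadrightarrow \image(f) \hookrightarrow V'$ in the abelian category of MHSs. Applying the exact functor $(-)^{p,q}$ yields $V^{p,q} \twoheadrightarrow \image(f)^{p,q} \hookrightarrow (V')^{p,q}$, whose composition equals $V^{p,q}(f) = 0$. Surjectivity of the first arrow together with injectivity of the second forces $\image(f)^{p,q}=0$ for all $p,q$. Then $\Gr^W_k \image(f) = \bigoplus_{p+q=k}\image(f)^{p,q}=0$ for all $k$, so the weight filtration on $\image(f)$ is constant; since MHSs are implicitly finite-dimensional with exhaustive, bounded weight filtration, this forces $\image(f)=0$ and hence $f=0$.

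The argument is entirely formal once Proposition \ref{prop:MHSs} is available, so there is no real technical obstacle. The only mild subtlety worth flagging is that we invoke the abelian category structure twice — once implicitly to make sense of $\Gr^W_k(f)$ as a morphism of pure Hodge structures, and once to form $\image(f)$ as a bona fide sub-MHS of $V'$ — and both uses are precisely what Proposition \ref{prop:MHSs} sanctions.
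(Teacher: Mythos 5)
Your argument is correct and is exactly the formal deduction the paper intends: the corollary is stated without proof as a direct consequence of Proposition \ref{prop:MHSs}, and your two steps (the hypothesis kills $\Gr^W_k(f)$, hence all $V^{p,q}(f)$ vanish; then exactness of $(-)^{p,q}$ applied to the image factorization forces $\image(f)=0$) are the standard way to make that deduction precise.
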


\begin{cor}
If $V, V'$ are MHSs and $f:V\to V'$ is a map which preserves the three filtrations and is an isomorphism of vector spaces, then $f$ is an isomorphism of MHSs.
\end{cor}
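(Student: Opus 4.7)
The plan is to deduce the corollary from the abelian category structure on MHSs established in Proposition \ref{prop:MHSs}, by showing that $f$ is both a monomorphism and an epimorphism in that category and then invoking the standard fact that in an abelian category a morphism which is both monic and epic is an isomorphism.

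First, since $f$ preserves all three filtrations, it is by definition a morphism in the category of MHSs, and so, because the category is abelian, it admits a kernel $K$ and a cokernel $Q$ taken there. By the construction of kernels and cokernels in this setup, $K$ is a sub-MHS of $V$ whose underlying vector space is the naive kernel of the vector-space map $f\colon V\to V'$, and $Q$ is a quotient MHS of $V'$ whose underlying vector space is the naive cokernel. Since $f$ is assumed to be an isomorphism of vector spaces, both naive invariants vanish. Therefore $K=0$ and $Q=0$ as MHSs, i.e.\ $f$ is both monic and epic in the category of MHSs.

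Consequently $f$ is an isomorphism in the abelian category of MHSs, so its categorical inverse exists there and is itself a morphism of MHSs. This categorical inverse necessarily agrees with the set-theoretic inverse $f^{-1}$, which therefore preserves the weight filtration $W_\bullet$ and the two Hodge filtrations $F^\bullet$ and $\bar F^\bullet$, as required.

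The argument is essentially formal once Proposition \ref{prop:MHSs} is in hand. The only point that really needs checking is that the kernel and cokernel in MHS have the naive underlying vector spaces; this is automatic from the universal property, given that the induced filtrations on the naive sub- and quotient spaces do assemble into MHSs, which is part of the content of the abelianness statement. For this reason I do not foresee any genuine obstacle in carrying out the proof.
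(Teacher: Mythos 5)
Your proof is correct and is exactly the intended deduction: the paper states this corollary without proof as an immediate consequence of Proposition \ref{prop:MHSs}, and the standard argument is precisely yours (kernel and cokernel in the abelian category of MHSs have the naive underlying vector spaces, so they vanish, and monic plus epic gives an isomorphism whose inverse is again a morphism of MHSs). No gap; the one point you flag — that kernels and cokernels are computed on underlying vector spaces — is indeed the content of Deligne's theorem packaged in Proposition \ref{prop:MHSs}.
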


We are mostly interested in the following special case:
\begin{defn}
A MHS $(V, W_\bullet, F^\bullet, \overline{F}^\bullet)$ is \emph{Tate} if $V^{p,q}=0$ for $p\neq q$.
\end{defn}

For such MHSs we have
\begin{prop}
A vector space $V$ with three filtrations $W_\bullet V$, $F^\bullet V$, $\overline{F}^\bullet V$ is a Tate MHS if and only if the following conditions are satisfied:
\begin{itemize}
\item The weights are even, i.e. $W_{2k+1}=W_{2k}$ for all $k\in\BZ$.
\item The filtration $F^\bullet$ splits the filtration $W_{2\bullet}$.
\item The filtration $\overline{F}^\bullet$ splits the filtration $W_{2\bullet}$.
\end{itemize}
\end{prop}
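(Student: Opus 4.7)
The plan is to establish both implications by producing, in each direction, a grading of $V$ that realises $W_{2\bullet}$ together with one of $F^\bullet$ or $\overline F^\bullet$, and then to reduce the remaining conditions to a per-graded-piece check. The Tate hypothesis is exactly what collapses Deligne's canonical bigrading onto the diagonal, so in our setting a single index $m$ is enough.

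For the ``only if'' direction I would start from the Tate assumption $V^{p,q}=0$ for $p\neq q$; this immediately forces $\Gr^W_{2k+1}V=0$ and hence $W_{2k+1}=W_{2k}$. To split $W_{2\bullet}$ by $F^\bullet$ I induct on the length of the weight filtration. At each step take the largest $k$ with $W_{2k}=V$; the MHS axiom at weight $2k$ gives $F^k+W_{2k-2}=V$, and applied to the index $k+1$ gives $F^{k+1}\subset W_{2k-2}$. Choose a vector space section $\tilde V_k\subset F^k$ of the surjection $F^k\twoheadrightarrow V/W_{2k-2}$. Because $\tilde V_k\subset F^p$ for $p\leq k$ while $\tilde V_k\cap F^p\subset \tilde V_k\cap F^{k+1}\subset \tilde V_k\cap W_{2k-2}=0$ for $p>k$, the decomposition $V=\tilde V_k\oplus W_{2k-2}$ splits every $F^p$ as $(F^p\cap \tilde V_k)\oplus(F^p\cap W_{2k-2})$. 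Since $W_{2k-2}$ with the inherited filtrations is a sub-MHS by Proposition \ref{prop:MHSs}, induction produces the desired grading. The splitting of $W_{2\bullet}$ by $\overline F^\bullet$ is obtained by the same argument with $\overline F$ in place of $F$; the two gradings so produced are in general distinct.

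For the ``if'' direction the two hypothesised splittings supply gradings $V=\bigoplus V_m=\bigoplus \overline V_m$ with $W_{2k}=\bigoplus_{m\leq k}V_m=\bigoplus_{m\leq k}\overline V_m$, $F^p=\bigoplus_{m\geq p}V_m$, and $\overline F^p=\bigoplus_{m\geq p}\overline V_m$. These induce on each $\Gr^W_{2k}V$ filtrations that jump only at position $k$: both $F^p\Gr^W_{2k}V$ and $\overline F^p\Gr^W_{2k}V$ equal $\Gr^W_{2k}V$ for $p\leq k$ and vanish for $p>k$. A direct case check then yields $F^p\oplus \overline F^{2k-p+1}=\Gr^W_{2k}V$ for every $p$, so $F^\bullet$ and $\overline F^\bullet$ are $2k$-opposed on each graded piece; combined with $W_{2k+1}=W_{2k}$ this verifies the MHS axioms and forces $V^{k,k}=\Gr^W_{2k}V$ with all off-diagonal components zero. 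Hence $V$ is a Tate MHS.

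The main obstacle I anticipate is the correct placement of the section $\tilde V_k$ inside $F^k$ in the inductive step of the ``only if'' direction: an arbitrary vector-space complement of $W_{2k-2}$ would not split the $F^p$, and it is precisely the combination $\tilde V_k\subset F^k$ together with $F^{k+1}\subset W_{2k-2}$ that forces the direct-sum decomposition to restrict correctly to every step of the Hodge filtration. Once this placement is secured, the remainder of both directions is routine bookkeeping.
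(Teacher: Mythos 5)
Your proof is correct. The paper actually states this proposition without proof, so there is nothing to compare against; your argument is the standard one, and the key point — choosing the section $\tilde V_k$ inside $F^k$ and combining it with $F^{k+1}\subset W_{2k-2}$ so that the decomposition $V=\tilde V_k\oplus W_{2k-2}$ restricts to every step of $F^\bullet$ — is exactly the right mechanism. One trivial wording fix: in the inductive step you want the \emph{smallest} $k$ with $W_{2k}=V$ (i.e.\ the top nonzero weight), not the largest.
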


This motivates
\begin{defn}
A MHS $(V, W_\bullet, F^\bullet, \overline{F}^\bullet)$ is \emph{split Tate} if it is Tate and $F^\bullet = \overline{F}^\bullet$.
\end{defn}

We extend our definitions to algebraic varieties: If $X$ is an algebraic variety, we denote by $H^k(X)$ resp. $H_c^k(X)$ the cohomology resp. compactly supported cohomology with coefficients in $\BC$.
\begin{defn} An algebraic variety $X$ is Tate, resp. split Tate if $H_c^k(X)$ is Tate, resp. split Tate for all $k$.
\end{defn}

The category of Tate MHSs is a full abelian subcategory of the category of all MHSs, and it is closed under extensions. This implies the following:
\begin{prop}\label{prop:cell decomposition tate}
If an algebraic variety $X$ admits a cell decomposition such that all cells are Tate, then $X$ is Tate.
\end{prop}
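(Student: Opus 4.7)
The plan is to reduce to a linearly ordered cell decomposition and then induct along the resulting filtration using the long exact sequence in compactly supported cohomology.

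First I would exploit the freedom noted at the end of Section 2.2 to refine the partial order on $\CP$ to a linear one. This replaces the original cell decomposition by a chain of open subsets
\[
\emptyset = U_0 \subset U_1 \subset \cdots \subset U_n = X
\]
whose successive complements $\varphi_i := U_i \setminus U_{i-1}$ are, up to reindexing, precisely the original cells; in particular each $\varphi_i$ is Tate by hypothesis.

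Next I would induct on $i$ to show that every $U_i$ is Tate, the base case $U_0 = \emptyset$ being vacuous. For the inductive step, I would invoke the long exact sequence
\[
\cdots \to H^{k-1}_c(\varphi_i) \to H^k_c(U_{i-1}) \to H^k_c(U_i) \to H^k_c(\varphi_i) \to H^{k+1}_c(U_{i-1}) \to \cdots
\]
attached to the open-closed decomposition $U_{i-1} \hookrightarrow U_i \hookleftarrow \varphi_i$. All arrows, including the connecting homomorphism, are morphisms of mixed Hodge structures by Deligne's theory. Splitting the sequence into short exact sequences exhibits $H^k_c(U_i)$ as an extension of a sub-MHS of the Tate object $H^k_c(\varphi_i)$ by a quotient MHS of the Tate object $H^k_c(U_{i-1})$. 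Since the full subcategory of Tate MHSs is abelian, hence closed under sub-objects and quotients, and is also closed under extensions (as recalled in the paragraph preceding the proposition), we conclude that $H^k_c(U_i)$ is Tate, closing the induction.

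I do not expect a serious obstacle: the substantive content is packaged entirely into two cited inputs, the MHS-functoriality of the open-closed long exact sequence and the three-term closure property of the Tate subcategory. The only bookkeeping point is to organise the linear refinement so that the cells actually appear as the locally closed differences $U_i \setminus U_{i-1}$ with $U_{i-1}$ open in $U_i$, which is guaranteed by the very definition of a cell decomposition.
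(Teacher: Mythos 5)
Your proof is correct and is exactly the argument the paper intends (the paper leaves it implicit, deducing the proposition directly from the abelian-ness and extension-closure of the Tate subcategory): linearly refine the poset, run the compactly supported long exact sequence along the resulting open filtration, and close the induction using that subs, quotients, and extensions of Tate MHSs are Tate.
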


Note that if all cells are split Tate, $X$ is not necessarily split Tate.

\subsection{An operator of degree $2$ with a filtration}\label{ssec:operator with filtration}
Here we recall linear algebra results concerning pairs consisting of a filtered vector space and an operator of degree $2$. Let $V$ be a vector space. Let $F_\bullet$ resp. $F^\bullet$ be a finite increasing resp. decreasing filtration on $V$. Let $\omega:V\to V$ be an operator of degree $2$, i.e. 
\[
\omega F_i\subset F_{i+2}\quad\text{resp.}\quad \omega F^i\subset F^{i+2}.
\]
\begin{defn}
We say that $\omega:V\to V$ satisfies \emph{Lefschetz property} with \emph{middle weight} $k$ with respect to an increasing resp. decreasing filtration $F_\bullet$ resp. $F^\bullet$ if it has degree $2$ and for all $i\in\BZ_{>0}$ we have an isomorphism
\[
\omega^i:F_{k-i}/F_{k-i-1} \to F_{k+i}/F_{k+i-1}\quad\text{resp.}\quad
\omega^i:F^{k-i}/F^{k-i+1} \to F^{k+i}/F^{k+i+1}.
\]
\end{defn}
Shifting degrees one can always assume that the middle weight is $0$.
Although the definition in the increasing/decreasing case is essentially the same, the behavior is completely different. For the decreasing case note that the operator must be nilpotent since $F^i=0$ for $i$ big enough. We have
\begin{prop}\label{prop:monodromic}
For a given finite dimensional vector space $V$ and a nilpotent operator $\omega:V\to V$ there exists a unique decreasing filtration $F^\bullet$, called \emph{monodromic filtration}, of middle weight $0$ with respect to which $\omega$ satisfies Lefschetz property.
\end{prop}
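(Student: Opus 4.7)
The plan is to invoke the Jacobson--Morozov theorem. Since $\omega$ is a nilpotent endomorphism of $V$, it can be embedded as $e=\omega$ in an $\mathfrak{sl}_2$-triple $(e,h,f)$ of endomorphisms of $V$ satisfying $[h,e]=2e$, $[h,f]=-2f$, $[e,f]=h$. The semisimple operator $h$ then has integer eigenvalues, giving a weight decomposition $V=\bigoplus_{k\in\BZ}V_k$ with $h|_{V_k}=k\cdot\Id$. I would then set
\[
F^i=\bigoplus_{k\geq i}V_k,
\]
a decreasing filtration with $F^i=V$ for $i\ll 0$ and $F^i=0$ for $i\gg 0$. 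The relation $[h,\omega]=2\omega$ forces $\omega V_k\subset V_{k+2}$, and hence $\omega F^i\subset F^{i+2}$. The Lefschetz property reduces to the classical fact that on each irreducible $\mathfrak{sl}_2$-summand of $V$ of highest weight $N$ the weight spaces in degrees $-i$ and $i$ are equidimensional (of dimension one for $|i|\leq N$ and zero otherwise) and $e^i$ intertwines them by an isomorphism.

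For uniqueness I would argue by induction on $\dim V$. Suppose $\tilde F^\bullet$ is any decreasing filtration of middle weight $0$ satisfying the Lefschetz property with respect to $\omega$. Let $N$ be the largest integer with $\omega^N\neq 0$; the isomorphism $\omega^N:\Gr^{-N}_{\tilde F}V\toiso \Gr^N_{\tilde F}V$ combined with $\omega\tilde F^i\subset\tilde F^{i+2}$ and the vanishing of $\Gr^i_{\tilde F}V$ for $|i|>N$ forces $\tilde F^{-N}$ to consist of primitive vectors of top weight. Lifting a basis of $\Gr^{-N}_{\tilde F}V$ to $\tilde F^{-N}$ and taking its $\omega$-orbit produces an $\omega$-invariant subspace $W\subset V$ on which $\tilde F^\bullet|_W$ is forced to be the standard Jordan-block filtration, matching the Jacobson--Morozov construction restricted to $W$. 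The quotient $V/W$ inherits all the hypotheses, with $\omega$ acting on a strictly smaller space, and the inductive hypothesis concludes.

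The main obstacle is uniqueness: existence is essentially a one-line invocation of Jacobson--Morozov together with $\mathfrak{sl}_2$ representation theory, while uniqueness requires showing that the filtration is intrinsic to $\omega$ and independent of the auxiliary semisimple complement $h$. The peeling-off-a-Jordan-block induction above bypasses this by isolating, at each step, a canonical $\omega$-invariant subspace on which the filtration is dictated by its defining properties.
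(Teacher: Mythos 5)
Your existence argument is correct and genuinely different from the paper's: the paper derives the closed formula $F^m=\sum_{i-j=m}\image\omega^i\cap\kernel\omega^{j+1}$ from the axioms (leaving the verification that it works to the reader), whereas you produce the filtration from a Jacobson--Morozov $\mathfrak{sl}_2$-triple and read off the Lefschetz property from $\mathfrak{sl}_2$ representation theory. Both are standard and fine for existence.

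The uniqueness argument, however, has a genuine gap at its final step. Even granting that $\tilde F^\bullet\cap W$ is forced to be the Jordan-block filtration (true, but it needs the injectivity of the induced maps $\Gr^{-N}\to\Gr^{-N+2j}$, which you do not invoke; also note $\tilde F^{-N}=V$, so it cannot ``consist of primitive vectors'') and that $V/W$ inherits the hypotheses, the conclusion does not follow: a filtration on $V$ is \emph{not} determined by its restriction to a subspace $W$ together with its image in $V/W$. Concretely, take $V=\BC^3$ with $\omega e_1=e_2$, $\omega e_2=\omega g=0$, so $N=1$ and $W=\mathrm{span}(e_1,e_2)$. The subspaces $\mathrm{span}(e_2,\,g)$ and $\mathrm{span}(e_2,\,g+e_1)$ have the same intersection with $W$ and the same image in $V/W$, so your induction cannot distinguish a candidate filtration with $\tilde F^0=\mathrm{span}(e_2,\,g+e_1)$ from the correct one with $F^0=\kernel\omega=\mathrm{span}(e_2,\,g)$; only the Lefschetz axioms themselves (which force $\tilde F^0\subset\kernel\omega$ here) rule the rogue one out, and your argument never appeals to them at this stage. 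The standard repairs are either the paper's route --- derive the closed formula directly from the axioms, which yields uniqueness outright --- or to run the induction on the subquotient $\kernel\omega^N/\image\omega^N$ rather than on $V/W$: one first shows $\tilde F^{N}=\image\omega^N$ and $\tilde F^{-N+1}=\kernel\omega^N$ are forced, so every intermediate step $\tilde F^m$ with $-N+1\le m\le N$ is sandwiched between them and hence genuinely \emph{is} the preimage of its image in the subquotient; uniqueness there then does lift to uniqueness on $V$.
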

\begin{proof}
For the uniqueness, notice that $F^i\subset \image \omega^i$ for $i\geq 0$. Thus for all $i,j\geq 0$ we have $\omega^j F^{i-2j}\subset\image\omega^i$, so for $i\geq j\geq0$ we have $F^{i-2j}\subset \kernel\omega^j + \image\omega^{i-j}$. Hence 
\[
F^{i-j}\subset \kernel\omega^j+\image\omega^i\qquad(i,j\geq 0).
\]
Dually, we have $\kernel\omega^j \subset F^{-j+1}$ for $j\geq 0$, which implies $\omega^i \kernel\omega^j\subset F^{-j+1+2i}$ for $i,j\geq 0$ and $\kernel \omega^{j-i} \cap \image\omega^i\subset F^{-j+1+2i}$ for $j\geq i\geq 0$. Hence
\[
\kernel \omega^j\cap\image\omega^i\subset F^{i-j+1}\qquad(i,j\geq 0).
\]
For all $m$ (we set $\kernel\omega^j=0$ and $\image\omega^j=V$ for $j\leq 0$) we obtain
\[
F^m\subset (\kernel \omega^0 + \image\omega^{m})\cap(\kernel \omega^1+\image\omega^{m+1})\cdots=\image\omega^{m}\cap(\kernel\omega^1+\image\omega^{m+1}\cap(\cdots
\]
\[
=\image\omega^m\cap\kernel\omega^1+\image\omega^{m+1}\cap\kernel\omega^2+\cdots\subset F^m.
\]
Thus uniqueness follows and we leave it to the reader to verify that
\[
F^m = \bigcap_{i-j=m} (\image\omega^i+\kernel\omega^j) = \sum_{i-j=m} \image\omega^i\cap\kernel\omega^{j+1}
\]
satisfies Lefschetz property.
\end{proof}
So in the decreasing case the filtration is uniquely determined by the operator. In the increasing case there can be many filtrations for a given operator. For instance, the filtration $F_{-1}=0$, $F_0=V$ always works with middle weight $0$. However, one can use $\omega$ to split the filtration. This is called \emph{Deligne splitting}, see \cite{cataldo2009hodge}, \cite{cataldo2010topology}. 
\begin{prop}\label{prop:deligne splitting}
Suppose $F_\bullet$ is an increasing filtration on $V$ and $\omega:V\to V$ satisfies the Lefschetz property with middle weight $k$. Then there is a unique direct sum decomposition
\[
V = \bigoplus_{i \geq j \geq 0} Q^{i,j}\quad\text{with}\quad F_m = \bigoplus_{k-i+2j\leq m} Q^{i,j}
\]
satisfying
\[
\omega^j:Q^{i,0} \xrightarrow{\sim} Q^{i,j}\quad (i \geq j \geq 0),\qquad \omega^{i+r+1} Q^{i,0} \subseteq  F_{k+i+r}\quad (i,r\geq 0).
\]
\end{prop}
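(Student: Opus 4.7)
The plan is to reduce to middle weight $k=0$ by shifting the indexing of $F_\bullet$ (replacing $F_m$ by $F_{m+k}$), and then construct the splitting by producing the primitive pieces $Q^{i,0}$ via an iterative correction, setting $Q^{i,j}:=\omega^j Q^{i,0}$ for $0\leq j\leq i$. For each $i\geq 0$, define the primitive subspace in the graded by
\[
\overline{P}_{-i}:=\kernel\!\left(\bar\omega^{i+1}:F_{-i}/F_{-i-1}\to F_{i+2}/F_{i+1}\right),
\]
where $\bar\omega$ is the induced map on the associated graded. The Lefschetz hypothesis yields the standard primitive decomposition of each graded piece,
\[
F_m/F_{m-1} \;=\; \bigoplus_{\substack{i\geq 0,\ 0\leq j\leq i\\ -i+2j=m}} \bar\omega^j \overline{P}_{-i}.
\]

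The heart of the argument is constructing $Q^{i,0}\subseteq F_{-i}$ that projects isomorphically onto $\overline{P}_{-i}$ and satisfies $\omega^{i+r+1}Q^{i,0}\subseteq F_{i+r}$ for every $r\geq 0$. I start with an arbitrary linear lift $P^{(0)}\subseteq F_{-i}$ of $\overline{P}_{-i}$, and note that primitivity already gives $\omega^{i+1}P^{(0)}\subseteq F_{i+1}$. Inductively, assume $P^{(r)}$ satisfies $\omega^{i+s+1}P^{(r)}\subseteq F_{i+s}$ for all $s<r$ and $\omega^{i+r+1}P^{(r)}\subseteq F_{i+r+1}$. Using the Lefschetz isomorphism $\bar\omega^{i+r+1}:F_{-i-r-1}/F_{-i-r-2}\xrightarrow{\sim} F_{i+r+1}/F_{i+r}$, I choose a correction $\delta^{(r)}\subseteq F_{-i-r-1}$ and set $P^{(r+1)}:=P^{(r)}+\delta^{(r)}$ so as to kill the obstruction in $F_{i+r+1}/F_{i+r}$. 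The critical consistency check is that for $s<r$ one has
\[
\omega^{i+s+1}\delta^{(r)}\subseteq F_{-i-r-1+2(i+s+1)}=F_{i+2s-r}\subseteq F_{i+s},
\]
so deeper corrections leave the shallower conditions intact; and once $F_{-i-r-1}=0$ the Lefschetz isomorphism forces $F_{i+r+1}/F_{i+r}=0$, no further correction is needed, and the process terminates after finitely many steps.

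With $Q^{i,j}:=\omega^j Q^{i,0}$, the injectivity of $\omega^j$ on $Q^{i,0}$ follows because its composition with projection to $F_{-i+2j}/F_{-i+2j-1}$ is $\bar\omega^j$ restricted to $\overline{P}_{-i}$, which is injective by the primitive decomposition. The direct sum $V=\bigoplus_{i\geq j\geq 0}Q^{i,j}$ compatibly with $F_\bullet$ then follows by induction on $m$ from the primitive decomposition of $F_m/F_{m-1}$. For uniqueness, suppose $v\in Q^{i,0}$ and $v'\in Q'^{i,0}$ both lift the same element of $\overline{P}_{-i}$; arguing by induction on $s$, if $v-v'\in F_{-i-s-1}$, then $\omega^{i+s+1}(v-v')\in F_{i+s}$ (using the defining conditions on both), and by injectivity of $\bar\omega^{i+s+1}:F_{-i-s-1}/F_{-i-s-2}\to F_{i+s+1}/F_{i+s}$ one gets $v-v'\in F_{-i-s-2}$, forcing $v=v'$ after finitely many steps.

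I expect the main obstacle to be the bookkeeping in the iterative correction: keeping track of which filtration level each $\delta^{(r)}$ lives in, verifying that the nested shifts of degree ensure deeper corrections never spoil previously-achieved conditions, and confirming termination without assuming $\omega$ is nilpotent on $V$ itself (only its induced action on the finite-length associated graded is known to be nilpotent). Once these filtration-degree inequalities are handled, both the direct sum decomposition and the uniqueness statement reduce cleanly to the Lefschetz isomorphism on the graded.
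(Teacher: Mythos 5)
Your proposal is correct and follows essentially the same route as the paper: both realize $Q^{i,0}$ as a lift of the primitive part of $F_{k-i}/F_{k-i-1}$ and construct it by successive corrections in $F_{k-i-r-1}$, using the Lefschetz isomorphisms $\omega^{i+r+1}:F_{k-i-r-1}/F_{k-i-r-2}\toiso F_{k+i+r+1}/F_{k+i+r}$ to kill the obstruction at each stage, with termination for degree reasons. (Only a harmless index slip: $\omega^{i+s+1}\delta^{(r)}\subseteq F_{i+2s-r+1}$, not $F_{i+2s-r}$, which is still contained in $F_{i+s}$ when $s<r$.)
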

\begin{proof}
Note that given $Q^{i,0}$, we can uniquely recover $Q^{i,j}$ as $w^j Q^{i,0}$. The conditions imply $Q^{i,0}\subset F_{k-i}$. The conditions also imply
\[
\omega F_m =  \sum_{\substack{k-i+2j\leq m+2\\j\geq 1}} Q^{i,j} \pmod {F_m}.
\]
Hence abstractly, we have  
\begin{equation}\label{eq:isomorphism 1}
Q^{i,0}\cong F_{k-i}/(F_{k-i-1}+\omega F_{k-i-2})\qquad(i\geq 0).
\end{equation}
In order to realize $Q^{i,0}$ as a subspace of $V$, we have to provide a splitting of the surjective map $F_{k-i}\to Q^{i,0}$. First note that modulo $F_{k-i-1}+\omega F_{k-i-3}=F_{k-i-1}$ the splitting exists and is determined uniquely by vanishing of $\omega^{i+1} Q^{i,0}$ in $F_{k+i+2}/F_{k+i+1}$. So we uniquely upgrade \eqref{eq:isomorphism 1} to an embedding
\[
Q^{i,0} \subset F_{k-i}/F_{k-i-1}\quad\text{such that}\quad \omega^{i+1} Q^{i,0}\subset F_{k+i+1}.
\]
Then we continue to upgrade the embedding as follows. At step $r=0,1,2,\ldots$ we have an embedding
\[
Q^{i,0} \subset F_{k-i}/F_{k-i-r-1}\quad\text{such that}\quad \omega^{i+r+1} Q^{i,0}\subset F_{k+i+r+1}.
\]
Using the isomorphism 
\[
\omega^{i+r+1}:F_{k-i-r-1}/F_{k-i-r-2} \xrightarrow{\sim} F_{k+i+r+1}/F_{k+i+r}
\]
we uniquely upgrade the embedding to 
\[
Q^{i,0} \subset F_{k-i}/F_{k-i-r-2}\quad\text{such that}\quad \omega^{i+r+1} Q^{i,0}\subset F_{k+i+r}.
\]
Then we have $\omega^{i+r+2} Q^{i,0}\subset F_{k+i+r+2}$ and we can continue to step $r+1$.
\end{proof}

If we start with any vector $f\in Q^{i,0}$ and apply $\omega$ to it several times, then we see that initially we will go up the filtration two steps at a time, and after $i$ iterations only one step at a time. If $\omega^{i+1} f$ has a component $0\neq f_{i',j'}\in Q^{i',j'}$, then we necessarily have $\omega^{i'-j'} f_{i',j'}\in F_{k+i+i'-j'}$, which implies $k+i'\leq k+i+i'-j'$, so $j'\leq i$. Conversely, for any choice of vector spaces $Q^{i,0}$ and maps
\[
\omega_i:Q^{i,0} \to \bigoplus_{0\leq j'\leq \min(i,i')} Q^{i',j'} \cong \bigoplus_{0\leq i'} \left(Q^{i',0}\right)^{\oplus (\min(i,i')+1)}
\]
we can uniquely define $\omega$ on $V=\bigoplus_{i\geq 0} \left(Q^{i,0}\right)^{\oplus (i+1)}$ so that $\omega^{i+1}|_{Q^{i,0}}=\omega_i$ and the decomposition of $V$ satisfies the properties of Deligne splitting. Thus we have obtained a complete classification of triples $(V,F_\bullet,\omega)$ such that $V$ is a finite dimensional vector space, $F_\bullet$ is an increasing filtration and $\omega:V\to V$ satisfies the Lefschetz property. The following special case needs a separate name:
\begin{defn}
We say that $\omega:V\to V$ satisfies \emph{split Lefschetz property} with \emph{middle weight} $k$ with respect to an increasing filtration $F_\bullet$ if it satisfies Lefschetz property and additionally one of the following equivalent conditions:
\begin{enumerate}
\item The Deligne splitting (Proposition \ref{prop:deligne splitting}) satisfies $\omega^{i+1} Q^{i,0}=0$ for all $i\geq 0$.
\item There is a grading $V=\bigoplus_i G_i$ so that $\omega G_i\subset G_{i+2}$ and $F_i=\bigoplus_{j\leq i} G_j$.
\item The monodromic filtration of $\omega$ (Proposition \ref{prop:monodromic}) splits $F_\bullet$.
\end{enumerate}
\end{defn}
Equivalence of these conditions is clear from the above. Finally, we note the following property, which is easy to prove:
\begin{prop}
	If $\omega:V\to V$ satisfies Lefschetz property with middle weight $k$ with respect to two increasing filtrations $F_\bullet$, $F_\bullet'$, and if $F_i\subset F_i'$ holds for all $i$, then $F_\bullet = F_\bullet'$.
\end{prop}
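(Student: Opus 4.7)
The plan is a dimension count. I would first extract from the Lefschetz property a precise relation between $\dim F_{k+i}$ and $\dim F_{k-i-1}$; exactly the same relation will hold for $F_\bullet'$, and the hypothesis $F_j\subset F_j'$ will then squeeze the two sets of dimensions to coincide.

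Concretely, writing $d_j:=\dim F_j$, the Lefschetz isomorphism $\omega^i:F_{k-i}/F_{k-i-1}\xrightarrow{\sim}F_{k+i}/F_{k+i-1}$ for $i\geq 0$ gives $d_{k+i}-d_{k+i-1}=d_{k-i}-d_{k-i-1}$, which rearranges to $d_{k+i}+d_{k-i-1}=d_{k+i-1}+d_{k-i}$. Hence $i\mapsto d_{k+i}+d_{k-i-1}$ is constant on $i\geq 0$. The finiteness of the filtration forces this constant to equal $\dim V$ (take $i$ large enough that $d_{k+i}=\dim V$ and $d_{k-i-1}=0$), and since the expression is invariant under $i\mapsto -i-1$, one obtains
\[
d_{k+i}+d_{k-i-1}=\dim V\qquad(i\in\BZ).
\]
The identical identity, with $d_j':=\dim F_j'$, follows from the Lefschetz property for $F_\bullet'$.

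Next, I would combine these two identities with the dimension inequalities $d_j\leq d_j'$ coming from $F_j\subset F_j'$: summing $d_{k+i}\leq d_{k+i}'$ with $d_{k-i-1}\leq d_{k-i-1}'$ and comparing with the identities above forces equality throughout. Hence $d_j=d_j'$ for every $j$, and combined with the containment $F_j\subset F_j'$ this yields $F_j=F_j'$.

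I do not anticipate any real obstacle: the proof is pure linear algebra, and its only content is the symmetry identity above, which neatly packages the Lefschetz property with the finiteness of the filtration.
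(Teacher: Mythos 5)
Your argument is correct and complete. The paper states this proposition without proof (it is flagged as ``easy to prove''), so there is nothing to compare against; your dimension count --- deriving $\dim F_{k+i}+\dim F_{k-i-1}=\dim V$ from the Lefschetz isomorphisms together with finiteness of the filtration, doing the same for $F_\bullet'$, and then squeezing via $F_j\subset F_j'$ --- is exactly the natural argument one would supply, and every step checks out (the $i=0$ case of your telescoping identity is trivially true, and the genuine Lefschetz isomorphisms cover $i\geq 1$, which is all you need).
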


\subsection{Weight filtration and a two-form}
Let $X$ be an algebraic variety of dimension $d$ over $\BC$. For $k\in\BZ_{\geq 0}$ we have
\[
0=W_0 H^k_c(X) \subset \cdots W_k H^k_c(X)=H^k_c(X).
\]
Let $\omega\in H^2(X)$ be any class.
We know that (Theorem 5.39 in \cite{peters2008mixed}) $H^2(X)=W_4 H^2(X)$, and $W_4 H^2/W_3 H^2$ only can have non-trivial $2,2$-component, which means 
\[
W_4 = W_3 + F_2,\quad W_4 = W_3 + \ol F_2.
\]
Denote by $[\omega]$ the projection of the class of $\omega$ to $W_4/W_3$. It belongs to $F_2$ and $\ol F_2$.

There is a cup product (see Section 6.3 in \cite{peters2008mixed}) that respects MHSs:
\[
H^2(X)\otimes H^k_c(X) \to H^{k+2}_c(X).
\]
Thus we obtain that the map $\cup\omega$ acts as
\[
\cup \omega: W_i H^j_c(X) \to W_{i+4} H^{j+2}_c(X).
\]
Passing to the associated graded with respect to $W_\bullet$ we obtain a map of degrees $(2,2,2)$:
\begin{equation}\label{eq:omega associated graded}
\cup [\omega]: H^k_c(X)^{p,q}\to H^{k+2}_c(X)^{p+2,q+2}.
\end{equation}
We are going to apply theory developed in Section \ref{ssec:operator with filtration} to the situation of vector space $H^\bullet_c(X)=\bigoplus_{k=0}^{2 \dim X} H^k_x(X)$, filtration $W_\bullet$ and operator $\cup\omega$.

\begin{defn}
We say that $\omega\in H^2(X)$ satisfies \emph{curious Lefschetz property} with middle weight $2 d$ if all weights of $X$ are even and the operator $\cup \omega: H^\bullet_c(X) \to H^\bullet_c(X)$
satisfies Lefschetz property with respect to the increasing filtration $W_{2\bullet}$, i.e. for all $k,i\geq 0$ we have an induced isomorphism
\[
(\cup\omega)^i: W_{2d-2i} H^k_c(X)/W_{2d-2i-1} H^k_c(X) \toiso W_{2d+2i} H^{k+2i}_c(X)/W_{2d+2i-1} H^{k+2i}_c(X).
\]
If moreover $\cup\omega$ satisfies split Lefschetz property, then we say that $\omega$ satisfies \emph{split curious Lefschetz property}.
\end{defn}
The property was discovered in \cite{hausel2008mixed}. It is called curious because the action on cohomological degrees is not like in the usual hard Lefschetz theorem.

\begin{rem}
Suppose we have a curious Lefschetz property. Let $\BC^*$ act on $H^\bullet_c(X)$ by the degree $k$ action on $H^k_c(X)$ for each $k$. Then $\omega$ is conjugated by degree $2$ action. By uniqueness, Deligne splitting of Proposition \ref{prop:deligne splitting} must be preserved by the action. Thus we have direct sum decompositions 
\[
H_c^k(X)=\bigoplus_{0\leq j\leq i} Q^{i,j,k}
\]
for each $k$.
\end{rem}

\begin{rem}
The operator $\cup\omega$ satisfies curious Lefschetz if and only if the operator $\cup[\omega]$ from \eqref{eq:omega associated graded} does.
\end{rem}

In parallel to Proposition \ref{prop:cell decomposition tate} we have
\begin{prop}\label{prop:weak cell lefschetz}
Suppose an algebraic variety $X$ has a cell decomposition $(\CP, \varphi)$. Let $\omega\in H^2(X)$ be a class and let $d$ be a number such that the restriction of $\omega$ to each cell $\varphi_\sigma$ satisfies curious Lefschetz property with middle weight $2d$. Then $\omega$ satisfies curious Lefschetz property on $X$ with middle weight $2d$.
\end{prop}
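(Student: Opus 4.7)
The plan is to induct on the number of cells $n = |\image(\varphi)|$, reducing the general statement to the case of a closed cell together with its open complement, to which I would apply the long exact sequence of compactly supported cohomology and the five lemma. The base case $n = 1$ is immediate from the hypothesis, since then $X = \varphi_1$.

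For the inductive step, I would choose a linear refinement of $\CP$ as in Section 2.2, so that the cells become totally ordered $\varphi_1, \ldots, \varphi_n$ with $U_i = \bigsqcup_{j \leq i} \varphi_j$ open in $X$. Set $U = U_{n-1}$ (open in $X$) and $Z = \varphi_n$ (closed in $X$). The first $n - 1$ cells give a cell decomposition of $U$ to which the inductive hypothesis applies, yielding the curious Lefschetz property for $\omega|_U$ with middle weight $2d$; by assumption, the same holds for $\omega|_Z$.

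The key tool is the long exact sequence
\[
\cdots \to H_c^k(U) \to H_c^k(X) \to H_c^k(Z) \to H_c^{k+1}(U) \to \cdots,
\]
which is simultaneously a long exact sequence of mixed Hodge structures and of $H^\bullet(X)$-modules, so cup product with $\omega \in H^2(X)$ commutes with every arrow. Since morphisms of MHS are strict with respect to $W_\bullet$, applying $\Gr^W_m$ yields a long exact sequence of pure Hodge structures of weight $m$ for every $m$; in particular, the vanishing of odd-weight pieces passes from $U$ and $Z$ to $X$, verifying the parity condition in the definition of curious Lefschetz. Now fix $i, k \geq 0$. Cup with $\omega^i$ induces a morphism from the five-term fragment
\[
\Gr^W_{2d-2i} H_c^{k-1}(Z) \to \Gr^W_{2d-2i} H_c^k(U) \to \Gr^W_{2d-2i} H_c^k(X) \to \Gr^W_{2d-2i} H_c^k(Z) \to \Gr^W_{2d-2i} H_c^{k+1}(U)
\]
to the analogous fragment with $W$-index shifted by $4i$ and cohomological degree shifted by $2i$. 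By the inductive hypothesis and the assumption, the four outer vertical arrows are isomorphisms, so the five lemma forces the middle arrow $(\cup\omega)^i \colon \Gr^W_{2d-2i} H_c^k(X) \to \Gr^W_{2d+2i} H_c^{k+2i}(X)$ to be an isomorphism, which is exactly the curious Lefschetz property for $X$ in bidegree $(k, i)$.

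I do not anticipate a serious obstacle; the only point requiring a small amount of care is the compatibility of $\cup \omega$ with the connecting homomorphism $H_c^k(Z) \to H_c^{k+1}(U)$. This is standard once one identifies the long exact sequence above with the cohomology sequence of the distinguished triangle $j_! \BC_U \to \BC_X \to i_* \BC_Z \xrightarrow{+1}$ for $j \colon U \hookrightarrow X$ and $i \colon Z \hookrightarrow X$, which is a triangle of $H^\bullet(X)$-modules.
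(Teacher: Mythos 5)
Your proof is correct and follows essentially the same route as the paper: reduce to the case of a closed cell with open complement, apply $\Gr^W$ to the long exact sequence of compactly supported cohomology (using strictness/exactness of the functor on MHS, Proposition \ref{prop:MHSs}), and conclude by the five lemma applied to the map induced by $(\cup\omega)^i$. The paper's proof is just a terser version of yours; your explicit remarks on the induction over a linear refinement, the parity of weights, and the compatibility of $\cup\omega$ with the connecting homomorphism are details the paper leaves implicit.
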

\begin{proof}
It is enough to prove the statement in the situation of a space $X$ and a closed subspace $Z$ with complement $U$. Then we have a commutative diagram with exact rows
\[
\begin{tikzcd}
H^{k-1}_c(Z) \arrow{r} \arrow{d}& H^k_c(U) \arrow{r}\arrow{d} & H^k_c(X) \arrow{r}\arrow{d} &H^k_c(Z) \arrow{r}\arrow{d} &H^{k+1}_c(U)\arrow{d}\\
H^{k-1+2i}_c(Z) \arrow{r} & H^{k+2i}_c(U) \arrow{r} & H^{k+2i}_c(X) \arrow{r} &H^{k+2i}_c(Z) \arrow{r} &H^{k+1+2i}_c(U)\\
\end{tikzcd}
\]
where the vertical arrows are given by $(\cup\omega)^i$. The rows are formed by morphisms of MHSs. If we apply the functor $W_{2d-2i}/W_{2d-2i-1}$ to the top row and the functor $W_{2d+2i}/W_{2d+2i-1}$ to the bottom row, the diagram will remain commutative and by Proposition \ref{prop:MHSs} the rows will remain exact. Because the two left and the two right vertical arrows are isomorphisms, the middle vertical arrow also is.
\end{proof}

Similarly to the property of being split Tate, it is not true that if we have split curious Lefschetz for each cell of $X$ then we have it for $X$. We have the following:

\begin{prop}\label{prop:tate and lefschetz}
Suppose $X$ is Tate and $\omega\in H^2(X)$ satisfies curious Lefschetz property with middle weight $2d$.
\begin{enumerate}
\item If $\omega \in F^2 H^2(X)$, then $\omega$ satisfies split curious Lefschetz property on $X$. In particular, the monodromic filtration for $\omega$ coincides with the Hodge filtration $F^\bullet$ (up to the shift of indexing by $d$) and splits the weight filtration.
\item If $\omega \in F^2 H^2(X)\cap \ol F^2 H^2(X)$, then $X$ is split Tate.
\end{enumerate}
\end{prop}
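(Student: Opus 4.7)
The plan is to exploit the fact that for a Tate mixed Hodge structure the Hodge filtration canonically splits the weight filtration: one has a direct sum decomposition $H^k_c(X) = \bigoplus_{j} H^{j,j}_c(X)$ with $W_{2j} H^k_c = \bigoplus_{i \leq j} H^{i,i}_c$ and $F^j H^k_c = \bigoplus_{i \geq j} H^{i,i}_c$, so that the intersection $G^k_j := F^j H^k_c \cap W_{2j} H^k_c$ equals $H^{j,j}_c(X)$. I set $G_j := \bigoplus_k G^k_j \subset H^\bullet_c(X)$, a grading of the total cohomology satisfying $W_{2j} = \bigoplus_{i \leq j} G_i$. The same identities hold with $\bar F^\bullet$ in place of $F^\bullet$, since on a Tate MHS the conjugate Hodge filtration splits the weight filtration in the same way.

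For part (i), since $\omega \in F^2 H^2(X)$ and the cup product is a morphism of mixed Hodge structures, $\cup \omega$ preserves both $F^\bullet$ (shifted by $2$) and $W_\bullet$ (shifted by $4$). Hence $\cup \omega$ carries $G_j = F^j \cap W_{2j}$ into $F^{j+2} \cap W_{2j+4} = G_{j+2}$. Combined with the curious Lefschetz hypothesis, the grading $\{G_j\}$ realizes condition (ii) in the definition of the split Lefschetz property, so split curious Lefschetz holds on $X$. For the statement about the monodromic filtration, I unpack the Lefschetz isomorphism $\omega^i: W_{2d-2i}/W_{2d-2i-1} \xrightarrow{\sim} W_{2d+2i}/W_{2d+2i-1}$ via the Tate identification $W_{2j}/W_{2j-1} \cong H^{j,j}_c = F^j/F^{j+1}$; the result is the decreasing-filtration Lefschetz isomorphism $\omega^i: F^{d-i}/F^{d-i+1} \xrightarrow{\sim} F^{d+i}/F^{d+i+1}$ of Proposition \ref{prop:monodromic} with middle weight $d$. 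Uniqueness of the monodromic filtration forces $F^\bullet$, shifted by $d$, to coincide with the monodromic filtration of $\cup \omega$, and by construction this filtration splits $W_{2\bullet}$.

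For part (ii), the additional hypothesis $\omega \in \bar F^2 H^2(X)$ lets the argument of part (i) go through verbatim with $\bar F^\bullet$ replacing $F^\bullet$, using the observations above. I conclude that $\bar F^\bullet$, shifted by $d$, is likewise the monodromic filtration of $\cup \omega$. Uniqueness then forces $F^\bullet = \bar F^\bullet$ on all of $H^\bullet_c(X)$, hence on each $H^k_c(X)$, which is precisely the definition of split Tate. The only care needed is matching the index shifts between $W_{2\bullet}$-Lefschetz with middle weight $2d$ and $F^\bullet$-Lefschetz with middle weight $d$; there is no genuine obstacle, since the substantive work is packaged in Propositions \ref{prop:MHSs} and \ref{prop:monodromic}.
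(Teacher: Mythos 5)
Your proof is correct, and it is exactly the argument the paper's surrounding machinery is set up to deliver (the paper itself states Proposition \ref{prop:tate and lefschetz} without proof): the Tate splitting $G_j = F^j\cap W_{2j}$ realizes condition (ii) of the split Lefschetz definition, and the uniqueness statement of Proposition \ref{prop:monodromic} identifies $F^\bullet$ (and, under the extra hypothesis, $\ol F^\bullet$) with the monodromic filtration, forcing $F^\bullet=\ol F^\bullet$. Nothing is missing; the index bookkeeping you flag is indeed harmless.
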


An interesting consequence of this claim is that we can calculate the Hodge numbers of $X$ from the dimensions of kernels of powers of $\cup\omega$:
\begin{prop}
Suppose $\omega\in F^2 H^2(X)$ satisfies the curious Lefschetz property and $X$ is Tate. For each $i>0$ let
\[
g_i(u)=\sum_{k=0}^{2\dim X} u^k \dim\kernel \left((\cup\omega)^i:H^k_c(X)\to H^{k+2i}_c(X)\right)
\]
and let $g_i(u)=0$ for $i\leq 0$. For each $i\in\BZ$ let
\[
f_i(u)=(u^2+1)g_{i+1}(u)-g_i(u)-u^2 g_{i+2}(u),
\]
which is zero for all but finitely many values of $i$.
Then the mixed Hodge polynomial of $X$ is given by
\[
\MH_X(u,v)=\sum_{k,i} u^k v^i \dim H_c^k(X)^{i,i} = \sum_{i=0}^\infty v^{d-i} f_i(u) \frac{(uv)^{i+1}-1}{uv-1}.
\]
\end{prop}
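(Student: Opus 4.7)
The plan is to exploit the Deligne splitting attached to the operator $\cup\omega$ on $H^\bullet_c(X)$. Since $\omega \in F^2 H^2(X)$ and $X$ is Tate, Proposition~\ref{prop:tate and lefschetz}(i) ensures that $\omega$ satisfies the \emph{split} curious Lefschetz property with middle weight $2d$; applying Proposition~\ref{prop:deligne splitting} to the filtered space $(H^\bullet_c(X), W_{2\bullet})$ then produces a direct sum decomposition
\[
H^\bullet_c(X) \;=\; \bigoplus_{a\geq j\geq 0} Q^{a,j},\qquad \omega^j:Q^{a,0}\xrightarrow{\sim} Q^{a,j},
\]
with the splitness condition $\omega\, Q^{a,a}=0$. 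Since $\cup\omega$ preserves the grading by cohomological degree, the decomposition refines to $Q^{a,j}=\bigoplus_k Q^{a,j,k}$. Because the splitting splits $W_{2\bullet}$ and $X$ is Tate, each $Q^{a,j,k}$ is pure of Hodge bi-degree $(d-a+2j,d-a+2j)$, and $\dim Q^{a,j,k}=\dim Q^{a,0,k-2j}$. Set $p_a(u):=\sum_k u^k\dim Q^{a,0,k}$, the polynomial attached to the primitive part of block size $a$.

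From this decomposition the kernel polynomials can be read off directly: $\omega^i$ annihilates $Q^{a,j,k}$ exactly when $j+i>a$, and otherwise is an isomorphism onto $Q^{a,j+i,k+2i}$. Hence
\[
g_i(u)\;=\;\sum_a p_a(u)\sum_{j=\max(0,a-i+1)}^{a} u^{2j},
\]
with the inner sum equal to $\frac{u^{2(a+1)}-1}{u^2-1}$ in the ``small'' regime $a<i$ and to $u^{2(a-i+1)}\,\frac{u^{2i}-1}{u^2-1}$ in the ``large'' regime $a\geq i$.

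The combinatorial heart of the proof is then the identity $f_i(u)=p_i(u)$. I would verify it by inserting the above expression for $g_i$ and computing, for each fixed $a$, the contribution of $p_a(u)$ to $(u^2+1)g_{i+1}-g_i-u^2 g_{i+2}$. Splitting into the four cases $a\leq i-1$, $a=i$, $a=i+1$, and $a\geq i+2$, a short telescoping calculation shows that the contribution vanishes in the three cases $a\neq i$ and equals $p_i(u)$ in the remaining case. This in particular explains why $f_i$ has finite support, matching the vanishing asserted in the statement.

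Finally, summing $u^k v^{d-a+2j}\dim Q^{a,j,k}$ over $(a,j,k)$ and collecting the contribution from each primitive block gives
\[
\MH_X(u,v)\;=\;\sum_a v^{d-a}\,p_a(u)\sum_{j=0}^{a}(uv)^{2j},
\]
and inserting $p_a=f_a$ together with the closed form for the resulting finite geometric series yields the asserted expression. The principal obstacle is the telescoping verification of $f_i=p_i$; it is elementary but requires careful bookkeeping at the boundary cases $a\in\{i-1,i,i+1,i+2\}$ where the two regimes in the formula for $g_i$ meet.
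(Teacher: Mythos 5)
Your proposal is correct and follows essentially the same route as the paper: both arguments run through the Deligne splitting $Q^{a,j}$ of $(H^\bullet_c(X),W_{2\bullet},\cup\omega)$, express $g_i$ as a sum of $u^{2j}$ times the primitive polynomials $p_a=\operatorname{tr} u|_{Q^{a,0}}$ over the pairs $(a,j)$ killed by $\omega^i$, and invert to get $p_i=(u^2+1)g_{i+1}-g_i-u^2g_{i+2}$. The paper organizes the inversion more economically, by first computing $g_i-g_{i-1}=f_{i-1}+u^2f_i+u^4f_{i+1}+\cdots$ and then taking one further difference, instead of your four-case telescoping; on the other hand you are more explicit than the paper in invoking the \emph{split} Lefschetz property (via Proposition \ref{prop:tate and lefschetz}) to justify that $\kernel(\cup\omega)^i$ is exactly $\bigoplus_{j+i>a}Q^{a,j}$.

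One point deserves attention: your own (correct) assembly gives
\[
\MH_X(u,v)=\sum_{a} v^{d-a}p_a(u)\sum_{j=0}^{a}(uv)^{2j}=\sum_{a} v^{d-a}p_a(u)\,\frac{(uv)^{2a+2}-1}{(uv)^2-1},
\]
which is \emph{not} the displayed $\frac{(uv)^{i+1}-1}{uv-1}$; the exponents in the $v$-direction jump by $2$ along a Lefschetz string, not by $1$. A check on $X=(\BC^*)^2$ with a log-symplectic form ($f_0=2u^3$, $f_1=u^2$) confirms that your version reproduces $u^2+2u^3v+u^4v^2$ while the displayed closed form does not, so the statement (and the paper's final equality) contains a typo. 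Your last sentence, claiming that the geometric series ``yields the asserted expression,'' should therefore be amended: it yields the corrected expression with $\sum_{j=0}^{a}(uv)^{2j}$.
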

\begin{proof}
Let $\BC^*$ with coordinate $u$ act on $H_c^\bullet(X)$ so that the action on $H_c^i(X)$ is given by multiplication by $u^i$. Recall the Deligne decomposition $Q^{\bullet,\bullet}$ from Proposition \ref{prop:deligne splitting}, which is preserved by $u$. Denote 
\[
f_i(u) = \trace u|_{Q^{i,0}}\quad (i\geq 0),\quad\text{so that}\quad
\trace u|_{Q^{i,j}} = u^{2j} f_i(u).
\]
Then we have 
\[
\MH_X(u,v) = \sum_{0\leq j\leq i} v^{d-i+2j} u^{2j} f_i(u) = \sum_{i=0}^\infty v^{d-i} f_i(u) \frac{(uv)^{i+1}-1}{uv-1}.
\]
On the other hand, we have
\[
g_i(u) = \trace u|_{\kernel (\cup \omega)^i} = \sum_{\substack{0\leq j' \leq i'\\j'+i>i'}} u^{2j'} f_{i'}(u)\quad  (i\geq 0),
\]
from which we extract for all $i\geq 1$
\[
g_i(u)-g_{i-1}(u) = \sum_{\substack{0\leq j' \leq i'\\j'+i-1=i'}} u^{2j'} f_{i'}(u) = f_{i-1}(u) + u^2 f_i(u) + u^4 f_{i+1}(u)+\cdots
\]
and
\[
f_i(u) = (1+u^2) g_{i+1}(u)-g_{i}(u) - u^2 g_{i+2}(u)\quad(i\geq 0).
\]
\end{proof}

Simplest varieties satisfying the curious Lefschetz property are symplectic tori:
\begin{prop}\label{prop:torus lefschez}
	Let $X=(\BC^*)^n$ be a torus and let $\omega$ be a log-canonical non-degenerate $2$-form, i.e. a form written in coordinates $x_1,\ldots,x_n$ as 
	\[
	\sum_{i<j} \omega_{ij} d\log x_i\wedge d \log x_j\qquad(\omega_{ij}\in\BC^*).
	\]
	Then $\omega$ satisfies split curious Lefschetz property on $X$ with middle weight $n$.
\end{prop}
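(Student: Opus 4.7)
The plan is to compute $H^\bullet(X,\BC)$ and $H^\bullet_c(X,\BC)$ explicitly as mixed Hodge structures, and reduce the curious Lefschetz property to the classical hard Lefschetz statement for a symplectic form on the exterior algebra of a vector space via Poincar\'e duality. Writing $V=\bigoplus_{i=1}^n\BC\cdot d\log x_i$, we have $H^\bullet(X,\BC)=\Lambda^\bullet V$ with each generator $d\log x_i$ pure of Hodge type $(1,1)$, so $H^k(X,\BC)$ is pure of weight $2k$ and type $(k,k)$. Poincar\'e duality for the smooth affine variety $X$ furnishes an isomorphism of mixed Hodge structures
\[
H^k_c(X,\BC) \;\cong\; H^{2n-k}(X,\BC)^*(-n),
\]
so $H^k_c(X,\BC)$ is pure of weight $2(k-n)$ and type $(k-n,k-n)$. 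In particular $X$ is Tate, and $\omega\in H^2(X,\BC)$ has pure type $(2,2)$, so $\omega\in F^2 H^2(X,\BC)\cap\overline{F}^2 H^2(X,\BC)$.

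Next I match the non-trivial graded pieces. Because each $H^k_c$ is pure, $W_{n-2i}/W_{n-2i-1}$ on $H^k_c(X,\BC)$ is all of $H^k_c$ when $2(k-n)=n-2i$ and zero otherwise; this forces $n$ even and $k=\tfrac{3n}{2}-i$, and the parallel condition on the target yields the same $k$. (If $n$ is odd, no antisymmetric $n\times n$ matrix is non-degenerate, so the hypothesis is empty and the statement is vacuous.) Hence curious Lefschetz reduces to the claim
\[
(\cup\omega)^i\colon H^{3n/2-i}_c(X,\BC)\toiso H^{3n/2+i}_c(X,\BC)\qquad(0\leq i\leq n/2).
\]
Since cup product with $\omega$ on $H^\bullet_c$ is the transpose under Poincar\'e duality of cup product with $\omega$ on $H^\bullet$, this in turn reduces to showing that wedge by $\omega^i$ is an isomorphism $\Lambda^{n/2-i}V\to\Lambda^{n/2+i}V$.

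The latter is the classical hard Lefschetz statement for a non-degenerate element $\omega\in\Lambda^2 V^*$ on any finite-dimensional symplectic vector space. Setting $L=\omega\wedge(-)$, letting $\Lambda_\omega$ denote contraction against the dual bivector of $\omega$, and letting $H$ act on $\Lambda^k V$ by multiplication by $k-n/2$, one checks that $(L,\Lambda_\omega,H)$ satisfies the $\mathfrak{sl}_2$-commutation relations on $\Lambda^\bullet V$. Finite-dimensional $\mathfrak{sl}_2$-representation theory then gives that $L^i$ is an isomorphism from the weight $(-i)$ subspace $\Lambda^{n/2-i}V$ to the weight $(+i)$ subspace $\Lambda^{n/2+i}V$. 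This is essentially the only step of the proof, and although entirely classical, it is the core content; one can also verify it concretely by choosing a symplectic basis and writing $\Lambda^\bullet V$ as a tensor product of two-dimensional building blocks on which $L$ and $\Lambda_\omega$ act transparently. Finally, since $X$ is Tate and $\omega\in F^2 H^2(X,\BC)$, Proposition~\ref{prop:tate and lefschetz}(i) upgrades the curious Lefschetz property to the split curious Lefschetz property.
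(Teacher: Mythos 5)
Your proposal is correct and follows essentially the same route as the paper: identify $H^\bullet(X)$ with the exterior algebra on the (co)character lattice, reduce the curious Lefschetz statement to the linear-algebra hard Lefschetz for a non-degenerate $\omega\in\Lambda^2 V^*$ via the $\mathfrak{sl}_2$-triple $(L,\Lambda_\omega,H)$, and transfer to compactly supported cohomology by Poincar\'e duality. Your extra bookkeeping (the weight computation forcing $k=3n/2-i$, the remark that odd $n$ is vacuous, and the appeal to Proposition~\ref{prop:tate and lefschetz}(i) for the split part) only makes explicit what the paper leaves implicit.
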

\begin{proof}
	This should be well-known, so we only sketch the proof.
	The cohomology of $X$ is the exterior algebra over the character lattice $\hat X$. The cohomology degrees range from $0$ to $n$, and the weights are all even and go from $0$ to $2 n$, so that the weight of $H^i$ is $2 i$. So, in order to establish the Lefschetz property on cohomology, it is sufficient to show that 
	\[
	(\cup \omega)^k: \Lambda^{n/2-k} \hat X \to \Lambda^{n/2+k} \hat X
	\]
	is an isomorphism for all $k$. In some coordinates on $\hat X\otimes \BC$, the form can be written as
	\[
	\omega = \sum_{i=1}^{n/2} e_{i}\wedge f_{i}.
	\]
	Let us view $e_i$, $f_i$ as multiplication operators, and let us define the dual operators $e^i$, $f^i$ on monomials by by 
	\[
	e^i(e_i\wedge x)=x,\quad e^i (x)=0 \quad\text{if $x$ does not contain $e_i$},
	\]
	and similarly $f^i$. Then we have identities
	\[
	e^i e_i + e_i e^i = 1,\quad f^i f_i + f_i f^i = 1.
	\]
	Define
	\[
	\omega^* = \sum_{i=1}^{n/2} e^{i} f^{i}.
	\]
	A direct computation gives
	\[
	[\omega,\omega^*] = -n/2 + d,
	\]
	where $d$ is the degree operator. The triple $(\omega,\omega^*,-n/2+d)$ forms a representation of the Lie algebra $\mathfrak{sl}_2$. Decomposing the cohomology into irreducible representations of $\mathfrak{sl}_2$ makes the claim obvious. By duality, we obtain the corresponding statement for the cohomology with compact supports.
\end{proof}		

\section{Differential forms on groups}
\subsection{Notations}
We fix $n$ and let $G=\GL_n$, $B\subset G$ the subgroup of upper-triangular matrices, and $U\subset B$ the subgroup of unipotent upper-triangular matrices, $T\subset G$ is the torus of diagonal matrices. The Weyl group is the permutation group $S_n$.

\subsection{Bruhat decomposition}
For a permutation $\pi$, we let 
\[
\inv(\pi) = \{(i,j):i<j\;\&\;\pi(i)>\pi(j)\},\quad \noinv(\pi) = \{(i,j):i<j\;\&\;\pi(i)<\pi(j)\},
\]
then define the subgroups
\[
U_\pi^+ = \Id + \sum_{(i,j)\in\noinv(\pi)} \BC e_{i,j}, \quad U_\pi^- = \Id + \sum_{(i,j)\in\inv(\pi)} \BC e_{i,j}.
\]
We have decompositions\footnote{The reader can easily reformulate all this in terms of roots} 
\[
U = U_\pi^+ U_\pi^- = U_\pi^- U_\pi^+.
\]
The Bruhat decomposition is
\[
G = \bigsqcup_{\pi\in S_n} B \pi B,
\]
it corresponds to a continuous function $G\to S_n$, where $S_n$ is viewed as a poset with respect to the Bruhat order. Each cell $B \pi B$ as an algebraic variety looks like $T\times \BC^{l(\pi)+\binom{n}2}$, and can be explicitly parametrized using the decompositions
\begin{equation}\label{eq:bruhat}
B \pi  B = U_{\pi^{-1}}^- \pi T U = U \pi T U_\pi^-.
\end{equation}

\subsection{$2$-forms}\label{ssec:2 forms}
 Let $X$ be an algebraic variety. Given two morphisms $f,g:X\to G$, we define a $2$-form on $X$ by
\[
(f|g):=\trace\left(f^{-1} df \wedge dg g^{-1}\right).
\]
Explicitly, in coordinates we have
\[
(f|g) = \sum_{i,j,k,l} (f^{-1})_{i j} (g^{-1})_{l i} d f_{jk} \wedge d g_{kl}.
\]
The main property of this construction is the $2$-cocycle property:
\begin{equation}\label{eq:2cocycle}
(f|gh) + (g|h) = (fg|h) + (f|g),
\end{equation}
which can be verified by equating both sides to 
\[
(f|g) + (g|h) + \trace\left(f^{-1} df g \wedge dh h^{-1} g^{-1}\right)
\]
using the matrix Leibnitz rule $d(fg) = df g + f dg$ and the cyclic property of the trace. So we can define for any sequence $f_1,\ldots,f_k$ of maps $X\to \GL_n(\BC)$ a $2$-form
\[
(f_1|f_2|\cdots|f_k):=(f_1|f_2) + (f_1 f_2 | f_3) + \cdots + (f_1 f_2 \cdots f_{k-1}|f_k),
\]
so that it has the property
\begin{equation}\label{eq:2cocycle2}
(f_1|f_2|\cdots|f_k) = (f_i|f_{i+1}) + (f_1|\cdots|f_{i-1}|f_i f_{i+1}|f_{i+2}|\cdots |f_k).
\end{equation}
It is straightforward to verify the following:
\begin{prop}\label{prop:form vanishes}
The form $(f|g)$ vanishes in all of the following cases:
\begin{enumerate}
	\item if $f$ or $g$ is constant;
	\item if $g=f^{-1}$;
	\item if $f\in U$, $g\in B$ or vice versa.
\end{enumerate}	
Furthermore, we have $(f_1|f_2) = -(f_2^{-1}|f_1^{-1})$ for any two functions $f_1$, $f_2$.
\end{prop}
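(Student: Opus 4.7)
The plan is to verify each claim by a direct local computation using only two tools: the Leibniz identity $d(ff^{-1})=0$, which gives $d(f^{-1}) = -f^{-1}\,df\,f^{-1}$, and the graded-cyclic property of the trace on matrix-valued differential forms, $\trace(\alpha\wedge\beta)=(-1)^{pq}\trace(\beta\wedge\alpha)$ for forms of degrees $p$ and $q$. No deeper structure is needed; everything reduces to matrix manipulations inside the trace.

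Item (i) is immediate since $(f|g)$ is bilinear in $df$ and $dg$, both of which vanish when the respective map is constant. For the final identity $(f_1|f_2)=-(f_2^{-1}|f_1^{-1})$, I would apply the Leibniz rule inside the trace: $f_2\,d(f_2^{-1})=-df_2\,f_2^{-1}$ and $d(f_1^{-1})\,f_1=-f_1^{-1}\,df_1$, so that
\[
(f_2^{-1}|f_1^{-1}) = \trace\bigl(df_2\,f_2^{-1}\wedge f_1^{-1}\,df_1\bigr),
\]
and the sign from swapping two $1$-forms under the trace turns this into $-(f_1|f_2)$. Item (ii) is then a special case of the same computation: setting $g=f^{-1}$ and applying the same Leibniz rule reduces $(f|f^{-1})$ to $-\trace(\alpha\wedge\alpha)$ with $\alpha=f^{-1}\,df$, and $\trace(\alpha\wedge\alpha)=\sum_{i,j}\alpha_{ij}\wedge\alpha_{ji}$ is manifestly antisymmetric under the relabeling $(i,j)\leftrightarrow(j,i)$, hence zero.

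The most substantive part is item (iii), which I would handle by tracking the triangular shape of the matrices appearing. If $f\in U$, then $f^{-1}\in U$ is upper unitriangular while $df$ is a matrix of $1$-forms supported strictly above the diagonal, so $f^{-1}\,df$ is a strictly upper triangular matrix of $1$-forms (the product of an upper unitriangular matrix and a strictly upper triangular one is strictly upper triangular). If $g\in B$, then both $dg$ and $g^{-1}$ are upper triangular, so $dg\,g^{-1}$ is upper triangular. The wedge product $f^{-1}\,df\wedge dg\,g^{-1}$ is therefore strictly upper triangular as a matrix of $2$-forms, and its trace vanishes entrywise on the diagonal. The symmetric case $f\in B$, $g\in U$ is handled by reversing the roles, or alternatively by applying the identity $(f_1|f_2)=-(f_2^{-1}|f_1^{-1})$ just established, since $f^{-1}\in B$ and $g^{-1}\in U$.

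I do not foresee any real obstacle here; the only thing to be careful about is the sign bookkeeping from the Leibniz rule and from the graded-cyclic property of the trace. The triangularity argument in (iii) is the closest thing to a subtlety, and there one only needs the elementary observation that strictly upper triangular times upper triangular is strictly upper triangular, which is a standard block computation.
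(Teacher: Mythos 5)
Your proof is correct, and since the paper omits the verification as ``straightforward,'' your direct computation via the Leibniz rule, the graded-cyclic property of the trace, and the triangularity bookkeeping in (iii) is exactly the intended argument. No gaps.
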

Our main tool is to use \eqref{eq:2cocycle2} in situations when $(f_i|f_{i+1})$ vanishes by Proposition \ref{prop:form vanishes}. In such a sitation we can remove $|$ between $f_i$ and $f_{i+1}$ or insert it.
To illustrate this, we prove
\begin{prop}\label{prop:bruhat form}
	Let $f:X\to G$ be a morphism whose image is contained in the Bruhat cell $B\pi B$ for some $\pi\in S_n$. Choose a factorization of $f$ as $u_1 \pi t u_2$, where $u_1, u_2:X\to U$ and $t:X\to T$ are algebraic morphisms and consider the $2$-form
	\[
	(u_1 |\pi t u_2).
	\]
	This form does not depend on the choice of the factorization.
\end{prop}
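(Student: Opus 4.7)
The plan is to pin down the non-uniqueness of the factorization in terms of a single auxiliary element and then verify invariance of the form by iterated application of the cocycle identity \eqref{eq:2cocycle2} together with the vanishing criteria of Proposition \ref{prop:form vanishes}.

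First, I would reduce any factorization to the canonical one associated with the unique decomposition $B \pi B = U \pi T U^-_\pi$ of \eqref{eq:bruhat}. Using the decomposition $U = U^+_\pi U^-_\pi$, write $u_2 = u^+ u^-$ with $u^+ \in U^+_\pi$ and $u^- \in U^-_\pi$, and set
\[
v := \pi \, t \, u^+ \, t^{-1} \pi^{-1}.
\]
Since $T$ normalizes $U^+_\pi$ and $\pi U^+_\pi \pi^{-1} = U^+_{\pi^{-1}}$, one has $v \in U^+_{\pi^{-1}} \subset U$, and the identity $\pi t u^+ = v \pi t$ rewrites
\[
f = u_1 \, \pi t u^+ u^- = (u_1 v)\, \pi \, t \, u^-,
\]
which is the canonical factorization. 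Since the triple $(u_1 v, t, u^-) \in U \times T \times U^-_\pi$ is intrinsic to $f$, every factorization of $f$ is obtained from this canonical one by the choice of $u^+ \in U^+_\pi$; in particular the torus factor $t$ is determined by $f$.

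It therefore suffices to show $(u_1 \mid \pi t u_2) = (u_1 v \mid \pi t u^-)$. Two applications of \eqref{eq:2cocycle2}, one inserting a bar between $u_1$ and $v$ on the right-hand side and one inserting a bar between $v$ and $\pi t u^-$ on the left-hand side (after using $\pi t u_2 = v \cdot \pi t u^-$), give
\[
(u_1 v \mid \pi t u^-) - (u_1 \mid \pi t u_2) = (v \mid \pi t u^-) - (u_1 \mid v),
\]
and the term $(u_1 \mid v)$ vanishes by Proposition \ref{prop:form vanishes}(iii), since $u_1, v \in U \subset B$.

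It then remains to show $(v \mid \pi t u^-) = 0$. Applying \eqref{eq:2cocycle2} once to split off the constant factor $\pi$ (whose contributions vanish by Proposition \ref{prop:form vanishes}(i)) and using the rewrite $v \pi = \pi \cdot (t u^+ t^{-1})$, the form reduces to $(t u^+ t^{-1} \mid t u^-)$. Since $t u^+ t^{-1} \in U^+_\pi \subset U$ and $t u^- \in T U^-_\pi \subset B$, this vanishes by Proposition \ref{prop:form vanishes}(iii), completing the argument. The only genuine difficulty is the bookkeeping of where each factor lives ($U$, $U^\pm_\pi$, $T$, or $B$) at each stage of the manipulation; this is the step I expect to demand the most care, but given the explicit compatibility between the decomposition $U = U^+_\pi U^-_\pi$ and the cocycle rules, the calculation is essentially mechanical once the canonical form is identified.
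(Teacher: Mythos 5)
Your proof is correct and follows essentially the same route as the paper's: the paper normalizes the \emph{left} unipotent factor into $U^-_{\pi^{-1}}$ via $U=U^-_{\pi^{-1}}U^+_{\pi^{-1}}$ and pushes the extra factor rightward past $\pi$ and $t$, while you normalize the \emph{right} factor into $U^-_\pi$ via $U=U^+_\pi U^-_\pi$ and push the extra factor leftward — a mirror image of the same argument, using the same cocycle manipulations and the same vanishing criteria from Proposition \ref{prop:form vanishes}.
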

\begin{proof}
There is a unique factorization satisfying $u_1(X)\subset U^-_{\pi^{-1}}$. Suppose $u_1',t',u_2'$ gives another factorization. Using the isomorphism $U=U_{\pi^{-1}}^- U_{\pi^{-1}}^+$, write $u_1'=u_1'' v$. So we have
\[
f = u_1'' v \pi t' u_2' = u_1'' \pi t' v' u_2' \qquad \text{for $v':X\to U$,}
\] 
which implies $u_1''=u_1$, $t=t'$, $v' u_2'= u_2$. So we have
\[
(u_1' | \pi t u_2') = (u_1 v | \pi t u_2') = (u_1| v | \pi| t | u_2'),
\]
where we insert $|$ between $u_1$ and $v$ because they are in $U$, then between $\pi$ and $t u_2'$ because $\pi$ is constant, and then between $t$ and $u_2'$ because $t$ is in $B$ and $u_2'$ is in $U$. Now we can remove $|$ between $v$ and $\pi$, move $v$ past $\pi$ and reinsert $|$ to obtain
\[
(u_1 | \pi | {}^\pi v | t | u_2').
\]
Now we have ${}^\pi v:X\to U_\pi^+$, $t:X\to T$, so we can continue to move ${}^\pi v$ past $t$ and obtain
\[
(u_1 | \pi | t | v' | u_2') = (u_1 | \pi  t  v'  u_2') = (u_1 | \pi t u_2).
\]
\end{proof}

\subsection{Differential}\label{ssec:d omega}
It will be useful to have an explicit formula for the differential
\[
d (f|g) = -\trace(f^{-1} df\wedge f^{-1} df\wedge dg\, g^{-1}) - \trace(f^{-1} df\wedge dg\, g^{-1}\wedge dg\, g^{-1}).
\]
On the other hand, we have
\[
\trace\left(\left((fg)^{-1} d(fg)\right)^{\wedge 3}\right) = \trace\left(\left(g^{-1}\, dg\right)^{\wedge 3}\right) + \trace\left(\left(f^{-1}\, df\right)^{\wedge 3}\right)
\]
\[
+3\trace(f^{-1} df\wedge f^{-1} df\wedge dg\, g^{-1}) + 3\trace(f^{-1} df\wedge dg\, g^{-1}\wedge dg\, g^{-1}).
\]
Introduce the notation
\[
\{f\} = \trace\left(\left(f^{-1}\, df\right)^{\wedge 3}\right).
\]
Then we have
\[
3 d (f|g) = \{g\} + \{f\} - \{fg\},\qquad 3 d(f_1|\cdots|f_k) = \sum_{i=1}^k \{f_i\} - \{f_1 f_2\cdots f_k\}.
\]
Symbol $\{f\}$ satisfies
\[
\{f\} = -\{f^{-1}\}.
\]

\section{Symplectic form on the character variety}\label{sec:symplectic form}
In this section we write down an explicit formula for the symplectic form $\omega$ on the parabolic character variety following \cite{guruprasad1997group}. Then we show that this symplectic form can be obtained by a version of higher transgression from the characteristic class $c^2$ and conclude that $\omega\in F^2$ for the Hodge filtration. The main idea came from an attempt to make the corresponding result of Shende \cite{shende2016weights} in the non-parabolic case more explicit.

\subsection{Complexes of varieties and higher transgressions}
In \cite{deligne1974theorie} Deligne constructs mixed Hodge structure on the cohomology of a simplicial variety. We need a mild generalization of this construction to the case of a complex of varieties. This is probably well known to experts. 

\begin{defn}
Let $\Var$ be the category of algebraic varieties over $\BC$. Let $\QVar$ be the category whose objects are objects of $\Var$ and whose morphisms are defined as follows. For any $X,Y\in\Var$ let $\Hom_{\QVar}$ be the free $\BQ$-vector space on the set of triples $(X', Y', f)$ where $X'$ is a connected component of $X$, $Y'$ is a connected component of $Y$ and $f:X'\to Y'$ is a morphism in $\Var$. The composition is defined on basis elements by
\[
(Y'', Z', g) \circ (X', Y', f) = \begin{cases}
(X', Z', g\circ f) & \text{if $Y''=Y'$,}\\
0 & \text{otherwise}
\end{cases}.
\]
Then $\QVar$ is an additive category where the direct sum is given by the disjoint union.
\end{defn}

In fact $\QVar$ is the additive envelope of the category of connected algebraic varieties with morphisms formal linear combinations of morphisms.

Then we can consider homologically indexed complexes of varieties $(X_\bullet,d_\bullet)$, unbounded in the positive direction that look like
\[
\cdots \xrightarrow{d_3} X_2 \xrightarrow{d_2} X_1 \xrightarrow{d_1} X_0,
\]
we have $d_i\in\Hom_{\QVar}(X_i, X_{i-1})$ such that $d_i d_{i+1}=0$.
The category of such complexes will be denoted by $\CVar$. Since $\QVar$ is additive, the category $\CVar$ is a \emph{strongly pre-triangulated dg-category}, which means that there are functorial constructions of shifts and mapping cones. The category $\CVar$ is more useful than the category of simplicial varieties because any simplicial variety produces a complex in a natural way, but there are more morphisms between complexes than there are morphisms between simplicial varieties. 

The definition of cohomology of a complex and the construction of functorial mixed Hodge structure on the cohomology is done exactly like in the simplicial case. We recall the main steps here. Having a complex of smooth varieties $(X_\bullet,d_\bullet)$ as above we go over $i=0,1,2,\ldots$ and construct compactifications $X_i\subset \tilde X_i$ such that $\tilde X_i\setminus X_i$ is a simple normal crossing divisor in such a way that all the $\Var$-components of $d_i$ extend to morphisms $\tilde X_i\to \tilde X_{i-1}$. Since $d_i$ is a finite linear combination of morphisms in $\Var$, this is possible. Thus we obtain a complex of pairs
\begin{equation}\label{eq:complex of pairs}
\cdots \xrightarrow{\tilde d_3} (X_2,\tilde X_2) \xrightarrow{\tilde d_2} (X_1, \tilde X_1) \xrightarrow{\tilde d_1} (X_0,\tilde X_0),
\end{equation}
where each $\tilde d_i$ is a linear combination of morphisms of pairs $(X_i,\tilde X_i)\to (X_{i-1},\tilde X_{i-1})$. Using the construction of logarithmic de Rham complex, composing it with any functorial way to compute $R\Gamma$, e.g. using the Godement resolution, we obtain a contravariant functor from pairs $(X,\tilde X)$ to mixed Hodge complexes $(X,\tilde X)\to \MHC(X,\tilde X)$. We do not mention the definition of mixed Hodge complexes, which the reader can find in \cite{deligne1974theorie} or \cite{peters2008mixed}. Important point for us is that there is a functorial construction of mapping cones in the category of mixed Hodge complexes (see Section 3.4 in \cite{peters2008mixed}). A reader interested in details is welcome to unravel the definition and see that our constructions make sense, but to get the general idea one can pretend that we are working with ordinary complexes in some additive category. We apply the functor $\MHC$ to the complex \eqref{eq:complex of pairs} and obtain a double complex 
\[
\cdots \xleftarrow{\tilde d_3^*} \MHC(X_2,\tilde X_2) \xleftarrow{\tilde d_2^*} \MHC(X_1, \tilde X_1) \xleftarrow{\tilde d_1^*} \MHC(X_0,\tilde X_0),
\]
To this double complex we associate the total complex $\Tot^\bullet \MHC(X_\bullet, \tilde X_\bullet)$ by iterating the cone construction. More explicitly, let us start by setting $C_0=\MHC(X_0,\tilde X_0)$. Then  we set $C_1=\cone(d_1^*)[-1]$, which fits into a short exact sequence of mixed Hodge complexes
\[
0\to \MHC(X_1,\tilde X_1)[-1] \to C_1 \to C_0\to 0.
\]
The condition $d_2^* d_1^*=0$ implies that the map $d_2^*$ extends to a map
$C_1\to \MHC(X_2,\tilde X_2)[-1]$. So we take the cone of this map and shift it by $[-1]$, and so on. Proceeding in this way we obtain a sequence of mixed Hodge complexes together with maps
\[
\cdots C_2\to C_1 \to C_0,
\]
which stabilizes in each degree after finitely many iterations. So the limit is a well-defined mixed Hodge complex which we denote by $\Tot^\bullet \MHC(X_\bullet, \tilde X_\bullet)$. The cohomology of a complex of varieties (which can be also computed using algebraic de Rham complexes or complexes of singular chains as the total complex of the double complex) is identified with the cohomology of this mixed Hodge complex, so it naturally carries a mixed Hodge structure which does not depend on the choice of compactifications,
\[
H^\bullet(X_\bullet)=H^\bullet(\Tot^\bullet\MHC(X_\bullet,\tilde X_\bullet)).
\]
For any morphism $X_\bullet\to Y_\bullet$ in $\CVar$ given by a commutative diagram of varieties and formal linear combinations of maps
\[
\begin{tikzcd}
\cdots \arrow{r} & X_2 \arrow{r}\arrow{d} & X_1  \arrow{r}\arrow{d} & X_0\arrow{d}\\
\cdots \arrow{r} & Y_2 \arrow{r} & Y_1  \arrow{r} & Y_0
\end{tikzcd}
\]
we can choose compactifications $Y_i\subset \tilde Y_i$ and then compactifications $X_i\subset \tilde X_i$ so that we obtain a diagram of pairs. Then we apply $\MHC$ and $\Tot^\bullet$ to obtain morphisms of mixed Hodge structures
\[
H^i(Y_\bullet)\to H^i(X_\bullet).
\] 
The category $\CVar$ has a shift functor $(X_\bullet,d_\bullet) \to (X_\bullet[k],d_\bullet[k])$ for each $k\in\BZ_{\geq 0}$ defined by
\[
X_i[k] = \begin{cases}
X_{i-k}&(i\geq k)\\
\varnothing&(i<k)
\end{cases},\qquad d_i[k] = \begin{cases}
d_{i-k}&(i> k)\\
0&(i\leq k)
\end{cases},
\]
and we have 
\[
H^i(X_\bullet[k]) \cong \begin{cases}
H^{i-k}(X_\bullet)&(i\geq k)\\
0&(i<k)
\end{cases}.
\]
Thus a morphism of type $f:X_\bullet[k]\to Y_\bullet$ produces a map in cohomology which preserves mixed Hodge structures, but shifts cohomological degrees:
\[
f^*: H^{i+k}(Y_\bullet) \to H^{i}(X_\bullet).
\]
We call such maps \emph{higher transgressions}.

Any usual variety $X$ can be considered as a complex of varieties where we put $X$ in degree $0$ and empty sets in other degrees with zero differentials.

\subsection{Characteristic classes}
Suppose an algebraic group $G$ acts on an algebraic variety $X$. Then the bar construction produces a complex of varieties as follows. For $k=0,1,2,\ldots$ let 
\begin{equation}\label{eq:presentation1}
G^k X = G^{\times k} \times X.
\end{equation}
It is sometimes more convenient to write
\begin{equation}\label{eq:presentation2}
G^k X = G\backslash (G^{k+1} \times X).
\end{equation}
We will denote points on $G^k X$ by $(g_1,\ldots,g_k,x)$ when we use presentation \eqref{eq:presentation1} and by $[g_0,\ldots,g_k,x]$ when we use presentation \eqref{eq:presentation2}. The identification of the two notations is given by
\[
[g_0, g_1, \ldots, g_k, x]=(g_0^{-1} g_1, g_1^{-1} g_2, \ldots, g_{k-1}^{-1} g_k, g_k^{-1} x)
\]
and the inverse map is given by
\[
(g_1, \ldots, g_k, x) = [1, g_1, g_1 g_2, \ldots, g_1 g_2\cdots g_k, g_1 g_2\cdots g_k x].
\]

The morphisms $d_k:G^k X \to G^{k-1} X$ for $k\geq 1$ are defined by
\[
d_k = \sum_{i=0}^k (-1)^i d_{k,i}, \qquad d_{k,i}[g_0,\ldots,g_k,x]=[g_0,\ldots,\widehat g_i,\ldots,g_k,x]
\]
using the presentation \eqref{eq:presentation2}. Using \eqref{eq:presentation1} we have:
\begin{equation}\label{eq:dk long}
d_{k,i}(g_1,\ldots,g_k,x) = \begin{cases} (g_2, g_3,\ldots,g_k,x) & (i=0)\\
(g_1, \ldots, g_{i} g_{i+1},\ldots,g_k,x) & (0<i<k)\\
(g_1,\ldots,g_{k-1},g_k x) & (i=k)
\end{cases}.
\end{equation}
For instance, in small degrees we have 
\[
d_1(g_1, x) = (x)-(g_1 x), \quad d_2(g_1, g_2, x) = (g_2, x) - (g_1 g_2,x)+(g_1, g_2 x).
\]
This defines a complex of varieties $G^\bullet X$, which will be called the \emph{quotient stack}. The reader can recognize that our definition is simply the complex obtained from the usual simplicial construction of the quotient stack. 

Note that if $G$ is reductive, the categorical quotient $X/G$ is a well-defined algebraic variety and we have a natural map $G^\bullet X\to X/G$ called the \emph{augmentation map}. If $X$ is a principal $G$-variety, the augmentation map induces an isomorphism on cohomology (see \cite{deligne1974theorie}). 

In the case when $X$ is a point we obtain the simplicial realization of the \emph{classifying space} $*/G$. Suppose $G=GL_n$. Then the cohomology of $*/G$ is the polynomial ring in characteristic classes $c_1,c_2,\ldots,c_n$ with $c_i\in H^{2i}(*/G)^{i,i}$. There is a map $G[1]\to */G$:
\[
\begin{tikzcd}
G= & & & G\arrow{d}{\Id} & \\
*/G= & \cdots \arrow{r} & G^{\times 2} \arrow{r} & G \arrow{r}{0} & \text{point}
\end{tikzcd}
\]
which induces transgression maps $\tr:H^{i}(*/G) \to H^{i-1}(G)$ preserving mixed Hodge structures. It is well-known that the cohomology ring of $GL_n$ is the exterior algebra on the images of the characteristic classes $\tr(c_i)$.

\subsection{Character variety, character stack, and maps to $*/G$}\label{ssec:charvar}
We will be dealing with the following kinds of objects. Let $\Sigma$ be a triangulated topological space. The main example is a surface of genus $g$ with $k$ boundary components. Let $G$ be an algebraic group. The main example is $GL_n$. The space of homomorphisms from the fundamental groupoid of the triangulation to $G$ will be denoted $\CX$ and called the representation variety. Geometrically,  points of $\CX$ correspond to local systems on $\Sigma$ equipped with a choice of basis over each vertex of the triangulation. Then for each vertex the group $G$ acts on $\CX$ by change of basis. The product of these groups $G$ will be called the gauge group. The quotient of $\CX$ by the action of the gauge group will be called the character stack. Details follow. 

Let $T_i^\nondeg$ denote the set of simplices of the triangulation of $\Sigma$ of dimension $i$. The elements of $T_i^\nondeg$ will be called the \emph{non-degenerate simplices}. Denote by $[m]$ the set $\{0,1,\ldots,m\}$. We assume for each simplex $\tau\in T_i^\nondeg$ a choice of numbering of the vertices of $\tau$ by numbers from $[i]$. 

Then \emph{all simplices} of $\Sigma$ are defined as follows. The set of $m$-simplices in $\Sigma$, denoted $T_i$ is the set of pairs $(\tau, f)$ where $\tau\in T_i^\nondeg$ for some $i\leq m$ and $f:[m]\to [i]$ is a surjective map. One should think of $f$ as a map from the standard $m$-simplex $\Delta^m$ to $\Sigma$ whose image is $\tau\cong\Delta^i$, and the induced map $\Delta^m\to\Delta^i$ is an affine map sending the vertices of $\Delta^m$ to the vertices of $\Delta^i$ as prescribed by $f$. In the usual terminology, $T_\bullet$ is a simplicial set and $\Sigma$ is its topological realization.

The \emph{representation variety} $\CX$ is defined as the subvariety of $G^{T_1^\nondeg}$ of assignments $e\in T_1^\nondeg \to g_e\in G$ satisfying the following condition. Note that any $e\in T_1$ is either mapped to a vertex, or to an edge of the triangulation. If it is mapped to a vertex, we set $g_e=1\in G$. If it is mapped to an edge $e'$, we set $g_e=g_{e'}$ if the orientations of $e$ and $e'$ coincide, and $g_e=g_{e'}^{-1}$ otherwise. The condition is that for each triangle $\tau\in T_2^\nondeg$ whose boundary with the induced orientation consists of $e_1, e_2, e_3$ we have
\[
g_{e_1} g_{e_2} g_{e_3} = 1 \qquad \text{for each}\quad\begin{tikzpicture}[scale=2,baseline=15pt] \draw[->] (0,0)--(1,0) node[midway,below] {$e_1$};\draw[->] (1,0) -- (0.5,0.71) node[midway,right] {$e_2$}; \draw[->] (0.5,0.71) -- (0,0)node[midway,left] {$e_3$};\draw (0.5,0.3) node {$\tau$};\end{tikzpicture}\quad\in T_2^\nondeg.
\]

Note that $\CX$ is naturally a closed subvariety of $G^{T_1}$. Thus $\CX$ comes with natural maps $g_e:\CX\to G$ for $e\in T_1$. The matrix function $g_e$ will be called the \emph{transfer matrix} along $e$.  Each element $\tau\in T_m$ defines a map $g_\tau:\CX\to G^m$ as follows. Let $e_i\in T_1$ for $i=1,2,\ldots,m$ denote the edge of $\tau$ going from the vertex $i-1$ to the vertex $i$. Then we define 
\[
g_\Delta=(g_{e_1}, \ldots, g_{e_m}).
\]
It is clear that the system of maps $(g_\tau)_{\tau\in T_\bullet}$, $g_\tau:\CX\to G^{\dim \tau}$ is compatible with the natural face maps in the following sense. For each $\tau\in T_m$ and each $i\in[m]$, let $\tau_i\in T_{m-1}$ denote the $m-1$-simplex obtained from $\tau$ by removing the vertex $i$. Then we have a commutative diagram
\[
\begin{tikzcd}
\CX \arrow{r}{g_\tau} \arrow{dr}{g_{\tau_i}}& G^m\arrow{d}{d_{m,i}}\\
 & G^{m-1}
\end{tikzcd}
\]

Geometrically, the representation variety of a single $k$-simplex is $G^k$, and $g_\tau$ is the restriction map. Using the presentation $G^k=G\backslash G^{k+1}$, we can view this character variety more symmetrically. Put a base point in the center of the standard $k$-simplex. Then the transfer maps from the vertices to the center are encoded by a $k+1$-tuple $(g_0, \ldots, g_k)\in G^{k+1}$. Changing the basis in the center amounts to the diagonal multiplication on the left by elements of $G$.

We have $T_0=T_0^\nondeg$. The \emph{gauge group} $G^{T_0}$ acts on $\CX$ by framing transformations as follows. If $h\in G^{T_0}$ with $h_v\in G$ for all $v\in T_0$, then for each edge $e:v_1\to v_2$ we send $g_e$ to $h_{v_1} g_e h_{v_2}^{-1}$. In the presentation \eqref{eq:presentation2} the components of $h$ at the vertices $h_0, \ldots, h_k$ act as follows:
\[
[g_0, \ldots, g_k] \to [g_0 h_0^{-1}, \ldots, g_k h_k^{-1}].
\]

The \emph{character stack} is defined as the quotient stack of the representation variety by the gauge group $\CX/G^{T_0}=(G^{T_0})^\bullet \CX$. The restriction maps $g_\tau$ extend to the quotient stack as follows:
\begin{prop}\label{prop:stack natural maps}
There exists a collection of $\QVar$-morphisms $g_{\tau,r}: (G^{T_0})^r \CX \to G^{m+r}$ where $\tau\in T_m$ and $r\in\BZ_{\geq 0}$ such that
\begin{enumerate}
\item For $r=0$ we have $g_{\tau,0}=g_\tau:\CX\to G^m$.
\item For any $\tau\in T_m$ and $r\in\BZ_{\geq 0}$ we have
\begin{equation}\label{eq:stack morphisms boundary}
d_{m+r} g_{\tau,r} = g_{\tau,r-1} d_r + (-1)^r g_{\partial\tau,r},
\end{equation}
where $\partial\tau$ is the boundary of $\tau$, i.e. the formal linear combination $\sum_{i=0}^m (-1)^i \tau_i$ where $\tau_i$ is obtained from $\tau$ by removing the vertex $i$, and $g$ is extended to formal linear combinations of simplices by linearity.
\end{enumerate}
\end{prop}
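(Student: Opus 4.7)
The plan is to give an explicit shuffle formula for $g_{\tau,r}$ and then verify the identity \eqref{eq:stack morphisms boundary} by a combinatorial computation modelled on the classical Eilenberg--Zilber shuffle map. This avoids an abstract homological lifting argument and produces all the morphisms in one stroke.

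First I set up the building blocks. Fix a non-degenerate simplex $\tau = [v_0, \ldots, v_m]$ with edges $e_j : v_{j-1} \to v_j$, and for $x \in \CX$ write $T_i(x) = g_{e_1}(x)\cdots g_{e_i}(x) \in G$ for the partial transfer from $v_0$ to $v_i$, with $T_0 = 1$. For a point $[\tilde h^{(0)}, \ldots, \tilde h^{(r)}, x] \in (G^{T_0})^r \CX$ in the symmetric presentation, define the $(m+1)(r+1)$ group elements
\[
a_{i,j} := T_i(x)\,\tilde h^{(j)}_{v_i}, \qquad 0 \leq i \leq m,\ 0 \leq j \leq r.
\]
A direct computation using $T_i(x) \mapsto k_{v_0} T_i(x) k_{v_i}^{-1}$ and $\tilde h^{(j)}_{v_i} \mapsto k_{v_i} \tilde h^{(j)}_{v_i}$ under a gauge transformation $k \in G^{T_0}$ shows that every $a_{i,j}$ is sent to $k_{v_0}\,a_{i,j}$, independently of $i$ and $j$. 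Consequently, for any monotone lattice path $P = (p_0, p_1, \ldots, p_{m+r})$ in $[0,m]\times[0,r]$ from $(0,0)$ to $(m,r)$ with unit horizontal or vertical steps, the tuple $[a_{p_0}, \ldots, a_{p_{m+r}}]$ descends to a well-defined morphism $(G^{T_0})^r \CX \to G^{m+r}$ in the symmetric presentation $G^{m+r} = G\backslash G^{m+r+1}$.

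I would then define
\[
g_{\tau,r}\bigl([\tilde h^{(0)}, \ldots, \tilde h^{(r)}, x]\bigr) \;:=\; \sum_{P} \epsilon(P)\,[a_{p_0}, a_{p_1}, \ldots, a_{p_{m+r}}],
\]
summed over all such paths $P$, with $\epsilon(P)$ the standard $(m,r)$-shuffle sign. Extension from a non-degenerate $\tau$ to a general $\tau \in T_m$ is formal via the defining surjection $[m] \to [\dim\tau]$, which makes some of the edges $e_j$ trivial and contracts the sum accordingly. For $r = 0$ there is a single path (the horizontal staircase), and the resulting tuple, transported to the asymmetric presentation, coincides with $g_\tau$; this establishes (i).

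The heart of the argument is (ii). Expanding $d_{m+r}$ as the alternating sum of omissions of entries $a_{p_s}$ in each tuple, the contributions split according to the position and corner type of $p_s$. Omissions at the endpoints $s = 0$ and $s = m+r$ contribute the extreme faces of $\tau$ and of the bar complex, and assemble into $(-1)^r g_{\partial\tau, r}$. Interior omissions at a \emph{straight} lattice point (two consecutive horizontal steps, or two consecutive vertical steps) correspond respectively to composing two adjacent edges of $\tau$, producing the interior faces of $\tau$ appearing in $g_{\partial\tau, r}$, and to multiplying two adjacent framings, producing a face map $d_{r,i}$ with $0 < i < r$ inside $g_{\tau, r-1}\,d_r$. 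Interior omissions at a \emph{corner} (HV or VH) cancel in pairs across the two shuffles that differ only by swapping that corner. The main obstacle is the sign bookkeeping: one must pin down $\epsilon(P)$ so that the pairwise corner cancellations are exact and the remaining terms assemble with the prescribed signs $+1$ in front of $g_{\tau, r-1}d_r$ and $(-1)^r$ in front of $g_{\partial\tau, r}$. Once the convention is fixed by a single check (e.g.\ $m = r = 1$), the general identity reduces to a routine adaptation of the Eilenberg--Zilber shuffle computation.
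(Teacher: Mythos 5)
Your proposal is correct and follows essentially the same route as the paper: the elements $a_{i,j}=T_i(x)\tilde h^{(j)}_{v_i}$ are exactly the paper's framed local system on the prism $\Delta^r\times\Delta^m$ with vertex data $g_ih_{i,j}$, your shuffle sum over monotone lattice paths is the paper's prismatic triangulation $\Delta^r\times\Delta^m=\sum_p(-1)^{|p|}\Delta_p$, and your verification of (ii) via corner cancellations and straight-point omissions is precisely the paper's proof of $\partial(\Delta^r\times\Delta^m)=\partial(\Delta^r)\times\Delta^m+(-1)^r\Delta^r\times\partial(\Delta^m)$. The only difference is that the paper pins down the sign as $(-1)^{|p|}$ (the number of path inversions) explicitly, where you defer it to a normalization check.
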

\begin{proof}
A point of $\CX$ associates to $\tau$ an element of $G^m$ represented by a sequence $(g_0,\ldots, g_m)$, well-defined up to the left multiplication by $G$. A point in $(G^{T_0})^r\CX$ is represented by a point in $\CX$ and an $r+1$-tuple of elements of $G$ at each vertex, up to an action of $G^{T_0}$. So for each vertex $i$ of $\tau$ we have an $r+1$-tuple $(h_{0,i},\ldots,h_{r,i})$. This defines a framed local system on the product $\Delta^r\times \Delta^m$ by putting $g_i h_{i,j}$ in the vertex $(j,i)\in [r]\times [m]$. 

There is a standard way to triangulate the product $\Delta^r\times \Delta^m$ into $\binom{m+r}{m}$ simplices of dimension $m+r$. Simplices of this triangulation are given by lattice paths from $(0,0)$ to $(r,m)$ consisting of $(1,0)$ and $(0,1)$ steps. Denote such a path by $p$ and let $p_{i}\in [r]\times [m]$, be the position after $i$ steps, $0\leq i\leq r+m$. Let $|p|$ be the number of pairs $1\leq j<\leq j'\leq r+m$ such that the $j$-th step is $(0,1)$ and $j'$ step is $(1,0)$. The simplex corresponding to $p$ is denoted by $\Delta_p$. It has vertices $p_{0}, p_1,\ldots, p_{r+m}$, and it is taken with sign $(-1)^{|p|}$. So we write
\[
\Delta^r\times \Delta^m = \sum_p (-1)^{|p|} \Delta_p.
\]
Then we have
\begin{equation}\label{eq:product of simplices}
\partial(\Delta^r\times \Delta^m) = \partial(\Delta^r) \times \Delta^m + (-1)^r \Delta^r \times \partial(\Delta^m).
\end{equation}
Indeed, on the left hand side we go over all paths and remove one vertex from each path in all possible ways. So we have a sum over all paths from $(0,0)$ to $(r,m)$ where one step is $(2,0)$, $(1,1)$ or $(0,2)$ and all the other steps are as before. The paths with $(1,1)$ cancel out because each such path is obtained in two ways with opposite signs. The paths with $(2,0)$ step bijectively correspond to the summands of $\partial(\Delta^r) \times \Delta^m$ and the paths with $(0,2)$ step correspond to the summands of $(-1)^r \Delta^r \times \partial(\Delta^m)$.

Taking the formal sum over the simplices of $\Delta^r\times\Delta^m$ with signs we obtain a morphism in $\QVar$
\[
(G^{\times m+1})^{\times r+1} \times G^{\times m+1} \to G^{\times m+r+1},
\]
which is $G$-equivariant with respect to the left $G$-action on $G^{\times m+1}$ and $G^{\times m+r+1}$, and invariant under the action of $G^{\times m+1}$ on $(G^{\times m+1})^{\times r+1} \times G^{\times m+1}$. Thus we obtain a well-defined morphism in $\QVar$
\[
g_{\tau,r}:(G^{T_0})^{r} \CX \to G^{m+r}.
\]
Using \eqref{eq:product of simplices} we obtain \eqref{eq:stack morphisms boundary}.
\end{proof}

Now we can construct morphisms $\CX/G^{T_0}[m]\to */G$ by producing linear combinations of $m$-simplices of $\Sigma$ so that the contributions from  $g_{\partial\Delta,r}$ are annihilated. In particular, we obtain
\begin{cor}
	For any $m$, elements of the simplicial homology group $H_m(\Sigma, \BQ)$ give well-defined up to homotopy maps $\CX/G^{T_0}[m]\to */G$.
\end{cor}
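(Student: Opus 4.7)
The plan is to apply Proposition \ref{prop:stack natural maps} directly: given an $m$-chain $c = \sum_\tau c_\tau \tau$ on $\Sigma$, extend the maps of that proposition by linearity, set $g_{c,r} := \sum_\tau c_\tau\, g_{\tau,r}$, and declare $F^{(c)}_{m+r} := g_{c,r}$. A morphism $\CX/G^{T_0}[m]\to */G$ in $\CVar$ is precisely a collection of $\QVar$-morphisms $F_{m+r}\colon (G^{T_0})^r\CX\to G^{m+r}$ for $r\geq 0$ commuting with differentials, so I must check that $F^{(c)}$ is a chain map whenever $c$ is a cycle, and that homologous cycles produce chain-homotopic morphisms.

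For $r\geq 1$, the identity \eqref{eq:stack morphisms boundary} extended linearly reads $d_{m+r}\, g_{c,r} = g_{c,r-1}\, d_r + (-1)^r g_{\partial c,r}$, so the chain map condition $d_{m+r}F^{(c)}_{m+r} = F^{(c)}_{m+r-1}d_r$ becomes $g_{\partial c,r}=0$, which holds precisely when $\partial c = 0$. The edge case $r=0$ becomes $d_m g_{c,0} = g_{\partial c,0}$, which again vanishes for a cycle, matching the fact that $\CX/G^{T_0}[m]$ is zero in degree $m-1$. Hence every $m$-cycle produces a genuine morphism in $\CVar$.

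For homotopy invariance, suppose $c - c' = \partial \tilde c$ for some $(m+1)$-chain $\tilde c$ and define $h_{m+r} := (-1)^r g_{\tilde c, r}\colon (G^{T_0})^r\CX \to G^{m+r+1}$. Applying \eqref{eq:stack morphisms boundary} to $\tilde c$ gives
\[
d_{m+r+1}h_{m+r} + h_{m+r-1}d_r = \bigl((-1)^r+(-1)^{r-1}\bigr)\,g_{\tilde c,r-1}\,d_r + g_{\partial\tilde c,r} = g_{c,r}-g_{c',r},
\]
which is precisely the chain-homotopy identity $dh+hd = F^{(c)}-F^{(c')}$; the $r=0$ case reduces to $d_{m+1}g_{\tilde c,0} = g_{\partial\tilde c,0}$. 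There is no real obstacle: the entire argument is a sign-careful application of the single identity \eqref{eq:stack morphisms boundary}, with the sign $(-1)^r$ in the homotopy forced by the cancellation $(-1)^r+(-1)^{r-1}=0$ in the display above, and the $r=0$ edge case handled automatically because the source complex vanishes in degree $m-1$.
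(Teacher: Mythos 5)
Your argument is correct and is exactly the (mostly implicit) argument the paper intends: extend $g_{\tau,r}$ linearly to chains, observe that the obstruction term $(-1)^r g_{\partial c,r}$ in \eqref{eq:stack morphisms boundary} vanishes for cycles, and use the same identity applied to a bounding $(m+1)$-chain to produce the chain homotopy. The sign bookkeeping, including the $(-1)^r$ in the homotopy and the $r=0$ edge cases, checks out.
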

These maps induce higher transgression maps on the cohomology which are compatible with the mixed Hodge structure. Thus we recover results of \cite{shende2016weights}. 

\subsection{Parabolic character stack}\label{ssec:parabolic char stack}
Let $\Sigma$ be a Riemann surface of genus $g$ with $k$ boundary components. Let $p_0\in\Sigma$ be a point in the interior, and let $p_i$ be a point on $i$-th boundary component. We can assume $\Sigma$ is obtained by gluing sides of a polygon whose vertices are mapped to $p_0$, $p_1$, \ldots, $p_k$. Choosing a triangulation of the polygon we obtain a triangulation of $\Sigma$. We assume the edges of the polygon are given by 
\begin{itemize}
\item The boundary edges $q_i:p_i\to p_i$ for $i=1,\ldots,k$.
\item The edges $\gamma_i:p_0\to p_i$ for $i=1,\ldots,k$.
\item Loops $\alpha_i,\beta_i:p_0\to p_0$ for $i=1,\ldots,g$.
\end{itemize}
Moreover, we assume that walking along the sides of the polygon we obtain the following loop, which becomes contractible in $\Sigma$:
\begin{equation}\label{eq:matrix equation}
[\alpha_1,\beta_1]\cdots [\alpha_g, \beta_g] \gamma_1^{-1} q_1 \gamma_1\cdots\gamma_k^{-1} q_k \gamma_k.
\end{equation}
The $\CX$ can be identified with the variety of $2k+2g$ tuples of matrices satisfying the condition expressed by \eqref{eq:matrix equation}.
Pick matrices $C_1,\ldots,C_k\in G$. Denote by $Z(C_i)\subset G$ the corresponding centralizer subgroups. Let the \emph{parabolic gauge group} be
\[
G_\parab=G\times Z(C_1) \times\cdots\times Z(C_k)\subset G^{k+1}.
\]
The \emph{parabolic representation variety} $\CX_{\parab}\subset \CX$ is the subvariety of collections 
\[
\CX_{\parab}=\{(g_e)_{e\in T_1^\nondeg}\in\CX:\;g_{q_i}=C_i\;\text{for all $i$}\}.
\]
 Then $G_\parab$ preserves $\CX_\parab$ and we have for each $i$ a closed embedding
\[
G_\parab^i \CX_\parab\subset (G^{k+1})^i\CX.
\]
In particular, constructions of Section \ref{ssec:charvar} extend to the \emph{parabolic character stack} defined by $\CX_\parab/G_\parab = G_\parab^\bullet \CX_\parab$.

We assume that the orientations of the $2$-simplices of the triangulation coincide with the orientations induced from $\Sigma$, and the orientation of $q_i$ is opposite to the orientation induced from $\partial\Sigma$ for each $i$. Let $[\Sigma]$ be the sum of the $2$-simplices of the triangulation $T_2^\nondeg$. By Proposition \ref{prop:stack natural maps}, we have
\[
\partial \Sigma = -\sum_{i=1}^k q_i, \qquad g_{[\Sigma],r}:G^r_\parab\CX_\parab\to G^{r+2},\quad d_{r+2} g_{[\Sigma],r} = g_{[\Sigma],r-1} d_r - (-1)^r \sum_{i=1}^k g_{q_i,r}.
\]
So the obstruction for $g_{[\Sigma],r}$ to become a morphism of complexes is given by the maps $g_{q_i,r}$. 
Since the element associated to $q_i$ is fixed to be $C_i$, the maps $g_{q_i,r}$ factor as follows:
\[
\begin{tikzcd}
G^r_\parab\CX_\parab \arrow{r}{g_{q_i,r}} \arrow{d} & G^{r+1}\\
Z(C_i)^r \arrow{ru}{\sigma_i} & 
\end{tikzcd}
\]
where the morphism of complexes $\sigma_i: */Z(C_i)[1] \to */G$ is given by
\[
[h_0,\ldots,h_r] \to [h_0,\ldots,h_r,C_i h_r]-[h_0,\ldots,h_{r-1},C_i h_{r-1}, C_i h_r]+\cdots.
\]
We correct $g_{[\Sigma],r}$ to obtain a morphism of complexes as follows. Let 
\[
BG_\parab := \cone\left(\bigoplus_{i=1}^k \sigma_i:\bigoplus_{i=1}^k */Z(C_i)[1] \to */G\right),
\]
or, more explicitly, 
\[
BG_\parab^r= G^r \sqcup Z(C_1)^{r-2}\sqcup\cdots\sqcup Z(C_k)^{r-2},
\]
and the differential is the sum of the usual differentials $G^{r}\to G^{r-1}$, $Z(C_i)^{r-2}\to Z(C_i)^{r-3}$ and the maps $(-1)^r \sigma_i:Z(C_i)^{r-2}\to G^{r-1}$. Then the morphisms
\[
\tilde g_{[\Sigma],r}: G_\parab^r\CX_\parab \to BG_\parab^{r+2}
\]
given by the sum of $g_{[\Sigma],r}$ and the projection maps $G^r_\parab\to Z(C_i)^r$ form a morphism of complexes, which we denote by $\tilde g_{[\Sigma]}$. 

\subsection{Cohomology classes on the stack}
We can construct cohomology classes on the stack $\CX_\parab/G_\parab$ by pulling back from $BG_\parab$. So we need to understand the cohomology of $BG_\parab$. We have a long exact sequence of mixed Hodge structures
\begin{equation}\label{eq:long parabolic sequence}
\cdots\to H^{r-1}(*/G) \to \bigoplus_{i=1}^k H^{r-2}(*/Z(C_i)) \to H^r(BG_\parab)   \to H^{r}(*/G)\to\cdots.
\end{equation}
Suppose from now on that $G=\GL_n$ and the matrices $C_i$ are diagonal, and the entries of the $C_i$ are ordered in such a way that if some eigenvalue repeats, its positions form a contiguous interval. This corresponds to the condition that $Z(C_i)$ is the group of block-diagonal matrices of certain shape depending on the multiplicities of the eigenvalues. The classifying space has only even cohomology. Therefore the long exact sequence splits into short exact sequences. In particular, to obtain the symplectic form we need to understand the following short exact sequence:
\[
0\to \bigoplus_{i=1}^k H^2(*/Z(C_i)) \to H^4(BG_\parab)\to H^{4}(*/G)\to 0.
\]
The Hodge filtration splits the sequence above in the sence that
\[
H^{4}(*/G) = F^2 H^4(BG_\parab),
\]
but it is not clear how to construct this splitting algebraically. So we will produce an explicit element of $H^4(BG_\parab)$ and then show that it belongs to $F^2$.

In the following formulas, the letters $x$, $y$, $z$ stand for varying elements in $G$. In the algebraic de Rham realization, the cohomology $H^4(BG_\parab)$ is computed from the cohomology of the total complex of the following tri-complex:
\[
\begin{tikzcd}
& \Omega^2(G\times G \times G) & &\\
\Omega^1(G\times G)\arrow{r}{d}\arrow{rddd} & \Omega^2(G\times G)\arrow{r}{d}\arrow{u}{(y,z)-(xy,z)+(x,yz)-(x,y)}\arrow{rddd} & \Omega^3(G\times G) &\\
& \Omega^2(G)\arrow{r}{d}\arrow{u}{(xy)-(x)-(y)} & \Omega^3(G) \arrow{u}{(xy)-(x)-(y)}\arrow{r}{d}& \Omega^4(G) \\
&\bigoplus_{i=1}^k \Omega^1(Z(C_i)\times Z(C_i))&\\
& \bigoplus_{i=1}^k \Omega^1(Z(C_i)) \arrow{r}{d} \arrow{u}{(xy)-(x)-(y)} &\bigoplus_{i=1}^k \Omega^2(Z(C_i))
\end{tikzcd}
\]

The classes in $H^2(*/Z(C_i))$ are represented as follows. Denote by $\widehat{Z(C_i)}$ the group of multiplicative characters. There is an isomorphism
\[
\bigoplus_{i=1}^k \widehat{Z(C_i)}\otimes\BC \toiso \bigoplus_{i=1}^k H^2(*/Z(C_i)),
\]
constructed as follows.
For every multiplicative character $f:Z(C_i)\to \BC^*$ we have a differential form $\frac{df}{f}\in\Omega^1(Z(C_i))$, which is closed and satisfies
\[
\frac{df(xy)}{f(xy)} - \frac{df(x)}{f(x)} - \frac{df(y)}{f(y)}=0.
\]
Thus we obtain an element in $\bigoplus_{i=1}^k \Omega^1(Z(C_i))$ which is annihilated by all the outgoing arrows. Classes obtained in this way belong to $F^1 \cap \overline F^1 \cap W_2$.

In the following calculations we often use the fact that for matrix-valued differential forms $\eta_1, \eta_2$ of degrees $|\eta_1|$, $|\eta_2|$ we have
\[
\trace(\eta_1\wedge\eta_2) = (-1)^{|\eta_1|\cdot|\eta_2|}\trace(\eta_2\wedge\eta_1).
\]
Define a class $\omega\in H^4(BG_\parab)$ by $\omega=\omega_2+\frac13 \omega_3$ where
\[
\omega_2 = \trace(x^{-1} dx\wedge dy\, y^{-1})\in\Omega^2(G\times G),\quad \omega_3 = \trace\left((x^{-1} dx)^{\wedge 3}\right)\in\Omega^3(G).
\]
\begin{prop}\label{prop:class in BGpar}
The combination of differential forms $\omega=\omega_2+\frac13 \omega_3$ is closed with respect to the total differential of the tri-complex and therefore defines a class in $H^4(BG_\parab)$.
\end{prop}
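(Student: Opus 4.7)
The plan is to identify the total differential on the tri-complex computing $H^4(BG_\parab)$ as the sum of three pieces: the de Rham $d$ on each $\Omega^\bullet(G^q)$, the simplicial differential $\delta$ (alternating pullback along the face maps of the bar construction), and the pullbacks along the cone components $\sigma_i$. Writing $\omega_2$ at bidegree (form, simplicial)$=(2,2)$ and $\omega_3$ at $(3,1)$, both of total degree $4$, closedness of $\omega$ splits into four independent vanishing statements at total degree $5$, each of which reduces to an identity already established.

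\textbf{Step 1.} $d\omega_3=0$. Writing $\theta = x^{-1}dx$, the Maurer--Cartan relation $d\theta=-\theta\wedge\theta$ gives $d(\theta^{\wedge 3}) = -\theta^{\wedge 4}$, and the cyclic property of the trace on a product of four matrix-valued $1$-forms yields $\trace\theta^{\wedge 4}=0$.

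\textbf{Step 2.} The two contributions at bidegree $(3,2)$ cancel. Applying the identity $3\,d(f|g)=\{f\}+\{g\}-\{fg\}$ from Section~\ref{ssec:d omega} to the two coordinate projections $f,g:G\times G\to G$ gives $3\,d\omega_2 = \omega_3(x)+\omega_3(y)-\omega_3(xy)$, which is $\pm\delta\omega_3$ (the sign being fixed by the chosen convention for the total differential). The coefficient $\tfrac13$ on $\omega_3$ is exactly what is needed to make the resulting sum vanish.

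\textbf{Step 3.} $\delta\omega_2=0$. This is the $2$-cocycle relation \eqref{eq:2cocycle} applied to the three coordinate projections on $G^{\times 3}$, which reads $\omega_2(y,z) - \omega_2(xy,z) + \omega_2(x,yz) - \omega_2(x,y) = 0$.

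\textbf{Step 4.} The contributions into the $Z(C_i)$-columns via $\sigma_i^*$ vanish. By construction every such pullback evaluates $\omega_2$ or $\omega_3$ on an argument in which at least one $G$-coordinate is replaced by the constant matrix $C_i$; for example, at simplicial level $r=2$ one finds $\sigma_i^*\omega_2 = \omega_2(h,C_i) - \omega_2(C_i,h)$. By Proposition~\ref{prop:form vanishes}(i), $(f|g)$ vanishes whenever either factor is constant, and $\{C_i\}=0$ since $dC_i=0$; hence all such contributions are zero.

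There is no genuine obstacle here; the only bookkeeping needed is consistency of signs between the de Rham and simplicial parts of the total differential, after which Steps 1--4 combine to show that $\omega$ is a total cocycle, hence defines a class in $H^4(BG_\parab)$.
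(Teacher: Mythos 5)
Your proposal is correct and follows essentially the same route as the paper: it checks $d\omega_3=0$ via Maurer--Cartan and the cyclic trace, cancels the mixed bidegree-$(3,2)$ terms using the identity $3\,d(f|g)=\{f\}+\{g\}-\{fg\}$ (which the paper rederives explicitly inside its proof), verifies $\delta\omega_2=0$ by the $2$-cocycle property, and kills the $Z(C_i)$-column contributions because the pullback along $\sigma_i$ substitutes the constant matrix $C_i$ into one slot. The only cosmetic difference is your cleaner bookkeeping by bidegree; no gap.
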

\begin{proof}
Using $d(x^{-1} dx) = -x^{-1} dx\wedge x^{-1} dx$, we obtain
\[
d \omega_3 = \trace\left((x^{-1} dx)^{\wedge 4}\right) = 0,
\]
\[
d \omega_2 = -\trace(x^{-1} dx\wedge x^{-1} dx\wedge dy\, y^{-1}) - \trace(x^{-1} dx\wedge dy\, y^{-1}\wedge dy\, y^{-1}).
\]
Using $(xy)^{-1} d(xy)=y^{-1} dy + y^{-1} (x^{-1} dx)\, y$, we obtain
\[
\trace\left(\left((xy)^{-1} d(xy)\right)^{\wedge 3}\right)
= \trace\left((x^{-1} dx)^{\wedge 3}\right) + \trace\left((y^{-1} dy)^{\wedge 3}\right)
\]
\[
+ 3 \trace(x^{-1} dx\wedge x^{-1} dx\wedge dy\, y^{-1}) + 3 \trace(x^{-1} dx\wedge dy\, y^{-1}\wedge dy\, y^{-1}),
\]
\[
\trace (y^{-1} dy\wedge dz\, z^{-1}) + \trace (x^{-1} dx\wedge d(yz)\, (yz)^{-1})
\]
\[
=\trace (y^{-1} dy\wedge dz\, z^{-1}) + \trace (x^{-1} dx\wedge dy\, y^{-1}) + \trace( x^{-1} dx\wedge y dz\,z^{-1} y^{-1})
\]
\[
=\trace (x^{-1} dx\wedge dy\, y^{-1}) + \trace((xy)^{-1} d(xy)\wedge dz\, z^{-1}).
\]
This shows that $\omega=\omega_2+\frac13 \omega_3$ is in the kernel of the total differential of the complex $\Omega^\bullet(G^\bullet)$ and therefore defines a class in $H^4(*/G)$. The map $\Omega^2(G\times G)\to \Omega^2(Z(C_i))$ is given by the pull-back along $x\to (x,C_i)-(C_i,x)$. So $\omega_2$ goes to zero. This shows that $\omega$ also defines a class in $H^4(BG_\parab)$.
\end{proof}

Next we show
\begin{prop}
The class $\omega$ lies in $F^2 H^4(BG_\parab)$.
\end{prop}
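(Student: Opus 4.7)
The plan is to use the definition of the Hodge filtration on the cohomology of a complex of smooth varieties: after choosing a smooth compactification $\tilde X_i\supset X_i$ with simple normal crossing boundary, the Hodge filtration $F^p$ on $H^\bullet(X_\bullet)$ is induced by the ``stupid'' filtration on the logarithmic de Rham complexes $\Omega^{\geq p}_{\tilde X_i}(\log D_i)$, assembled into the mixed Hodge complex $\Tot^\bullet\MHC(X_\bullet,\tilde X_\bullet)$. Consequently, any cohomology class that can be represented by a cocycle in the total complex whose every component has de Rham form degree $\geq p$ lies automatically in $F^p$, provided those components extend to logarithmic forms on the chosen compactifications.

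With this criterion in hand, the strategy is entirely direct. I simply inspect the degrees of the two constituents of $\omega=\omega_2+\tfrac13\omega_3$ that were used in Proposition~\ref{prop:class in BGpar}. The form $\omega_2=\trace(x^{-1}dx\wedge dy\,y^{-1})$ lives in $\Omega^2(G\times G)$, and $\omega_3=\trace((x^{-1}dx)^{\wedge 3})$ lives in $\Omega^3(G)$. Both contribute to the total complex of the tri-complex of $BG_\parab$ in pure form degrees $2$ and $3$ respectively, which are both $\geq 2$. Hence, as soon as one knows that each component actually represents a class in the log de Rham complex on some chosen compactification, the cocycle representing $\omega$ sits inside the subcomplex $F^2 \Omega^\bullet$ of the total Hodge complex, and therefore $[\omega]\in F^2H^4(BG_\parab)$.

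The one point that needs a brief justification is why these Maurer--Cartan type forms extend as logarithmic forms to a smooth compactification of $G=\GL_n$ and of its products. This is well known: the left-invariant forms $x^{-1}dx$ have at worst logarithmic singularities on any smooth equivariant compactification of $G$ (for instance the wonderful compactification of $\PGL_n$, lifted to $\GL_n$, or an iterated blow-up of projective space containing $G$), because the extension to the boundary picks up only simple poles along the boundary divisors. The compactifications of $G\times G$ and $G^{\times 3}$ appearing in the tri-complex can be chosen as products of such, and the face maps $d_{k,i}$ extend to morphisms of compactifications, so the log extension property is preserved at every spot in the tri-complex where $\omega_2$ or $\omega_3$ appears. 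I expect the main (very mild) obstacle to be merely the bookkeeping of verifying that the chosen compactifications are compatible with the boundary maps $d_{k,i}$ of the bar resolution; once that is set up, the conclusion $\omega\in F^2 H^4(BG_\parab)$ is immediate from the form-degree count above, and gives the desired algebraic splitting $F^2H^4(BG_\parab)\twoheadrightarrow H^4(*/G)$.
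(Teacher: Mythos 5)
Your general criterion is the right one --- a class in the cohomology of a complex of smooth varieties represented by a cocycle all of whose components extend to logarithmic forms of degree $\geq p$ on compatible SNC compactifications lies in $F^p$ --- and this is exactly the criterion the paper invokes. The gap is the claim that $x^{-1}dx$ has at worst logarithmic poles on a smooth (equivariant) compactification of $\GL_n$: this is false, and it cannot be fixed by a better choice of compactification. Concretely, on the slice $x=\bigl(\begin{smallmatrix}1&0\\ t&s\end{smallmatrix}\bigr)$ of $\GL_2$ one has $(x^{-1}dx)_{21}=dt/s$, a non-logarithmic pole along the degeneration divisor $\{\det x=0\}$. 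What is true for an equivariant compactification is that the invariant \emph{vector fields} extend to sections of the log tangent sheaf; but they degenerate along the boundary, so the dual coframe $x^{-1}dx$ acquires worse-than-logarithmic poles. More decisively: $\omega_3=\trace\bigl((x^{-1}dx)^{\wedge 3}\bigr)$ is a closed $3$-form on $G=\GL_n$ whose class in $H^3(G)$ is a nonzero multiple of the transgressed class $\tr(c_2)$, which has Hodge type $(2,2)$, so $F^3H^3(G)=0$; if $\omega_3$ extended to a logarithmic $3$-form on some SNC compactification, its (nonzero) class would lie in $F^3H^3(G)$, a contradiction. Hence $\omega_3$ is not logarithmic on \emph{any} compactification, and the degree count alone does not place the cocycle $(\omega_2,\tfrac13\omega_3)$ in the subcomplex computing $F^2$. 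This is the standard phenomenon that a global regular $p$-form on an open variety need not lie in $F^p$.

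To repair this one must replace the cocycle by a cohomologous one that is genuinely logarithmic, and the paper does so indirectly via the splitting principle: it maps $BT_\parab\to BB_\parab\to BG_\parab$, shows that the induced map $H^4(BG_\parab)\to H^4(BT_\parab)$ is injective (comparing the three short exact sequences \eqref{eq:long parabolic sequence} and using the snake lemma, plus the homotopy equivalence $B\simeq T$), and observes that on the torus level $\omega_3$ restricts to zero while $\omega_2$ becomes $\sum_i d\log x_i\wedge d\log y_i$, a bona fide logarithmic $2$-form on $T\times T$ since the coordinates are units. Your criterion then applies on the torus side to give the class in $F^2H^4(BT_\parab)$, and injectivity together with strictness of morphisms of mixed Hodge structures pulls the conclusion back to $F^2H^4(BG_\parab)$.
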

\begin{proof}
We use the splitting principle. The maps $\sigma_i$ can be restricted to the subgroups of $Z(C_i)$ and $G$ of upper-triangular matrices, which we denote by $B(C_i)$, $B$ with the corresponding cone denoted by $BB_\parab$. Then we can project to the groups of diagonal matrices which we denote by $T(C_i)$ and $T$ with the cone $BT_\parab$. The corresponding long exact sequences \eqref{eq:long parabolic sequence} then form commutative diagrams, and in particular we obtain a commutative diagram
\[
\begin{tikzcd}
0\arrow{r} & \bigoplus_{i=1}^k H^2(*/Z(C_i)) \arrow{r} \arrow[hookrightarrow]{d} & H^4(BG_\parab)\arrow{r} \arrow[hookrightarrow]{d} & H^{4}(*/G)\arrow{r} \arrow[hookrightarrow]{d} & 0\\
0\arrow{r} & \bigoplus_{i=1}^k H^2(*/B(C_i)) \arrow{r} & H^4(BB_\parab)\arrow{r} & H^{4}(*/B)\arrow{r} & 0\\
0\arrow{r} & \bigoplus_{i=1}^k H^2(*/T(C_i)) \arrow{r} \arrow{u}{\sim} & H^4(BT_\parab) \arrow{r} \arrow{u}{\sim} & H^{4}(*/T)\arrow{r} \arrow{u}{\sim} & 0
\end{tikzcd}
\]
The arrows pointing up are isomorphisms because the corresponding projection maps $B^m\to T^m$ and similarly for $Z(C_i)$ are homotopy equivalences. The arrows pointing down are injections because they are injections for the left arrow and for the right arrow by the splitting principle, and therefore for the middle arrow by the snake lemma.

Consider any torus of the form $T=(\BC^*)^a$. Denote the coordinates by $z_1, \ldots, z_a$. Denote by $\Omega_\mathrm{log}^r(T)$ the vector space of logarithmic $r$-forms, i.e. forms of the form 
\[
\frac{d z_{i_1}}{z_{i_1}}\wedge \cdots \wedge \frac{d z_{i_r}}{z_{i_r}}.
\]
For any compactification $\tilde T$ with simple normal crossings divisor $D=\tilde T\setminus T$, logarithmic forms extend to global sections of $\Omega^r_{\tilde T}(\log D)$ because functions $z_i$ are meromorphic functions on $\tilde T$ with zeroes and poles along $D$. Remember that the Hodge filtration is induced by the ``stupid filtration'' on $\Omega^\bullet_{\tilde T}(\log D)$. So when we have a complex of varieties $(X_\bullet, d_\bullet)$ all of whose connected components are tori, any logarithmic $r$-form $\eta\in\Omega_{\mathrm{log}}^r(X_k)$ satisfying $d_{k+1}^*\eta=0$ produces a class in $F^r H^{k+r}(X_\bullet,d_\bullet)$.

Using these observations let us show that the class corresponding to $\omega$ in $H^{4}(BG_\parab)$ belongs to $F^2$. When we restrict $\omega$ to $BB_\parab$, the component $\omega_3$ becomes zero and the component $\omega_2$ becomes a pull-back of a logarithmic form in $\Omega^2(T\times T)$. Therefore the corresponding class is in $F^2 H^4(BT_\parab)$. Because the map $H^4(BG_\parab)\to H^4(BT_\parab)$ is an injective morphism of mixed Hodge structures, we have that $F^2 H^4(BG_\parab) = F^2 H^4(BT_\parab)\cap H^4(BG_\parab)$, so the class of $\omega$ is in $F^2 H^4(BG_\parab)$.
\end{proof}

\begin{rem}\label{rem:split or does not}
In the case without marked points we have $H^4(BG_\parab)=H^4(*/G)$ and $H^4(*/G)$ is pure of weight $(2,2)$. Therefore $\omega\in \overline F^2 H^4(BG_\parab)$. This is not true in general. For example, consider the case of rank $2$ on $\BP^1$ with four marked points with generic semisimple monodromies. In this case the class of $\omega$ spans $F^4 H^2$. Suppose $\overline F^4 H^2 = F^4 H^2$. Then the line $F^4 H^2\subset H^2$ depends on the eigenvalues both holomorphically and anti-holomorphically, so it must be locally constant. On the other hand, by computing the Picard-Fuchs equation satisfied by the class of $\omega$ we can verify that this line is not locally constant.
From a different point of view, we can take complex conjugate $\bar\omega$ and try to relate it modulo coboundaries to $\omega$ using relations of the form
\[
\overline{\frac{d z_1}{z_1}\wedge \frac{d z_2}{z_2}} = \frac{d z_1}{z_1}\wedge \frac{d z_2}{z_2} - d\left(\log|z_1|^2 \frac{d z_2}{z_2} + \log|z_2|^2 \overline{\frac{d z_1}{z_1}}\right).
\]
If $C_i$ has eigenvalues whose absolute values are different from $1$, we will see that the class $\bar\omega$ is equivalent to $\omega$ plus a linear combination of classes coming from $*/Z(C_i)$. In the case of rank $2$ on $\BP^1$ with $4$ marked points, the character variety is a union of $\BC^*\times \BC^*$ and $6$ copies of $\BC$. These $\BC$ are attached to six points on $\BP^2\setminus \BC^*\times \BC^*$, which is a union of three lines $\BC$. So we obtain $6$ marked points on a triangle, $2$ on each side. This configuration is specified by three complex numbers, and we expect that the corresponding $\BR$-mixed Hodge structure ``knows'' absolute values of these numbers.
\end{rem}

All the classes constructed above by pullback along $\tilde g_{[\Sigma]}$ produce classes in $H^2(\CX_\parab/G_\parab)$. Note that $\omega_3$ vanishes under the pullback.
\begin{cor}\label{cor:classes on stack}
We have the following classes in $H^2(\CX_\parab/G_\parab)$:
\begin{enumerate}
\item For each $i$ and each character $f:Z(C_i)\to \BC^*$, we have the class represented by $\frac{df}{f}\in\Omega^1(G_\parab\times\CX_\parab)$. This class belongs to $F^1\cap\overline F^1\cap W_2$.
\item The pullback of the class of $\omega=\omega_2+\frac13\omega_3$ constructed above. This class belongs to $F^2\cap W_4$ and is represented by the following form, which we also denote by $\omega$:
\[
\omega = \sum_{\tau\in T_2^\nondeg} \trace(g_{e_1(\tau)}^{-1} d g_{e_1(\tau)} \wedge d g_{e_2(\tau)}\, g_{e_2(\tau)}^{-1}) \in\Omega^2(\CX_\parab),
\]
where $e_1(\tau)$ resp. $e_2(\tau)$ denote the edges $0-1$ resp. $1-2$ of the triangle $\tau$ and $g_{e_1(\tau)}$, $g_{e_2(\tau)}$ are the functions $\CX_\parab\to G$ corresponding to the transfer matrices along these edges. 
\end{enumerate}
\end{cor}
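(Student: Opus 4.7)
The plan is to pull back the classes in $H^4(BG_\parab)$ constructed in Proposition \ref{prop:class in BGpar} (and in the discussion preceding it) along the morphism of complexes $\tilde g_{[\Sigma]}:\CX_\parab/G_\parab[2]\to BG_\parab$ built at the end of Section \ref{ssec:parabolic char stack}. Since $\tilde g_{[\Sigma]}$ is a morphism in $\CVar$, it induces higher transgression maps
\[
\tilde g_{[\Sigma]}^*:H^{k+2}(BG_\parab) \to H^k(\CX_\parab/G_\parab)
\]
which preserve each of the three filtrations $W_\bullet$, $F^\bullet$, $\overline F^\bullet$; this is built into the $\Tot^\bullet\MHC$ formalism recalled in Section 4.1. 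Specializing to $k=2$ will transfer the filtration properties already established on $H^4(BG_\parab)$ directly onto $H^2(\CX_\parab/G_\parab)$, reducing each part of the corollary to identifying an explicit differential-form representative.

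For part (i), I would first note that the inclusion $\bigoplus_{i=1}^k H^2(*/Z(C_i)) \hookrightarrow H^4(BG_\parab)$ coming from \eqref{eq:long parabolic sequence} is a morphism of MHS, and $df/f$ was already observed to represent a class in $F^1\cap\overline F^1\cap W_2$ on the left. Hence $\tilde g_{[\Sigma]}^*(df/f)$ lies in $F^1\cap\overline F^1\cap W_2\, H^2(\CX_\parab/G_\parab)$. To pin down the explicit representative, trace through $\tilde g_{[\Sigma]}$: the degree-$1$ piece of $\CX_\parab/G_\parab[2]$ is $G_\parab\times\CX_\parab$, and by the construction in Section \ref{ssec:parabolic char stack} the component of $\tilde g_{[\Sigma]}$ landing in the $Z(C_i)$-summand of $BG_\parab^3$ is exactly the projection $G_\parab\times\CX_\parab\twoheadrightarrow Z(C_i)$ onto the $i$-th factor of the gauge group. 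Pulling $df/f$ back along this projection produces the claimed $1$-form on $G_\parab\times\CX_\parab$.

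For part (ii), Proposition \ref{prop:class in BGpar} together with the subsequent proposition place $\omega = \omega_2 + \tfrac13\omega_3$ in $F^2 H^4(BG_\parab)$, while the weight bound $\omega\in W_4$ is automatic since both ends of the short exact sequence \eqref{eq:long parabolic sequence} with $r=4$ have weight at most $4$. Therefore $\tilde g_{[\Sigma]}^*\omega\in F^2\cap W_4\, H^2(\CX_\parab/G_\parab)$. For the explicit representative I would decompose $\omega$ by its bidegree in the tri-complex computing $H^\bullet(BG_\parab)$: the piece $\omega_2\in\Omega^2(G^2)\subset\Omega^2(BG_\parab^2)$ is pulled back through the $r=0$ component $g_{[\Sigma]}:\CX_\parab\to G^2$, which by definition equals the $\QVar$-sum $\sum_{\tau\in T_2^\nondeg}(g_{e_1(\tau)}, g_{e_2(\tau)})$; substituting this into $\trace(x^{-1}dx\wedge dy\, y^{-1})$ yields the displayed formula. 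The piece $\tfrac13\omega_3\in\Omega^3(G)\subset\Omega^3(BG_\parab^1)$ could only contribute through an $r=-1$ component of $\tilde g_{[\Sigma]}$, but $(\CX_\parab/G_\parab[2])^1=\varnothing$, so this term vanishes identically.

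The step that requires the most care is precisely this last bidegree bookkeeping, which is what makes the $\omega_3$-term drop out and leaves only the trace expression over triangles; once it is in place, the remainder is mechanical, being the MHS-functoriality of higher transgression combined with the explicit data for $\tilde g_{[\Sigma]}$ and for the classes on $BG_\parab$ furnished by Proposition \ref{prop:class in BGpar}.
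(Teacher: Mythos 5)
Your proposal is correct and follows exactly the paper's route: the paper derives this corollary in one line by pulling back along $\tilde g_{[\Sigma]}$ and remarking that $\omega_3$ vanishes under the pullback, and your bidegree bookkeeping (the $\omega_3$-piece sits on $BG_\parab^1$, which nothing in $\CX_\parab/G_\parab[2]$ maps to) is precisely the correct justification of that remark, just as your identification of the $r=0$ and $r=1$ components of $\tilde g_{[\Sigma]}$ recovers the two explicit representatives.
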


\begin{rem}
	If the genus of $\Sigma$ is positive, we can also construct classes using the  elements of $H_1(\Sigma,\BZ)$, but this construction is not different from non-parabolic case.
\end{rem}

\subsection{Cohomology classes on the character variety}
Suppose $G=\GL_n$. Define the \emph{parabolic character variety} as the GIT quotient $X_\parab=\CX_\parab\sslash G_\parab$. It comes with the augmentation map
\[
\CX_\parab/G_\parab \to X_\parab.
\]
Note that there is a diagonal embedding $\BC^*\to G_\parab$ which acts trivially on $\CX_\parab$. Recall the genericity assumption from \cite{hausel2011arithmetic}:
\begin{defn}\label{def:generic}
The tuple of matrices $C_1,\ldots,C_k\in G$ is \emph{generic} if 
\[
\prod_{i=1}^k \det C_k = 1,
\]
and for every $1\leq r<n$ and a choice of $r$ eigenvalues $\alpha_{i,1},\ldots,\alpha_{i,r}$ out of the list of $n$ eigenvalues of $C_i$ for each $i=1,\ldots,k$ we have 
\[
\prod_{i=1}^k\prod_{j=1}^r \alpha_{i,j}\neq 1.
\]
\end{defn}
It was shown in \cite{hausel2011arithmetic} that
\begin{thm}\label{thm:generic char var}
	If $C_1$,\ldots,$C_k$ is generic, then 
	\begin{enumerate}
		\item the representation variety $\CX_\parab$ is non-singular;
		\item the stabilizer of each point on $\CX_\parab$ equals $\BC^*$;
		\item the GIT quotient $X_\parab$ is non-singular;
		\item the map $\CX_\parab \to X_\parab$ is a principal $G_\parab/\BC^*$-bundle.
	\end{enumerate}
\end{thm}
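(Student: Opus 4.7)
The plan is to prove part (ii) first as a purely algebraic statement about the centralizer of a generic representation; part (i) then follows from a tangent-space computation fed by (ii); and parts (iii)--(iv) come from applying Luna's \'etale slice theorem once we know that the action of $G_\parab/\BC^*$ is free with closed orbits. For (ii), fix a point of $\CX_\parab$ with transfer matrices $\alpha_i,\beta_i,\gamma_i$ and suppose $(h_0,h_1,\ldots,h_k)\in G_\parab$ fixes it. The fixed-point equations on $\alpha_i,\beta_i$ force $h_0\in Z(\alpha_i)\cap Z(\beta_i)$ for each $i$; on each edge $\gamma_i$ they determine $h_i$ in terms of $h_0$, and the constraint $h_i\in Z(C_i)$ then places $h_0$ in $Z(X_i)$ where $X_i$ is the conjugate of $C_i$ occurring as the $i$-th boundary monodromy in \eqref{eq:matrix equation}. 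Thus $h_0$ lies in the centralizer of the image of the associated representation $\rho:\pi_1(\Sigma)\to G$. If $h_0$ were not a scalar, a proper non-zero eigenspace $V\subset\BC^n$ of dimension $r$ would be preserved by $\rho(\pi_1(\Sigma))$; restricting \eqref{eq:matrix equation} to $V$ and taking determinants (commutators have determinant $1$ on any invariant subspace) yields
\[
\prod_{i=1}^k \det(X_i|_V)=1,
\]
where $\det(X_i|_V)$ is a product of $r$ eigenvalues of $C_i$ counted with multiplicity, contradicting the second genericity condition. Hence $h_0$ is scalar, then so is $h_i$, and the full stabilizer equals the diagonal $\BC^*\subset G_\parab$.

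For (i), realize $\CX_\parab$ as the fiber over the identity of
\[
\mu:G^{2g}\times G^k\to G,\qquad \mu(\alpha,\beta,\gamma)=\prod_{i=1}^g[\alpha_i,\beta_i]\cdot\prod_{i=1}^k \gamma_i^{-1}C_i\gamma_i.
\]
By the first genericity condition $\det\mu\equiv 1$, so $\mu$ lands in the hypersurface $\{g\in G:\det g=1\}$, whose tangent space at the identity is $\mathfrak{sl}_n\subset\mathfrak{gl}_n$. Smoothness of $\mu^{-1}(1)$ is equivalent to surjectivity of $d\mu$ onto $\mathfrak{sl}_n$ at every point of the fiber. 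A standard Maurer--Cartan / group-cohomology calculation (identifying tangent vectors with $1$-cocycles on $\pi_1(\Sigma)$ valued in the adjoint representation) shows that the annihilator of the image of $d\mu$ under the trace pairing on $\mathfrak{gl}_n$ is the centralizer in $\mathfrak{gl}_n$ of the image of $\rho$. By (ii) this centralizer is one-dimensional (the scalars), so the image of $d\mu$ has codimension $1$ and equals $\mathfrak{sl}_n$; hence $\CX_\parab$ is smooth of the expected dimension $n^2(2g+k-1)+1$. The main obstacle is precisely this cocycle calculation: morally it is Poincar\'e--Lefschetz duality for $\Sigma$ with prescribed boundary holonomies, but executing it concretely requires some bookkeeping with the boundary conditions $g_{q_i}=C_i$ and a careful comparison between the Lie-algebra stabilizer computed in (ii) and the relevant $H^0$ appearing in the duality.

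For (iii) and (iv), the reductive group $H:=G_\parab/\BC^*$ acts on the smooth affine variety $\CX_\parab$; by (ii) all stabilizers are trivial, and moreover (ii) shows that every representation parametrized by $\CX_\parab$ is absolutely irreducible and hence GIT-stable, so every $H$-orbit is closed. Luna's \'etale slice theorem then applies: the GIT quotient $X_\parab=\CX_\parab\sslash H$ is a geometric quotient, the map $\CX_\parab\to X_\parab$ is an \'etale-locally trivial principal $H$-bundle, and smoothness of $\CX_\parab$ descends to $X_\parab$.
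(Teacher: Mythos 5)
Your proposal is essentially correct, but it takes a different route from the paper: the paper does not prove this theorem at all. It quotes it from \cite{hausel2011arithmetic} and only supplies a remark translating the HLV statement (conjugation action of $G$ on tuples with $M_i$ constrained to the conjugacy class $C_i^G$) into the framed setup used here, by observing that $\CX_\parab$ is a principal $Z(C_1)\times\cdots\times Z(C_k)$-bundle over the HLV representation variety and invoking a tower of principal bundles. You instead reprove the result from scratch directly in the framed setup, which is cleaner in one respect: your stabilizer computation in (ii) already absorbs the components $h_i\in Z(C_i)$, so no translation between the two models is needed. Your arguments for (ii) (Schur-type argument: a non-scalar element of the stabilizer would produce a proper invariant eigenspace, whose determinant relation violates genericity) and for (iii)--(iv) (free action with closed orbits plus Luna's slice theorem) are complete and are exactly the arguments underlying the cited reference.

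The one place where your writeup is not self-contained is part (i), and you flag it yourself: the claim that the annihilator of $\operatorname{im}(d\mu)$ under the trace pairing is the Lie-algebra centralizer of the representation is precisely the technical heart of the HLV proof (their Theorem 2.2.5), and you defer the bookkeeping rather than carry it out. This is a known, standard computation, so I would not call it a wrong step, but as written part (i) rests on an unproved lemma in exactly the same way the paper rests on a citation. If you want the proof to stand alone, that cocycle/duality computation is the piece you still owe; note also that in the framed model the differential $d\mu$ has extra directions coming from varying $\gamma_i$ inside $G$ (not just inside $G/Z(C_i)$), which only enlarges the image and so does not endanger surjectivity, but should be said explicitly.
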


\begin{rem}
	In \cite{hausel2011arithmetic} the setup was slightly different. The representation variety was defined as consisting of $k+2g$-tuples 
	\[
	\alpha_1,\ldots,\alpha_g, \beta_1,\ldots,\beta_g, M_1,\ldots,M_k \in G
	\]
	satisfying 
	\[
	[\alpha_1,\beta_1]\cdots [\alpha_g, \beta_g] M_1 \cdots M_k = \Id,\qquad M_i\in C_i^G \quad(i=1,\ldots,k).
	\]
	Denote this representation variety by $\CX_\parab'$. Then $G$ acted by conjugation, $X_\parab=\CX_\parab'\sslash G$, and it was shown that $\CX_\parab'\to X_\parab$ is a principal $G/\BC^*$-bundle. To adopt this to our approach, notice that the map $G\to C_i^G$ sending $g$ to $g C_i g^{-1}$ makes $G$ into a principal $Z(C_i)$-bundle over $C_i^G$. Thus our $\CX_\parab$ is a principal $Z(C_1) \times \cdots \times Z(C_k)$-bundle over $\CX_\parab'$, where $Z(C_1) \times \cdots \times Z(C_k)$ is viewed as a normal subgroup of $G_\parab/\BC^*$ with quotient group $G/\BC^*$. So we are in the situation of a tower of principal bundles and therefore $\CX_\parab \to X_\parab$ is a principal bundle.
\end{rem}

We have
\begin{prop}
Suppose $C_i$ are generic. Then the pullback maps $H^\bullet(X_\parab)\to H^\bullet(\CX_\parab/G_\parab)$ are injective.
\end{prop}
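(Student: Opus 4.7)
The plan is to identify the character stack $\CX_\parab/G_\parab$ with a product $X_\parab \times */\BC^*$ and then deduce injectivity from the Künneth formula.

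First I would note that the diagonal subgroup $\BC^* \subset G_\parab$, consisting of central scalar matrices $(\lambda\Id,\ldots,\lambda\Id)$, acts trivially on $\CX_\parab$, so the action factors through $G'_\parab := G_\parab/\BC^*$. By Theorem \ref{thm:generic char var} the induced $G'_\parab$-action is free with geometric quotient $X_\parab$; in particular $\CX_\parab \to X_\parab$ is a principal $G'_\parab$-bundle. Combined with the trivial residual $\BC^*$-action, this yields a stack equivalence
\[
\CX_\parab/G_\parab \simeq (\CX_\parab/G'_\parab) \times */\BC^* \simeq X_\parab \times */\BC^*,
\]
where the last step uses triviality of the resulting $\BC^*$-gerbe (the principal $G'_\parab$-bundle provides a global lift).

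Applying Künneth in the simplicial realization of $*/\BC^*$ via its bar construction gives
\[
H^\bullet(\CX_\parab/G_\parab) \cong H^\bullet(X_\parab) \otimes H^\bullet(*/\BC^*) = H^\bullet(X_\parab)[c],\quad \deg c = 2.
\]
Under this identification the augmentation map corresponds to projection onto the first factor, so its pullback is $h \mapsto h \otimes 1$, which is manifestly injective.

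The main obstacle is translating the stacky splitting into the explicit simplicial bar construction $G_\parab^\bullet \CX_\parab$ used in the paper. If a direct construction of the equivalence is too heavy, a backup is the Leray spectral sequence for the augmentation, with $E_2^{p,q} = H^p(X_\parab) \otimes H^q(*/\BC^*)$. The only differentials that could hit the bottom row $E_r^{\bullet,0}$ come from $E_r^{\bullet-r,\,r-1}$; these vanish automatically for even $r$ since $H^{\mathrm{odd}}(*/\BC^*)=0$, and for odd $r \geq 3$ one argues by multiplicativity after observing that the generator $c \in H^2(*/\BC^*)$ admits a lift to $H^2(\CX_\parab/G_\parab)$ given by the class $\frac{df}{f}$ for $f$ a suitable character extending the identity character on the diagonal $\BC^*$ (Corollary \ref{cor:classes on stack}(i)). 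This forces all relevant differentials to vanish on the bottom row, so the edge map $H^\bullet(X_\parab) \to H^\bullet(\CX_\parab/G_\parab)$ is injective as required.
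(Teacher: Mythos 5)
Both branches of your argument rest on the same unjustified (and in general false) premise: that the central $\BC^*\subset G_\parab$ can be split off integrally. For the primary route, $\CX_\parab/G_\parab$ is a $\BC^*$-gerbe over $X_\parab$, and trivializing it amounts to lifting the principal $G_\parab/\BC^*$-bundle $\CX_\parab\to X_\parab$ to a $G_\parab$-bundle along the central extension $1\to\BC^*\to G_\parab\to G_\parab/\BC^*\to 1$; your parenthetical ``the principal $G'_\parab$-bundle provides a global lift'' is circular, since the $G'_\parab$-bundle is the input, not a lift. This gerbe is genuinely nontrivial in general: already for the twisted character variety ($k=1$, $C_1=\zeta_n\Id$) the universal bundle exists only as a $\PGL_n$-bundle, which is exactly the failure of the lift. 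For the backup route, the character $f$ of $G_\parab$ you need does not exist in general: every character of $G_\parab=\GL_n\times\prod_i Z(C_i)$ is a product of powers of block determinants, so its restriction to the diagonal $\lambda\mapsto(\lambda\Id,\ldots,\lambda\Id)$ has the form $\lambda\mapsto\lambda^{\sum_j a_j m_j}$ with $m_j$ the block sizes; when the $m_j$ share a common factor (e.g.\ $k=1$, $C_1=\zeta_n\Id$, where every $m_j=n$) no character restricts to the identity on $\BC^*$. Such a character does exist under the running Assumption that $C_k$ is regular semisimple, but the proposition is needed without it.

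The backup argument is repairable precisely because the coefficients are $\BC$: take $f=\det$ on the $\GL_n$-factor; then $\frac1n\frac{df}{f}$ defines a class in $H^2(\CX_\parab/G_\parab)$ restricting to the generator of $H^2(*/\BC^*)$ on the fibers, and your Leray--Hirsch/multiplicativity argument then kills all differentials into the bottom row (equivalently, the gerbe class is torsion, hence invisible in complex cohomology). The paper sidesteps gerbes entirely: it interposes the connected subgroup $SG_\parab=\{(g_0,\ldots,g_k):\prod_i\det g_i=1\}$, which surjects onto $G_\parab/\BC^*$ with finite kernel $\Gamma$; connectedness forces $\Gamma$ to act trivially on cohomology, so the composite $H^\bullet(\CX_\parab/(G_\parab/\BC^*))\to H^\bullet(\CX_\parab/G_\parab)\to H^\bullet(\CX_\parab/SG_\parab)$ is an isomorphism, whence the first of these two maps is injective and the claim follows from the principal-bundle isomorphism $H^\bullet(X_\parab)\cong H^\bullet(\CX_\parab/(G_\parab/\BC^*))$.
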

\begin{proof}
We factor the map in question as a composition of pullbacks
\begin{equation}\label{eq:composition of pullbacks}
H^\bullet(X_\parab) \to  H^\bullet(\CX_\parab/(G_\parab/\BC^*)) \to H^\bullet(\CX_\parab/G_\parab).
\end{equation}
By the genericity assumption and Theorem \ref{thm:generic char var}, $\CX_\parab$ is a principal homogeneous space for the group $G_\parab/\BC^*$. Therefore the first arrow in \eqref{eq:composition of pullbacks} is an isomorphism.

Define a subgroup
\[
SG_\parab = \{(g_0,\ldots, g_k):\prod_{i=0}^k \det g_i = 1\}\subset G_\parab.
\]
The composition of the two natural homomorphisms
\begin{equation}\label{eq:from SG to PG}
SG_\parab \to G_\parab \to G_\parab/\BC^*
\end{equation}
is surjective with kernel a finite subgroup $\Gamma\subset SG_\parab$. Clearly, $SG_\parab$ is connected and therefore $\Gamma$ acts trivially on $H^\bullet(SG_\parab)$. Hence \eqref{eq:from SG to PG} induces an isomorphism on the cohomology. Therefore the composition of the following pullback maps is an isomorphism:
\[
H^\bullet(\CX_\parab/(G_\parab/\BC^*)) \to H^\bullet(\CX_\parab/G_\parab)\to
H^\bullet(\CX_\parab/SG_\parab).
\]
This implies that the second arrow in \eqref{eq:composition of pullbacks} is injective.
\end{proof}

It is easy to identify which classes constructed in Corollary \ref{cor:classes on stack} come from classes in $H^2(X_\parab)$. For the first type of classes note that any character $f:\prod_{i=1}^k Z(C_i)\to \BC^*$ which vanishes on $\BC^*$ defines a line bundle on $X_\parab$ in a natural way. The first Chern class of this line bundle can be represented on $\CX_\parab/G_\parab$ by the form $\frac{df}{f}$. So classes of the first type are Chern classes of these tautological line bundles. For the second type of classes, note that $d_1^* \omega=0$, $d\omega=0$ implies that the form $\omega$ descends to the quotient $X_\parab$. We obtain

\begin{thm}\label{thm:classes on character variety}
Let $\Sigma$ be a triangulated Riemann surface of genus $g$ with $k$ boundary components.
Suppose $C_i\in\GL_n$ for $i=1,\ldots,k$ is a generic tuple. Then we have the following classes in the cohomology of the parabolic character variety $H^2(X_\parab)$:
\begin{enumerate}
\item For each $i$ and each character $f:\prod_{i=1}^k Z(C_i)\to \BC^*$ which vanishes on $\BC^*$, we have the first Chern class of the associated line bundle. This class belongs to $F^1\cap\overline F^1\cap W_2$.
\item The class of the canonical holomorphic symplectic form on $X_\parab$ which is obtained from the gauge-invariant form on the framed character variety $\CX_\parab$
\[
\omega = \sum_{\tau\in T_2^\nondeg} \trace(g_{e_1(\tau)}^{-1} d g_{e_1(\tau)} \wedge d g_{e_2(\tau)}\, g_{e_2(\tau)}^{-1}) \in\Omega^2(\CX_\parab),
\]
where the sum goes over the triangles of a triangulation of $\Sigma$, where $e_1(\tau)$ resp. $e_2(\tau)$ denote the edges $0-1$ resp. $1-2$ of the triangle $\tau$ and $g_{e_1(\tau)}$, $g_{e_2(\tau)}$ are the functions $\CX_\parab\to \GL_n$ corresponding to the transfer matrices along these edges. This class belongs to $F^2\cap W_4$.
\end{enumerate}
\end{thm}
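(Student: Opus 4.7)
The plan is to deduce both parts of the theorem by descending the classes of Corollary \ref{cor:classes on stack} along the chain of pullback maps
\[
H^2(X_\parab) \xrightarrow{\;\sim\;} H^2\bigl(\CX_\parab/(G_\parab/\BC^*)\bigr) \hookrightarrow H^2(\CX_\parab/G_\parab),
\]
where the first map is an isomorphism because $\CX_\parab$ is a principal $G_\parab/\BC^*$-bundle over $X_\parab$ (Theorem \ref{thm:generic char var}), and the second is the injection from the proposition preceding the theorem. Since pullback preserves mixed Hodge structure, it will be enough to exhibit lifts to $H^2(X_\parab)$ whose images on the stack coincide with the classes of Corollary \ref{cor:classes on stack}, and the Hodge-type assertions will then be automatic.

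For part (i), I would observe that a character $f:\prod_{i=1}^k Z(C_i)\to\BC^*$ vanishing on the diagonal $\BC^*$ extends trivially along the $G$-factor to a character of $G_\parab$ that factors through $G_\parab/\BC^*$. The associated line bundle $L_f$ on $X_\parab$, obtained from the principal $G_\parab/\BC^*$-bundle $\CX_\parab\to X_\parab$, has first Chern class represented, after pullback to $\CX_\parab/G_\parab$, by the logarithmic closed $1$-form $\frac{df}{f}$ — exactly the representative from Corollary \ref{cor:classes on stack}(i). Since a logarithmic closed $1$-form of character type is of Hodge type $(1,1)$ of weight $2$, the class $c_1(L_f)\in F^1\cap\overline{F}^1\cap W_2 H^2(X_\parab)$.

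For part (ii), the form $\omega$ on $\CX_\parab$ arises as the pullback of $\omega_2+\tfrac13\omega_3$ from $BG_\parab$ along $\tilde g_{[\Sigma]}:\CX_\parab/G_\parab[2]\to BG_\parab$ (Section \ref{ssec:parabolic char stack}), so its image in $H^2(\CX_\parab/G_\parab)$ is well-defined, closed, and $G_\parab$-invariant. To see that this image comes from $H^2\bigl(\CX_\parab/(G_\parab/\BC^*)\bigr)$ — and hence from $H^2(X_\parab)$ — it suffices to check that the class vanishes along the diagonal $\BC^*$-direction of $G_\parab$; but the diagonal $\BC^*$ acts trivially on $\CX_\parab$, so the corresponding vertical vector field is identically zero, and the contraction vanishes tautologically. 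The Hodge bound $F^2\cap W_4$ for the class on $X_\parab$ is then inherited from the corresponding bound for $\omega$ in $H^4(BG_\parab)$, established in Proposition \ref{prop:class in BGpar} and the proposition following it, because both $\tilde g_{[\Sigma]}^*$ and the pullbacks in the chain above preserve the Hodge and weight filtrations.

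The main obstacle is the descent of part (ii) \emph{as a differential form} on $X_\parab$, i.e.\ showing that the $G_\parab$-invariant representative $\omega\in\Omega^2(\CX_\parab)$ is basic (horizontal) for the non-trivial factor $(G_\parab/\BC^*)/\BC^*$ of the gauge group. At the level of cohomology this is automatic from the argument above, but to justify calling it ``\emph{the} symplectic form'' one should verify horizontality pointwise; I would do this by using the $2$-cocycle identity \eqref{eq:2cocycle2} to rewrite $\omega$ locally around a vertex $v\in T_0$ as a sum of telescoping terms $(g_{e_1}|g_{e_2})$ over the edges incident to $v$, and then show that the contraction with an infinitesimal gauge vector field at $v$ produces a sum that collapses via Proposition \ref{prop:form vanishes} and the matrix equation \eqref{eq:matrix equation} at the adjacent triangles. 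Once horizontality is in place, the identification of $\omega$ with the classical parabolic symplectic form of \cite{guruprasad1997group} is a routine local check on a single triangle.
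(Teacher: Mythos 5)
Your proposal follows essentially the same route as the paper: descend the classes of Corollary \ref{cor:classes on stack} through the injective pullback $H^2(X_\parab)\hookrightarrow H^2(\CX_\parab/G_\parab)$, realizing (i) as the first Chern class of the tautological line bundle attached to $f$ and (ii) by descending the form itself to the quotient. The ``main obstacle'' you flag---horizontality of $\omega$ for the gauge action---is already encoded in the cocycle condition $d_1^*\omega=0$ established at the stack level, which is precisely what the paper invokes to descend $\omega$ as a differential form, so your proposed pointwise contraction check would merely re-derive it.
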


\begin{rem}
	Using notations of Section \ref{ssec:2 forms}, we can write $\omega$ as follows:
	\begin{equation}\label{eq:form on charvar}
	\omega = (\alpha_1|\beta_1|\alpha_1^{-1}|\beta_1^{-1}|\cdots |\alpha_g|\beta_g|\alpha_g^{-1}|\beta_g^{-1}| \gamma_1^{-1}| q_1| \gamma_1|\cdots|\gamma_k^{-1}| q_k| \gamma_k),
	\end{equation}
	where we denote the transfer matrices and the edges of triangulation by the same letters.
	 One can compare our expression for $\omega$ with the one given in \cite{guruprasad1997group} for the canonical symplectic form on the parabolic character variety and see that they coincide.
\end{rem}

\section{Twisted symplectic varieties}\label{sec:braid variety}

\subsection{Geometric representation of positive braids}
Here we associate varieties to positive braids (for the monoid of positive braids see \cite{garside1969braid}). We expect our construction to be essentially equivalent to \cite{shende2017legendrian}, but our motivation came from an attempt to present certain natural $U\times B$-equivariant sheaves on $G$ in an effective way suitable for computations of the $2$-form and cell decompositions.
\begin{defn}
The \emph{monoid of positive braids} on $n$ strands is given by the following presentation:
\[
\Br_n^+=\left<\sigma_1,\ldots,\sigma_{n-1}: \sigma_i \sigma_j=\sigma_j \sigma_i\;(|i-j|>1),\; \sigma_i \sigma_{i+1} \sigma_i = \sigma_{i+1} \sigma_i \sigma_{i+1} \right>.
\]
\end{defn}
This monoid will have a representation in the following monoid:
\begin{defn}\label{def:TSV}
A \emph{twisted symplectic variety over $\GL_n$} (TSV) is a triple $(X,f,\omega)$ where $X$ is a smooth algebraic variety over $\BC$, $f:X\to \GL_n$ is a regular morphism and $\omega\in\Omega^2(X)$ is a holomorphic $2$-form satisfying
\begin{equation}\label{eq:d omega}
d \omega = -\frac13 \trace\left((f^{-1} df)^{\wedge 3}\right).
\end{equation}
Two triples $(X,f,\omega)$ and $(X',f',\omega')$ are isomorphic if there exists an isomorphism $\varphi:X\to X'$ such that $\varphi^*\omega'=\omega$ and $f=f'\circ \varphi$.
\end{defn}

\begin{defn}The \emph{convolution} of two triples $(X,f,\omega)$ and $(X',f',\omega')$ is defined as the triple 
\[
(X,f,\omega)*(X',f',\omega'):=(X\times X', ff', \omega+\omega'+\trace(f^{-1} df\wedge df'\,f'^{-1})).
\]
\end{defn}
Using notations of Section \ref{ssec:2 forms}, we write the form as 
\[
\omega+\omega'+(f|f').
\]
Using identities from the proof of Proposition \ref{prop:class in BGpar}, it is easy to check that the convolution of two TSVs is again a TSV, and that the convolution is associative. TSVs form a monoidal category.

We will sometimes denote a TSV by $X$, the underlying variety also by $X$ and the corresponding morphism and form by $f_X$ and $\omega_X$ respectively.

For each $i=1,\ldots,n-1$ let
\[
\sigma_i=(\BC, f_i, 0)
\]
where $f_i(z)$ for each $z\in\BC$ is the $n\times n$ matrix obtained from the identity matrix by replacing the $2\times 2$ block at the intersection of rows $i, i+1$ and columns $i,i+1$ by the matrix
\[
\begin{pmatrix}
0 & 1 \\
1 & z
\end{pmatrix}.
\]
In other words, we can write
\[
f_i(z) = \tau_i + z e_{i+1,i+1} = \tau_i(\Id+z e_{i,i+1}),
\]
where $\tau_i\in S_n$ is the transposition and $e_{i,j}$ denotes the elementary matrix with $1$ in the row $i$ and column $j$ and $0$ everywhere else.
Then using the following identity it is easy to check that the elements $\sigma_i$ satisfy the relations of $\Br_n^+$:
\[
f_1(z_1) f_2(z_2) f_1(z_3) = \begin{pmatrix}
0 & 0 & 1\\
0 & 1 & z_1\\
1 & z_3 & z_2
\end{pmatrix} = f_2(z_3) f_1(z_2-z_1 z_3) f_2(z_1).
\]
In this calculation the form can be ignored because on $\sigma_{i} \sigma_{i+1} \sigma_{i}=\sigma_{i+1} \sigma_{i} \sigma_{i+1}$ and $\sigma_i \sigma_j=\sigma_j \sigma_i$ for $|i-j|>1$ the form is zero. So we obtain
\begin{prop}
	There exists a homomorphism of monoids $\rho$ from $\Br_n^+$ to the monoid of TSVs such that $\rho(\sigma_i)=\sigma_i$.
\end{prop}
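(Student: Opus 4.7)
The plan is to show that the defining relations of $\Br_n^+$ hold in the monoid of TSVs up to isomorphism. Since $\Br_n^+$ is defined by a presentation, it suffices, for each relation, to exhibit an explicit isomorphism of the corresponding convolution TSVs, i.e.\ a biholomorphism of the underlying varieties that intertwines the maps to $\GL_n$ and pulls back the 2-forms.

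First I would check that each $\rho(\sigma_i)=(\BC,f_i,0)$ is a TSV and that convolution preserves the TSV axiom of Definition~\ref{def:TSV}. For the first point, $f_i^{-1}df_i = e_{i,i+1}\,dz$ is a strictly upper-triangular, rank-one matrix-valued 1-form, hence $(f_i^{-1}df_i)^{\wedge 3}=0$, matching $d(0)=0$. For the second, the cocycle identity $3\,d(f|g)=\{f\}+\{g\}-\{fg\}$ derived in Section~\ref{ssec:d omega} directly yields that the sum of the TSV axioms for $(X,f,\omega)$ and $(X',f',\omega')$ together with the extra term $(f|f')$ gives the TSV axiom for the convolution.

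For the commutation relation with $|i-j|>1$, the swap $\BC^2\to\BC^2$, $(z_i,z_j)\mapsto(z_j,z_i)$, matches the $f$'s since $f_i(z_i)$ and $f_j(z_j)$ act on disjoint $2\times 2$ blocks. The single cross-term $(f_i|f_j)=\trace\bigl(e_{i,i+1}\,dz_i\wedge e_{j+1,j}\,dz_j\bigr)$ vanishes because $e_{i,i+1}e_{j+1,j}=0$ when $|i-j|>1$, so both sides carry the zero form. For the braid relation, the matrix identity $f_1(z_1)f_2(z_2)f_1(z_3)=f_2(z_3)f_1(z_2-z_1z_3)f_2(z_1)$ already exhibited (and its obvious shift to indices $i,i+1$) supplies the isomorphism $\varphi:\BC^3\to\BC^3$, $\varphi(z_1,z_2,z_3)=(z_3,z_2-z_1z_3,z_1)$, which matches the $f$'s. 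For the forms, I would show both $\omega_{\sigma_i*\sigma_{i+1}*\sigma_i}$ and $\omega_{\sigma_{i+1}*\sigma_i*\sigma_{i+1}}$ are identically zero, so that $\varphi$ preserves them trivially.

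The main (and essentially only) non-trivial point is the vanishing of these two 2-forms. The computation localises in the $3\times 3$ block on which $f_i,f_{i+1}$ act, so it is enough to treat $n=3,i=1$. The key observation is that $f_1(z_1)^{-1}df_1(z_1)$ has its unique entry in position $(1,2)$, and a short induction shows that after multiplying on the right by $f_2(z_2)$ the 1-form $(f_1f_2)^{-1}d(f_1f_2)$ stays strictly upper triangular with entries supported only in positions $(1,3)$ and $(2,3)$; pairing with $df_1(z_3)f_1(z_3)^{-1}=e_{21}\,dz_3$ then gives $0$ under the trace, so both $(f_1|f_2)$ and $(f_1f_2|f_1)$ vanish and hence $\omega_{\sigma_1*\sigma_2*\sigma_1}=0$. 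The mirror calculation gives $\omega_{\sigma_2*\sigma_1*\sigma_2}=0$. This is the step I expect to be the only real obstacle; once it is done, the monoid homomorphism $\rho$ is defined by sending a braid to any convolution of the $\rho(\sigma_i)$'s corresponding to a word representative, and the verified relations show the result is independent of the word.
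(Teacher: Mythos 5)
Your proposal is correct and follows essentially the same route as the paper: reduce to the defining relations of $\Br_n^+$, match the maps to $\GL_n$ via the explicit $3\times 3$ matrix identity (and the trivial block-swap for $|i-j|>1$), and observe that all the $2$-forms involved vanish so that the forms impose no further condition. The paper simply asserts the vanishing of the forms on both sides of the relations, whereas you carry out the short $e_{ij}$-calculus verification explicitly; both are the same argument.
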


For a permutation $\pi$, we denote by $\inv(\pi)$ the set of inversions,
\[
\inv(\pi) = \{(i,j):i<j\;\text{and}\;\pi(i)>\pi(j)\}.
\]
\begin{prop}\label{prop:positive lift}
	Let $\beta_\pi\in\Br_n^+$ be the positive lift of a permutation $\pi\in S_n$. Let $\rho(\beta) = (\BC^{l(\pi)}, f, \omega)$. Then 
	\begin{enumerate}
		\item $f$ is an isomorphism with the subset $\pi U_\pi^- \subset GL_n(\BC)$,
		\item $\omega=0$.
	\end{enumerate}
\end{prop}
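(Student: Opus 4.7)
The plan is to induct on $\ell := l(\pi)$, using a reduced expression $\pi = \pi' s_i$ with $l(\pi') = \ell - 1$. The base case $\ell = 0$ is immediate: $\beta_{\Id}$ is the empty braid, $\rho(\beta_\Id)$ is the monoidal unit $(\mathrm{pt}, \Id, 0)$, and $\Id\cdot U_{\Id}^- = \{\Id\}$. For the inductive step, $\beta_\pi = \beta_{\pi'}\sigma_i$ gives $\rho(\beta_\pi) = \rho(\beta_{\pi'}) * \sigma_i$; unwinding the convolution (Definition \ref{def:TSV} ff.) yields underlying variety $\BC^{\ell - 1} \times \BC$, morphism $f = f_{\pi'}\cdot f_i$, and form $\omega = \omega_{\pi'} + 0 + (f_{\pi'}\mid f_i)$. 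By the inductive hypothesis $\omega_{\pi'} = 0$ and I may write $f_{\pi'} = \pi' v'$ with $v' : \BC^{\ell - 1} \toiso U_{\pi'}^-$. I would record the crucial combinatorial fact: length-additivity $l(\pi' s_i) > l(\pi')$ is equivalent to $\pi'(i) < \pi'(i+1)$, equivalently $(i, i+1) \notin \inv(\pi')$.

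To verify (ii), the plan is to collapse the mixed term $(f_{\pi'}\mid f_i)$ using the cocycle identity \eqref{eq:2cocycle} together with Proposition \ref{prop:form vanishes}. Writing $f_i = \tau_i(\Id + z_\ell e_{i, i+1})$ and setting $v'' := \tau_i^{-1} v' \tau_i$, three applications of the cocycle identity let me expel the constant factors $\pi'$ and $\tau_i$, reducing to
\[
(f_{\pi'}\mid f_i) \;=\; (v' \mid \tau_i(\Id + z_\ell e_{i,i+1})) \;=\; (v' \tau_i \mid \Id + z_\ell e_{i, i+1}) \;=\; (v'' \mid \Id + z_\ell e_{i, i+1}).
\]
Both arguments of the final form lie in $U$, so it vanishes by Proposition \ref{prop:form vanishes}(iii). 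The only nontrivial input is the inclusion $v'' \in U$: conjugation by $\tau_i$ sends a root $e_{a,b}$ of $U_{\pi'}^-$ to $e_{\tau_i(a),\tau_i(b)}$, and the single case where this escapes $U$ (namely $(i,i+1)\mapsto (i+1,i)$) is excluded by the length-additivity condition above.

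For (i), the same manipulations give $f = \pi \cdot v''(\Id + z_\ell e_{i,i+1})$. I would then match root positions: the map $(a,b) \mapsto (\tau_i(a), \tau_i(b))$ sends $\inv(\pi')$ bijectively onto $\inv(\pi) \setminus \{(i, i+1)\}$ (a short case-check on the four kinds of pairs involving or not involving $\{i,i+1\}$), so $v''$ parameterizes the affine subspace $\Id + \sum_{(a,b)\in \inv(\pi)\setminus\{(i,i+1)\}} \BC e_{a,b}$, while the extra factor $\Id + z_\ell e_{i,i+1}$ supplies the missing $(i,i+1)$-coordinate. The product is then the standard parameterization of $U_\pi^-$ associated to the reduced expression $s_{i_1}\cdots s_{i_\ell}$, an algebraic isomorphism $\BC^\ell \toiso U_\pi^-$. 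The main obstacle in this plan is precisely this last step: the combinatorial verification that the transformed inversions assemble correctly, and the geometric check that the resulting polynomial map is not merely a surjection onto $U_\pi^-$ but an isomorphism. This is classical in the theory of Bruhat decompositions, and can be dispatched if needed by a parallel induction on $\ell$ in which the explicit form of the parameterization is tracked alongside the vanishing of $\omega$.
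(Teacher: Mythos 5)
Your proof is correct and is essentially the paper's argument in mirror image: the paper inducts by prepending a simple reflection on the left (writing $\tau_m(\Id+ze_{m,m+1})\pi u=\tau_m\pi(\Id+ze_{\pi^{-1}(m),\pi^{-1}(m+1)})u$ and noting $\inv(\tau_m\pi)=\inv(\pi)\cup\{(\pi^{-1}(m),\pi^{-1}(m+1))\}$), while you append on the right and conjugate $v'$ through $\tau_i$, but both proofs rest on the same ingredients — induction on length, the cocycle identity \eqref{eq:2cocycle} together with Proposition \ref{prop:form vanishes} to kill the cross term, and the inversion-set bookkeeping for the Bruhat parameterization. The final product-decomposition step you flag as "classical" is treated at the same level of brevity in the paper, so no gap relative to the original.
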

\begin{proof}
	We prove the first claim by induction on the length $l(\pi)$. Given that the claim holds for $\pi\in S_n$ let $m$ be such that $l(\tau_m \pi)>l(\pi)$. Let $f=\pi u$ for $u:\BC^{l(\pi)} \to U_\pi^-$. Then we have
\[
\tau_m(\Id + z e_{m,m+1}) \pi u = \tau_m \pi (\Id+ z e_{\pi^{-1}(m),\pi^{-1}(m+1)}) u.
\]
We have $\inv(\tau_m \pi) = \inv(\pi) \cup \{(\pi^{-1}(m),\pi^{-1}(m+1))\}$, so the claim follows.

To verify that $\omega=0$ we do the same induction step. So we have to verify that 
\[
(\tau_m(\Id + z e_{m,m+1})| \pi u ) = 0.
\]
This is clear from the following manipulations:
\[
(\tau_m(\Id + z e_{m,m+1})| \pi u ) = (\tau_m | (\Id + z e_{m,m+1})| \pi | u ) = 
(\tau_m | (\Id + z e_{m,m+1}) \pi | u )
\]
\[
= (\tau_m | \pi (\Id+ z e_{\pi^{-1}(m),\pi^{-1}(m+1)}) | u) = (\tau_m \pi | (\Id+ z e_{\pi^{-1}(m),\pi^{-1}(m+1)})  u) = 0.
\]
\end{proof}
\begin{example}
	The smallest example which is not a lift of a permutation is $\beta=\sigma_1^2$, $n=2$. We have $\rho(\beta)=(\BC^2,f,\omega)$ where $f:\BC^2 \to GL_2(\BC)$ is given by
	\[
	f(z_1,z_2) = \begin{pmatrix} 1 & z_2 \\ z_1 & 1+z_1 z_2 \end{pmatrix},
	\]
	and
	\[
	\omega = \trace\left((\tau-z_1 e_{1,1}) e_{2,2} d z_1 \wedge d z_2 e_{2,2} (\tau - z_2 e_{1,1})\right) = d z_1 \wedge d z_2.
	\]
\end{example}

\begin{example}\label{ex:torus TSV}
Another interesting kind of TSVs is constructed as follows. Let $1\leq i<i'\leq n$. Let
\[
s_{i,i'}=(\BC^*,f_{i,i'},0),
\]
where $f_{i,i'}:\BC^*\to T$ sends $z$ to the diagonal matrix
\[
f_{i,i'}(z)=(1,\cdots,z,\cdots, -z^{-1},\cdots,1)\quad\text{$z$ at position $i$, $z^{-1}$ at position $i'$.}
\]
\end{example}

\subsection{Torus equivariance}
Let $T\subset GL_n$ denote the torus consisting of diagonal matrices. Together with the inclusion map to $GL_n(\BC)$ and zero form it defines a TSV. For any $i\in\BZ$, we denote by $T^i$ the TSV with the underlying space $T$, the map $f_{T^i}:T\to GL_n(\BC)$ given by $t\to t^i$, and zero form. We have natural diagonal embeddings $T^{i+j}\to T^i * T^j$ for any $i,j\in \BZ$. If $t\in T$, we denote its coordinates by $t_i$. The action of $\pi\in S_n$ is denoted by ${}^\pi t = \pi t \pi^{-1}$.

If $X$ is a TSV, and $\pi\in S_n$, we construct a TSV $T\times_\pi X$ as follows. Start by taking the convolution
\[
T * X * T^{-1}.
\]
This comes with a map to $GL_n$ and a $2$-form. We restict this data to the subvariety
\[
T\times_\pi X:=\{(t,x,t')\in T\times X \times T\,:\, t = {}^\pi t'\}
\]
and obtain a TSV. As a variety, it is simply $T\times X$, and the map is given by $(t,x)\to {}^\pi t x t^{-1}$.

\begin{defn}\label{def:equivariant TSV}
	A TSV $X$ is called $\pi$-twisted torus equivariant, or simply $\pi$-equivariant, if we have an action of $T$ on $X$ such that the corresponding map $T\times_\pi X \to X$ is a morphism of TSVs.
\end{defn}

\begin{prop}
	If $X$ and $X'$ are respectively $\pi$, $\pi'$-equivariant TSVs, then $X*X'$ is $\pi\pi'$-equivariant.
\end{prop}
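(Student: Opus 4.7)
The plan is to define the $T$-action on $X*X'$ by
\[
t\cdot(x,x')\;:=\;({}^{\pi'}t\cdot x,\;t\cdot x'),
\]
which is a well-defined $T$-action since $T$ is abelian, and then to verify the two conditions of Definition \ref{def:equivariant TSV}. The formula is forced by the requirement on the map to $\GL_n$: a direct calculation using the individual equivariances gives
\[
f_X({}^{\pi'}t\cdot x)\,f_{X'}(t\cdot x')={}^{\pi\pi'}t\cdot f_X(x)\cdot({}^{\pi'}t)^{-1}\cdot{}^{\pi'}t\cdot f_{X'}(x')\cdot t^{-1}={}^{\pi\pi'}t\cdot f_X(x)f_{X'}(x')\cdot t^{-1},
\]
so the map is $\pi\pi'$-twisted equivariant.

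The main step is the form identity. Applying the given equivariances of $\omega_X$ and $\omega_{X'}$ separately, the pullback of $\omega_{X*X'}=\omega_X+\omega_{X'}+(f_X|f_{X'})$ through the action map equals $\omega_X+\omega_{X'}$ plus the three pieces
\[
({}^{\pi\pi'}t\,|\,f_X\,|\,({}^{\pi'}t)^{-1}),\qquad({}^{\pi'}t\,|\,f_{X'}\,|\,t^{-1}),\qquad({}^{\pi\pi'}t\cdot f_X\cdot({}^{\pi'}t)^{-1}\,|\,{}^{\pi'}t\cdot f_{X'}\cdot t^{-1}),
\]
while the form coming from the TSV structure on $T\times_{\pi\pi'}(X*X')$ equals $\omega_X+\omega_{X'}+(f_X|f_{X'})+({}^{\pi\pi'}t\,|\,f_Xf_{X'}\,|\,t^{-1})$. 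I would then verify that these two expressions agree.

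The key manipulation is to recognize that the sum of the three pieces above is, by iterated use of the cocycle identity \eqref{eq:2cocycle2} (in the form $(f_1|\cdots|f_k)+(g_1|\cdots|g_l)+(f_1\cdots f_k|g_1\cdots g_l)=(f_1|\cdots|f_k|g_1|\cdots|g_l)$), the single $6$-fold bar product
\[
({}^{\pi\pi'}t\,|\,f_X\,|\,({}^{\pi'}t)^{-1}\,|\,{}^{\pi'}t\,|\,f_{X'}\,|\,t^{-1}).
\]
Since $({}^{\pi'}t)^{-1}$ and ${}^{\pi'}t$ are inverses, Proposition \ref{prop:form vanishes}(ii) combined with \eqref{eq:2cocycle2} lets me merge them into the constant $1$, which is then absorbed via Proposition \ref{prop:form vanishes}(i); the whole expression collapses to $({}^{\pi\pi'}t\,|\,f_X\,|\,f_{X'}\,|\,t^{-1})$. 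A final use of the basic cocycle relation \eqref{eq:2cocycle} converts this into $(f_X|f_{X'})+({}^{\pi\pi'}t\,|\,f_Xf_{X'}\,|\,t^{-1})$, matching the required expression.

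The main obstacle is not conceptual but notational: one must carefully track the two layers of twisting by $\pi$ and $\pi'$ as the torus element $t$ moves through the cocycle expressions, in particular noticing that the inner factors $({}^{\pi'}t)^{-1}\cdot{}^{\pi'}t=1$ produce the cancellation that makes the computation close up. No ingredient beyond the machinery of Section \ref{ssec:2 forms} — the cocycle property and the elementary vanishing cases in Proposition \ref{prop:form vanishes} — is needed. As an equivalent organizational device, one could instead construct the morphism of TSVs $\phi:T\times_{\pi\pi'}(X*X')\to(T\times_\pi X)*(T\times_{\pi'}X')$ sending $(t,x,x')$ to $({}^{\pi'}t,x,t,x')$, and then compose with the convolution of the two given equivariance maps; the verification that $\phi$ is a morphism of TSVs is the same calculation packaged differently.
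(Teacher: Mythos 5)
Your proof is correct and is essentially the paper's argument: the same action $t\cdot(x,x')=({}^{\pi'}t\cdot x,\,t\cdot x')$, with the form identity reduced via the cocycle property \eqref{eq:2cocycle2} and the cancellation $({}^{\pi'}t)^{-1}\cdot{}^{\pi'}t=1$ from Proposition \ref{prop:form vanishes}. The paper merely packages this computation as a chain of TSV embeddings $T\times_{\pi\pi'}(X*X')\subset(T\times_\pi X)*(T\times_{\pi'}X')$ — exactly the alternative organization you describe in your last sentence.
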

\begin{proof}
	The action of $T$ on $X*X'$ is given by
	\[
	(t,x,x') \to ({}^{\pi'} t x, t x').
	\]
	We have a sequence of embeddings
	\[
	T\times_{\pi\pi'}(X * X') \subset T * X * X' * T^{-1}\subset T * X * T^0 * X' * T^{-1} \subset T * X * T^{-1} * T * X' * T^{-1},
	\]
	where the second embedding is given by 
	\[
	(t_1, x, x', t_2) \to (t_1, x, {}^{\pi'} t_2, x', t_2).
	\]
	This allows us to view $T\times_{\pi\pi'}(X * X')$ as the sub-TSV of $T * X * T^{-1} * T * X' * T^{-1}$ consisting of 6-tuples of the form
	\[
	({}^{\pi\pi'} t, x, {}^{\pi'} t, {}^{\pi'} t, x', t).
	\]
	Thus we have an inclusion of TSVs
	\[
	T\times_{\pi\pi'}(X * X')\subset (T\times_{\pi}X)*(T\times_{\pi'}X'),
	\]
	and since the actions $T\times_{\pi}X \to X$, $T\times_{\pi'}X' \to X'$ preserve the TSV structure, we obtain that the action
	\[
	T\times_{\pi\pi'}(X * X') \to X*X'
	\]
	also does.
\end{proof}

We have
\begin{prop}
The building block of the braid monoid representation $\sigma_i$ is $\tau_i$-equivariant
\end{prop}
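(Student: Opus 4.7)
The plan is to exhibit the $T$-action explicitly on $\sigma_i=\BC$ by the formula $t\cdot z := t_i t_{i+1}^{-1} z$, and then verify the two conditions implicit in Definition~\ref{def:equivariant TSV}: compatibility with the $\GL_n$-valued maps, and preservation of the convolution $2$-form.

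For the $\GL_n$-compatibility, I would check by direct matrix computation that ${}^{\tau_i}t\cdot f_i(z)\cdot t^{-1}=f_i(t_i t_{i+1}^{-1} z)$. Since ${}^{\tau_i}t$ is the diagonal matrix obtained from $t$ by swapping the entries at positions $i$ and $i+1$, conjugating the permutation part $\tau_i$ returns $\tau_i$ (the twist is engineered precisely for this), while the rank-one term $z e_{i+1,i+1}$ gets rescaled by $({}^{\tau_i}t)_{i+1,i+1}\, t_{i+1}^{-1}=t_i t_{i+1}^{-1}$.

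The main point is to check that the convolution $2$-form on $T\times_{\tau_i}\sigma_i$ vanishes, which is all that is needed since $\omega_{\sigma_i}=0$. The form on $T*\sigma_i*T^{-1}$ is $(t|f_i(z)|t'^{-1})$, restricted to the locus $t={}^{\tau_i}t'$, equivalently $t\tau_i=\tau_i t'$. Writing $f_i(z)=\tau_i u$ with $u=\Id+z e_{i,i+1}\in U$, I would repeatedly apply the cocycle identity \eqref{eq:2cocycle2} together with Proposition~\ref{prop:form vanishes} to insert and remove bars whenever a constant, a $(U,B)$-pair, or a $(g,g^{-1})$-pair appears. Concretely: inserting a bar (cost $(\tau_i|u)=0$) and then absorbing another one (cost $(t|\tau_i)=0$) gives $(t|f_i(z)|t'^{-1})=(t\tau_i|u|t'^{-1})$, which by the constraint equals $(\tau_i t'|u|t'^{-1})$. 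Splitting as $(\tau_i t'|u t'^{-1})+(u|t'^{-1})$ kills the second summand (since $u\in U$, $t'^{-1}\in B$); for the first, write $u t'^{-1}=t'^{-1}u'$ with $u'=t' u t'^{-1}\in U$, and the cocycle then yields $(\tau_i t'|t'^{-1}u')=(\tau_i|u')+(t'|t'^{-1}u')$, both vanishing (the first because $\tau_i$ is constant, the second because $(t'|t'^{-1}u')=(t'|t'^{-1})+(1|u')=0$ after another cocycle split).

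I expect no real obstacle: the group-theoretic identity is immediate, and the form computation is a routine application of the bar manipulation rules illustrated in Proposition~\ref{prop:bruhat form}, with the only care needed in tracking which factors lie in $T$, $U$, or are constant so that Proposition~\ref{prop:form vanishes} can be invoked at each step.
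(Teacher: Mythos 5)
Your proposal is correct and follows essentially the same route as the paper: the same matrix identity ${}^{\tau_i}t\,f_i(z)\,t^{-1}=f_i(t_it_{i+1}^{-1}z)$ identifies the action, and the same bar-insertion/removal manipulations via Proposition~\ref{prop:form vanishes} reduce the convolution form to terms that vanish on the locus $t={}^{\tau_i}t'$. The only cosmetic difference is that the paper simplifies the form on all of $T*\sigma_i*T^{-1}$ down to $({}^{\tau_i}t|t'^{-1})$ before restricting, whereas you restrict first; the computations are otherwise identical.
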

\begin{proof}
We have ($t\in T$)
\begin{equation}\label{eq:t past sigma}
{}^{\tau_i} t \;\tau_i (\Id + z e_{i,i+1}) t^{-1} =  \tau_i (\Id + t_{i}t_{i+1}^{-1} z e_{i,i+1}).
\end{equation}
So the action sends $z$ to $t_{i}t_{i+1}^{-1} z$.
Next we calculate the form on $T * \sigma_i * T^{-1}$ in coordinates $t, z, t'$:
\[
(t | \tau_i (\Id + z e_{i,i+1}) | t'^{-1}) = (t  \tau_i | (\Id + z e_{i,i+1})  t'^{-1})
= (\tau_i {}^{\tau_i} t | t'^{-1} (\Id +  t'_{i} t'^{-1}_{i+1} z e_{i,i+1}))
\]
\[
=(\tau_i | {}^{\tau_i} t | t'^{-1} | (\Id +  t'_{i} t'^{-1}_{i+1} z e_{i,i+1})) = ({}^{\tau_i} t | t'^{-1}) +  (\tau_i | {}^{\tau_i} t  t'^{-1} | (\Id +  t'_{i} t'^{-1}_{i+1} z e_{i,i+1})).
\]
The second summand vanishes, so we are left with
\[
({}^{\tau_i} t | t'^{-1}) = -\sum_{k=1}^n d \log t_{\tau_i(k)}\wedge d \log t_k',
\]
which clearly vanishes on $T\times_{\tau_i} \sigma_i$.
\end{proof}

The conclusion is
\begin{cor}\label{cor:equivariance}
	Let $\beta\in\Br_n^+$, and let $\pi(\beta)\in S_n$ be the associated permutation. Then the TSV $\rho(\beta)$ is $\pi(\beta)$-equivariant.
\end{cor}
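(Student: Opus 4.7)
The plan is to induct on the length of $\beta$ as a word in the generators $\sigma_1,\ldots,\sigma_{n-1}$. Write any representative word $\beta=\sigma_{i_1}\sigma_{i_2}\cdots\sigma_{i_k}$. Since $\rho$ is a monoid homomorphism into the convolution monoid of TSVs, we have
\[
\rho(\beta)=\rho(\sigma_{i_1})*\rho(\sigma_{i_2})*\cdots*\rho(\sigma_{i_k}).
\]
The associated permutation is $\pi(\beta)=\tau_{i_1}\tau_{i_2}\cdots\tau_{i_k}$, where the map $\Br_n^+\to S_n$, $\sigma_i\mapsto\tau_i$ is well-defined because the generators $\tau_i$ satisfy the braid and commutation relations in $S_n$.

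By the preceding proposition, each building block $\rho(\sigma_{i_j})=\sigma_{i_j}$ is $\tau_{i_j}$-equivariant. An immediate induction on $k$ using the compatibility of twisted equivariance with convolution (the proposition stating that the convolution of a $\pi$-equivariant TSV with a $\pi'$-equivariant TSV is $\pi\pi'$-equivariant) then shows that $\rho(\beta)$ is $\tau_{i_1}\tau_{i_2}\cdots\tau_{i_k}$-equivariant, i.e. $\pi(\beta)$-equivariant.

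There is essentially no obstacle: both ingredients have already been established, and the only thing to observe is that the two constructions $\beta\mapsto\rho(\beta)$ and $\beta\mapsto\pi(\beta)$ are compatible monoid homomorphisms into, respectively, the monoid of TSVs under convolution and the symmetric group, with the natural notion of twisted equivariance interpolating between them. The one subtlety worth a line in the write-up is that the twisted equivariance structure on $\rho(\beta)$ thereby constructed is a priori dependent on the chosen word, but since the action on the underlying variety is determined by the $T$-action (through the formula ${}^{\pi(\beta)}t\cdot f_{\rho(\beta)}(x)\cdot t^{-1}=f_{\rho(\beta)}(t\cdot x)$), and since $\rho$ itself is well-defined on $\Br_n^+$, the equivariant structure is canonical.
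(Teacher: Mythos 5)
Your proof is correct and is exactly the argument the paper intends: the corollary is stated as an immediate consequence of the two preceding propositions (each generator $\sigma_i$ is $\tau_i$-equivariant, and convolution of $\pi$- and $\pi'$-equivariant TSVs is $\pi\pi'$-equivariant), combined by induction on word length. Your extra remark on independence of the chosen word is a reasonable observation but not required, since the paper's notion of equivariance is a property of the TSV $\rho(\beta)$, which is already well-defined on $\Br_n^+$.
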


\begin{rem}
The TSV $s_{i,i'}$ from Example \ref{ex:torus TSV} is not equivariant for the identity permutation:
\[
(t|f_{i,i'}(z)|t^{-1}) = (d\log t_i - d\log t_{i'}) \wedge d\log z + d\log z\wedge(d\log t_{i'} - d\log t_i)
\]
\[
=2(d\log t_i - d\log t_{i'}) \wedge d\log z\;\neq\; 0.
\]
On the other hand, for the transposition $\tau=\tau_{i,i'}=(i,i')$, we obtain
\[
({}^\tau t|f_{i,i'}(z)|t^{-1}) = (d\log t_{i'} - d\log t_{i}) \wedge d\log z + d\log z\wedge(d\log t_{i'} - d\log t_i) 
\]
\[
+ d\log t_{i'}\wedge d\log t_i + d\log t_{i}\wedge d\log t_{i'}
=0.
\]
So $s_{i,i'}$ is equivariant for the transposition $\tau_{i,i'}$.
\end{rem}

\subsection{Restriction to the torus}
\begin{defn}
	Let $X$ be a TSV. We denote by $X_0\subset X$ the preimage $f_X^{-1}(B)$, where $B$ is the subgroup of upper-triangular matrices. The composition $X_0\to B \to T$ is denoted by $g$.
\end{defn}
The right hand side of \eqref{eq:d omega} vanishes on $T$, so we obtain:
\begin{prop}
	The restriction of $\omega_X$ to $X_0$ is closed.
\end{prop}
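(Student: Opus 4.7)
The plan is to use the defining identity of a TSV, namely
\[
d\omega_X \;=\; -\tfrac13 \trace\!\left((f_X^{-1}\,df_X)^{\wedge 3}\right),
\]
and show that the right-hand side vanishes identically on $X_0$. Since restriction commutes with the exterior derivative, this immediately gives $d(\omega_X|_{X_0}) = 0$.

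The key observation is that on $X_0$ the map $f_X$ factors through $B$, so the Maurer--Cartan form $A := f_X^{-1}\,df_X$, pulled back to $X_0$, takes values in the Lie algebra $\mathfrak b$ of upper-triangular matrices. Writing out the trace in matrix entries,
\[
\trace(A \wedge A \wedge A) \;=\; \sum_{i,j,k} A_{ij}\wedge A_{jk}\wedge A_{ki},
\]
and noting that $A_{ij} = 0$ whenever $i > j$, any nonzero summand must satisfy $i \le j \le k \le i$, forcing $i = j = k$. The remaining terms are $A_{ii}\wedge A_{ii}\wedge A_{ii}$, which vanish because each $A_{ii}$ is a scalar-valued $1$-form (so $A_{ii}\wedge A_{ii} = 0$). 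Hence $\trace(A^{\wedge 3})|_{X_0} = 0$, and we are done.

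The argument is essentially a one-line linear algebra observation about traces of cubes of upper-triangular matrix-valued $1$-forms, combined with the defining equation of a TSV, so there is no substantial obstacle — one only needs to be careful that ``$f_X$ lands in $B$'' really does force $f_X^{-1} df_X$ to land in $\mathfrak b$, which is immediate from the fact that $B$ is a subgroup (so its tangent bundle is a sub-bundle of $T\!\GL_n$ with fibre $\mathfrak b$ after left translation).
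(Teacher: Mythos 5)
Your proof is correct and is exactly the argument the paper intends: the paper justifies the proposition in one line by noting that the right-hand side of \eqref{eq:d omega} vanishes once $f_X$ factors through $B$, and your computation (restricting to indices $i=j=k$ and using that a scalar $1$-form squares to zero) is just the spelled-out version of that observation. No gaps.
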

We also have
\begin{prop}
	Suppose $X$ is $\pi$-equivariant. Then $X_0$ is preserved by the $T$-action, and $\omega_X|_{X_0}$ is  $T$-invariant. The composition map $X_0\to B \to T$ is $T$-equivariant with respect to the action of $T$ on itself given by $(t,t') \to {}^\pi t t^{-1} t'$.
\end{prop}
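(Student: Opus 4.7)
The plan is to unpack the $\pi$-equivariance hypothesis, which says that the action map $a:T\times_\pi X\to X$, $a(t,x)=t\cdot x$, is a morphism of TSVs. Since $T\times_\pi X$ sits inside $T * X * T^{-1}$ as the subvariety $\{t={}^\pi t'\}$, its TSV structure has map $(t,x)\mapsto {}^\pi t\cdot f_X(x)\cdot t^{-1}$ and $2$-form $\omega_X+({}^\pi t\mid f_X\mid t^{-1})$. The morphism-of-TSVs condition then yields the two identities
\[
f_X(t\cdot x)\eq {}^\pi t\cdot f_X(x)\cdot t^{-1},\qquad a^*\omega_X\eq \omega_X+({}^\pi t\mid f_X\mid t^{-1}).
\]

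From the first identity the preservation of $X_0$ is immediate: if $f_X(x)\in B$ then $f_X(t\cdot x)\in T\cdot B\cdot T=B$, so $t\cdot x\in X_0$. For the equivariance of $g:X_0\to T$, write the Bruhat decomposition $f_X(x)=g(x)u(x)$ with $g(x)\in T$, $u(x)\in U$. Using that $T$ is abelian and normalizes $U$,
\[
{}^\pi t\cdot g(x)u(x)\cdot t^{-1}\eq \bigl({}^\pi t\cdot t^{-1}\cdot g(x)\bigr)\cdot \bigl(t u(x)t^{-1}\bigr)
\]
is the $TU$-decomposition of $f_X(t\cdot x)$, hence $g(t\cdot x)={}^\pi t\cdot t^{-1}\cdot g(x)$, which is exactly the claimed twisted action on $T$.

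The main (and slightly subtle) point is the $T$-invariance of $\omega_X|_{X_0}$. For $t_0\in T$, let $\mu_{t_0}(x)=t_0\cdot x$; invariance amounts to $\mu_{t_0}^*(\omega_X|_{X_0})=\omega_X|_{X_0}$, and this is the restriction of the identity $a^*\omega_X=\omega_X+({}^\pi t\mid f_X\mid t^{-1})$ to the slice $\{t_0\}\times X_0$, i.e.\ to $dt=0$. Expanding by the cocycle property,
\[
({}^\pi t\mid f_X\mid t^{-1})\eq ({}^\pi t\mid f_X)+({}^\pi t\cdot f_X\mid t^{-1}),
\]
and recalling that $(p\mid q)=\trace(p^{-1}dp\wedge dq\,q^{-1})$, I would inspect each term: in $({}^\pi t\mid f_X)$ the factor $({}^\pi t)^{-1}d({}^\pi t)$ involves only $dt$ while $df_X$ involves only $dx$ (since $f_X$ depends only on $x$); in $({}^\pi t\cdot f_X\mid t^{-1})$ the factor $d(t^{-1})t$ involves only $dt$ while the other factor is a combination of $dt$ and $dx$ contributions, and $dt\wedge dt=0$. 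Thus every surviving contribution is of type $dt\wedge dx$, which vanishes upon restriction to $\{t_0\}\times X_0$; hence $\mu_{t_0}^*(\omega_X|_{X_0})=\omega_X|_{X_0}$.

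The place where a naive approach might go wrong — and which will be the main obstacle to a sharper statement later — is that the form $({}^\pi t\mid f_X\mid t^{-1})$ does \emph{not} vanish identically as a $2$-form on $T\times X_0$; a direct calculation with $f_X=gu$ produces nonzero mixed $d\log t\wedge d\log g$ contributions. What saves us, and all that is needed for $T$-invariance, is the weaker observation that the pure $dx\wedge dx$ component of this form vanishes — which is automatic because $f_X$ does not depend on $t$. This same observation will be important in Section \ref{sec:decomposing} when analyzing the restricted form on cells.
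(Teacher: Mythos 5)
Your argument is correct and essentially identical to the paper's: both compute $a^*\omega_X=\omega_X+({}^\pi t\,|\,f_X\,|\,t^{-1})$ and observe that every term of the correction carries at least one $dt$, hence vanishes on the slice $\{t_0\}\times X_0$ (the paper merely writes the correction out explicitly using $f(X_0)\subset B$, obtaining $\sum_i d\log t_i\wedge(\cdots)$, and reads off invariance the same way). One harmless slip: $dt\wedge dt$ is \emph{not} zero on a torus of rank $>1$ — the correction genuinely contains $d\log t_i\wedge d\log t_j$ terms when $\pi\neq\Id$ — but such terms also die upon restriction to a fixed $t_0$, so your conclusion is unaffected.
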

\begin{proof}
Let $X=(X,f,\omega)$. Clearly, $X_0$ is preserved by the action. The pullback along the action map $a:T\times X_0\to X_0$ must be given by the restriction of the form on $T * X_0 * T^{-1}$, so we calculate this restriction. It is given in coordinates $(t,x)$ by
\[
\omega|_{X_0} + (^\pi t|f(x)|t^{-1}).
\]
Since $f(X_0)\subset B$, this simplifies to
\[
\omega|_{X_0} + \sum_{i=1}^n\left( d\log t_{\pi^{-1}(i)} \wedge d\log f_{i,i}(x) - (d\log t_{\pi^{-1}(i)}+d\log f_{i,i}(x))\wedge d\log t_i\right).
\]
So we obtain
\begin{equation}\label{eq:moment map}
a^*\omega|_{X_0} = \omega|_{X_0} + \sum_{i=1}^n d\log t_i \wedge (d\log f_{\pi(i),\pi(i)}(x) + d\log f_{i,i}(x) - d\log t_{\pi(i)}).
\end{equation}
In particular, we see that the form is invariant. The statement about equivariance is clear.
\end{proof}

\begin{rem}
	The triple $(X_0,g,\omega|_{X_0})$ can be viewed as a $\pi$-equivariant TSV over $T$.
\end{rem}

\begin{rem}
	Equation \eqref{eq:moment map} in the case $\pi=\Id$, if we knew that the $2$-form is non-degenerate, would say that the logarithm of $X_0\to B \to T$ is the moment map for the $T$-action. 
\end{rem}

\begin{rem}
Notice that when $\pi\neq \Id$, the action of $T$ on itself above is non-trivial. So we can use this action to partially trivialize the family $X_0\to T$. More precisely, let 
\[
T^\pi=\kernel(\pi - \Id),\quad T_\pi = \cokernel(\pi - \Id),
\]
 where $\pi:T\to T$ is the homomorphism induced by $\pi$. Choose a splitting $T\cong T_1\times T_\pi$. Then the $T$-action trivializes the family in the $T_1$ direction. Restrict the family $X_0\to T$ to $T_\pi$. The resulting object is denoted $X_1$. It is endowed with a map to $T_\pi$ and a fiberwise action of $T^\pi$.
 The dimension of $T_\pi$ equals that of $T^\pi$ and equals to the number of cycles in $\pi$. If the restriction of $\omega$ to $X_1$ is non-degenerate, then the map to $T_\pi$ can be interpreted as the moment map for the $T^\pi$-action. Furthermore, we notice that $\BC^*\subset T^\pi$ acts trivially, which is mirrored by the fact that the image of the map to $T_\pi$ is contained in the subtorus
 \[
\{t\in T_\pi:\det(t)=\sign(\pi)\}.
 \]
\end{rem}

\begin{defn}\label{def:sympred}
	For $t\in T$ satisfying $\det(t)=\sign(\pi)$, we denote by $Y_\beta(t)$ the quotient stack $g^{-1}(t)/(T^\pi/\BC^*)$, where $T^\pi=\{t'\in T: \pi(t')=t'\}$.
\end{defn}

In Proposition \ref{prop:Y quotient} we will show that for generic values of $t$, $Y_\beta(t)$ coincides with the affine GIT quotient.

\subsection{Stratification}\label{ssec:stratification}
Let $\beta\in\Br_n^+$. Let $l=l(\beta)$ be the length of $\beta$ and choose a presentation 
\[
\beta=\sigma_{i_l} \cdots \sigma_{i_1}.
\]
We denote the permutation group on $n$ elements by $W$. Denote
\[
\pi_k = \tau_{i_k} \tau_{i_{k-1}} \cdots \tau_{i_1}\in W \qquad\text(k=0,1,\ldots,l),
\]
so that the associated permutation of $\beta$ is given by $\pi(\beta)=\pi_l$.

The space $\rho(\beta)_0$ is the closed subvariety of $\BC^l$ consisting of tuples $z_1,\ldots,z_l$ such that
\[
f_{i_l}(z_l) \cdots f_{i_2}(z_2) f_{i_1}(z_1) \in B,
\]
where $f_i(z)=\tau_i (\Id + z e_{i,i+1})$. 
Let $\rho(\beta)_0 \to W^{l+1}$ be the map that sends $z=(z_1,\ldots,z_l)$ to the sequence $p(z)=(p_0,p_1,\ldots,p_l)$ such that
\[
f_{i_k}(z_k) \cdots f_{i_1}(z_1) \in B p_k B\qquad(k=0,\ldots,l).
\]
For each point $z$ we have $p_0=p_l=\Id$. Furthermore:
\begin{prop}\label{prop:walk}
	For any point of $\rho(\beta)_0$ and every $k$ we have:
	\begin{equation}\label{eq:walk}
	p_{k+1} = \begin{cases}
	\tau_{i_{k+1}} p_k & \text{if $\tau_{i_{k+1}} p_k>p_k$,}\\
	\tau_{i_{k+1}} p_k \;\text{or}\; p_k & \text{if $\tau_{i_{k+1}} p_k<p_k$.}
	\end{cases}
	\end{equation}
\end{prop}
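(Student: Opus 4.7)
The plan is to identify $p_{k+1}$ directly using the classical calculus of left multiplication of Bruhat cells by a simple reflection. Writing
\[
y_k := f_{i_k}(z_k)\cdots f_{i_1}(z_1),
\]
so that $y_k \in B p_k B$ by definition, the key observation is that
\[
f_{i_{k+1}}(z_{k+1}) = \tau_{i_{k+1}}\bigl(\Id + z_{k+1}\, e_{i_{k+1},\,i_{k+1}+1}\bigr),
\]
whose second factor lies in $U\subset B$. Hence $f_{i_{k+1}}(z_{k+1}) \in \tau_{i_{k+1}} B \subset B \tau_{i_{k+1}} B$, and consequently
\[
y_{k+1} \;=\; f_{i_{k+1}}(z_{k+1})\, y_k \;\in\; B \tau_{i_{k+1}} B \cdot B p_k B.
\]

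Next I will invoke the standard formula for the product of a simple-reflection Bruhat cell and an arbitrary Bruhat cell in $G=\GL_n$: for any simple reflection $s=\tau_i$ and any $w\in W$,
\[
B s B \cdot B w B \;=\; \begin{cases} B\, sw\, B & \text{if } sw > w, \\ B\, sw\, B \,\sqcup\, B w B & \text{if } sw < w. \end{cases}
\]
Specializing to $s=\tau_{i_{k+1}}$ and $w=p_k$ and using that the Bruhat cells partition $G$ (so that the cell containing $y_{k+1}$ is uniquely determined) gives $p_{k+1} \in \{\tau_{i_{k+1}} p_k,\, p_k\}$ in general, and forces $p_{k+1} = \tau_{i_{k+1}} p_k$ whenever $\tau_{i_{k+1}} p_k > p_k$. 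This is exactly the content of \eqref{eq:walk}.

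The argument reduces entirely to a well-known fact about the Bruhat decomposition, so no serious obstacle arises. The only point worth flagging, to keep the logic honest, is the verification that $f_{i_{k+1}}(z_{k+1})$ lies in the single Bruhat cell $B \tau_{i_{k+1}} B$ (rather than in a larger union of cells that could enlarge the range of possibilities for $p_{k+1}$); this is immediate from the explicit form of $f_i(z)$ as $\tau_i$ times an element of $U$.
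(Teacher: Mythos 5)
Your proof is correct. The containment $f_{i_{k+1}}(z_{k+1})\in \tau_{i_{k+1}}U\subset B\tau_{i_{k+1}}B$ is immediate from the explicit formula for $f_i(z)$, and the standard product rule $BsB\cdot BwB = BswB$ (if $sw>w$) resp. $BswB\sqcup BwB$ (if $sw<w$), combined with the fact that the Bruhat cells partition $G$, gives exactly the dichotomy \eqref{eq:walk}. Note that the proposition only asserts membership of $p_{k+1}$ in the indicated set, not that both options are realized in the descent case, so nothing further is required.

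Your route is genuinely different from the paper's. The paper does not prove Proposition \ref{prop:walk} in isolation: it runs a single induction on $k$ that simultaneously establishes Propositions \ref{prop:walk}, \ref{prop:cells} and \ref{prop:cell2form}. At each step it maintains an explicit factorization $u_k p_k t^{(k)} v_k$ of the partial product and multiplies by $f_{i_{k+1}}(z_{k+1})$ in coordinates; in the case $\tau_i p_k<p_k$ it exhibits the closed locus $Z=\{z_{k+1}=-z_0(x)\}$ on which the product descends to $B\tau_i p_k B$ and shows that on the complement it stays in $Bp_kB$, with the new coordinate $a=z_{k+1}+z_0(x)\in\BC^*$. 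This explicit bookkeeping is what the paper needs anyway to identify each cell $C_p$ with $\BC^{U_p}\times(\BC^*)^{S_p}$, to track the map to $T$, and to compute the restriction of the $2$-form. Your argument is cleaner and more economical for the walk statement alone, at the cost of invoking the Bruhat product formula as a black box and yielding none of the finer structural information that the subsequent propositions require.
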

A sequence $p_0,\ldots,p_m$ satisfying the conditions $p_0=p_m=\Id$ and the conditions \eqref{eq:walk} will be called a \emph{walk}. If $p_{k+1}>p_k$, we will say that we \emph{go up}, if $p_{k+1}<p_k$ we \emph{go down}, and if $p_{k+1}=p_k$ we \emph{stay}. Then the rules \eqref{eq:walk} say that 
\[
\text{if we can go up, we must go up, otherwise we can stay.}
\]
This map $z\to p(z)$ is a continuous map when $W$ is viewed as a poset with Bruhat ordering. As explained in Section \ref{ssec:poset}, we obtain a stratification of $\rho(\beta)_0$
\[
\rho(\beta)_0 = \bigsqcup_{p} C_p
\]
 whose cells $C_p$ correspond to walks $p=(p_0,\ldots,p_l)$. It turns out, every cell has a simple shape. We denote by $U_p$, $D_p$ and $S_p$ the sets of positions where we go up, down and stay respectively. Note that $|U_p|=|D_p|$, and therefore $|S_p|=l(\beta)-2|U_p|$.
\begin{prop}\label{prop:cells}
	For any walk $p$, the cell $C_p$ is $T$-equivariantly isomorphic to $\BC^{U_p} \times (\BC^*)^{S_p}$.
\end{prop}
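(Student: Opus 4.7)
The plan is to induct on the length $l$ of $\beta$, carrying along at each step $k$ the full Bruhat normal form
\[
g_k := f_{i_k}(z_k)\cdots f_{i_1}(z_1) = u_k\, p_k\, t_k\, v_k,\quad u_k \in U^-_{p_k^{-1}},\ t_k \in T,\ v_k \in U,
\]
guaranteed by \eqref{eq:bruhat}. Only the pair $(u_k, t_k)$ matters for the next step, since $f_{i_{k+1}}(z_{k+1})$ acts on the left and $v_k$ can be absorbed into the $U$-factor of any new Bruhat decomposition.

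The inductive step is a single-crossing computation. Setting $i := i_{k+1}$ and $\pi := p_k$, one rewrites
\[
g_{k+1} = (\tau_i u_k' \tau_i^{-1})\, \tau_i\pi\, t_k\, v_k,\quad u_k' := (\Id + z_{k+1} e_{i, i+1}) u_k.
\]
The conjugate $\tau_i u_k' \tau_i^{-1}$ lies in $U$ except possibly for its $(i+1, i)$-entry $c := (u_k)_{i, i+1} + z_{k+1}$. Three subcases arise, matching the walk rule. If $\tau_i\pi > \pi$, then $(i, i+1) \notin \inv(\pi^{-1})$ forces $(u_k)_{i,i+1}=0$, so $c = z_{k+1}$; pushing $\tau_i$ past $\pi$, the scalar $c$ absorbs as the new generator of $u_{k+1} \in U^-_{(\tau_i\pi)^{-1}}$ in the direction $(\pi^{-1}(i), \pi^{-1}(i+1))$, which is a new inversion of $(\tau_i\pi)^{-1}$; thus $z_{k+1}$ contributes a free $\BC$-parameter. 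If $\tau_i\pi < \pi$, the $\mathrm{SL}_2$-level Bruhat decomposition of $\Id + c\, e_{i+1, i}$ (nontrivial precisely for $c \neq 0$) yields: $g_{k+1} \in B\pi B$ iff $c \neq 0$ (\emph{staying}, with $c \in \BC^*$ serving as a new coordinate), and $g_{k+1} \in B\tau_i\pi B$ iff $c = 0$ (\emph{going down}, imposing a linear equation with coefficient $1$ in $z_{k+1}$ that uniquely determines $z_{k+1}$ from the previous data).

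Iterating produces $C_p \cong \BC^{U_p}\times (\BC^*)^{S_p}$, where one takes the original $z_k$ as the coordinate at each up-step, the scalar $c_k$ as the coordinate at each stay-step, and eliminates $z_k$ at each down-step via its unique solution. The $T$-equivariance follows from \eqref{eq:t past sigma}: $T$ scales each $z_k$ by an explicit character, and the new stay-coordinate $c_k = (u_k)_{i,i+1} + z_{k+1}$ transforms by the same character (both summands do), so the resulting isomorphism intertwines the $T$-action with the evident linear $T$-action on $\BC^{U_p} \times (\BC^*)^{S_p}$.

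The main obstacle is the stay-versus-down bifurcation in the second regime: one must verify that the lower-triangular contribution $c\, e_{i+1, i}$ absorbs into a new Bruhat normal form across $\tau_i\pi\, t_k\, v_k$ in such a way that the dichotomy is governed precisely by $c \neq 0$ versus $c = 0$, and that the new Bruhat data $(u_{k+1}, t_{k+1})$ depend algebraically and $T$-equivariantly on the old data together with $c$. This reduces to a direct computation using the $\mathrm{SL}_2$ factorization of $\Id + c\,e_{i+1, i}$ and the standard commutation relations between simple-root subgroups and the torus, but bookkeeping the many commutations cleanly so that the $\BC^*$-valued coordinate emerges canonically is the one step that requires real care.
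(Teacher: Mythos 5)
Your proof is correct and follows essentially the same route as the paper: induction over the crossings while maintaining a Bruhat normal form $u_k p_k t_k v_k$, with the up/stay/down trichotomy governed by the scalar $c=(u_k)_{i,i+1}+z_{k+1}$ (the paper's $a=z+z_0(x)$), the stay cell contributing $c\in\BC^*$ and the down cell being the closed locus $c=0$. The only (harmless) differences are a small bookkeeping slip about where the new up-step generator lands (it enters as $e_{\pi^{-1}(i),\pi^{-1}(i+1)}$ to the \emph{right} of $\tau_i\pi$, equivalently as $e_{i,i+1}$ in the left factor $U^-_{(\tau_i\pi)^{-1}}$), and that the paper deduces the $T$-weight of $z_0$ from $T$-invariance of the stratification rather than by direct computation.
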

Before proving this statement, we give an explicit description of the action of $T$ on $(\BC^*)^{S_p}$, of the map from $(\BC^*)^{S_p}$ to $T$, and the restriction of the $2$-form on each cell:
\begin{prop}\label{prop:cell2form}
	Denote the coordinates on $(\BC^*)^{S_p}$ by $a_k$ for $k\in S_p$. For $k\in S_p$, let $\varphi_k:(\BC^*)^{S_p} \to T$ be the map
	\[
	\varphi_k((a_{k'})_{k'\in S}) = (1,\ldots,1,\underset{\text{position}\;p_{k}^{-1}(i_{k}+1)}{a_k},1,\dots,1,\underset{\text{position}\;p_{k}^{-1}(i_{k})}{-a_k^{-1}},1,\ldots,1).
	\]
	\begin{enumerate}
	\item The map $(\BC^*)^{S_p}\to T$ is given by $\prod_{k\in S_p} \varphi_k$.
	\item The action of $T$ on $(\BC^*)^{S_p}$ is given as follows: $(t_1,\ldots,t_n)\in T$ acts by multiplying each coordinate $a_k$ by $t_{\pi_{k}^{-1}(i_{k}+1)} t_{\pi_{k}^{-1}(i_{k})}^{-1}$.
	\item The form $\omega$ is given by
	\[
	\sum_{k,k'\in S_p:\;k>k'} \trace(d\log \varphi_k \wedge d\log \varphi_{k'}).
	\]
	\end{enumerate}	
\end{prop}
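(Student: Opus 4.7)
The plan is to proceed by induction on $k = 0, 1, \ldots, l$, explicitly parameterizing the cell $C_p$ by tracking at each step the Bruhat factorization of the partial product $F_k := f_{i_k}(z_k) \cdots f_{i_1}(z_1)$. Write $F_k = v_k \cdot p_k \cdot t_k \cdot u_k$ according to (\ref{eq:bruhat}), with $v_k \in U_{p_k^{-1}}^-$, $t_k \in T$, $u_k \in U$. All three claims of the proposition will be read off once this inductive parameterization is in place.

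At step $k+1$ one analyzes how left-multiplication by $f_{i_{k+1}}(z_{k+1}) = \tau_{i_{k+1}}(\Id + z_{k+1} e_{i_{k+1}, i_{k+1}+1})$ transforms the factorization, according to the three cases of Proposition \ref{prop:walk}. If $k+1 \in U_p$ (going up, $\tau_{i_{k+1}} p_k > p_k$), the multiplication automatically lands in $B p_{k+1} B$ and $z_{k+1}$ becomes a free coordinate $b_{k+1} \in \BC$, with $t_{k+1} = t_k$. If $k+1 \in D_p$ (going down), the requirement $F_{k+1} \in B p_{k+1} B$ with $p_{k+1} = \tau_{i_{k+1}} p_k < p_k$ forces $z_{k+1}$ to be a rational function of the earlier coordinates, and $t_{k+1}$ differs from $t_k$ by an explicit factor. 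If $k+1 \in S_p$ (staying), the condition $F_{k+1} \in B p_k B$ defines a codimension-one locus in $z_{k+1}$ which, after solving for the remaining scalar, is parameterized by a free $a_{k+1} \in \BC^*$; a direct computation of $\tau_{i_{k+1}}(\Id + z_{k+1} e_{i_{k+1}, i_{k+1}+1}) \cdot v_k p_k \cdot t_k u_k$ shows that the resulting torus correction is exactly $t_{k+1} = t_k \cdot \varphi_{k+1}(a_{k+1})$, with the positions $p_k^{-1}(i_{k+1})$, $p_k^{-1}(i_{k+1}+1)$ emerging from conjugating the elementary matrix past $v_k p_k$.

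Parts (1) and (2) follow directly. Iterating the above yields $t_l = \prod_{k \in S_p} \varphi_k(a_k)$, which is (1) (the order being immaterial since $T$ is abelian). For (2), Corollary \ref{cor:equivariance} guarantees $\pi_l$-equivariance; iterating (\ref{eq:t past sigma}) to move $t \in T$ past each $f_{i_{k+1}}(z_{k+1})$ in turn shows that $z_{k+1}$ is scaled by $t_{\pi_k^{-1}(i_{k+1})} t_{\pi_k^{-1}(i_{k+1}+1)}^{-1}$, which, using the relation $\pi_{k+1}^{-1}(i_{k+1}) = \pi_k^{-1}(i_{k+1}+1)$ derived from $\pi_{k+1} = \tau_{i_{k+1}} \pi_k$, matches the scaling $a_k \mapsto t_{\pi_k^{-1}(i_k+1)} t_{\pi_k^{-1}(i_k)}^{-1} \, a_k$ appearing in the formula of (2).

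For part (3), the form $\omega = (f_{i_l}(z_l) \,|\, \cdots \,|\, f_{i_1}(z_1))$ is computed by systematically applying the cocycle identity (\ref{eq:2cocycle2}) to group consecutive up-steps and consecutive down-steps into positive lifts of permutations. By Proposition \ref{prop:positive lift}, the form vanishes on each such monotone block, and the cross-terms between a monotone block and its neighbors vanish by Proposition \ref{prop:form vanishes}(i) and (iii) (using that these blocks lie in $\pi\cdot U$). The surviving contributions are exactly the cross-terms $(\varphi_k(a_k) \,|\, \varphi_{k'}(a_{k'}))$ between pairs of stays $k > k'$; on diagonal torus factors these evaluate directly to $\trace(d\log \varphi_k \wedge d\log \varphi_{k'})$. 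The main obstacle is the bookkeeping of how each $\varphi_{k'}(a_{k'})$ is commuted past the intervening permutation factors — the resulting conjugations redistribute the non-trivial torus entries — and reconciling this against the fact that $\varphi_k$ by definition already places its non-trivial entries at the conjugated positions $p_k^{-1}(i_k)$ and $p_k^{-1}(i_k+1)$ is precisely what makes all the cross-terms assemble into the stated sum with no leftover factors.
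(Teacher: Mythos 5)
Your overall strategy coincides with the paper's: induct on $k$, maintain the Bruhat factorization of the partial product, and split into the up/down/stay cases of Proposition \ref{prop:walk}. Parts (1) and (2) are essentially right, with one small hole in (2): the cell coordinate at a stay is $a_{k+1}=z_{k+1}+z_0(x)$, where $z_0$ is the critical value determined by the earlier coordinates, so knowing how $T$ scales $z_{k+1}$ does not by itself tell you how it scales $a_{k+1}$. You need the extra observation (which the paper makes) that the walk stratification is $T$-invariant, hence $z_0$ must transform by the same character as $z_{k+1}$, and only then does $a_{k+1}$ transform by that character.

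The genuine gap is in part (3), which is where the content of the proposition lives. Two problems. First, your appeal to Proposition \ref{prop:positive lift} to kill "monotone blocks" only applies to runs of up-steps (those do assemble into $\rho(\beta_\pi)$ for a reduced word); a down-step is not part of a positive lift at all --- there the coordinate is \emph{constrained} to $z=-z_0(x)$, and the vanishing of its contribution is a separate cancellation, namely that $\tau_i(\Id - z_0 e_{i,i+1})(\Id + z_0 e_{i,i+1})\tau_i=\Id$ absorbs the new factor into the existing $u_k$. Second, and more seriously, the final paragraph identifies "the bookkeeping of how each $\varphi_{k'}(a_{k'})$ is commuted past the intervening factors" as the main obstacle and then asserts it closes up with no leftover terms; but that bookkeeping \emph{is} the proof. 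The paper carries it out by maintaining, at every step $k$, the exact invariant $\omega=\omega_{\log}+(u_k\,|\,p_k t^{(k)} v_k)$ and verifying in each of the three cases --- via Proposition \ref{prop:bruhat form} and repeated insertion/removal of bars --- that $\omega_{\log}$ is unchanged on up- and down-steps and is incremented by exactly $({}^{p_k^{-1}}t'(a)\,|\,t^{(k)}(x))$ on a stay; summing these increments gives the stated double sum, and the boundary term $(u_l\,|\,p_l t^{(l)} v_l)$ dies because $p_l=\Id$. Without exhibiting such an invariant and checking the three cases, the claim that all cross-terms "assemble into the stated sum with no leftover factors" is unsupported, so as written the proof of (3) is incomplete.
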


\begin{proof}[Proof of Propositions \ref{prop:walk}, \ref{prop:cells}, \ref{prop:cell2form}]
Consider the walk up to the position $k$. Let $X_{k}$ be the locally closed subset of $\BC^{k}$ consisting of tuples $z_1,\ldots,z_{k}$ such that for each $j\leq k$ we have 
\[
f_{i_j}(z_j) \cdots f_{i_1}(z_1) \in B p_j B.
\]
We will show our results by induction $k$. We factor the natural map $X_k\to B p_k B$ as the pointwise product $u_k p_k t^{(k)} v_k$ where $u_k:X_k\to U_{p_k}^-$, $t^{(k)}:X_k \to T$ and $v_k:X_k \to U$. We also decompose the $2$-form on $X_k$ as
\[
\omega = \omega_{\log} + (u_k |p_k t^{(k)} v_k).
\]
Now let us see what happens when we pass to $X_{k+1}$. Let $i=i_{k+1}$, $z=z_{k+1}$. Consider
\[\tag{*}
f_i(z) u_k(x) p_k t^{(k)}(x) v_k(x) = \tau_i (\Id + z e_{i,i+1}) u_k(x) p_k t^{(k)}(x) v_k(x).
\]
We have $2$ cases:

Case 1. Suppose $\tau_i p_k > p_k$. Apply \eqref{eq:bruhat} to $\tau_i (\Id + z e_{i,i+1}) u_k(x)$ to write
\[
(*)=u_k'(x,z) \tau_i (\Id + z'(x,z) e_{i,i+1}) p_k t^{(k)}(x) v_k(x).
\]
Since $\tau_i p_k>p_k$, we have $(i,i+1)\notin \inv(p_k^{-1}$). So ${}^{p_k^{-1}} (\Id + z'(z,x) e_{i,i+1}) \in U$ and we obtain
\[
(*)= u_k'(x,z) \tau_i p_k t^{(k)}(x) v_k'(x,z)
\]
for $u_k', v_k':X\times\BC \to U$. We see that the new Bruhat cell is $\tau_i p_k$ (if we can go up we go up), $X_{k+1}=X_k\times\BC$, and the map to $T$ did not change. The form on $X_{k+1}$ is given by
\[
\omega_{\log} + (u_k |p_k t^{(k)} v_k) + (\tau_i (\Id + z e_{i,i+1})| u_k p_k t^{(k)} v_k) = \omega_{\log} + (\tau_i (\Id + z e_{i,i+1})| u_k | p_k t^{(k)} v_k).
\]
\[
\omega_{\log} + (\tau_i | (\Id + z e_{i,i+1}) u_k | p_k t^{(k)} v_k) = \omega_{\log} + (u_k'(x,z) |\tau_i (\Id + z'(x,z) e_{i,i+1}) | p_k t^{(k)} v_k),
\]
where we used Proposition \ref{prop:bruhat form} to obtain the last equality. It is clear now that by inserting and removing $|$ we can move $(\Id + z e_{i,i+1})$ past $p_k$ and $t^{(k)}$ and arrive at
\[
\omega_{\log} + (u_k'(x,z) | \tau_i p_k t^{(k)}(x) v_k'(x,z)).
\]
So $\omega_{\log}$ on $X_{k+1}$ is the same as the one on $X_k$.

Case 2. Suppose $\tau_i p_k < p_k$. Write $p_k = \tau_i p'$ for $p'=\tau_i p_k$. Applying \eqref{eq:bruhat} to $u_k \tau_i$, we obtain
\[
u_k(x) \tau_i = (\Id + z_0(x) e_{i,i+1}) \tau_i u_k'(x),
\]
where $u_k':X_k\to U_{\tau_i}^+$. We decompose $\BC\times X_k$ into an open and a closed set. The closed set is 
\[
Z:=\{(z,x)\in\BC\times X_k:z=-z_0(x)\}.
\]
On $Z$ we have
\[
(*) = \tau_i (\Id - z_0(x) e_{i,i+1}) (\Id + z_0(x) e_{i,i+1}) \tau_i u_k'(x) p' t^{(k)} v_k(x) = u_k'(x) p' t^{(k)} v_k(x).
\]
So $p_{k+1}=p'$, which means we are going down.
On the complement of $Z$ we have
\[
(*) = \tau_i (\Id + z e_{i,i+1}) (\Id + z_0(x) e_{i,i+1}) \tau_i u_k'(x) p' t^{(k)} v_k(x).
\]
The product $\tau_i (\Id + z e_{i,i+1}) (\Id + z_0(x) e_{i,i+1}) \tau_i$ is non-trivial only in the $i$-th and $i+1$-th rows and columns where it looks like
\[
\begin{pmatrix} 0 & 1 \\ 1 & 0 \end{pmatrix} \begin{pmatrix} 1 & z + z_0(x) \\ 0 & 1 \end{pmatrix} \begin{pmatrix} 0 & 1 \\ 1 & 0 \end{pmatrix} = \begin{pmatrix} 1 & 0\\z + z_0(x) & 1 \end{pmatrix}.
\]
Let $a(z,x)=z+z_0(x)$, this is a map $\BC\times X_k\setminus Z\to \BC^*$. We Bruhat-decompose
\begin{equation}\label{eq:bruhat2by2}
\begin{pmatrix} 1 & 0 \\ a & 1 
\end{pmatrix} = \begin{pmatrix} -a^{-1} & 1 \\ 0 & a \end{pmatrix} \begin{pmatrix} 0 & 1 \\ 1 & 0 \end{pmatrix} \begin{pmatrix} 1 & a^{-1} \\ 0 & 1 \end{pmatrix} = \begin{pmatrix} -a^{-1} & 0 \\ 0 & a \end{pmatrix} \begin{pmatrix} 1 & -a \\ 0 & 1 \end{pmatrix} f_i(a^{-1}).
\end{equation}
We have $\tau_i p'>p'$, so, as shown in Case 1, (*) is in the Bruhat cell $\tau_i p'=p_k$. Hence outside $Z$ we stay. At this point Proposition \ref{prop:walk} has been proven. We split Case $2$ into two subcases:

Case 2a. Suppose $p_{k+1}=\tau_{k+1} p_k$. Then $X_{k+1}=Z\cong X_k$. Clearly, the map to $T$ does not change. The new form is
\[
\omega_{\log} + (\tau_i (\Id + z e_{i,i+1})| u_k | \tau_k p' t^{(k)} v_k) = \omega_{\log} + (\tau_i (\Id + z e_{i,i+1})| u_k| \tau_k | p' t^{(k)} v_k)
\]
\[
= \omega_{\log} + (\tau_i (\Id + z e_{i,i+1})| (\Id + z_0(x) e_{i,i+1}) |\tau_i u_k'(x) | p' t^{(k)}(x) v_k(x)) 
\]
\[
= \omega_{\log} + (u_k'(x) | p' t^{(k)}(x) v_k(x)),
\]
so $\omega_{\log}$ did not change.

Case 2b. Suppose $p_{k+1}=p_k$. Then $X_{k+1} = \BC\times X_k\setminus Z \cong \BC^* \times X_k$, where the coordinate on $\BC^*$ is $a=z+z_0(x)$. Let $t':\BC^*\to T$ be the map that sends $a$ to the matrix with $-a^{-1}$ at position $i$, $a$ at position $i+1$ and $1$ at all the other diagonal entries. Because of \eqref{eq:bruhat2by2}, the new map to $T$ is given by 
\[
{}^{p_k^{-1}} t'(a) t^{(k)}(x).
\]
To calculate the new form, we begin as in Case 2a and obtain
\[
\omega_{\log} + (\tau_i (\Id + a e_{i,i+1}) \tau_i |u_k'(x) | p' t^{(k)}(x) v_k(x))
\]
\[
= \omega_{\log} + ( t'(a) (\Id  - a e_{i,i+1}) f_i(a^{-1}) |u_k'(x) | p' t^{(k)}(x) v_k(x)).
\]
Since $( t'(a) (\Id  - a e_{i,i+1})| f_i(a^{-1})=0$ (after expanding the definition, we only get terms $da\wedge da$ which are zero), we can continue as 
\[\tag{**}
=\omega_{\log} + ( t'(a) (\Id  - a e_{i,i+1}) | f_i(a^{-1}) |u_k'(x) | p' t^{(k)}(x) v_k(x)). 
\]
Let $f_i(a^{-1}) u_k'(x)|p' t^{(k)} v_k=u_k''(x) p_k t^{(k)} v_k'(x)$ be the Bruhat decomposition. As in the proof of Case 1, we obtain
\[
(f_i(a^{-1}) |u_k'(x) | p' t^{(k)} v_k) = (u_k''(x) | p_k t^{(k)}(x) v_k'(x)).
\]
So we obtain
\[
(**) = \omega_{\log} + ( t'(a) (\Id  - a e_{i,i+1}) | u_k''(x) | p_k t^{(k)}(x) v_k'(x)) 
\]
\[
= \omega_{\log} + ( t'(a) (\Id  - a e_{i,i+1}) u_k''(x) | p_k t^{(k)}(x) v_k'(x)).
\]
Let $t'(a) (\Id  - a e_{i,i+1}) u_k''(x) = u_k'''(a,x) t'(a)$. Then
\[
(**) = \omega_{\log} + (u_k'''(a,x)| t'(a)|p_k t^{(k)}(x) v_k'(x)) = \omega_{\log} + (u_k'''(a,x)| t'(a)|p_k| t^{(k)}(x)| v_k'(x))
\]
\[
\omega_{\log} + (u_k'''(a,x)| p_k {}^{p_k^{-1}}t'(a) t^{(k)}(x)| v_k'(x)) + ({}^{p_k^{-1}}t'(a) | t^{(k)}(x)).
\]
So the new $\omega_{\log}$ is given by
\[
\omega_{\log} + ({}^{p_k^{-1}}t'(a) | t^{(k)}(x)).
\]
Note that when we finally arrive at $k=l$, we have
\[
\omega = \omega_{\log} + (u_l |p_l t^{(l)} v_l),
\]
where the second term is zero because $p_l=\Id$.

We have seen that in each case we have $X_{k+1}=X_k$, $X_{k+1}=\BC\times X_k$ or $X_{k+1}=\BC^*\times X_k$. This implies Proposition \ref{prop:cells}. When we follow how the map to $T$ and $\omega_{\log}$ is updated, we obtain claims (i) and (iii) of Proposition \ref{prop:cell2form}. To see how $T$ acts on $(a_1,\ldots,a_m)$ we note that, in notations of Case 2b, the coordinate $z$ is multiplied by $t_{\pi_{k}^{-1}(i+1)} t_{\pi_{k}^{-1}(i)}^{-1}$ by \eqref{eq:t past sigma}. Since the walk stratification is invariant under $T$, the critical value $z_0$ must be also multiplied by the same factor. Hence the sum $a=z+z_0$ is also multiplied by the same factor. This proves claim (ii) of Proposition \ref{prop:cell2form}.
\end{proof}	

We recognize in Proposition \ref{prop:cell2form} convolution of TSVs from Example \ref{ex:torus TSV}.
\begin{cor}\label{cor:cell is a convolution}
	For any walk $p$, the torus component of the cell $C_p$, together with the $2$-form, the map to $T$, and the $T$-action is $T$-equivariantly isomorphic to the convolution of TSVs of the form $s_{p_k^{-1}(i_k+1),p_k^{-1}(i_k)}$ where $k$ goes over $k\in S_p$ and the convolution is performed from right to left.
\end{cor}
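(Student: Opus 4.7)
The plan is to deduce Corollary \ref{cor:cell is a convolution} essentially directly from Proposition \ref{prop:cell2form}, by matching each of the three structures (map to $T$, $T$-action, closed $2$-form) appearing there with the corresponding structure on a convolution of the rank-one TSVs $s_{i,i'}$ from Example \ref{ex:torus TSV}. Concretely, I would set up the candidate isomorphism as the identity on the underlying variety $(\BC^*)^{S_p}$, where for each $k\in S_p$ the coordinate $a_k$ is identified with the $\BC^*$-coordinate of a copy of $s_{i_k',i_k''}$ with $(i_k',i_k'') = (p_k^{-1}(i_k+1), p_k^{-1}(i_k))$; ordering the factors from right to left as $k$ decreases produces the claimed convolution.

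Next I would verify the three items of Proposition \ref{prop:cell2form} one by one. For the map to $T$: by the definition of convolution, the composite map of the convolution to $T\subset\GL_n$ is the pointwise product of the factor maps $f_{i_k',i_k''}(a_k)$, which places $a_k$ in diagonal slot $p_k^{-1}(i_k+1)$ and $-a_k^{-1}$ in slot $p_k^{-1}(i_k)$; this is exactly $\prod_{k\in S_p}\varphi_k$. For the $T$-action: each $s_{i_k',i_k''}$ was shown (in the remark just before Definition \ref{def:sympred}) to be $\tau_{i_k',i_k''}$-equivariant, and the convolution of $\pi$- and $\pi'$-equivariant TSVs is $\pi\pi'$-equivariant by the $T$-equivariance proposition proved earlier. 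Pushing $t$ past the factors on the left gives on the $k$-th coordinate the character $t_{\pi_k^{-1}(i_k+1)} t_{\pi_k^{-1}(i_k)}^{-1}$ (where $\pi_k$ is the cumulative product of the transpositions $\tau_{i_k',i_k''}$ for $k'\leq k$, which coincides with the permutation $\pi_k$ from Section \ref{ssec:stratification}); this matches Proposition \ref{prop:cell2form}(ii).

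For the $2$-form, each factor $s_{i_k',i_k''}$ carries the zero form, so the form on the convolution reduces to the sum of the cocycle correction terms $(f_{X_{k_1}} \cdots f_{X_{k_{j-1}}} \mid f_{X_{k_j}})$. Iterated application of the cocycle identity \eqref{eq:2cocycle2} expands this into $\sum_{k>k'} (\varphi_k\mid\varphi_{k'})$; and since each $\varphi_k$ takes values in $T$, this form equals $\sum_{k>k'}\trace(d\log\varphi_k\wedge d\log\varphi_{k'})$, matching Proposition \ref{prop:cell2form}(iii) exactly.

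The only real points to watch are (a) the convention $i<i'$ in Example \ref{ex:torus TSV}, which is harmless: the defining formula $f_{i,i'}$ makes sense for any distinct pair and the equivariance computation in the remark does not use the ordering, so we may take $(i_k',i_k'')$ in whatever order the walk produces; and (b) bookkeeping of the permutations $\pi_k$, to make sure the torus character controlling the $T$-action on the $k$-th factor really is the one produced by pushing $t$ successively past $s_{i_{k'}',i_{k'}''}$ for $k'>k$. Both are routine once the correspondence between factors of the convolution and stays $k\in S_p$ is fixed; this is the main bookkeeping step, and there is no further analytic content to the proof.
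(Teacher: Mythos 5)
Your overall strategy is the same as the paper's: take the identity map on $(\BC^*)^{S_p}$ and check that the three pieces of data in Proposition \ref{prop:cell2form} match the convolution data. Items (i) and (iii) are indeed immediate, exactly as you argue (for (iii), since all $\varphi_k$ are $T$-valued the cocycle terms $(\varphi_{k_m}\cdots\varphi_{k_{j+1}}\mid\varphi_{k_j})$ split additively into $\sum_{k>k'}(\varphi_k\mid\varphi_{k'})$), and your point (a) about the ordering convention is moot for a cleaner reason: $k\in S_p$ means $\tau_{i_k}p_k<p_k$, which forces $p_k^{-1}(i_k+1)<p_k^{-1}(i_k)$.

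The gap is in item (ii), which is precisely the one non-obvious point. You claim that the cumulative product of the transpositions $\tau_{p_{k'}^{-1}(i_{k'}+1),\,p_{k'}^{-1}(i_{k'})}$ over stays $k'\le k$ ``coincides with the permutation $\pi_k$ from Section \ref{ssec:stratification}.'' It does not: the correct identity is that this product equals $p_k^{-1}\pi_k$, proved by an induction over \emph{all} crossings (at a non-stay both $p_k$ and $\pi_k$ acquire the same left factor $\tau_{i_k}$, so $p_k^{-1}\pi_k$ is unchanged; at a stay only $\pi_k$ changes, and $p_k^{-1}\tau_{i_k}p_k$ is exactly the transposition $(p_k^{-1}(i_k),p_k^{-1}(i_k+1))$). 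A check on $\beta=\sigma_1^4$ with the walk (go up, stay, stay, go down) shows the two disagree: at $k=2$ the cumulative product is $\tau_1$ while $\pi_2=\Id$. With your identification, pushing $t$ past the factors to the right of the $k$-th one would twist by $\pi_{k-1}$ rather than by $p_{k-1}^{-1}\pi_{k-1}$, and the resulting character on $a_2$ in that example would come out as $t_1/t_2$ instead of the correct $t_2/t_1$. So the character formula you quote is correct, but the derivation you give does not produce it; you need the identity ``cumulative product $=p_k^{-1}\pi_k$'' and then the index manipulation $({}^{p_{k-1}^{-1}\pi_{k-1}}t)_{p_k^{-1}(i_k)}=t_{\pi_{k-1}^{-1}(i_k)}=t_{\pi_k^{-1}(i_k+1)}$ to land on Proposition \ref{prop:cell2form}(ii).
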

\begin{proof}
	Only the statement about $T$-action is not obvious from Proposition \ref{prop:cell2form}. For each $k\in S_p$, we claim that the partial convolution
	\[
	\prod_{k'\leq k,\,k'\in S_p} s_{p_k^{-1}(i_k+1),p_k^{-1}(i_k)}
	\]
	is equivariant for the permutation $p_k^{-1} \pi_k$. To show this, we go from $k=1$ to $k=l$. If $k\notin S_p$, then both $p_k$ and $\pi_k$ are multiplied by the transposition $\tau_{i_k}$, so that $p_k^{-1} \pi_k$ does not change. If $k\in S_p$, it is multiplied by the transposition $(p_k^{-1}(i_k+1),p_k^{-1}(i_k))$, as expected.
	
	Hence the torus action for the convolution is given by multiplying the diagonal matrix 
	\[
	(1,\ldots,1,\underset{\text{position}\;p_{k}^{-1}(i_{k}+1)}{a_k},1,\dots,1,\underset{\text{position}\;p_{k}^{-1}(i_{k})}{-a_k^{-1}},1,\ldots,1)
	\]
	on the left by ${}^{p_k^{-1} \pi_k} t$ and on the right by ${}^{p_{k-1}^{-1} \pi_{k-1}} t^{-1}$. So $a_k$ is multiplied by
	\[
	\frac{({}^{p_k^{-1} \pi_k} t)_{p_{k}^{-1}(i_{k}+1)}}{({}^{p_k^{-1} \pi_k} t)_{p_{k}^{-1}(i_{k})}} = \frac{t_{\pi_k^{-1}(i_k+1)}}{t_{\pi_k^{-1}(i_k)}}.
	\]
\end{proof}

\subsection{Some examples}

\begin{example}[$T(2,4)$]
As an example, consider $n=2$ and $\beta=\sigma_1^4$. The variety $X=\rho(\beta)$ is $\BC^4$ with coordinates $z_1,\ldots z_4$. Multiplying the matrices $\begin{pmatrix} 0 & 1\\1 & z_i\end{pmatrix}$ produces
\[
\begin{pmatrix}
z_{2} z_{3} + 1 & z_{1} z_{2} z_{3} + z_{1} + z_{3} \\
z_{2} z_{3} z_{4} + z_{2} + z_{4} & z_{1} z_{2} z_{3} z_{4} + z_{1} z_{2} + z_{1} z_{4} + z_{3} z_{4} + 1
\end{pmatrix}.
\]
So $X_0$ is given by the equation $z_{2} z_{3} z_{4} + z_{2} + z_{4}=0$. The map $X_0\to \BC^*\subset \BC^{*2}$ is given by $z_2 z_3 +1$. The action of $\lambda\in\BC^*$ is given by 
\[
\lambda (z_1,z_2,z_3,z_4) = (\lambda z_1, \lambda^{-1} z_2, \lambda z_3, \lambda^{-1} z_4).
\]
There are two possible walks:
\begin{itemize}
	\item $(1,\tau_1,1,\tau_1,1)$ (go up, go down, go up, go down). The second walk corresponds to the closed subvariety $z_2=z_4=0$ in $X_0$. So it is isomorphic to $\BC^2$ with coordinates $(z_1,z_3)$, the action is by scaling, and the map to $\BC^*$ is constant $1$. 
	\item $(1,\tau_1,\tau_1,\tau_1,1)$ (go up, stay, stay, go down) corresponds to the open subvariety $z_2\neq 0$ in $X_0$. This condition implies $z_4\neq 0$. Then $z_3$ can be computed by
\[
z_3 = -\frac{z_2+z_4}{z_2 z_4}.
\]
So we obtain $\BC\times\BC^{*2}$ with coordinates $z_1,z_2,z_4$. The map to $\BC^*$ is given by $-\frac{z_2}{z_4}$. We can gauge out the $\BC^*$-action by setting $z_4=1$, so the quotient is the variety $\BC\times \BC^*$  with coordinates $(z_1,z_2)$ mapped to $\BC^*$ by the map $(z_1,z_2)\to -z_2$. 
\end{itemize}
We see that the preimage of $t\in\BC^*\setminus\{1\}$ intersects only the open subvariety, the intersection is isomorphic to $\BC\times\BC^*$ and the quotient by the $\BC^*$-action is $\BC$. On the other hand, the preimage of $t=1$ consists of two irreducible components and the quotient by the $\BC^*$-action is not a variety because of the fixed point $(0,0,0,0)$.
\end{example}

\section{Seifert surfaces}\label{sec:seifert}

\subsection{Construction}
Vivek Shende explained to me that the stratification of Section \ref{ssec:stratification} is equivalent to the stratification of the moduli space of sheaves with microlocal support constructed in \cite{shende2017legendrian} (in particular, see Proposition 6.31). Therefore, the cells are related to the moduli space of rank $1$ local systems on the corresponding Seifert surface and the 2-form should arise from the intersection form on the surface. Below we implement this idea in a precise combinatorial construction. We verify that the data coming out of Proposition \ref{prop:cell2form} can be conveniently encoded in terms of the homology groups of the corresponding Seifert surface. 

\begin{defn}
	A \emph{marked surface} $(\Sigma,A,B)$ is a (not necessarily connected) topological oriented surface $\Sigma$ with boundary, together with a finite sets  of marked points $A$ and $B$ on the boundary colored black and white respectively. We require that each connected component has at least one boundary component, each boundary component has at least two marked points, and the colors of the marked points on each boundary component interchange.
\end{defn}

The basic examples are presented on the following pictures:
\begin{equation}\label{eq:marked surfaces}
\begin{tikzpicture}[baseline=0]
\draw[fill,pattern=dots] (0,0) circle (1);
\draw (1,0) node[circle,fill] {} node[below right]{$a_1$};
\draw (-1,0) node[circle,fill=white,draw] {};
\draw[<-] (0.7,-1.3) to node[midway,below right] {$\rot$} (1.3,-0.7);
\draw[fill,pattern=dots] (4,0) circle (1);
\draw (3,0) node[circle,fill] {} node[below left]{$a_1$};
\draw (4,1) node[circle,fill=white,draw] {};
\draw (4,-1) node[circle,fill=white,draw] {};
\draw (5,0) node[circle,fill] {} node[below right]{$a_2$};
\draw[<-] (4.7,-1.3) to node[midway,below right] {$\rot$} (5.3,-0.7);
\draw[->,thick] (3,0) to node[midway,above] {$\gamma$} (4.8,0);
\end{tikzpicture}
\end{equation}
Let $\Lambda=H_1(\Sigma, A)$, $\Lambda'=H_1(\Sigma, B)$. A crucial observation is that Thom duality provides us with a perfect pairing
\begin{equation}\label{eq:perfect pairing}
\Lambda\times\Lambda' \to \BZ.
\end{equation}
Let $\rot$ be the rotation operator, defined up to homotopy, that rotates the boundary components clockwise so that $A$ goes to $B$ and vice versa. There are actions
\[
\rot: \Lambda\to \Lambda', \;\Lambda'\to\Lambda,\;A\to B,\;B\to A.
\]
Define $\pi:A\to A$  by $\pi=\rot^2$.
Define the operators $\varphi:\Lambda \to \BZ^A$, $\psi:\BZ^A\to \Lambda$ by
\[
\varphi(\gamma) = \partial(\gamma), \quad \psi(v)\cdot\rot(\gamma) = (v,\partial(\gamma))\quad(\gamma\in\Lambda,\;v\in\BZ^A),
\]
$(\cdot,\cdot)$ is the standard scalar product on $\BZ^A$. Alternatively, $\psi$ applied to the basis vector corresponding to $a\in A$ is the path which goes from $a$ to $\pi(a)$ by following the boundary. In particular, we have 
\[
\varphi\circ \psi = \rot^2-1 = \pi-1.
\]
Thus, setting $\gamma=\psi(v')$, we obtain
\[
\psi(v)\cdot \rot(\psi(v')) = (v,\pi(v')) - (v,v').
\]
Another relation can be obtained by plugging in $\rot^{-2}(\gamma)$:
\[
(v,\pi^{-1} \varphi(\gamma)) = (v,\partial(\rot^{-2})(\gamma))=\psi(v)\cdot\rot^{-1}(\gamma) = \rot(\psi(v))\cdot \gamma.
\]
Introduce the following bilinear form on $\Lambda$
\begin{equation}\label{eq:omega definition for surface}
\omega(\gamma,\gamma') = \rot(\gamma)\cdot\gamma'.
\end{equation}
Then $\varphi$ and $\psi$ are related by
\begin{equation}\label{eq:phi psi omega}
\omega(\gamma, \psi(v)) = -(v,\varphi(\gamma)),\quad \omega(\psi(v),\gamma) = (\pi(v),\varphi(\gamma)).
\end{equation}
\begin{example}
	In the picture on the left of \eqref{eq:marked surfaces}, we have $\Lambda=0$, so all the data is trivial. In the picture on the right, we have $\Lambda=\BZ$ with the generator $\gamma=a_1 a_2$. So we have
	\[
	\gamma\cdot\rot(\gamma) = -1,\;\varphi(\gamma)=[a_2]-[a_1],\;\psi(a_1)=\gamma,\;\psi(a_2)=-\gamma.
	\]
\end{example}
\subsection{Gluing}\label{ssec:gluing}
Suppose $(\Sigma, A, B)$ is a marked surface, and let $a,a'\in A$ be two distinct black marked points such that $\pi(a)\neq a'$. In such situation, we can glue $(a,\rot(a))$ to $(a',\rot^{-1}(a'))$:
\[
\begin{tikzpicture}
\draw[fill,pattern=dots] (-0.5,1) to [out=-70,in=120] (0,0) node[left]{$a=a'$}
to (1,0) to [out=60,in=-110] (1.5,1) to (-0.5,1);
\draw[fill,pattern=dots] (0,0) to [out=-120,in=70] (-0.5,-1) to (1.5,-1) to [out=110,in=-60] (1,0) node[right]{$\rot(a)=\rot^{-1}(a')$} to (0,0);
\draw(0,0) node[circle,inner sep=2pt,fill]{};
\draw(1.5,1) node[circle,inner sep=2pt,fill]{} node[right] {$\pi^{-1}(a')$};
\draw(1.5,-1) node[circle,inner sep=2pt,fill]{} node[right] {$\pi(a)$};
\draw(1,0) node[circle,inner sep=2pt,fill=white,draw]{};
\draw(-0.5,1) node[circle,inner sep=2pt,fill=white,draw]{} node[left] {$\rot(a')$};
\draw(-0.5,-1) node[circle,inner sep=2pt,fill=white,draw]{} node[left] {$\rot^{-1}(a)$};
\end{tikzpicture}
\]
Denote the resulting surface by $\bar\Sigma$. Let $\bar\Lambda$, $\bar\varphi$ and so on denote the corresponding invariants of $\bar\Sigma$. We have a natural homomorphism
\[
\iota:\Lambda \to \bar\Lambda,
\]
which is surjective because every path on $\bar\Lambda$ which intersects the line of gluing can be deformed to make it pass through the point $a$ and then broken into parts, all in $\Sigma$. Similarly, there is a surjective homomorphism
\[
\iota':\Lambda' \to \bar\Lambda'.
\]
The existence of perfect pairing \eqref{eq:perfect pairing} implies that both $\iota$ and $\iota'$ are injective:
\[
\iota(\gamma)=0\;\;\Rightarrow\;\; \gamma\cdot \gamma'=\iota(\gamma)\cdot \iota'(\gamma')=0\;\text{for all $\gamma'\in\Lambda'$}\;\;\Rightarrow\;\; \gamma=0.
\]
So we have $\bar\Lambda=\Lambda$, $\bar\Lambda'=\Lambda'$. The boundary operator $\varphi$ remains unchanged, $\bar\varphi=\varphi$, but the new operator $\overline\rot$ is different from $\rot$. For $\gamma\in\Lambda$, the value of $\overline\rot(\gamma)$ differs from $\rot(\gamma)$ if the boundary of $\gamma$ contains $a$. More precisely, we have
\[
\overline\rot(\gamma) = \rot(\gamma)+([a],\varphi(\gamma)) \rot^{-1}\psi([a']) = \rot\left(\gamma+([a],\varphi(\gamma))\psi([\pi^{-1}(a')])\right).
\]
The new value of $\psi$ is given by
\[
\overline\psi(v) = \psi(v) + (v,[a])\psi([a']-[a])+(v,[\pi^{-1}(a')])\psi([a]),
\]
so it is convenient to identify $\bar{A}=A/\{a=a'\}$ with $A\setminus\{a\}$ to obtain a simpler formula
\begin{equation}\label{eq:gluing psi}
\bar\psi(v) = \psi(v) +(v,[\pi^{-1}(a')])\psi([a]) = \psi\left(v+(v,[\pi^{-1}(a')])[a]\right).
\end{equation}
To verify, we compute
\[
\bar\psi(v)\cdot\overline\rot(\gamma) = \left(v+(v,[\pi^{-1}(a')])[a],\; 
\varphi(\gamma+([a],\varphi(\gamma))\psi([\pi^{-1}(a')]))\right)
\]
\[
=(v, \varphi(\gamma)) + (v,[\pi^{-1}(a')])\;([a],\varphi(\gamma))+([a],\varphi(\gamma))\;(v,\varphi\psi([\pi^{-1}(a')]))
\]
\[+([a],\varphi(\gamma))\;(v,[\pi^{-1}(a')])\;([a],\varphi\psi([\pi^{-1}(a')])).
\]
The last term vanishes by the assumption $a\neq a'$, $a\neq \pi^{-1}(a')$, and after cancellations we obtain
\[
\bar\psi(v)\cdot\overline\rot(\gamma) = (v, \varphi(\gamma)) + ([a],\varphi(\gamma))(v,[a']) = \left(v,\;\bar\varphi(\gamma)\right),
\]
as expected. Finally, we compute
\begin{multline}\label{eq:gluing omega}
\bar\omega(\gamma,\gamma') = \omega(\gamma,\gamma') + ([a],\varphi(\gamma)) \;(\rot^{-1}\psi([a'])) \cdot \gamma'
\\
=\omega(\gamma,\gamma') + ([a],\varphi(\gamma)) \;\psi([a']) \cdot \rot(\gamma') = \omega(\gamma,\gamma') + ([a],\varphi(\gamma)) \;([a'],\varphi(\gamma')).
\end{multline}
\subsection{Toric TSVs from surfaces}\label{ssec:toric TSVs from surfaces}
\begin{defn}
	A marked surface $(\Sigma,A,B)$ is called \emph{labeled} if the set $A$ is identified with the set $1,\ldots,n$.
\end{defn}
To the data of labeled marked surface $(\Sigma,A,B)$ we associate a TSV whose underlying vector space is given by $X=\BC^*\otimes \Lambda$, the map to $G$ factors through $T$ and is given by $\varphi$, the action of $T$ is given by $\psi$. The form is given by the antisymmetrization of $\omega$. We claim that the resulting TSV is $\pi$-equivariant. To verify this, we use \eqref{eq:phi psi omega} as follows:
\begin{equation}\label{eq:omega translation}
\omega(\gamma+\psi(v), \gamma'+\psi(v')) = \omega(\gamma,\gamma')+(\pi(v),\varphi(\gamma'+\psi(v'))) - (v',\varphi(\gamma)).
\end{equation}
This should be compared with the form $\omega+({}^\pi t|\varphi|t^{-1})$ on $T\times X$, whose value on a pair of cocharacters $(\gamma,v),(\gamma',v')$ of $T\times X$ is given by the antisymmetrization of 
\[
\omega(\gamma,\gamma') + (\pi(v),\varphi(\gamma')) - (\varphi(\gamma), v') - (\pi(v),v').
\]
Subtracting this expression from \eqref{eq:omega translation} produces
\[
(\pi(v),\varphi(\psi(v'))+v') = (v,v'), 
\]
which is symmetric in $v$ and $v'$, so the antisymmetrization sends it to zero.

\begin{example}\label{ex:surface for s}
	To obtain a TSV similar to $s_{i,i'}$ of Example \ref{ex:torus TSV}, we take a disjoint union of a disk with $2$ black dots labeled $i, i'$, and $n-2$ copies of a disk with $1$ black dot distributing the remaining labels. The difference of this TSV from the one of Example \ref{ex:torus TSV} is absence of the minus sign in the map $\BC^*\to T$. This difference is not important.
\end{example}

The convolution of TSVs precisely corresponds to gluing of surfaces as follows. Suppose two labeled marked surfaces $(\Sigma,A,B)$ and $(\Sigma',A',B')$ are given. To distinguish $A$ from $A'$, we write $A=\{1,\ldots,n\}$, $A'=\{1',\ldots,n'\}$. Applying the operation of Section \ref{ssec:gluing} with $a=i$, $a'=i'$ for each $i=1,\ldots,n$ to the disjoint union of $\Sigma$ and $\Sigma'$ produces a surface $(\bar\Sigma,\bar A,\bar B)$ with the following invariants:
\begin{enumerate}
	\item The set $\bar A$ is identified with $A\sqcup A'\setminus A=A'$.
	\item The lattice $\bar\Lambda$ is the direct sum of the corresponding lattices for $\Sigma$ and $\Sigma'$.
	\item The operator $\bar\varphi$ is the sum of the corresponding operators $\varphi$ and $\varphi'$.
	\item By \eqref{eq:gluing psi}, the operator $\bar\psi$ is given by
	\[
	\bar\psi(v) = \psi'(v) + \sum_{i=1}^n (v,[\pi'^{-1}(i')]) \psi([i]) = \psi'(v) + \sum_{i=1}^n (\pi'(v),[i']) \psi([i]) 
	\]
	\[
	= \psi'(v) + \psi(\pi'(v))\quad(v\in\BZ^{A'}).
	\]
	\item The permutation $\bar\pi$ sends $\pi'^{-1}(a')$ to $\pi(a)'$, so it is given by the composition $\pi\pi'$.
	\item Finally, by \eqref{eq:gluing omega}, the $2$-form is given by $\bar\omega=\omega+\omega'+(\varphi|\varphi')$.
\end{enumerate}
So we have
\begin{prop}\label{prop:gluing}
	Let $(\Sigma,A,B)$ and $(\Sigma',A',B')$ be labeled marked surfaces with corresponding TSVs denoted by $X$, $X'$. Let $(\bar\Sigma,\bar A, \bar B)$ be the surface obtained by gluing edge $i \rot(i)$ to edge $i' \rot^{-1}(i')$ for each $i=1,\ldots,n$. Then the TSV associated to the labeled marked surface $(\bar\Sigma,\bar A, \bar B)$ is isomorphic to the convolution $X*X'$.
\end{prop}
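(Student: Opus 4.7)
The plan is to verify directly each of the six structural descriptions (i)--(vi) of the invariants of $(\bar\Sigma,\bar A,\bar B)$ listed before the proposition, and then match them term-by-term against the convolution data from Definition~\ref{def:TSV}. I would iterate the gluing operation of Section~\ref{ssec:gluing}, once for each $i = 1,\ldots,n$. At every step the hypotheses $a \neq a'$ and $\pi(a) \neq a'$ required by the gluing formulas hold automatically: the sets $A$ and $A'$ are disjoint, and at every intermediate stage the not-yet-glued points of $A$ and of $A'$ lie in disjoint orbits of the current permutation $\pi$, because only the already glued labels can link the two pieces.

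Items (i)--(iii) and (v) are bookkeeping: statement (ii) is the repeated application of $\bar\Lambda = \Lambda$ from Section~\ref{ssec:gluing}, starting from the disjoint union $\Sigma \sqcup \Sigma'$ whose lattice is $\Lambda \oplus \Lambda'$; the boundary operator is unchanged at each step, giving (iii); and tracing the new boundary through the identified points $i = i'$ shows that $\bar\pi$ on $\bar A = A'$ sends $v$ first by $\pi'$ into $A$ and then, after the identification $i \leftrightarrow i'$, by $\pi$ back into $A'$, yielding (v). The real content is in (iv) and (vi). For (iv), since each application of \eqref{eq:gluing psi} only modifies terms supported at the single label being glued, the $n$ gluings can be treated as a single joint operation whose cumulative effect is
\[
\bar\psi(v) = \psi'(v) + \sum_{i=1}^n (v,[\pi'^{-1}(i')])\,\psi([i]) = \psi'(v) + \psi(\pi'(v)),
\]
using $\pi'(v) = \sum_i (v,[\pi'^{-1}(i')])[i']$ together with the identification $i \leftrightarrow i'$. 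For (vi), iterating \eqref{eq:gluing omega} produces the accumulated cross-term
\[
\bar\omega(\gamma,\gamma') = \omega(\gamma,\gamma') + \omega'(\gamma,\gamma') + \sum_{i=1}^n ([i],\varphi(\gamma))\,([i'],\varphi'(\gamma')),
\]
which, after antisymmetrization in $\gamma,\gamma'$, is precisely the pairing $(\varphi|\varphi')$ appearing in the convolution form.

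With these formulas in hand, the match with the convolution structure is exact: $\bar\varphi = \varphi + \varphi'$ corresponds to the product of diagonal matrices at the level of cocharacters, so $\bar f = f f'$; the action encoded by $\bar\psi$ reproduces the $T$-action $(t,x,x') \mapsto ({}^{\pi'}t\,x,\,t\,x')$ used in the proof that convolution preserves twisted equivariance; $\bar\pi = \pi\pi'$ is precisely the equivariance permutation of $X * X'$; and $\bar\omega$ is $\omega + \omega' + (\varphi|\varphi')$. The main obstacle in this otherwise mechanical argument is the bookkeeping required to justify that the $n$ successive applications of \eqref{eq:gluing psi} and \eqref{eq:gluing omega} can be carried out in any order and produce the announced joint formulas. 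Concretely, one must verify that the modification at the $i$-th step depends only on data attached to the labels $i, i'$ and commutes with the modifications at all other labels; this is an inspection of the two gluing formulas but is where careful indexing is needed.
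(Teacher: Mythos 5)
Your proposal is correct and follows essentially the same route as the paper: the paper's argument is exactly the iterated application of the gluing operation of Section \ref{ssec:gluing} with $a=i$, $a'=i'$, tracking the invariants via \eqref{eq:gluing psi} and \eqref{eq:gluing omega} to obtain the six items (i)--(vi) and then matching them against the convolution data. Your additional remarks on why the hypotheses $a\neq a'$, $\pi(a)\neq a'$ persist at intermediate stages and why the $n$ gluings commute are details the paper leaves implicit, and they are correctly resolved.
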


\subsection{Non-degeneracy}
Let $(\Sigma,A,B)$ be a labeled marked surface. Similarly to Definition \ref{def:sympred}, for $t\in T$ in the image of $\varphi$ consider the quotient $Y(t)=\varphi^{-1}(t)/T^\pi$. This is a torus whose cocharacter lattice can be computed by taking the quotient of $\kernel\varphi$ by the image of $\psi$ on $\pi$-invariant cocharacters. We have an exact sequence
\begin{equation}\label{eq:first exact sequence}
0\to H_1(\Sigma) \to H_1(\Sigma,A) \xrightarrow{\varphi} H_0(A)\to H_0(\Sigma)\to 0, 
\end{equation}
which implies that $\kernel\varphi=H_1(\Sigma)$. Since the rotation operator acts trivially on $H_1(\Sigma)$, the restriction of $\omega$ to $\varphi^{-1}(t)$ is given by the intersection form. The image of $\psi$ on $\pi$-invariant characters is precisely spanned by the classes of boundary components, and we have an exact sequence
\begin{equation}\label{eq:second exact sequence}
0\to H_2(\bar\Sigma) \to H_1(\partial\Sigma) \to H_1(\Sigma) \to H_1(\bar\Sigma) \to 0,
\end{equation}
where $\bar\Sigma$ is obtained from $\Sigma$ by attaching disks along the boundary components. This shows that the cocharacter lattice of $Y(t)$ is precisely given by $H_1(\bar\Sigma)$, and $\omega$ is given by the intersection form, which is non-degenerate. Therefore we obtain

\begin{prop}
	For any labeled marked surface, the corresponding variety $Y(t)$ together with the restriction of $\omega$ is a symplectic torus.
\end{prop}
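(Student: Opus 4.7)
The plan is to identify $Y(t)$ explicitly as a torus whose cocharacter lattice is $H_1(\bar\Sigma)$, where $\bar\Sigma$ denotes the closed oriented surface obtained from $\Sigma$ by capping off each boundary component with a disk, and then to recognize the restriction of $\omega$ as the intersection pairing on this closed surface, whose nondegeneracy is Poincar\'e duality.

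First I would unwind the definition of the cocharacter lattice of $Y(t)$. The fiber $\varphi^{-1}(t)$ is a torsor over the torus with cocharacter lattice $\kernel\varphi$, and $Y(t)$ is the further quotient by $T^\pi$, which acts through $\psi$ on the $\pi$-invariant sublattice $(\BZ^A)^\pi$. Hence the cocharacter lattice of $Y(t)$ is $\kernel\varphi$ modulo the image of $\psi$ on $(\BZ^A)^\pi$. The exact sequence \eqref{eq:first exact sequence} identifies $\kernel\varphi$ with $H_1(\Sigma)$. A basis of $(\BZ^A)^\pi$ is given by the sums $\sum_{a\in C}[a]$ over $\pi$-orbits $C\subset A$, and since $\pi=\rot^2$, these orbits correspond bijectively to boundary components of $\Sigma$. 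I would verify directly from the definition of $\psi$ as \emph{walk from $a$ to $\pi(a)$ along the boundary} that $\psi$ applied to such a generator yields the class of the corresponding boundary loop in $H_1(\Sigma)$. Consequently $\psi\bigl((\BZ^A)^\pi\bigr)$ coincides with the image of $H_1(\partial\Sigma)\to H_1(\Sigma)$, and the second exact sequence \eqref{eq:second exact sequence} identifies the quotient with $H_1(\bar\Sigma)$.

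Next I would interpret the form. Every class in $\kernel\varphi = H_1(\Sigma)$ is represented by a cycle disjoint from $\partial\Sigma$, and on such cycles $\rot$ acts trivially (a small rotation of the boundary moves nothing in the interior). So the pairing $\omega(\gamma,\gamma') = \rot(\gamma)\cdot\gamma'$ from \eqref{eq:omega definition for surface} restricts on $\kernel\varphi$ to the Thom-dual intersection pairing on $H_1(\Sigma)$. Distinct boundary loops can be made disjoint in $\Sigma$, so the sublattice $\psi\bigl((\BZ^A)^\pi\bigr)$ is isotropic and $\omega$ descends to an antisymmetric pairing on $H_1(\bar\Sigma)$, which under the identification above is precisely the intersection pairing on the closed oriented surface $\bar\Sigma$.

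The remaining, and really the only nontrivial, ingredient is nondegeneracy. This is the standard fact that the intersection form on $H_1(\bar\Sigma,\BZ)$ of a closed oriented $2$-manifold (possibly disconnected, with some spherical components on which $H_1$ vanishes trivially) is unimodular by Poincar\'e duality, hence nondegenerate over $\BC$ after tensoring. Combined with the identification of the cocharacter lattice and of the form carried out above, this shows that $Y(t)$ is a torus and that $\omega|_{Y(t)}$ is symplectic, as claimed. The main thing to be careful about is the identification of $\psi$ on $\pi$-invariants with the boundary cycles in $H_1(\Sigma)$; once this bookkeeping is done, the topological input is elementary.
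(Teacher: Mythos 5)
Your proof is correct and follows essentially the same route as the paper: identify the cocharacter lattice of $Y(t)$ as $\kernel\varphi/\psi\bigl((\BZ^A)^\pi\bigr)\cong H_1(\bar\Sigma)$ via the two exact sequences, observe that $\rot$ acts trivially on interior cycles so $\omega$ restricts to the intersection form, and conclude nondegeneracy by Poincar\'e duality. You spell out a couple of bookkeeping steps (the identification of $\psi$ on $\pi$-invariants with boundary loops, and isotropy of that sublattice) that the paper leaves implicit, which is fine.
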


Proposition \ref{prop:torus lefschez} implies
\begin{cor}\label{cor:lefschetz for surface}
	The class of $\omega$ in $H^2(Y(t))$ satisfies the curious Lefschetz property with middle weight $\dim H_1(\bar\Sigma)$.
\end{cor}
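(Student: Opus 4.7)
The plan is to reduce the statement to Proposition \ref{prop:torus lefschez} via the identification of $Y(t)$ just established in the preceding proposition. From the exact sequences \eqref{eq:first exact sequence} and \eqref{eq:second exact sequence}, together with the observation that the image of $\psi$ on $\pi$-invariant cocharacters is spanned by the boundary cycles, the cocharacter lattice of $Y(t)$ is identified with $H_1(\bar\Sigma)$, where $\bar\Sigma$ is the closed oriented surface obtained by capping off the boundary components of $\Sigma$. Under this identification, the rotation operator acts trivially, so the restriction of $\omega$ defined by \eqref{eq:omega definition for surface} coincides with the intersection pairing on $H_1(\bar\Sigma)$.

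First I would choose a $\BZ$-basis $\gamma_1,\ldots,\gamma_n$ of $H_1(\bar\Sigma)$, where $n=\dim H_1(\bar\Sigma)$. This gives toric coordinates $x_1,\ldots,x_n$ on $Y(t)\cong(\BC^*)^n$ in which the symplectic form reads
\[
\omega\eq\sum_{i<j}(\gamma_i\cdot\gamma_j)\,d\log x_i\wedge d\log x_j.
\]
By Poincar\'e duality on the closed oriented surface $\bar\Sigma$, the antisymmetric matrix $(\gamma_i\cdot\gamma_j)$ is unimodular, hence non-degenerate over $\BC$. Thus $\omega$ is a non-degenerate log-canonical $2$-form on the torus $(\BC^*)^n$, exactly in the form required by Proposition \ref{prop:torus lefschez}.

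Applying Proposition \ref{prop:torus lefschez} yields the split curious Lefschetz property for $\omega$ on $Y(t)$ with middle weight $n=\dim H_1(\bar\Sigma)$, and in particular the curious Lefschetz property claimed in the corollary. The only point requiring care is that Proposition \ref{prop:torus lefschez} is stated with the individual matrix entries $\omega_{ij}$ in $\BC^\ast$, whereas the intersection form may well have vanishing entries; inspection of the proof of that proposition shows that only non-degeneracy of the matrix is used, since the argument proceeds by reducing to a standard symplectic basis and producing an $\mathfrak{sl}_2$-triple. I do not expect any genuine obstacle: the content is already packaged in the two exact sequences and the Poincar\'e duality identification that precede the corollary in the text.
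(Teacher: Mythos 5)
Your proof is correct and follows exactly the paper's route: the corollary is deduced by combining the preceding proposition (which already identifies $Y(t)$ with a torus whose cocharacter lattice is $H_1(\bar\Sigma)$ and the form with the intersection pairing) with Proposition \ref{prop:torus lefschez}. Your remark that Proposition \ref{prop:torus lefschez} really only needs non-degeneracy of the matrix $(\omega_{ij})$, not that every entry be nonzero, is a fair reading of its proof and resolves the only point of friction.
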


\subsection{Connection to the stratification of braid varieties}\label{ssec:surface from walk}
For convenience, we recall the main characters of the story. $\beta=\sigma_{i_l} \cdots \sigma_{i_1}$ is a positive braid on $n$ strands with the associated permutation $\pi=\pi(\beta)$. $X=\rho(\beta)$ is an affine space equipped with a map $f$ to $G$ and we defined $X_0=\rho(\beta)_0=f^{-1}(B)$. $X$ is equipped with a holomorphic $2$-form $\omega$, which restricts to a closed form on $X_0$. $X_0$ is equipped with a map $g$ to $T$ and a $T$-action that are compatible in the way prescribed by the permutation $\pi$. For a point $t\in T$, we introduced the quotient stack $Y_\beta(t)=g^{-1}(t)/(T^\pi/\BC^*)$, where $T^\pi\subset T$ is the subtorus of $\pi$-invariant elements. Expression \eqref{eq:moment map} implies that $\omega$ induces a closed form on $Y_\beta(t)$.

The space $\rho(\beta)_0$ has a stratification so that each cell by Corollary \ref{cor:cell is a convolution}, Example \ref{ex:surface for s} and Proposition \ref{prop:gluing} looks like a product of an affine space and a TSV associated to a surface.

More explicitly, out of the data of a braid $\beta$ and a walk $p$ the corresponding labeled marked surface $\Sigma$ is constructed as follows. Begin by placing $n$ half-disks next to each other labeling them by numbers from $1$ to $n$:
\[
\begin{tikzpicture}[scale=1,baseline=-2pt]
\draw[very thick] (-3,0) to [out=-90,in=180] (-2.75,-0.25) node[circle,inner sep=2pt,fill]{} to [out=0,in=-90] (-2.5,0);
\draw[very thick] (-1,0) to [out=-90,in=180] (-.75,-0.25) node[circle,inner sep=2pt,fill]{} to [out=0,in=-90] (-.5,0);
\draw[very thick] (1.5,0) to [out=-90,in=180] (1.75,-0.25) node[circle,inner sep=2pt,fill]{} to [out=0,in=-90] (2,0);
\draw (0.5,0) node{$\cdots$};
\draw(-2.75,0.2) node{$1$};
\draw(-.75,0.2) node{$2$};
\draw(1.75,0.2) node{$n$};
\end{tikzpicture} 
\]
Then go from $k=1$ to $k=l(\beta)$.	For each crossing $\sigma_{i_k}$ of $\beta$ which is not a stay of $p$, we draw the following picture:
	\[
	\begin{tikzpicture}[scale=0.6,baseline=-2pt]
	\draw[very thick] (-3,1) -- (-3,-1);
	\draw[very thick] (-2.5,1) -- (-2.5,-1);
	\draw[very thick] (0,1) -- (2,-1);
	\draw[very thick] (0.5,1) -- (2.5,-1);
	\draw[very thick] (2,1) -- (1.35,0.35);
	\draw[very thick] (2.5,1) -- (1.6,0.1);
	\draw[very thick] (0.5,-1) -- (1.15,-0.35);
	\draw[very thick] (0,-1) -- (0.9,-0.1);
	\draw[very thick] (4.5,1) -- (4.5,-1);
	\draw[very thick] (5,1) -- (5,-1);
	\draw (-1,0) node{$\cdots$};
	\draw (3.5,0) node{$\cdots$};
	\end{tikzpicture} 
	\]
	If the crossing is a stay of $p$, we draw the following picture we call ``the intersection'' instead:
	\[
	\begin{tikzpicture}[scale=0.6,baseline=-2pt]
	\draw[very thick] (-3,1) -- (-3,-1);
	\draw[very thick] (-2.5,1) -- (-2.5,-1);
	\draw[very thick] (1.7,-0.35) -- (2,-1);
	\draw[very thick] (2,0) -- (2.5,-1);
	\draw[very thick] (2,1) -- (1.65,0.25);
	\draw[very thick] (2.5,1) -- (2,0);
	\draw[very thick] (0.5,-1) to [out=90,in=135,looseness=1.5] (1.4,-0.1);
	\draw[very thick] (0,-1) to [out=90,in=135,looseness=1.5] (1.65,0.25);
	\draw[very thick] (4.5,1) -- (4.5,-1);
	\draw[very thick] (5,1) -- (5,-1);
	\draw[very thick] (1.7,-0.35) to [out=-135,in=-60,looseness=1.5] (0.7,-0.4);
	\draw[very thick] (1.4,-0.1) to [out=-135,in=-50,looseness=1.5] (0.9,-0.1);
	\draw[very thick] (0.3,0.3) to [out=110,in=-90,looseness=1] (0,1);
	\draw[very thick] (0.6,0.6) to [out=110,in=-90,looseness=1] (0.5,1);
	\draw (-1,0) node{$\cdots$};
	\draw (3.5,0) node{$\cdots$};
	\end{tikzpicture} 
	\]
	We stack these pictures on top of each another, and then place half-disks on the very top as follows:
		\[
	\begin{tikzpicture}[scale=1,baseline=-2pt]
	\draw[very thick] (-3,0) to [out=90,in=180] (-2.75,0.25) to [out=0,in=90] (-2.5,0);
	\draw[very thick] (-1,0) to [out=90,in=180] (-.75,0.25)  to [out=0,in=90] (-.5,0);
	\draw[very thick] (1.5,0) to [out=90,in=180] (1.75,0.25) to [out=0,in=90] (2,0);
	\draw[very thick] (-3,0) to [out=90,in=90,looseness=1.7] (-2.5,0);
	\draw[very thick] (-1,0) to [out=90,in=90,looseness=1.7] (-0.5,0);
	\draw[very thick] (1.5,0) to [out=90,in=90,looseness=1.7] (2,0);
	\draw (0.5,0) node{$\cdots$};
\draw(-2.75,0.25) node[circle,inner sep=2pt,fill=white,draw]{};
\draw(-.75,0.25) node[circle,inner sep=2pt,fill=white,draw]{};
\draw(1.75,0.25) node[circle,inner sep=2pt,fill=white,draw]{};
	\end{tikzpicture} 
	\]
	
	The main observation goes as follows:
	\begin{prop}
		If we go from bottom to top starting at half-disk $i$ always following the left side of the surface, after $k$ steps we are at the horizontal position $\pi_k(i)$. If we follow the right side instead, we are at $p_k(i)$. 
	\end{prop}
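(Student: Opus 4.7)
The argument goes by induction on $k$. At $k=0$ no steps have been taken, the path starts inside the half-disk labeled $i$, and since $\pi_0 = p_0 = \id$ both the left- and right-side paths trivially sit at horizontal position $i$, giving the base case.

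For the inductive step, suppose after $k-1$ steps the left-side path is at $\pi_{k-1}(i)$ and the right-side path is at $p_{k-1}(i)$. Step $k$ corresponds to the $k$-th slice of the stacked picture, which only affects the strands at positions $i_k$ and $i_k+1$; strands at all other positions pass through the slice vertically and carry their boundaries along unchanged, so it suffices to analyze the two-strand interaction. If step $k$ is a pure crossing (not a stay), the slice is topologically a transverse crossing of two ribbons: both the left and the right boundary arcs of each ribbon travel across the crossing and interchange the positions $i_k\leftrightarrow i_k+1$, so tracing either side realizes the transposition $\tau_{i_k}$. Since in the non-stay case $\pi_k = \tau_{i_k}\pi_{k-1}$ and $p_k = \tau_{i_k}p_{k-1}$, the inductive hypothesis propagates for both sides.

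If step $k$ is a stay, the slice is the intersection picture, and here the two sides behave asymmetrically by construction: the left-boundary arcs of the two incoming ribbons cross one another on their way up (realizing $\tau_{i_k}$), while the right-boundary arcs loop around the central bubble and connect back to the right-boundaries at the same horizontal positions (realizing the identity). Combined with $\pi_k = \tau_{i_k}\pi_{k-1}$ (which holds at every step) and $p_k = p_{k-1}$ (which holds at stays), the induction again propagates.

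The one nontrivial piece of bookkeeping is the verification of this asymmetric behavior at the intersection picture. I would carry it out by labeling each of the arcs drawn in the picture as either the left- or right-boundary arc of one of the four ribbons (top-left, top-right, bottom-left, bottom-right) or as one of the two connector arcs in the central bubble, and then reading off the pairings of endpoints directly. A more conceptual description, which also works, is that the intersection slice is obtained from the crossing slice by a single saddle surgery applied on the right side only: this surgery by construction destroys the right crossing (hence the right side sees the identity) while leaving the left-side connectivity intact (hence the left side still sees $\tau_{i_k}$). This is the main obstacle, but once the picture is labeled carefully it is a routine local check.
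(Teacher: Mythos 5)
Your proof is correct and follows essentially the same route as the paper: the paper's ``proof'' consists precisely of the two dashed-path pictures showing that at an intersection the left-side boundary arcs cross (realizing $\tau_{i_k}$) while the right-side arcs loop around and return to their original positions (realizing the identity), which is exactly the local check you perform in your inductive step. Your explicit induction scaffolding and the saddle-surgery remark are just a more formal packaging of the same pictorial argument.
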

	The picture below shows how two paths following the left side look like around the intersection followed by an analogous picture for paths following the right side.
	\[
	\begin{tikzpicture}[scale=1,baseline=-2pt]
	\draw[very thick] (1.7,-0.35) -- (2,-1);
	\draw[very thick] (2,0) -- (2.5,-1);
	\draw[very thick] (2,1) -- (1.65,0.25);
	\draw[very thick] (2.5,1) -- (2,0);
	\draw[very thick] (0.5,-1) to [out=90,in=135,looseness=1.5] (1.4,-0.1);
	\draw[very thick] (0,-1) to [out=90,in=135,looseness=1.5] (1.65,0.25);
	\draw[very thick] (1.7,-0.35) to [out=-135,in=-60,looseness=1.5] (0.7,-0.4);
	\draw[very thick] (1.4,-0.1) to [out=-135,in=-50,looseness=1.5] (0.9,-0.1);
	\draw[very thick] (0.3,0.3) to [out=110,in=-90,looseness=1] (0,1);
	\draw[very thick] (0.6,0.6) to [out=110,in=-90,looseness=1] (0.5,1);
	\draw[dashed] (0.1,-1) to [out=90,in=135,looseness=1.5] (1.7,0.1)
	to (2.1,1);
	\draw[dashed] (2.1,-1) to (1.75,-0.2)  to [out=-135,in=-60,looseness=1.5] (0.8,-0.3);
	\draw[dashed] (0.4,0.4) to (0.1,1);
	\end{tikzpicture} 
	\qquad\qquad
	\begin{tikzpicture}[scale=1,baseline=-2pt]
	\draw[very thick] (1.7,-0.35) -- (2,-1);
	\draw[very thick] (2,0) -- (2.5,-1);
	\draw[very thick] (2,1) -- (1.65,0.25);
	\draw[very thick] (2.5,1) -- (2,0);
	\draw[very thick] (0.5,-1) to [out=90,in=135,looseness=1.5] (1.4,-0.1);
	\draw[very thick] (0,-1) to [out=90,in=135,looseness=1.5] (1.65,0.25);
	\draw[very thick] (1.7,-0.35) to [out=-135,in=-60,looseness=1.5] (0.7,-0.4);
	\draw[very thick] (1.4,-0.1) to [out=-135,in=-50,looseness=1.5] (0.9,-0.1);
	\draw[very thick] (0.3,0.3) to [out=110,in=-90,looseness=1] (0,1);
	\draw[very thick] (0.6,0.6) to [out=110,in=-90,looseness=1] (0.5,1);
	\draw[dashed] (0.4,-1) to [out=90,in=135,looseness=1.5] (1.55,0)
	to [out=-100,in=-50,looseness=1.5] (0.8,-0.2);
	\draw[dashed] (2.4,-1) to (1.9,0) to (2.4,1);
	\draw[dashed] (0.5,0.5) to (0.4,1);
	\end{tikzpicture} 
	\]
	
	It is not hard to see that the surface we obtain by this construction is homeomorphic to the surface obtained by gluing the surfaces of Example \ref{ex:surface for s}.
	
	The surface without boundary $\bar\Sigma$ is obtained from $\Sigma$ by attaching a disk to each boundary component. To summarize, we have:
	\begin{cor}
		The cocharacter lattice of the torus component of the cell of $Y_\beta(t)$ corresponding to the walk $p$ is isomorphic to $H_1(\overline\Sigma)$, and the pairing induced by the $2$-form $\omega$ on the cocharacter lattice is the intersection form on $H_1(\overline\Sigma)$.
	\end{cor}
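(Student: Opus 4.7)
The plan is to translate the cell's TSV structure into the surface side via Proposition \ref{prop:gluing} and then read off the conclusion from the two exact sequences \eqref{eq:first exact sequence} and \eqref{eq:second exact sequence}. The work is essentially bookkeeping: one must verify that the combinatorial surface $\Sigma$ built from $(\beta,p)$ in Section \ref{ssec:surface from walk} is the very same labeled marked surface that Proposition \ref{prop:gluing} produces when one convolves the building blocks prescribed by Corollary \ref{cor:cell is a convolution}.

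First I would unfold Corollary \ref{cor:cell is a convolution}: the torus component of $C_p$ is the convolution, from right to left, of the basic TSVs $s_{p_k^{-1}(i_k+1),\,p_k^{-1}(i_k)}$ indexed by $k\in S_p$. By Example \ref{ex:surface for s}, each such $s_{i,i'}$ is, up to the harmless sign discussed there, the TSV associated to the labeled marked surface consisting of a disk with two black points (at labels $i,i'$) together with $n-2$ singleton disks carrying the remaining labels. Next, applying Proposition \ref{prop:gluing} iteratively, the convolution is the TSV of the labeled marked surface obtained by gluing these disks successively, with each gluing step identifying edge $i\,\rot(i)$ of the incoming surface with edge $i'\,\rot^{-1}(i')$ of the accumulated surface.

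The combinatorial model introduced in Section \ref{ssec:surface from walk} records precisely this sequence of gluings pictorially: each ``stay'' step glues an intersection (which is exactly the $s_{i,i'}$-disk in the two relevant strands), while each ``up/down'' step corresponds to an entry outside $S_p$ that contributes trivially to the convolution, being absorbed into the braid diagram's ambient topology. The horizontal-position observation following the two paths-on-boundary pictures ensures that the labels $p_k^{-1}(i_k),p_k^{-1}(i_k+1)$ at the $k$-th intersection match the gluing recipe of Proposition \ref{prop:gluing}. Once this identification is in place, the surface $\Sigma$ constructed in Section \ref{ssec:surface from walk} \emph{is} the labeled marked surface whose TSV is the torus component of $C_p$.

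Finally, I would invoke the non-degeneracy discussion preceding Corollary \ref{cor:lefschetz for surface}. The torus component of $Y_\beta(t)$ is the symplectic reduction of the TSV of $\Sigma$ at $t$: its cocharacter lattice is
\[
\kernel\varphi\,/\,\psi\bigl((\BZ^A)^\pi\bigr).
\]
The first exact sequence \eqref{eq:first exact sequence} identifies $\kernel\varphi$ with $H_1(\Sigma)$, on which $\rot$ acts trivially so that $\omega$ restricts to the intersection pairing. The image $\psi((\BZ^A)^\pi)$ is spanned by the classes of the boundary components of $\Sigma$, so by \eqref{eq:second exact sequence} the quotient is $H_1(\bar\Sigma)$, and the induced form is the intersection form on $H_1(\bar\Sigma)$.

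The main obstacle is the middle step: carefully checking that the iterated gluing prescribed by Proposition \ref{prop:gluing} (which keeps track of the acting permutation $\pi'$ at each stage) does produce the pictorial surface of Section \ref{ssec:surface from walk}, including the correct placement of black/white marked points and the labels $p_k^{-1}(i_k),p_k^{-1}(i_k+1)$. This is a purely combinatorial verification, best done by an induction on $k=0,1,\ldots,l(\beta)$ that simultaneously tracks the surface obtained so far, its labeling of $\bar A$, and the twist permutation, matching these with the path-tracking statement ``following the left side after $k$ steps one lands at $\pi_k(i)$, following the right side one lands at $p_k(i)$''. Once that induction is complete, the corollary follows.
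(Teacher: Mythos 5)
Your proposal is correct and follows essentially the same route as the paper: identify the torus component of $C_p$ with the TSV of the glued surface via Corollary \ref{cor:cell is a convolution}, Example \ref{ex:surface for s} and Proposition \ref{prop:gluing}, match that glued surface with the pictorial $\Sigma$ of Section \ref{ssec:surface from walk}, and then read off the cocharacter lattice and the form from the exact sequences \eqref{eq:first exact sequence} and \eqref{eq:second exact sequence} exactly as in the non-degeneracy subsection. The combinatorial matching you flag as the main obstacle is precisely the step the paper dispatches with ``it is not hard to see,'' and your proposed induction on $k$ tracking the surface, the labeling, and the twist permutation is the right way to fill it in.
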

	
	\subsection{Connected case}
	The case when the surface $\Sigma$ is connected is important:
	\begin{prop}\label{prop:connected surface}
		For a braid $\beta$ and a walk $p$ the following conditions are equivalent:
		\begin{enumerate}
			\item The surface $\Sigma$ constructed in Section \ref{ssec:surface from walk} is connected.
			\item The stabilizer of each point in the corresponding cell $C_p$ is $\BC^*$.
			\item The image of $C_p\to T$ is the full subtorus $\{t\in T:\det(t)=\sign(\pi)\}$.
		\end{enumerate}
	\end{prop}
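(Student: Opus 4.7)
The plan is to reduce the three conditions to properties of the lattice $\Lambda=H_1(\Sigma,A)$ together with the maps $\varphi:\Lambda\to H_0(A)=\BZ^A$ and $\psi:\BZ^A\to\Lambda$ introduced in Section \ref{ssec:toric TSVs from surfaces}. By Corollary \ref{cor:cell is a convolution}, Example \ref{ex:surface for s}, and Proposition \ref{prop:gluing}, the torus factor of $C_p$ is $T$-equivariantly isomorphic over $T$ to $\BC^*\otimes\Lambda$, with structure map induced by $\varphi$ (up to a fixed translate coming from the sign conventions of Example \ref{ex:torus TSV}) and $T$-action induced by translation through $\psi$; the affine factor $\BC^{U_p}$ is irrelevant for all three conditions.

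For (i)$\Leftrightarrow$(iii), the image of $C_p\to T$ is a coset of the subtorus $\BC^*\otimes\image(\varphi)$. By exactness of \eqref{eq:first exact sequence}, $\image(\varphi)=\kernel(H_0(A)\to H_0(\Sigma))$, which has rank $n-c$, where $c$ is the number of connected components of $\Sigma$. Tracking the signs from Example \ref{ex:torus TSV} through the convolution, the coset lies in $\{t\in T:\det(t)=(-1)^{|S_p|}\}$, and since $|S_p|=l(\beta)-2|U_p|$ we have $(-1)^{|S_p|}=(-1)^{l(\beta)}=\sign(\pi)$. The ambient subtorus $\{\det(t)=\sign(\pi)\}$ has dimension $n-1$, so the coset fills it if and only if $n-c=n-1$, i.e.\ $c=1$.

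For (i)$\Leftrightarrow$(ii), the stabilizer of every point under the translation action is $K:=\kernel(\BC^*\otimes\psi:T\to\BC^*\otimes\Lambda)$. The vector $(1,\dots,1)\in\BZ^A$ always lies in $\kernel(\psi)$ because $\psi(1,\dots,1)$ equals the sum of all boundary loops of $\Sigma$, which vanishes in $H_1(\Sigma,A)$ as $\partial[\Sigma]$; hence the diagonal $\BC^*\subset T$ is always contained in $K$. Writing $K$ as an extension of $\mathrm{Tor}^{\BZ}_1(\Lambda/\image(\psi),\BC^*)$ by $\BC^*\otimes\kernel(\psi)$, one sees that $K=\BC^*$ if and only if (a) $\rank\kernel(\psi)=1$ and (b) $\Lambda/\image(\psi)$ is torsion-free. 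For (b), the circle computation $H_1(S^1,\text{blacks})=\BZ^m$ identifies $\image(\psi)$ with $H_1(\partial\Sigma,A)\subset H_1(\Sigma,A)$, and the long exact sequence of the triple $(\Sigma,\partial\Sigma,A)$ together with $H_0(\partial\Sigma,A)=0$ gives $\Lambda/\image(\psi)\cong H_1(\Sigma,\partial\Sigma)\cong H^1(\Sigma)$ by Lefschetz duality, which is torsion-free. For (a), applying the same triple sequence on each connected component $\Sigma_i$ shows that $\kernel(\psi|_{\BZ^{A\cap\Sigma_i}})$ is generated by $\sum_{a\in A\cap\Sigma_i}[a]$, coming from the image of the fundamental class under $H_2(\Sigma_i,\partial\Sigma_i)=\BZ\to H_1(\partial\Sigma_i,A)$; hence $\rank\kernel(\psi)=c$, which equals $1$ precisely when $\Sigma$ is connected. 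Finally, since $\kernel(\psi)$ is saturated in $\BZ^A$ and contains $(1,\dots,1)$, in the connected case $\kernel(\psi)=\BZ\cdot(1,\dots,1)$, so the identity component of the stabilizer is exactly the diagonal $\BC^*$.

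The main bookkeeping step is the identification $\image(\psi)=H_1(\partial\Sigma,A)$ inside $\Lambda$; once this is established, the cokernel analysis via Lefschetz duality and the componentwise kernel computation are both routine. The only other point requiring care is matching the sign conventions so that $(-1)^{|S_p|}=\sign(\pi)$ for every walk $p$.
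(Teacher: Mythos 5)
Your proof is correct and follows essentially the same route as the paper: equivalence of (i) and (iii) via $\image(\varphi)=\kernel(H_0(A)\to H_0(\Sigma))$ from \eqref{eq:first exact sequence}, and equivalence of (i) and (ii) by identifying $\kernel(\psi)$ with the span of the boundary components. Your two extra checks — that $\Lambda/\image(\psi)\cong H^1(\Sigma)$ is torsion-free (so the stabilizer is actually connected, not merely of the right dimension) and that the determinant of the image coset is $(-1)^{|S_p|}=\sign(\pi)$ — are points the paper's proof leaves implicit, and are worth making explicit.
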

	\begin{proof}
		Recall that $C_p=\BC^{U_p}\times (\BC^*)^{S_p}$ and the map to $T$ factors through $(\BC^*)^{S_p}$. On the cocharacter lattice, this map is described by $\varphi$, which is the boundary map $H^1(\Sigma,A)\to H_0(A)$. The image of this map is the kernel of the surjective map $H_0(A)\to H_0(\Sigma)$ by \eqref{eq:first exact sequence}. So the corank of the image equals to the number of connected components. In particular, if the surface is connected, the image is given by vectors $(x_1,\ldots,x_n)$ satisfying $\sum_{i=1}^n x_i=0$. This proves equivalence of (i) and (iii).
		
		To analyze the stabilizers, it is enough to consider the subset $\{0\}\times (\BC^*)^{S_p}$ which is fixed by the $T$-action. The cocharacter lattice of the stabilizer is the kernel of $\psi:H_0(A) \to H_1(\Sigma,A)$. Recall that $\psi(a)$ is the path that connects $a$ to the next point in $A$ by moving along the boundary. In particular, the kernel of $\psi$ is contained in the lattice of $\pi$-invariant vectors, which is mapped to closed paths. So the kernel of $\psi$ is the kernel of the map $H_1(\partial\Sigma)\to H_1(\Sigma)$, which is isomorphic to $H_2(\bar\Sigma)$ by \eqref{eq:second exact sequence}. So its rank is $1$ precisely when the surface is connected. In the connected case, the kernel is spanned by the sum of all the boundary components, which is described by vector $(1,1,\ldots,1)$. This proves equivalence of (i) and (ii).
	\end{proof}

It is clear that for generic values of $t$, the preimage $g^{-1}(t)$ only intersects cells whose surfaces are connected. More precisely, we have
\begin{defn}\label{def:braid generic}
	A point $(t_1,\ldots,t_n)\in T$ is said to be \emph{generic} if we have
	\[
	\prod_{i=1}^n t_i = \sign(\pi),
	\]
	but for any proper $\pi$-invariant subset $S\subset \{1,\ldots,n\}$ we have
	\[
	\prod_{i\in S} t_i \neq \sign(\pi|_S).
	\]
\end{defn}

Then we obtain
\begin{prop}\label{prop:generic implies connected}
	If $t\in T$ is generic, then $g^{-1}(t)$ intersects only cells whose surfaces are connected.
\end{prop}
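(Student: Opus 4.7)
The plan is to prove the contrapositive. Suppose some cell $C_p$ meets $g^{-1}(t)$ while the associated surface $\Sigma_p$ of Section \ref{ssec:surface from walk} is disconnected; I will deduce that $t$ fails to be generic. Let $S_1,\dots,S_r$, $r\geq 2$, be the partition of $\{1,\dots,n\}$ into $A$-intersections with the connected components of $\Sigma_p$, and fix any proper subset $S := S_j$. The aim is to show that $S$ is $\pi$-invariant and that $\prod_{i \in S} t_i = \sign(\pi|_S)$, which directly contradicts Definition \ref{def:braid generic}. The $\pi$-invariance is immediate from the boundary-tracing property established in Section \ref{ssec:surface from walk}: starting at the bottom half-disk labeled $i$ and following the left boundary up arrives at top horizontal position $\pi(i)$, and this path never leaves the component containing $i$.

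To compute $\chi_S(t) := \prod_{i\in S}t_i$ for $t$ in the image, I use Proposition \ref{prop:cell2form}(i): the map $g|_{C_p}$ factors through $(\BC^*)^{S_p}$ via $(a_k)\mapsto \prod_{k\in S_p}\varphi_k(a_k)$, where $\varphi_k$ places $a_k$ at position $p_k^{-1}(i_k+1)$ and $-a_k^{-1}$ at position $p_k^{-1}(i_k)$. For each stay $k\in S_p$, the corresponding intersection in $\Sigma_p$ merges the two strips at horizontal positions $i_k,i_k+1$ into a single connected component; hence $p_k^{-1}(i_k)$ and $p_k^{-1}(i_k+1)$ lie in the same $S_\ell$ and are therefore either both in $S$ or both outside. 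The contribution of $a_k$ to $\chi_S(g(a))$ is accordingly $a_k(-a_k^{-1})=-1$ in the former case and $1$ in the latter, so $\chi_S(t) = (-1)^m$, where $m$ is the number of stays with both endpoints in $S$.

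To match $(-1)^m$ with $\sign(\pi|_S)$, I track $\rho_k := p_k^{-1}\pi_k$: it is unchanged at non-stays, and at a stay $k$ satisfies $\rho_k = \rho_{k-1}\sigma_k$, where $\sigma_k := \pi_{k-1}^{-1}\tau_{i_k}\pi_{k-1}$ is the transposition of $\pi_{k-1}^{-1}(i_k)$ and $\pi_{k-1}^{-1}(i_k+1)$. Using $\rho_0=\Id$ and $\rho_l=\pi$, one obtains $\pi = \prod_{k\in S_p}\sigma_k$ in walk order. The two elements exchanged by $\sigma_k$ are again boundary labels of the pair of strips merged at stay $k$, so they too lie in a common connected component of $\Sigma_p$ and are either both in $S$ or both in $S^c$; dropping those $\sigma_k$ that fix $S$ pointwise, $\pi|_S$ becomes the ordered product of the $m$ remaining transpositions within $S$, giving $\sign(\pi|_S) = (-1)^m$. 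Combining yields $\chi_S(t) = \sign(\pi|_S)$ for a proper $\pi$-invariant $S$, contradicting genericity.

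The only delicate point is the agreement of the two descriptions of ``stays lying inside $S$'' used in the argument: the $p_k^{-1}$-labels appear in the coordinate formula, while the $\pi_{k-1}^{-1}$-labels appear in the decomposition of $\pi$. These labels need not coincide individually, but they produce the same count $m$ because both are boundary labels of the same pair of strips of $\Sigma_p$ at stay $k$, so a union of connected components of $\Sigma_p$ contains one label iff it contains the other.
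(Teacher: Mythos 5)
Your proof is correct and follows essentially the same route as the paper's: both arguments observe that on the cell $C_p$ the character $\prod_{i\in S}t_i$ attached to a connected component is constant equal to $(-1)^{\#\{\text{stays inside the component}\}}$ (the paper cites the sign in Example \ref{ex:surface for s}, you compute $a_k\cdot(-a_k^{-1})=-1$ directly from Proposition \ref{prop:cell2form}), and both identify that parity with $\sign(\pi|_S)$ by tracking $p_k^{-1}\pi_k$ along the walk. Your write-up merely makes explicit two points the paper leaves implicit, namely the $\pi$-invariance of $S$ and the agreement of the $p_k^{-1}$- and $\pi_{k-1}^{-1}$-labelings of the merged strips.
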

\begin{proof}
	If $t\in T$ intersects a cell $C_p$, then $t$ is contained in the subtorus $g(C_p)\subset T$. The cocharacter lattice of $g(C_p)$ is the kernel of $H_0(A)\to H_0(\Sigma)$. Suppose $\Sigma$ is not connected, $\Sigma=\Sigma'\sqcup\Sigma''$. This produces a decomposition $A=S\sqcup (A\setminus S)$ and the cocharacter lattice of the subtorus is contained in 
	\[
	\{(x_1,\ldots,x_n):\sum_{i\in S} x_i = 0.
	\]
	So the subtorus is given by condition
	\[
	\prod_{i\in S} t_i = \const,
	\]
	where the constant is $\pm 1$. Our construction of TSV from surface would produce $1$, but each $s_{i,i'}$ changes the sign (see Example \ref{ex:surface for s}). The total number of sign changes is the number of intersections in $\Sigma'$ (see Section \ref{ssec:surface from walk}). The parity of the number of intersections up to step $i$ is given by the parity of the permutation $p_i^{-1} \pi_i$ restricted to $S$. In the end we have $p_m=\Id$, so the parity is the parity of $\pi|_S$.
\end{proof}

\begin{prop}\label{prop:Y quotient}
	For a generic point $t\in T$, the affine GIT quotient 
	\[
	Y_\beta^{\GIT}(t)=g^{-1}(t)\sslash (T^\pi/\BC^*)
	\]
	is a geometric quotient and the projection $g^{-1}(t) \to Y_\beta^{\GIT}(t)$ is a Zariski locally trivial fibration. Hence the stack $Y_\beta(t)$ is represented by affine variety $Y_\beta^{\GIT}(t)$.
\end{prop}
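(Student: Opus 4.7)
My plan is to verify three things in sequence: freeness of the $T^\pi/\BC^*$-action on $g^{-1}(t)$, existence of the affine GIT quotient as a geometric quotient, and Zariski local triviality of the projection.

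First, for freeness, I combine the two preceding propositions. Since $t$ is generic in the sense of Definition \ref{def:braid generic}, Proposition \ref{prop:generic implies connected} shows that $g^{-1}(t)$ meets only those cells $C_p$ whose associated surface is connected. On each such cell, Proposition \ref{prop:connected surface}(ii) identifies the stabilizer of every point under the $T$-action as exactly the diagonal $\BC^* \subset T$. Hence $T^\pi/\BC^*$ acts with trivial stabilizers on $g^{-1}(t)$. Second, $g^{-1}(t)$ is affine (cut out inside the affine space $\rho(\beta) = \BC^{l(\beta)}$ by the closed conditions $f \in B$ and $g = t$), the group $T^\pi/\BC^*$ is reductive, and the action is free. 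Standard GIT then yields an affine GIT quotient $Y_\beta^{\GIT}(t)$ that is simultaneously a geometric quotient, and the orbit map $q: g^{-1}(t) \to Y_\beta^{\GIT}(t)$ is a principal $T^\pi/\BC^*$-bundle in the \'etale topology (for instance by Luna's slice theorem applied to this torus action).

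The main obstacle is upgrading to Zariski local triviality. Here I use that $T^\pi/\BC^*$ is a split torus over $\BC$: decomposing it as $(\BC^*)^r$, a principal bundle under it corresponds to an $r$-tuple of $\BC^*$-torsors, and by Hilbert's Theorem 90 every $\BC^*$-torsor in the \'etale topology arises from a line bundle in the Zariski topology and is therefore Zariski locally trivial. Taking the product over the $r$ factors gives Zariski local triviality of $q$. Consequently, the stack $Y_\beta(t)$ is equivalent to the scheme $Y_\beta^{\GIT}(t)$, since a free action admitting a Zariski locally trivial geometric quotient makes the stack quotient representable by its coarse moduli.
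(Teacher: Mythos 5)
Your proof is correct, and the first half (freeness via Propositions \ref{prop:generic implies connected} and \ref{prop:connected surface}, then geometricity of the affine GIT quotient) is essentially the paper's argument, though the paper spells out the intermediate step that you compress into ``standard GIT'': freeness forces all orbits to be equidimensional, hence closed, and then each fiber of $g^{-1}(t)\to Y_\beta^{\GIT}(t)$ contains a unique closed orbit and so is a single orbit. Where you genuinely diverge is the Zariski local triviality. The paper argues by hand: it uses the fact that $g^{-1}(t)$ sits inside $\BC^l$ with $T$ acting linearly, each coordinate being scaled by a character $t_j/t_{j'}$, and (using triviality of the stabilizer) picks a subset $I$ of coordinates nonvanishing at the given point on which $T^\pi/\BC^*$ acts freely and transitively on $(\BC^*)^I$; the open set where these coordinates are nonzero then carries an explicit trivialization. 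You instead go through Luna's slice theorem to get an \'etale torsor and then invoke Hilbert 90 for the split torus $T^\pi/\BC^*\cong(\BC^*)^r$ (a torsor under a product of copies of $\mathbb{G}_m$ is an $r$-tuple of line bundles, hence Zariski locally trivial). Both are valid; your route is more general and portable (it does not use the specific linear coordinate structure of the braid variety), while the paper's is elementary and self-contained, producing the trivializing open sets explicitly without appealing to descent or \'etale cohomology. The final representability statement is handled the same way in both.
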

\begin{proof}
	By Propositions \ref{prop:generic implies connected} and \ref{prop:connected surface}, the stabilizers are trivial. This implies that all orbits of $T^w/\BC^*$ acting on $g^{-1}(t)$ have the same dimension, and therefore all orbits are closed. Each fiber of the projection $g^{-1}(t) \to Y_\beta^{\GIT}(t)$ contains a unique closed orbit (see \cite{brion2010introduction}), therefore the fibers are precisely the orbits, which means that the quotient is geometric.
	
	Finally, we show local triviality. Pick a point $x\in g^{-1}(t)$. Recall that $g^{-1}(t)$ is a closed subset of $X_0$, which is a closed subset of $\BC^l$ on which $T$ acts linearly so that $T$ acts on each coordinate through a character of the form $\frac{t_j}{t_{j'}}$. Because the stabilizer is trivial, we can choose a subset of coordinates $I\subset \{1,\ldots,l\}$ such that for any $k\in I$ we have $x_k\neq 0$ and the induced action of $T^\pi/\BC^*$ on $(\BC^*)^I$ is free and transitive. Let
	\[
	U=\{x\in g^{-1}(t): x_k\neq 0\quad \text{for all $k\in I$}\}.
	\]
	Then $U$ is a $T^\pi/\BC^*$-invariant Zariski open neighborhood of $x$ such that the projection morphism $U \to U\sslash(T^\pi/\BC^*)$ is a trivial fibration.
\end{proof}

Finally, we obtain
\begin{thm}\label{thm:lefschetz for Y}
	For a generic point $t\in T$, the class of $\omega$ in $H^2(Y_\beta(t))$ satisfies curious Lefschetz property with middle weight $l(\beta) - n - c(\beta)+2$.
\end{thm}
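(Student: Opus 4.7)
The plan is to apply Proposition \ref{prop:weak cell lefschetz} to a cell decomposition of $Y_\beta(t)$ inherited from the walk stratification of $\rho(\beta)_0$ given in Section \ref{ssec:stratification}. For generic $t$, Proposition \ref{prop:Y quotient} makes the quotient $Y_\beta(t)$ into an honest affine variety, and Proposition \ref{prop:generic implies connected} ensures that only those walks $p$ whose associated surface $\Sigma_p$ from Section \ref{ssec:surface from walk} is connected survive in $g^{-1}(t)$. Combining Corollary \ref{cor:cell is a convolution} with Proposition \ref{prop:gluing}, the corresponding cell of $Y_\beta(t)$ takes the form $\BA^{|U_p|}\times Y(\Sigma_p,t)$, where $Y(\Sigma_p,t)=\varphi^{-1}(t)/T^\pi$ is the torus attached to the labeled marked surface $\Sigma_p$ as in Section \ref{ssec:toric TSVs from surfaces}.

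The critical numerical relation I want to establish is $\dim Y(\Sigma_p,t)=2g_p=d-2|U_p|$, where $g_p$ is the genus of $\bar\Sigma_p$ and $d:=l(\beta)-n-c(\beta)+2$. By Corollary \ref{cor:cell is a convolution} we have $|S_p|=l(\beta)-2|U_p|=\rank H_1(\Sigma_p,A)$, which for a connected $\Sigma_p$ with $n$ black marked points and $b$ boundary components equals $2g_p+b+n-2$ by the relative long exact sequence of the pair $(\Sigma_p,A)$. The combinatorial heart is verifying $b=c(\beta)$: using the left-side/right-side tracking recalled in Section \ref{ssec:surface from walk} (the left side takes bottom position $i$ to top position $\pi(\beta)(i)$, and the right side takes top position $\pi(\beta)(i)$ back to bottom position $\pi(\beta)(i)$), following the boundary of $\Sigma_p$ clockwise from the black marked point $a_i$ through one half-disk at the top and one at the bottom lands at $a_{\pi(\beta)(i)}$. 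Hence $\rot^2=\pi(\beta)$ on $A$, so the boundary components biject with the cycles of $\pi(\beta)$. Substituting gives $2g_p = d - 2|U_p|$.

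Having identified $Y(\Sigma_p,t)$ as a torus of dimension $2g_p$, I invoke Corollary \ref{cor:lefschetz for surface} (whose non-degeneracy comes from the intersection form on $H_1(\bar\Sigma_p)$ as discussed in Section \ref{ssec:toric TSVs from surfaces}): the form $\omega$ on $Y(\Sigma_p,t)\cong T^{2g_p}$ satisfies split curious Lefschetz with middle weight $2g_p$. By Proposition \ref{prop:cell2form}(iii) the form on $\BA^{|U_p|}\times Y(\Sigma_p,t)$ is pulled back from the toric factor (it only involves the $\varphi_k$'s for $k\in S_p$), and since $H^\bullet_c(\BA^{|U_p|})$ is concentrated in degree $2|U_p|$ with pure weight $2|U_p|$, the K\"unneth formula translates the Lefschetz statement on $Y(\Sigma_p,t)$ into curious Lefschetz on the cell with middle weight $2g_p+2|U_p|=d$. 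Proposition \ref{prop:weak cell lefschetz} applied to the cell decomposition then gives curious Lefschetz on $Y_\beta(t)$ with middle weight $d$.

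The main difficulty I anticipate is the precise combinatorial identification $b=c(\beta)$ together with the clean realization of $C_p\cap g^{-1}(t)/(T^\pi/\BC^*)\cong \BA^{|U_p|}\times Y(\Sigma_p,t)$, since it demands careful bookkeeping of the boundary traversal of $\Sigma_p$ through the alternating half-disks and intersection regions, and a compatibility check between the abstract surface TSV of Section \ref{ssec:toric TSVs from surfaces} and the concrete toric data produced by Proposition \ref{prop:cell2form}.
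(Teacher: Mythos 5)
Your proposal is correct and follows essentially the same route as the paper: reduce to cells via Proposition \ref{prop:weak cell lefschetz}, identify each surviving cell as (an affine bundle over) the symplectic torus of a connected surface, apply Corollary \ref{cor:lefschetz for surface}, and match the middle weights by an Euler-characteristic count using $|S_p|+2|U_p|=l(\beta)$ and the identification of boundary components of $\Sigma_p$ with cycles of $\pi(\beta)$. The only difference is presentational — you phrase the count via the genus $g_p$ and spell out $b=c(\beta)$ through $\rot^2=\pi(\beta)$, where the paper works directly with $\chi(\bar\Sigma)$ and $\dim H_1(\bar\Sigma)$.
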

\begin{proof}
By Proposition \ref{prop:weak cell lefschetz}, it is enough to check the statement for each cell in the cell decomposition of $Y_\beta(t)$. Each cell looks like a product of a torus and affine space, $\BC^{U_p}$. By Corollary \ref{cor:lefschetz for surface}, curious Lefschetz holds for the torus part with middle weight equal to $\dim H_1(\bar\Sigma)$. By \eqref{eq:first exact sequence}, the Euler characteristic of $\Sigma$ is
\[
\chi(\Sigma) = \dim H_0(A) - \dim H_1(\Sigma,A) = n - |S_p|.
\]
Since $\bar\Sigma$ is obtained by attaching $c(\pi)$ disks, we have
\[
\chi(\bar\Sigma) = n - |S_p| + c(\pi) \quad\Rightarrow\quad \dim H_1(\bar\Sigma) = 2\dim H_0(\bar\Sigma)-n +|S_p|-c(\pi). 
\]
Since the surface is connected, we obtain that the middle weight equals 
\[
2-n+|S_p|-c(\pi).
\]
Multiplying by the affine space increases the weight by $2 |U_p|$, and from $|S_p|+2|U_p|=l(\beta)$ we obtain that all cells have curious Lefschetz with same weight middle weight $2-n+l(\beta)-c(\pi)$.
\end{proof}
	
\section{Decomposing character varieties}\label{sec:decomposing}
	\subsection{One puncture of maximal ramification}
	Recall the notations of Section \ref{ssec:parabolic char stack}: $G=GL_n$, $B\subset G$ consists of upper-triangular matrices, $U\subset B$ consists of unipotent matrices and $T\subset B$ consists of diagonal matrices. $W$ is the permutation group on $n$ elements. $C_1,\ldots,C_k\in T$ are assumed to satisfy the genericity assumption (Definition \ref{def:generic}), and $g\geq 0$. The framed character variety $\CX_\parab$ is the variety consisting of $k+2g$-tuples of elements 
	\[
	\alpha_1,\ldots,\alpha_g, \beta_1,\ldots,\beta_g, \gamma_1,\ldots,\gamma_k \in G
	\]
	satisfying
	\[
	[\alpha_1,\beta_1]\cdots [\alpha_g, \beta_g] \gamma_1^{-1} C_1 \gamma_1\cdots\gamma_k^{-1} C_k \gamma_k = \Id.
	\]
	The gauge group is
	\[
	G_\parab = G \times Z(C_1) \times \cdots Z(C_k),
	\]
	which acts on $\CX_\parab$ so that $G$ conjugates $\alpha$-s and $\beta$-s, and multiplies $\gamma$-s on the right, while $Z(C_i)$ multiplies $\gamma_i$ on the left. We have the categorical quotient
	\[
	X_\parab = \CX_\parab/G_\parab,
	\]
	and $\CX_\parab\to X_\parab$ is  a principal $G_\parab/\BC^*$-bundle. 

	As a first step, we gauge out the action of $G\subset G_\parab/\BC^*$ using the fact that $G$ acts freely on $\gamma_k$. Let $\CX^0_\parab\subset \CX_\parab$ be defined by the condition $\gamma_k=\Id$. Then $\CX^0_\parab \to X_\parab$ is a principal bundle with structure group
	\[
	G_\parab^0:=(G_\parab/\BC^*)/G = (Z(C_1) \times \cdots \times Z(C_k))/\BC^*.
	\]
	The variety $\CX^0_\parab$ is described by the matrix equation
	\begin{equation}\label{eq:product1}
		[\alpha_1,\beta_1]\cdots [\alpha_g, \beta_g] \gamma_1^{-1} C_1 \gamma_1\cdots\gamma_{k-1}^{-1} C_{k-1} \gamma_{k-1} \;C_k = \Id.
	\end{equation}

	We proceed under the following assumption:
	\begin{assumption*}
		Suppose $C_k$ has distinct eigenvalues $a_1,\ldots,a_n\in\BC^*$.
	\end{assumption*}
	This implies that $Z(C_k)=T$. Further assume that the eigenvalues of all the $C_i$ are ordered nicely:
	\begin{defn}\label{defn:ordered}
		We say that a diagonal matrix $C_i$ is \emph{ordered nicely} if its list of diagonal entries $x_1,\ldots,x_n$ satisfies
		\[
		x_i=x_j \;\Rightarrow\; x_i=x_{i+1}=\cdots=x_j\;\text{for any $i<j$.}
		\]
	\end{defn}
	This corresponds to the condition that $Z(C_i)$ is formed by block-diagonal matrices. 
	
	\subsection{Covering by a vector bundle}
	As a second step, we pass to $\CX_\parab^1:=\CX_\parab^0\times U$. The group $G^0_\parab$ acts on $U$ through the conjugation action of $T/\BC^*$. Form the quotient
	\[
	\tilde X_\parab = \CX_\parab^1/G^0_\parab.
	\]
	The map $\tilde X_\parab \to X_\parab$ is a fiber bundle with fiber $U$. In particular, it is a homotopy equivalence.
	
	As a third step, we change the coordinates. Let $u$ be the coordinate on $U$. We conjugate $\alpha$-s and $\beta$-s by $u$ and multiply $\gamma$-s by $u^{-1}$ on the right. This changes the matrix equation \eqref{eq:product1} to 
	\begin{equation}\label{eq:product2}
	[\alpha_1,\beta_1]\cdots [\alpha_g, \beta_g] \gamma_1^{-1} C_1 \gamma_1\cdots\gamma_{k-1}^{-1} C_{k-1} \gamma_{k-1} \; u C_k u^{-1}=\Id.
	\end{equation}
	Let $u'= u C_k u^{-1} C_k^{-1}$. Since $C_k$ has distinct eigenvalues, the map $u\to u'$ is invertible on $U$, so we will use $u'$ instead of $u$. The new matrix equation is (we drop the $'$ from $u'$)
	\begin{equation}\label{eq:product3}
	[\alpha_1,\beta_1]\cdots [\alpha_g, \beta_g] \gamma_1^{-1} C_1 \gamma_1\cdots\gamma_{k-1}^{-1} C_{k-1} \gamma_{k-1} \;u = C_k^{-1}.
	\end{equation}
	We can view the above product as the convolution of equivariant TSVs (Definitions \ref{def:TSV}, \ref{def:equivariant TSV}). One type of TSVs comes from the factors $\gamma_i^{-1} C_i \gamma_i$, which correspond to the punctures, another one from  $[\alpha_i,\beta_i]$, which are the genus contributions. The variable $u$ corresponds to the TSV which we denote by $U$, corresponding to $U\subset G$ with the natural inclusion and zero form. Bruhat decompositions will produce a stratification of each TSV, and we will obtain a stratification of $\tilde X_\parab$ whose strata are related to the braid varieties.
	
	\subsection{Puncture contributions}\label{ssec:puncture contributions}
	Let $C$ be a diagonal matrix with ordered entries (Definition \ref{defn:ordered}). The centralizer $Z(C)$ is the group of block-diagonal matrices of the corresponding shape.  We consider the TSV 
	\[
	G/Z(C),\; x\to x C x^{-1},\;(x|C|x^{-1}).
	\]
	It is equivariant for the identity permutation.
	
	Let $P=Z(C) B$ be the parabolic subgroup containing $Z(C)$.
	We use the Bruhat decomposition of $B\backslash G /P$. 	  
The cells are indexed by elements of $W/W_C$ where $W_C=Z(C)\cap W$. View $W/W_C$ as a subset of $W$ by representing each coset by the shortest permutation. For a fixed $\pi\in W/W_C$, the cell can be parametrized by 
\[
x = v \pi p\quad (p\in P, \; v\in U_{\pi^{-1}}^-).
\]
Notice that the group $Z(C)$ acts only on $p$ and can be gauged out by restricting to the subset where $p$ is in the unipotent radical of $P$, denoted by $N$. So the original TSV is stratified by the TSVs of the form
\[
U^-_{\pi^{-1}}\times N,\; (v,p)\to v\pi p C p^{-1} \pi^{-1} v^{-1},\; (v\pi p|C|p^{-1} \pi^{-1} v^{-1}).
\]
Using Proposition \ref{prop:form vanishes}, we rewrite the form as 
\[
(v|\pi |p |C |p^{-1}| \pi^{-1} |v^{-1}).
\]
It is convenient to describe TSV by a single expression, for example $v\pi p C p^{-1} \pi^{-1} v^{-1}$, provided we know the domain of each variable, and we follow the convention that the form is obtained by inserting $|$ between every two symbols.
 Let $p'=C^{-1} p C p^{-1}$. The map $p\to p'$ is an algebraic automorphism of $N$, so we can use $p'$ as a coordinate instead of $p$. After performing the substitution, we drop $'$ from $p'$. So we obtain
 \[
 v\pi C p \pi^{-1} v^{-1}.
 \]
Next we study what happens when we take convolution with $U$. The product looks like
 \[
 v\pi C p \pi^{-1} v^{-1} u.
 \]
Note that $\pi$ is the shortest representative of a coset of $W/W_C$. This means that the inversions of $\pi$ can only appear between indices in different $W_C$-orbits. Therefore we have $N\supset U^-_{\pi}$ and there is a decomposition 
\[
N =U^-_{\pi} (U^+_{\pi}\cap N).
\]
So we can decompose $p$ as a product $p^- p^+$ and pass $p^+$ to the other side of the permutation:
\[
v \pi C p^- \pi^{-1} (\pi p^+ \pi^{-1}) v^{-1} u.
\]
with $\pi p^+ \pi^{-1}\in U$. We make a change of variables $u'=\pi p^+ \pi^{-1} v u$ and obtain\footnote{we keep dropping $'$ from $u'$ after each substitution}
\[
v \pi C p^- \pi^{-1} u.
\]
Note that the variable $p_i^+$ does not participate in the equation anymore. Now we notice that $p^- \pi^{-1} u$ can be reparametrized using \eqref{eq:bruhat} and Proposition \ref{prop:positive lift} to arrive at
\[
v \pi C u f_{\pi^{-1}}(z),
\]
where $f_{\pi^{-1}}$ comes from the TSV associated to the positive lift of the permutation $\pi^{-1}$, and $z\in\BC^{l(\pi)}$ is the coordinate. Performing the substitution $u'=C u C^{-1}$ we obtain
\[
v \pi u C f_{\pi^{-1}}(z).
\]
Remember that $v$ is a coordinate on $U_{\pi^{-1}}^-$, so we can reparametrize again as follows:
\[
u f_{\pi}(z') C f_{\pi^{-1}}(z).
\]
We present our result symbolically as follows:
\begin{equation}\label{eq:puncture contribution}
x C x^{-1} u \;\sim\; \bigsqcup_{\pi\in W/W_C} u f_{\pi}(z') C f_{\pi^{-1}}(z) \times (U^+_{\pi}\cap N).
\end{equation}
It is clear that $u$ can be moved to the next factor of \eqref{eq:product3} and so on. In the case of genus zero, we obtain that $\CX^1_\parab$ is stratified by cells corresponding to tuples $\bar\pi=(\pi_1,\ldots,\pi_k)$ ($\pi_i\in W/W_{C_i}$). Each cell has equation of the form
\[
u \prod_{i=1}^{k-1} f_{\pi_i}(z_i') C_i f_{\pi_i^{-1}}(z_i) = C_k^{-1}.
\]
We can move all $C_i$ to the right because of equivariance, to arrive at an equivalent equation
\[
\prod_{i=1}^{k-1} f_{\pi_i}(z_i') f_{\pi_i^{-1}}(z_i) = u C_{\bar\pi}\quad (
C_{\bar\pi}=C_k^{-1} \prod_{i=1}^{k-1} \pi_i(C_i^{-1})).
\]
Note that the genericity assumption of Definition \ref{def:generic} for $C_i$ implies the genericity assumption of Definition \ref{def:braid generic} for $C_{\bar\pi}$.

\begin{thm}\label{thm:stratification genus 0}
	Suppose $g=0$, $k\geq 1$, $C_1,\ldots,C_k$ are diagonal matrices that are ordered nicely (Definition \ref{defn:ordered}) satisfying the genericity assumption (Definition \ref{def:generic}), and suppose $C_k$ has distinct eigenvalues. Let $X_\parab$ be the corresponding character variety. Denote by $W_i\subset W$ the stabilizer of $C_i$. Then $X_\parab$ carries a vector bundle $\epsilon:\tilde X_\parab\to X_\parab$ of rank $\binom{n}{2}$ and a stratification $X_\parab=\bigsqcup_{\bar\pi} X_{\bar\pi}$ indexed by tuples $\bar\pi=(\pi_1,\ldots,\pi_{k-1})$ with $\pi_i\in W/W_i$ in such a way that for any $\bar\pi$ we have the following diagram:
	\[
	\begin{tikzcd}
	& \tilde X_{\bar\pi}=\epsilon^{-1}(X_{\bar\pi}) \arrow{ld}{\epsilon} \arrow{rd} &\\
	X_{\bar\pi} & & Y_{b_{\bar\pi}}(C_{\bar\pi}),
	\end{tikzcd}
	\]
	where the arrow on the right is a vector bundle of rank $r_{\bar\pi}$, and the braid $b_{\bar\pi}$ is given by the product
	\[
	b_{\bar\pi} = \sigma_{\pi_1} \sigma_{\pi_1^{-1}} \cdots \sigma_{\pi_{k-1}} \sigma_{\pi_{k-1}^{-1}},
	\]
	where $\sigma_{\pi_i}$ is the positive lift of the shortest permutation representing $\pi_i$. The rank is given by
	\[
	r_{\bar\pi} = \sum_{i=1}^{k-1} (l_i - l(\pi_i)),
	\]
	where $l_i=\dim N_i$, $N_i$ is the radical of $Z(C_i)B$. The pullbacks of the $2$-forms from $X_{\bar\pi}$ and $Y_{b_{\bar\pi}}(C_{\bar\pi})$ to $\tilde X_{\bar\pi}$ agree.
\end{thm}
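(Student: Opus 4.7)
The plan is to assemble the ingredients developed in Sections~\ref{ssec:parabolic char stack}--\ref{sec:seifert} by running the computation of Section~\ref{ssec:puncture contributions} simultaneously on all $k-1$ puncture factors of \eqref{eq:product3}. First I would set $\tilde X_\parab := \CX_\parab^1/G_\parab^0$ with $\CX_\parab^1 = \CX_\parab^0 \times U$. Since $\CX_\parab^0 \to X_\parab$ is a principal $G_\parab^0$-bundle by Theorem~\ref{thm:generic char var}, the projection $\tilde X_\parab \to X_\parab$ is the associated bundle with fiber $U$, on which $G_\parab^0$ acts through its $T/\BC^*$-quotient by conjugation; identifying $U$ with its Lie algebra via the (algebraic, $T$-equivariant) logarithm exhibits this as a vector bundle of rank $\binom{n}{2}$. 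The substitution $u \mapsto u C_k u^{-1} C_k^{-1}$ is an algebraic automorphism of $U$ (because $C_k$ has distinct eigenvalues) and preserves the conjugation action, so it is an automorphism of the vector bundle and turns the defining equation into \eqref{eq:product3}.

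Next I would stratify by Bruhat cells: $X_{\bar\pi} \subset X_\parab$ is the locus where $\gamma_i^{-1} C_i \gamma_i$ lies in $B \pi_i P_i$ for every $i$, a condition which is $G_\parab^0$-invariant and hence descends to $X_\parab$. On $\tilde X_{\bar\pi} = \epsilon^{-1}(X_{\bar\pi})$ I would apply \eqref{eq:puncture contribution} iteratively, from the rightmost puncture through to the leftmost: at each step the factor $\gamma_i^{-1} C_i \gamma_i \cdot u$ is rewritten as $u'\, f_{\pi_i}(z'_i)\, C_i\, f_{\pi_i^{-1}}(z_i)$ times a trivial affine factor $U^+_{\pi_i} \cap N_i$, and the fresh unipotent $u'$ propagates into the next factor. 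Cumulatively this exhibits $\tilde X_{\bar\pi}$ as the product of an affine bundle of rank $r_{\bar\pi} = \sum_i (l_i - l(\pi_i))$ and the solution set of $u \prod_{i=1}^{k-1} f_{\pi_i}(z'_i) C_i f_{\pi_i^{-1}}(z_i) = C_k^{-1}$. Using the $\pi_i^{-1}$-equivariance of $f_{\pi_i^{-1}}$ (Corollary~\ref{cor:equivariance}) to move each $C_i$ past $f_{\pi_i^{-1}}(z_i)$, producing $\pi_i(C_i)$ on the right, and then the $\Id$-equivariance of the successive factors $f_{\pi_j} f_{\pi_j^{-1}}$ to collect these diagonal terms on the far right, the equation becomes
\[
\prod_{i=1}^{k-1} f_{\pi_i}(\tilde z'_i) f_{\pi_i^{-1}}(\tilde z_i) = u\, C_{\bar\pi}, \qquad C_{\bar\pi}=C_k^{-1}\prod_{i=1}^{k-1}\pi_i(C_i^{-1}).
\]

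To identify the right-hand arrow, I would observe that the left-hand side is the defining product for $\rho(b_{\bar\pi})$ with $b_{\bar\pi} = \sigma_{\pi_1}\sigma_{\pi_1^{-1}}\cdots\sigma_{\pi_{k-1}}\sigma_{\pi_{k-1}^{-1}}$, and the constraint that it lies in $U\cdot C_{\bar\pi}\subset B$ places the braid coordinates in $g^{-1}(C_{\bar\pi}) \subset \rho(b_{\bar\pi})_0$, with $u$ uniquely determined by the braid coordinates and the $U^+_{\pi_i}\cap N_i$-factors free---so the fiber dimension of the projection to the braid variety is exactly $r_{\bar\pi}$. The residual gauge group, after the $Z(C_i)$-factors have been used to fix each $\gamma_i = v_i\pi_i p_i$ with $p_i\in N_i$, is the global $T/\BC^*$ coming from $Z(C_k)=T$; this equals $T^{\pi(b_{\bar\pi})}/\BC^*$ because $\pi(b_{\bar\pi}) = \Id$, and its quotient is $Y_{b_{\bar\pi}}(C_{\bar\pi})$ by Definition~\ref{def:sympred}. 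Genericity of $C_{\bar\pi}$ in the sense of Definition~\ref{def:braid generic} (with $\sign = +1$ here) follows from that of the $C_i$: a proper subset $S \subsetneq \{1,\ldots,n\}$ with $\prod_{i\in S}(C_{\bar\pi})_i = 1$ would translate, via $(\pi_j(C_j^{-1}))_i = (C_j^{-1})_{\pi_j^{-1}(i)}$, into a product of $|S|$-many eigenvalues chosen from each $C_i$ being equal to $1$, which is ruled out by Definition~\ref{def:generic}. The matching of the two-forms is built into the reduction: the form \eqref{eq:form on charvar} on $\CX_\parab$ coincides with the TSV convolution form on the product, every rewrite in Section~\ref{ssec:puncture contributions} only uses Proposition~\ref{prop:form vanishes}, and the trivial non-braid factors contribute a zero form.

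The main obstacle, as I see it, is bookkeeping the residual torus gauge action cleanly through all the reductions and verifying that it matches the $T^{\pi(b_{\bar\pi})}/\BC^* = T/\BC^*$ action on the braid side. The initial group $G_\parab^0 = (Z(C_1)\times\cdots\times Z(C_{k-1})\times T)/\BC^*$ is redistributed: each $Z(C_i)$ for $i<k$ gets consumed by the Bruhat-form fixing of $\gamma_i$, while only the global $T$ from $Z(C_k)$ persists as the fiberwise symmetry on the braid side. Confirming that the $T$-action coming from the braid TSV (Corollary~\ref{cor:cell is a convolution}, Proposition~\ref{prop:cell2form}) agrees with this residual action, and that the $T$-equivariance used in each reduction step is compatible with all the intermediate reparametrizations, is where the bookkeeping becomes most delicate.
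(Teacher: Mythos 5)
Your proposal is correct and follows essentially the same route as the paper: the same passage to $\tilde X_\parab = \CX^0_\parab\times U/G^0_\parab$, the same substitution $u\mapsto uC_ku^{-1}C_k^{-1}$, the same iterated application of the Bruhat reduction \eqref{eq:puncture contribution} propagating the unipotent factor through the puncture terms, and the same use of equivariance to collect the diagonal matrices into $C_{\bar\pi}$. The extra care you take with the genericity of $C_{\bar\pi}$ and the residual $T/\BC^*$-action is exactly the bookkeeping the paper performs implicitly.
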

The compactly supported cohomology does not change when we pass to a vector bundle $r$, except that the weights increase by $2r$. So by Theorem \ref{thm:lefschetz for Y}, we have curious Lefschetz on $X_{\bar\pi}$ of middle weight
\[
l(b_{\bar\pi}) - n - c(b_{\bar\pi}) +2+ 2\left(\sum_{i=1}^{k-1} (l_i - l(\pi_i))\right) - 2\binom{n}{2}.
\]
We have $c(b_{\bar\pi})=n$, $l(b_{\bar\pi})=2\sum_{i=1}^{k-1} l(\pi_i)$ and $2 l_i+ \dim Z(C_i)=n^2$. So the weight equals
\[
(k-2) n^2 + 2 - \sum_{i=1}^k \dim Z(C_i),
\]
which is precisely the dimension of the character variety. In particular, it does not depend on $\bar\pi$ and by Proposition \ref{prop:weak cell lefschetz} we obtain
\begin{cor}
	In the case of genus zero when $C_k$ has distinct eigenvalues, the form $\omega$ on the character variety satisfies curious Lefschetz with middle weight equal to the dimension.
\end{cor}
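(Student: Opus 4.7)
The strategy is to apply Proposition~\ref{prop:weak cell lefschetz} to the cell decomposition $X_\parab = \bigsqcup_{\bar\pi} X_{\bar\pi}$ from Theorem~\ref{thm:stratification genus 0}, deriving the curious Lefschetz property on each cell $X_{\bar\pi}$ from the property on $Y_{b_{\bar\pi}}(C_{\bar\pi})$ (Theorem~\ref{thm:lefschetz for Y}) transported along the two vector-bundle projections
\[
X_{\bar\pi} \;\xleftarrow{\;\epsilon\;}\; \tilde X_{\bar\pi} \;\longrightarrow\; Y_{b_{\bar\pi}}(C_{\bar\pi}).
\]
Since Proposition~\ref{prop:weak cell lefschetz} requires a \emph{uniform} middle weight across all cells, the substance of the argument is the arithmetic check that the weight one obtains on $X_{\bar\pi}$ is independent of $\bar\pi$ and equal to $\dim X_\parab$. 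I would first verify that the stratification is a cell decomposition in the sense of Section~\ref{ssec:poset}, the partial order on $\bar\pi$ being inherited from the Bruhat orders on each $W/W_i$ through the closure relations used to construct the strata in \eqref{eq:puncture contribution}.

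\textbf{Per-stratum argument.} Fix a tuple $\bar\pi$. The element $C_{\bar\pi} = C_k^{-1}\prod_{i=1}^{k-1}\pi_i(C_i^{-1}) \in T$ is generic in the sense of Definition~\ref{def:braid generic}, as noted after \eqref{eq:puncture contribution}, so Theorem~\ref{thm:lefschetz for Y} yields curious Lefschetz for $\omega$ on $Y_{b_{\bar\pi}}(C_{\bar\pi})$ with middle weight $l(b_{\bar\pi}) - n - c(b_{\bar\pi}) + 2$. The Thom isomorphism for a rank-$r$ vector bundle shifts both degree and weight in compactly supported cohomology by $2r$, and the final sentence of Theorem~\ref{thm:stratification genus 0} guarantees that the pullback of $\omega$ from $X_{\bar\pi}$ and the pullback from $Y_{b_{\bar\pi}}(C_{\bar\pi})$ agree on $\tilde X_{\bar\pi}$. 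Consequently, curious Lefschetz lifts to $\tilde X_{\bar\pi}$ with middle weight raised by $2 r_{\bar\pi}$ and then descends along $\epsilon$ (rank $\binom{n}{2}$) to $X_{\bar\pi}$ with middle weight
\[
l(b_{\bar\pi}) - n - c(b_{\bar\pi}) + 2 + 2 r_{\bar\pi} - 2\binom{n}{2}.
\]

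\textbf{Weight collapse and conclusion.} The associated permutation of $b_{\bar\pi} = \prod_i \sigma_{\pi_i}\sigma_{\pi_i^{-1}}$ is the identity, so $c(b_{\bar\pi}) = n$ and $l(b_{\bar\pi}) = 2\sum_{i=1}^{k-1} l(\pi_i)$. Substituting $r_{\bar\pi} = \sum_{i=1}^{k-1}(l_i - l(\pi_i))$ and using $2 l_i + \dim Z(C_i) = n^2$ together with $\dim Z(C_k) = n$, the terms linear in $l(\pi_i)$ cancel and the expression simplifies to $(k-2) n^2 + 2 - \sum_{i=1}^{k} \dim Z(C_i)$, which is exactly $\dim X_\parab$ and manifestly independent of $\bar\pi$. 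Proposition~\ref{prop:weak cell lefschetz} then delivers curious Lefschetz on all of $X_\parab$ with middle weight $\dim X_\parab$. There is no genuinely hard step remaining: all the geometric content—the vector-bundle cover $\tilde X_\parab \to X_\parab$, the identification of each stratum with a braid variety up to an affine factor, the comparison of $\omega$ with the intersection form on the Seifert surface, and Lefschetz on $Y_\beta(t)$—has been done in the preceding sections. The only things to verify in this last step are the two Thom shifts and the weight cancellation; the apparent ``miracle'' that the middle weight is independent of $\bar\pi$ is really a reflection of the fact that the dimensions of the strata and the defects $r_{\bar\pi}$ trade off against $l(b_{\bar\pi})$ in precisely the right way.
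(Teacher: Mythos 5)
Your proposal is correct and follows the paper's own argument essentially verbatim: transport curious Lefschetz from $Y_{b_{\bar\pi}}(C_{\bar\pi})$ (Theorem \ref{thm:lefschetz for Y}) across the two vector-bundle maps of Theorem \ref{thm:stratification genus 0} with the corresponding weight shifts $+2r_{\bar\pi}$ and $-2\binom{n}{2}$, check via $c(b_{\bar\pi})=n$, $l(b_{\bar\pi})=2\sum l(\pi_i)$, $2l_i+\dim Z(C_i)=n^2$ and $\dim Z(C_k)=n$ that the middle weight collapses to $(k-2)n^2+2-\sum_i\dim Z(C_i)=\dim X_\parab$ uniformly in $\bar\pi$, and conclude by Proposition \ref{prop:weak cell lefschetz}. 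The arithmetic is right and nothing essential is missing.
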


\subsection{Genus contributions}\label{ssec:genus contributions}
We use notation conventions as above. We start with an equivariant TSV
\[
\alpha \beta \alpha^{-1} \beta^{-1}.
\]
Use Bruhat decompositions for $\alpha$ and $\beta$ as follows:
\[
\alpha = v_1 \pi_1 t_1 u_1\quad (v_1\in U_{(\pi_1)^{-1}}^-, \; t_1\in T, \; u_1\in U),
\]
\[
\beta = u_2 \pi_2 t_2 v_2\quad (v_2\in U_{\pi_2}^-, \; t_2\in T, \; u_2\in U).
\]
Using Proposition \ref{prop:form vanishes} we check that the form does not change if we insert $|$ between every two symbols. So each cell is described by
\[
v_1 \pi_1 t_1 u_1 u_2 \pi_2 t_2 v_2 u_1^{-1} t_1^{-1} \pi_1^{-1} v_1^{-1} v_2^{-1} t_2^{-1} \pi_2^{-1} u_2^{-1}.
\]
After multiplying by $u$ on the right we see that $u_2^{-1}$ can be absorbed into $u$ to obtain
\[
v_1 \pi_1 t_1 u_1 u_2 \pi_2 t_2 v_2 u_1^{-1} t_1^{-1} \pi_1^{-1} v_1^{-1} v_2^{-1} t_2^{-1} \pi_2^{-1} u.
\]
Now $u_2$ appears only once, so we use it to absorb $u_1$, which will appear only once after that, so that it can absorb $v_2$:
\[
v_1 \pi_1 t_1 u_2 \pi_2 t_2  u_1^{-1} t_1^{-1} \pi_1^{-1} v_1^{-1} v_2^{-1} t_2^{-1} \pi_2^{-1} u.
\] 
Now $v_2$ appears only once, namely in the parametrization of the Bruhat cell
\[
v_2^{-1} t_2^{-1} \pi_2^{-1} u.
\]
Reparametrizing it we obtain
\[
v_1 \pi_1 t_1 u_1 u_2 \pi_2 t_2 v_2 u_1^{-1} t_1^{-1} \pi_1^{-1} v_1^{-1} u t_2^{-1} f_{\pi_2^{-1}}(z_2).
\]
Now $u$ can absorb $v_1^{-1}$ and here is all that remains:
\[
v_1 \pi_1 t_1 u_1 u_2 \pi_2 t_2 v_2 u_1^{-1} t_1^{-1} \pi_1^{-1} u t_2^{-1} f_{\pi_2^{-1}}(z_2),
\]
and all the non-torus variables appear only once. Decompose $u_1 = u_1^+ u_1^-$ with $u_1^{\pm}\in U_{\pi_1}^\pm$. Then $(u_i^{+})^{-1}$ can be moved past the permutation and absorbed by $u$.  Then we reparametrize and obtain
\[
v_1 \pi_1 t_1 u_2 \pi_2 t_2  u t_1^{-1} f_{\pi_1^{-1}}(z_1) \;t_2^{-1} f_{\pi_2^{-1}}(z_2).
\]
We proceed in a similar fashion decomposing $u_2 = u_2^- u_2^+$, absorbing $u_2^{+}$ and reparametrizing. So we are left with
\[
v_1 \pi_1 t_1 u \cdots.
\]
which can be reparametrized as 
\[
u f_{\pi_1}(z_1') t_1 \cdots,
\]
and the total product looks like
\begin{equation}\label{eq:product uft}
u f_{\pi_1}(z_1') t_1 f_{\pi_2}(z_2') t_2 t_1^{-1} f_{\pi_1^{-1}}(z_1) t_2^{-1} f_{\pi_2^{-1}}(z_2).
\end{equation}
Using the equivariance property of $f_\pi(z)$ (Corollary \ref{cor:equivariance}), we can group all the $t$-symbols together, for instance we have
\[
(f_{\pi_1^{-1}}(z_1)|t_2^{-1}) = ({}^{\pi_1^{-1}} t_2^{-1}| {}^{\pi_1^{-1}} t_2 | f_{\pi_1^{-1}}(z_1)|t_2^{-1}) = ({}^{\pi_1^{-1}} t_2^{-1}|f_{\pi_1^{-1}}(t_2(z_1))), 
\]
where $t_2(z_1)$ comes from the action of $T$ on $z_1$, so the change of variables $z_1\to t_2^{-1}(z_1)$ produces $({}^{\pi_1^{-1}} t_2^{-1}|f_{\pi_1^{-1}}(z_1))$. So we rearrange the product \eqref{eq:product uft} as follows:
\begin{equation}\label{eq:product uft2}
u f_{\pi_1}(z_1') f_{\pi_2}(z_2') \; {}^{\pi_2^{-1}} t_1 \;t_2 \; t_1^{-1}\;\; {}^{\pi_1^{-1}} t_2^{-1}\;\;  f_{\pi_1^{-1}}(z_1)  f_{\pi_2^{-1}}(z_2).
\end{equation}
The product ${}^{\pi_2^{-1}} t_1 \;t_2 \; t_1^{-1}\;\; {}^{\pi_1^{-1}} t_2^{-1}$ is equivariant with respect to the permutation $\pi_2^{-1} \pi_1^{-1} \pi_2\pi_1$, so the complete product \eqref{eq:product uft2} is equivariant for the identity permutation. As a result, we obtain a stratification
\[
\alpha \beta \alpha^{-1} \beta^{-1} u \sim \bigsqcup_{\pi_1,\pi_2\in W} u f_{\pi_1}(z_1') f_{\pi_2}(z_2')\; \;{}^{\pi_2^{-1}} t_1 \;t_2 \;t_1^{-1}\;\; {}^{\pi_1^{-1}} t_2^{-1} f_{\pi_1^{-1}}(z_1)  f_{\pi_2^{-1}}(z_2)\;\times U_{\pi_1}^+\times U_{\pi_2}^+.
\]

It turns out, that the TSV corresponding to the product ${}^{\pi_2^{-1}} t_1 \;t_2 \; t_1^{-1}\;\; {}^{\pi_1^{-1}} t_2^{-1}$ comes form a surface in the sense of Section \ref{ssec:toric TSVs from surfaces}. To construct the surface, consider the punctured torus
	\[
	\begin{tikzpicture}
	\draw (0,0) rectangle (4,4);
	\draw[very thick] (2,2) circle (0.3);
	\draw[<-] (1,-0.5) -- (3,-0.5) node[midway,below] {$\pi_1$};
	\draw[<-] (-0.5,1) -- (-0.5,3) node[midway,left] {$\pi_2$};
	\draw (2,0) to [out=90,in=-60] (2.3,2);
	\draw[<-] (2,4) to [out=-90,in=60] (2.3,2);
	\draw (2.6,3) node {$\gamma$};
	\draw (3.3,2.2) node {$\gamma'$};
	\draw[->] (2.3,2) to [out=30,in=0,looseness=2] (2,2.5) to [out=180,in=0] (0,2);
	\draw (4,2) to (2.3,2);
	\draw (2.3,2) node[draw,circle,inner sep=2pt,fill] {};
	\draw (1.7,2) node[circle,inner sep=2pt,fill=white,draw]{};
\draw[->] (0.7,1.3) to node[midway,above right] {$\rot$} (1.3,0.7);
	\end{tikzpicture}
	\]
The torus is obtained by gluing the opposite sides of the rectangle above. The puncture is in the middle. We use $\pi_1$ and $\pi_2$ to construct an unramified covering of the punctured torus so that the monodromy along the paths $\gamma$, $\gamma'$ are given by $\pi_2^{-1}$, $\pi_1$ respectively. The preimages of the black dot are numbered by $\{1,\ldots,n\}$. The pullbacks of $\gamma,\gamma'$ are denoted by $\gamma_i$,$\gamma_i'$ in such a way that we have
	\[
	\gamma_i:i\to \pi_2^{-1}(i),\quad \gamma_i': \pi_1^{-1}(i)\to i.
	\]
This produces a labeled marked surface $(\Sigma,A,B)$.
\begin{prop}
	The TSV $(X,f,\omega)$ associated by Section \ref{ssec:toric TSVs from surfaces} to $(\Sigma,A,B)$ is described as follows. The paths $\gamma_i$, $\gamma_i'$ for $i=1,\ldots,n$ form a basis of $H_1(\Sigma,A)$, which identifies $H_1(\Sigma,A)$ with $\BZ^n\times\BZ^n$. This identifies $X$ with $T\times T$, with coordinates $t_1\in T$, $t_2\in T$. The map $f$ and the form $\omega$ then come from the product ${}^{\pi_2^{-1}} t_1 \;t_2 \; t_1^{-1}\;\; {}^{\pi_1^{-1}} t_2^{-1}$.
\end{prop}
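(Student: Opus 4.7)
The proof proceeds by directly matching the invariants of the surface TSV with those of the product formula.

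First I would check that $\{\gamma_i, \gamma_i'\}_{i=1}^n$ is a $\BZ$-basis of $\Lambda := H_1(\Sigma, A)$. The punctured torus deformation retracts to the wedge $\gamma \vee \gamma'$, and this retract lifts to the $n$-sheeted cover $\Sigma$, realizing $\Sigma$ up to homotopy as a graph with vertex set $A$ and $2n$ oriented edges $\gamma_i, \gamma_i'$. For such a graph, $H_1(\Sigma, A)$ is freely generated by the edges, yielding $\Lambda \cong \BZ^{2n}$ and the identification $X = \BC^* \otimes \Lambda \cong T \times T$, with $t_1$ and $t_2$ the toric coordinates dual to the $\gamma_i$- and $\gamma_i'$-sublattices respectively.

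Next I would compute the map $f$. Since $\gamma_i$ runs from $i$ to $\pi_2^{-1}(i)$ and $\gamma_i'$ from $\pi_1^{-1}(i)$ to $i$, the boundary map $\varphi$ satisfies $\varphi(\gamma_i) = [\pi_2^{-1}(i)] - [i]$ and $\varphi(\gamma_i') = [i] - [\pi_1^{-1}(i)]$. Exponentiating and using $({^{\pi^{-1}}} t)_j = t_{\pi(j)}$, one finds the $j$-th diagonal entry of $f(t_1, t_2)$ equals $(t_1)_{\pi_2(j)}\,(t_2)_j\,(t_1)_j^{-1}\,(t_2)_{\pi_1(j)}^{-1}$, which is precisely the $j$-th diagonal entry of the product ${^{\pi_2^{-1}}} t_1 \cdot t_2 \cdot t_1^{-1} \cdot {^{\pi_1^{-1}}} t_2^{-1}$.

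The heart of the proof is matching the two-forms. By definition \eqref{eq:omega definition for surface}, $\omega_\Sigma(\delta, \delta') = \rot(\delta) \cdot \delta'$, where the intersection pairing on $\Sigma$ is pulled back from the torus via the covering map. From the single transverse intersection $\gamma \cdot \gamma' = 1$ on the punctured torus, I would deduce on the cover that $\gamma_i \cdot \gamma_j' = \delta_{i,j}$ (with a fixed sign convention) while $\gamma_i \cdot \gamma_j = \gamma_i' \cdot \gamma_j' = 0$, then track how the rotation perturbs the endpoints of each $\gamma_i, \gamma_i'$ from the black markers $A$ to the adjacent white markers $B$ along the boundary of $\Sigma$. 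This yields explicit formulas for $\omega_\Sigma$ on the basis. On the product side, I would expand $({^{\pi_2^{-1}}} t_1 | t_2 | t_1^{-1} | {^{\pi_1^{-1}}} t_2^{-1})$ using the 2-cocycle identity \eqref{eq:2cocycle2} together with Proposition \ref{prop:form vanishes}, producing an antisymmetric bilinear form on the cocharacter lattice of $T \times T$; a direct comparison of coefficients finishes the proof.

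The main obstacle will be the combinatorial bookkeeping for $\rot$ on the cover, since the cyclic order in which the lifts $\{1, \ldots, n\}$ of the black marker appear around each boundary component of $\Sigma$ depends on the cycle structure of the monodromy $\pi = \pi_2^{-1} \pi_1 \pi_2 \pi_1^{-1}$ around the central puncture. A useful consistency check is that the computed $\varphi$ and $\psi$ satisfy the identities \eqref{eq:phi psi omega}, which, combined with the moment-map-like identity \eqref{eq:moment map} applied to the product formula, determine $\omega_\Sigma$ uniquely up to a contribution from $H_1(\Sigma) = \kernel\varphi$; the latter is then pinned down by the single intersection computation $\gamma_i \cdot \gamma_j' = \delta_{i,j}$ inherited from the base torus.
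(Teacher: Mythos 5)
Your overall strategy is the same as the paper's: identify the basis of $H_1(\Sigma,A)$, read off $\varphi$ from the endpoints of $\gamma_i,\gamma_i'$, and match the rotated intersection numbers against the six terms of $\bigl({}^{\pi_2^{-1}} t_1 \,|\, t_2 \,|\, t_1^{-1} \,|\, {}^{\pi_1^{-1}} t_2^{-1}\bigr)$. Your argument for the basis claim is genuinely different and perfectly fine: you retract the cover onto the lifted graph, whereas the paper pairs $\gamma_i,\gamma_i'$ against centrally reflected paths $\bar\gamma_i,\bar\gamma_i'$ to get a primitive rank-$2n$ sublattice and then invokes $\chi(\Sigma)=-n$ via the exact sequence \eqref{eq:first exact sequence}. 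Your route is more elementary; the paper's has the advantage of producing the dual basis it then reuses for the intersection computation.

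The one place your plan would likely go wrong if followed literally is the starting point ``$\gamma_i\cdot\gamma_j'=\delta_{i,j}$ inherited from the single transverse intersection on the base.'' In the paper's picture $\gamma$ and $\gamma'$ meet only at the black dot, i.e.\ at their common endpoint, not transversally in the interior; consequently \emph{all} of the intersection data for $\rot(\gamma_i)\cdot\gamma_j'$ is concentrated at the marked points and is resolved by how $\rot$ separates the outgoing arcs there. The correct answer is
\[
\rot(\gamma_i)\cdot\gamma_j' = -\delta_{\pi_2^{-1}(i),\pi_1^{-1}(j)} + \delta_{i,\pi_1^{-1}(j)} + \delta_{\pi_2^{-1}(i),j},
\]
a three-term expression depending on $\pi_1,\pi_2$, with no $\delta_{i,j}$ term at all; similarly $\rot(\gamma_i)\cdot\gamma_j$ and $\rot(\gamma_i')\cdot\gamma_j'$ each contribute a $\pi$-dependent term that survives antisymmetrization. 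So the computation is not ``$\delta_{i,j}$ plus a perturbation'' but is entirely the endpoint analysis you deferred to the end as bookkeeping. Your fallback of pinning $\omega$ down via \eqref{eq:phi psi omega} also only determines $\omega$ on $\Lambda\times\image\psi$ and $\image\psi\times\Lambda$, which leaves more freedom than ``a contribution from $\kernel\varphi$''; you would still need the explicit rotated intersection numbers on the full basis, which is exactly what the paper writes down.
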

\begin{proof}
	Let $\bar\gamma_i$, $\bar\gamma_i'$ denote the paths defined analogously to the paths $\gamma_i$, $\gamma_i'$, but with the picture reflected centrally. We can label them in such a way that 
	\[
	\gamma_i\cdot\bar\gamma_j=0,\;\gamma_i'\cdot\bar\gamma_j'=0,\;\gamma_i\cdot\bar\gamma_j'=\pm\delta_{i,j},\;\gamma_i'\cdot\bar\gamma_j=\pm\delta_{i,j}.
	\]
	In particular, it follows that the classes of $\gamma_i$ and $\gamma_i'$ in $\Lambda=H_1(\Sigma, A)$ are linearly independent and span a primitive sublattice of $\Lambda$. On the other hand, by \eqref{eq:first exact sequence} the rank of $\Lambda$ is equal to $2n$ ($\chi(\Sigma)=-n$). So we conclude that $\gamma_1,\ldots,\gamma_n,\gamma_1',\ldots,\gamma_n'$ form a basis of $\Lambda$.

	The homomorphism $\varphi$ is given by the boundary map $\partial:\Lambda\to\BZ^n$, which we can explicitly write as follows:
	\[
	\varphi = (\pi_2^{-1}-1,1-\pi_1^{-1}).
	\]
	This precisely matches the product ${}^{\pi_2^{-1}} t_1 \;t_2 \; t_1^{-1}\;\; {}^{\pi_1^{-1}} t_2^{-1}$.

	We make sure that following the paths according to  $\pi_2^{-1}\pi_1^{-1}\pi_2\pi_1$ amounts to going around the boundary component counter-clockwise, so we have
	\[
	\pi=\pi_2^{-1}\pi_1^{-1}\pi_2\pi_1.
	\]
	
	To find $\omega$, we compute
	\[
	\rot(\gamma_i)\cdot\gamma_j= \delta_{i,j}-\delta_{\pi_2^{-1}(i),j},\quad
	\rot(\gamma_i')\cdot\gamma_j'=\delta_{i,j}-\delta_{i,\pi_1^{-1}(j)}.
	\]
	\[
	\rot(\gamma_i') \cdot \gamma_j = -\delta_{i,j},\quad
	\rot(\gamma_i)\cdot \gamma_j'  = -\delta_{\pi_2^{-1}(i),\pi_1^{-1}(j)}+\delta_{i, \pi_1^{-1}(j)}+\delta_{\pi_2^{-1}(i),j}. 
	\]
	The terms $\delta_{i,j}$ in $\rot(\gamma_i)\cdot\gamma_j$ and $\rot(\gamma_i')\cdot\gamma_j'$ disappear after anti-symmetrization, and the remaining six terms match the six terms of the $2$-form
	\[
	\left({}^{\pi_2^{-1}} t_1 \;|\;t_2 \;|\; t_1^{-1}\;\;|\;\; {}^{\pi_1^{-1}} t_2^{-1}\right).
	\]
	
	The operator $\psi$ is given by 
	\[
	\psi = \begin{pmatrix}(\pi_1^{-1}-1)\pi_2\pi_1\\(\pi_2^{-1}-1)\pi_2\pi_1
	\end{pmatrix}.
	\]
	This matches the $T$-action implied by \eqref{eq:product uft2}. Note that $\pi_2\pi_1$ is necessary because $f_{\pi_1^{-1}}(z_1) f_{\pi_2^{-1}}(z_2)$ in \eqref{eq:product uft2} is $\pi_1^{-1} \pi_2^{-1}$-equivariant. As a sanity check, we compute the composition:
	\[
	\varphi\circ\psi =  \left((\pi_2^{-1}-1)(\pi_1^{-1}-1)+(1-\pi_1^{-1})(\pi_2^{-1}-1)\right)\pi_2\pi_1 = \pi_2^{-1}\pi_1^{-1}\pi_2\pi_1-1 = \pi-1,
	\]
	as it should be.
\end{proof}

Applying the procedure described above to each term $[\alpha_i,\beta_i]$, we obtain a stratification of $\CX^1_\parab$ indexed by tuples 
\[
\bar\pi=(\pi_1,\ldots,\pi_k,\pi_1^{1},\pi_2^{1},\ldots,\pi_1^{g}, \pi_2^{g})\quad (\pi_i\in W/W_{C_i},\; \pi_1^i,\pi_2^i\in W).
\]
Denote the cell corresponding to $\bar\pi$ by $\CX_{\bar\pi}$ and its quotient by $T/\BC^*$ by $X_{\bar\pi}$. Then we have 
\[
\CX_{\bar\pi} = \CX_{\bar\pi}'\times \prod_{i=1}^g U_{\pi_1^i}^+ \times U_{\pi_2^i}^+ \times \prod_{i=1}^{k-1} U_{\pi_i}^+\cap N_i,
\]
where $\CX_{\bar\pi}'$ has equation of the form
\[
\text{product of $t$-s and $f$-s} \quad=\quad u C_{\bar\pi}.
\]
The left hand side is understood as the convolution of braid TSVs and torus TSVs. Stratifying the braid TSVs we can ignore the torus TSVs. So $\CX_{\bar\pi}'$ is stratified by cells associated to walks for the braid $b_{\bar\pi}$,
\[
b_{\bar\pi} = \sigma_{\pi_1^1} \sigma_{\pi_2^1} \sigma_{(\pi_1^1)^{-1}} \sigma_{(\pi_2^1)^{-1}}\cdots \sigma_{\pi_1^g} \sigma_{\pi_2^g} \sigma_{(\pi_1^g)^{-1}} \sigma_{(\pi_2^g)^{-1}} \; \sigma_{\pi_1} \sigma_{\pi_1^{-1}} \cdots \sigma_{\pi_{k-1}} \sigma_{\pi_{k-1}^{-1}}.
\]
In the generic case, we can restrict our attention to connected surfaces, which means in particular that there is not stabilizers. Let $X_{\bar\pi}'=\CX_{\bar\pi}'/(T/\BC^*)$. Similarly to Theorem \ref{thm:lefschetz for Y}, taking into account contributions of torus TSVs, which increase the weight by the dimension, we obtain that $X_{\bar\pi}'$ together with the $2$-form satisfies curious Lefschetz of middle weight
\[
l(b_{\bar\pi}) - 2n +2 + 2 g n. 
\]
The space $X_{\bar\pi}$ is a vector bundle over $X_{\bar\pi}'$ whose rank is given by (for some of the notations, see Theorem \ref{thm:stratification genus 0})
\[
\sum_{i=1}^{k-1} \dim (U_{\pi_i}^+\cap N_i) + \sum_{i=1}^g (\dim U_{\pi_1^i}^+ +\dim U_{\pi_2^i}^+) = \sum_{i=1}^{k-1} (l_i-l(\pi_i)) + \sum_{i=1}^g (n(n-1)-l(\pi_1^i)-l(\pi_2^i)).
\]
So $X_{\bar\pi}$ satisfies curious Lefschetz of middle weight
\[
(2g-2)n +2 +\sum_{i=1}^{k-1} 2 l_i + 2 g n (n-1) = 2 g n^2 - 2n + 2 + \sum_{i=1}^{k-1} 2 l_i.
\]
We have $2 l_i+\dim Z(C_i) = n^2$, so the weight equals
\[
(2 g +k-1)n^2 - 2n + 2 - \sum_{i=1}^{k-1} Z(C_i).
\]
In particular, the weights of all cells are equal, so by Proposition \ref{prop:weak cell lefschetz} we obtain curious Lefschetz for $\tilde X_\parab$ of the same weight. Since $\tilde X_\parab$ is a vector bundle over $X_\parab$ of rank $\binom{n}{2}$, we obtain that $X_\parab$ satisfies curious Lefschetz of weight
\[
(2g + k -2) n^2 +2 - \sum_{i=1}^{k} Z(C_i),
\]
which is the dimension of $X_\parab$. We conclude with
\begin{thm}\label{thm:stratification genus g}
	For arbitrary genus, suppose $k\geq 1$, $C_1,\ldots,C_k$ are diagonal matrices that are ordered nicely (Definition \ref{defn:ordered}) satisfying the genericity assumption (Definition \ref{def:generic}), and suppose $C_k$ has distinct eigenvalues. Then the corresponding character variety $X_\parab$ satisfies curious Lefschetz with middle weight equal to the dimension of $X_\parab$. 
\end{thm}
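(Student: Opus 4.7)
The plan is to bootstrap the genus-zero argument of Theorem \ref{thm:stratification genus 0} using the bar-complex-style reductions from Section \ref{ssec:genus contributions}, and to verify that the uniform weight computation carried out in that section does indeed match $\dim X_\parab$ on every stratum. First I would replace $\CX_\parab$ by the homotopy-equivalent $\tilde X_\parab = \CX_\parab^1 / G_\parab^0$ built from $\CX_\parab^1 = \CX_\parab^0 \times U$, so that the defining equation becomes \eqref{eq:product3}, a single matrix identity expressed as a convolution of building blocks $[\alpha_i,\beta_i]$, $\gamma_i^{-1} C_i \gamma_i$, and $u\in U$. The pullback of the $2$-form $\omega$ of Theorem \ref{thm:classes on character variety} to $\tilde X_\parab$ agrees with the TSV-form coming from this convolution (up to the pullback from $X_\parab$, which is what we need).

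Next I would apply the two reductions of Sections \ref{ssec:puncture contributions}--\ref{ssec:genus contributions} simultaneously. Each puncture factor is stratified as in \eqref{eq:puncture contribution} by cosets $\pi_i\in W/W_{C_i}$, producing a free factor $U^+_{\pi_i}\cap N_i$ and a braid TSV contribution $\sigma_{\pi_i}\sigma_{\pi_i^{-1}}$. Each genus factor is stratified by pairs $(\pi_1^j,\pi_2^j)\in W\times W$, producing free factors $U^+_{\pi_1^j}\times U^+_{\pi_2^j}$, a braid contribution $\sigma_{\pi_1^j}\sigma_{\pi_2^j}\sigma_{(\pi_1^j)^{-1}}\sigma_{(\pi_2^j)^{-1}}$, and a toric contribution which by the proposition after \eqref{eq:product uft2} is the TSV of the labeled marked surface associated to the punctured torus. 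By Proposition \ref{prop:gluing}, concatenating all these surface TSVs with the TSVs $\sigma_i$ and $U$ realizes $\CX_{\bar\pi}'$ as the convolution associated to the braid $b_{\bar\pi}$ above \eqref{eq:product uft}; its quotient by $T/\BC^*$ is then (after restriction to $C_{\bar\pi}$) the variety $Y_{b_{\bar\pi}}(C_{\bar\pi})$ of Definition \ref{def:sympred}. Genericity of $C_1,\ldots,C_k$ forces $C_{\bar\pi}$ to be generic in the sense of Definition \ref{def:braid generic}, so by Proposition \ref{prop:generic implies connected} only walks with connected Seifert surface appear.

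Then I would apply Theorem \ref{thm:lefschetz for Y} to each stratum $X_{\bar\pi}'\subset Y_{b_{\bar\pi}}(C_{\bar\pi})$, giving curious Lefschetz with middle weight $l(b_{\bar\pi})-2n+2+2gn$ (the extra $2gn$ from the genus-TSV torus factors of Section \ref{ssec:genus contributions}, which have cocharacter lattice $H_1(\overline\Sigma)$ of rank $2gn$). The total cell $X_{\bar\pi}$ is a vector bundle over $X_{\bar\pi}'$ of rank $\sum_i (l_i-l(\pi_i))+\sum_j(n(n-1)-l(\pi_1^j)-l(\pi_2^j))$, so by Proposition \ref{prop:weak cell lefschetz} applied to the trivial cell decomposition of that bundle, $X_{\bar\pi}$ itself carries curious Lefschetz, with middle weight shifted up by twice the rank. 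A direct count (using $l(b_{\bar\pi}) = 2\sum_i l(\pi_i)+2\sum_j(l(\pi_1^j)+l(\pi_2^j))$ and $2l_i+\dim Z(C_i)=n^2$) telescopes all $l(\pi_i)$, $l(\pi_1^j)$, $l(\pi_2^j)$ out and leaves
\[
(2g+k-1)n^2 - 2n + 2 - \sum_{i=1}^{k-1}\dim Z(C_i),
\]
independent of $\bar\pi$. Since the stratification of $\tilde X_\parab$ has this uniform middle weight, Proposition \ref{prop:weak cell lefschetz} yields curious Lefschetz on $\tilde X_\parab$. Finally $\tilde X_\parab\to X_\parab$ is an affine bundle of rank $\binom{n}{2}$, so compactly supported cohomology shifts weights by $2\binom{n}{2}=n(n-1)$, converting the middle weight into
\[
(2g+k-2)n^2 + 2 - \sum_{i=1}^{k}\dim Z(C_i),
\]
which is exactly $\dim X_\parab$. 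The form computed on strata agrees with the pullback of the canonical $\omega$ of Theorem \ref{thm:classes on character variety}, so Proposition \ref{prop:weak cell lefschetz} applied once more (or rather the identification of $\omega$ on each cell already achieved in Sections \ref{sec:braid variety}--\ref{sec:seifert}) transports the property down to $X_\parab$.

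The main obstacle is the bookkeeping in the uniform weight computation: one has to track the Seifert-surface middle weight $2-n+l(\beta)-c(\pi)$ of Theorem \ref{thm:lefschetz for Y} for the combined braid, the genus-TSV contributions of rank $2gn$ from the toric pieces ${}^{\pi_2^{-1}}t_1\,t_2\,t_1^{-1}\,{}^{\pi_1^{-1}}t_2^{-1}$, the affine corrections from all the unipotent radical factors, and the final $\binom{n}{2}$-shift, and check that the $\pi$-dependent pieces cancel and the residual number equals $\dim X_\parab$. Once the cancellations are verified (which uses only $l(b_{\bar\pi})$ counting and $2l_i+\dim Z(C_i)=n^2$), the theorem follows.
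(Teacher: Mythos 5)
Your proposal follows the paper's own proof essentially step for step: pass to the rank-$\binom{n}{2}$ bundle $\tilde X_\parab$, stratify via the puncture reductions of Section \ref{ssec:puncture contributions} and the genus reductions of Section \ref{ssec:genus contributions} (including the punctured-torus surface TSV for each handle), apply Theorem \ref{thm:lefschetz for Y} on each stratum, verify that the $\bar\pi$-dependent terms cancel to give the uniform middle weight $(2g+k-1)n^2-2n+2-\sum_{i<k}\dim Z(C_i)$, and conclude with Proposition \ref{prop:weak cell lefschetz} and the final $2\binom{n}{2}$ weight shift. The bookkeeping you carry out matches the paper's computation exactly, so the plan is correct and not materially different from the published argument.
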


\begin{rem}
	If we enlarge the class of braid varieties to allow convolutions of braid TSVs and TSVs associated to surfaces, we can formulate a statement similar to Theorem \ref{thm:stratification genus 0} for arbitrary genus. We may think of the Seifert surfaces associated to the cells of the stratification as covers of the surface of genus $g$ ramified in one point, somehow perturbed in the neighborhood of the ramification point using the braid and the walk.
\end{rem}

\section{Monodromic $W$-action}\label{sec:monodromic action}
In this section we will prove results without the assumption that $C_k$ has distinct eigenvalues. In particular, the case of twisted character variety of complete curve (no punctures) studied in \cite{hausel2008mixed} we be covered. Our plan is as follows. By introducing extra puncture if necessary, we may assume $C_k=\Id$. To deal with this situation we perturb $C_k$, i.e. set $C_k=\lambda$ for $\lambda\in U_\varepsilon$,
\[
U_\varepsilon=\left\{(\lambda_1,\ldots,\lambda_n)\in T\;:\;\prod_{i=1}^n \lambda_i=1,\quad \lambda_i\neq\lambda_j,\quad |\lambda_i-1|<\varepsilon\right\},
\]
where $\varepsilon>0$ is sufficiently small. The character variety does not depend on the order of the eigenvalues, so we have a family $X_{U_\varepsilon/W}$ of character varieties over $U_\varepsilon/W$. Let $x_0\in U_\varepsilon$ be a base point and let $X_{x_0}$ be the fiber over $x_0$. Then the cohomology spaces of fibers form local systems over $U_\varepsilon$, so the cohomology of $X_{x_0}$ comes with an action of $\pi_1(U_\varepsilon/W,x_0)$. We show that this action comes from an action of $W$ via the natural homomorphism $\pi_1(U_\varepsilon/W,x_0)\to W$. We show that the class of the $2$-form $\omega$ and the weight filtration are invariant under the $W$-action, which implies that the curious Lefschetz holds for the anti-invariant part. Then we show that the cohomology of the character variety for $C_k=\Id$ together with the weight filtration and the action of $\omega$ is isomorphic to the anti-invariant part by identifying our action with the $W$-action coming from the Grothendieck-Springer sheaf.

Below we denote by $X^{=1}$ the character variety for $C_k=1$, and if $Y$ is has a map to $T$ we denote by $Y^t$ its fiber at $t\in T$.

\subsection{Singular character variety}
Suppose $C_1,\ldots,C_k$ is a collection of diagonal matrices satisfying Definitions \ref{def:generic} (generic) and \ref{defn:ordered} (ordered nicely), and further satisfying $C_k=\Id$. Define $V\subset T$ by
\[
V = \left\{t=(t_1,\ldots,t_n)\in T\;:\;(C_1,\ldots,C_{k-1},t)\;\text{is generic}\right\}.
\]
This subset is given by the condition $\det t=1$, and non-vanishing of a finite collection of functions on $T$. In particular, $V$ is an affine variety. We need to consider two versions of character and representation varieties.

The \emph{singular representation variety} $\CX_\sing$ is defined by
\[
\CX_{\sing} = \Big\{\alpha_1,\ldots,\alpha_g, \beta_1,\ldots,\beta_g, M_1,\ldots,M_{k-1},M_k \in G\;:
\]
\[
[\alpha_1,\beta_1]\cdots [\alpha_g, \beta_g] M_1 M_2 \cdots M_k = \Id,\;M_i\sim C_i\,(i<k),\; \chi(M_k)\in V/W \Big\},
\]
where $\chi$ is the characteristic polynomial map $G/_\ad G\to T/W$. Note that $V\subset T$ is $W$-invariant. This variety has an action of $G$ by conjugation. The \emph{singular character variety} $X_\sing$ is the quotient $X_\sing=\CX_\sing/G$. The variety $\CX_{\sing}$ is a smooth affine algebraic variety and $\CX_{\sing}/G$ is a good quotient, so $X_\sing$ is also smooth. The natural map $\mu:X_\sing\to V/W$ induced by $\chi$ is however not smooth, and in particular its fiber over $1$ is usually singular.
\begin{prop}\label{prop:singular quotient}
	The affine GIT quotient $X_\sing=\CX_\sing/(G/\BC^*)$ is a geometric quotient and the projection $\CX_\sing \to X_\sing$ is an \'etale locally trivial fibration.
\end{prop}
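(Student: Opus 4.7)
The plan is to reduce the proposition to the assertion that $G/\BC^*$ acts \emph{freely} on $\CX_\sing$, since standard GIT for free actions of reductive groups on smooth affine varieties gives both the geometric-quotient property and étale local triviality (via Luna's étale slice theorem, or more elementarily via Matsushima plus the Bialynicki-Birula–Luna argument that orbits of maximal dimension are automatically closed and admit étale slices). So the problem is to show that at every point of $\CX_\sing$ the stabilizer inside $G$ equals the center $\BC^*$.

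First I would establish irreducibility of the underlying representation. Let $\rho$ be the representation $\BC^n$ associated to a point $(\alpha_\bullet,\beta_\bullet,M_1,\ldots,M_k)\in\CX_\sing$, and assume $W\subsetneq\BC^n$ is a non-zero subspace invariant under all $\alpha_i$, $\beta_i$ and $M_\ell$. Restricting the relation $\prod_i[\alpha_i,\beta_i]\prod_\ell M_\ell=\Id$ to $W$ and taking determinants, the commutators contribute trivially and we obtain
\[
\prod_{\ell=1}^{k-1}\det(M_\ell|_W)\cdot\det(M_k|_W)\eq 1.
\]
For $\ell<k$, $M_\ell$ is conjugate to the semisimple $C_\ell$, so $\det(M_\ell|_W)$ is a product of some $\dim W$ eigenvalues of $C_\ell$. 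The characteristic polynomial of $M_k$ lies in $V/W$, hence its eigenvalues (with multiplicity) together with those of $C_1,\ldots,C_{k-1}$ form a generic tuple in the sense of Definition~\ref{def:generic}; and $\det(M_k|_W)$ is a product of $\dim W$ of these eigenvalues (counted with generalized-eigenspace multiplicity, since $W$ is invariant). The genericity hypothesis now forbids the above product from being $1$, contradicting the existence of $W$. Therefore $\rho$ is irreducible, and Schur's lemma gives $\Stab_G(\rho)=\BC^*$.

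Next I would verify that $\CX_\sing$ is smooth. The standard tangent-complex computation identifies the Zariski tangent space at a point with the parabolic group cohomology $H^1$ of a certain complex of $\mathrm{Ad}(\rho)$-modules, and smoothness is equivalent to the vanishing of the corresponding $H^2$. Since $\rho$ is irreducible, Poincaré duality identifies this $H^2$ with $(H^0)^*=(\mathrm{End}_\rho(\BC^n))^*/\BC^*$, which vanishes modulo the center by Schur. This also gives that all orbits have the same dimension $\dim G-1=\dim(G/\BC^*)$, so by Matsushima every orbit is closed in $\CX_\sing$.

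Finally, with a free action of the reductive group $G/\BC^*$ on the smooth affine variety $\CX_\sing$ whose orbits are all closed of maximal dimension, Luna's étale slice theorem produces an étale neighborhood of every point of $X_\sing=\CX_\sing\sslash(G/\BC^*)$ over which $\CX_\sing\to X_\sing$ is isomorphic to the trivial $(G/\BC^*)$-bundle; in particular the GIT quotient is a geometric quotient and the projection is an étale locally trivial $(G/\BC^*)$-fibration. The main obstacle I expect is the irreducibility step, because $M_k$ need not be semisimple: one must argue carefully that a nontrivial invariant subspace still gives a forbidden multiplicative relation among the eigenvalues (taken with generalized-eigenspace multiplicities), which is exactly where the choice of $V$ — requiring genericity of $(C_1,\ldots,C_{k-1},t)$ for \emph{every} $t\in V$ — is used in an essential way.
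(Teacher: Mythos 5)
Your argument is correct, and for the harder half of the statement it takes a genuinely different route from the paper. The paper disposes of freeness in one line (it is the standard consequence of genericity, essentially Theorem \ref{thm:generic char var}; your determinant argument on an invariant subspace $W$, using that the characteristic polynomial of $M_k|_W$ divides that of $M_k$ so that $\det(M_k|_W)$ is a product of $\dim W$ eigenvalues of some $t\in V$, is exactly the right way to handle the non-semisimple $M_k$). Where the two proofs really diverge is étale local triviality: you invoke Luna's slice theorem for the free action of the reductive group $G/\BC^*$ on the smooth affine $\CX_\sing$ with all orbits closed, which is perfectly valid. The paper instead builds the étale trivialization by hand: by the Jacobson density theorem one finds a word $P$ in the matrices whose value at a given point has distinct eigenvalues, passes to the finite étale cover of $X_\sing$ on which one eigenvalue of $P_x$ is a regular function $f$, and uses the line $K(x)=\kernel(P_x-f(x))$ together with its images under further words $Q_2,\ldots,Q_n$ to write down an explicit $G/\BC^*$-equivariant map to $G/\BC^*$, i.e.\ a section. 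The explicit construction buys something later: in Proposition \ref{prop:smooth quotient} the same device, with the eigenline replaced by the first step of the flag $F$, yields \emph{Zariski} local triviality and quasi-projectivity of $X_\sm$, which Luna's theorem would not give directly. Your approach is shorter here but less reusable there.

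Two small corrections to your justifications. First, ``orbits of maximal dimension are automatically closed'' is false in general (the regular nilpotent orbit is the standard counterexample), and Matsushima's criterion is about affineness of $G/H$, not closedness of orbits; the fact you actually need is that the boundary of an orbit in its closure is a union of orbits of strictly \emph{smaller} dimension, so that once all stabilizers equal $\BC^*$ every orbit is closed. Second, the smoothness of $\CX_\sing$ is asserted by the paper outside the proof, so you need not reprove it, but if you do, note that $M_k$ ranges over an open subset of $\mathrm{SL}_n$ rather than over a single conjugacy class, so the tangent/obstruction computation is the non-parabolic (fixed-determinant) one at the $k$-th puncture rather than the parabolic one; the conclusion via Schur and duality is unaffected.
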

\begin{proof}
	The action does not have any fixed points, which implies that the affine GIT quotient parametrizes $G$-orbits, so the quotient is geometric.
	
	To construct a section $X_\sing\to \CX_\sing$ in a neighborhood of a point $[x_0]\in X_\sing $ for $x_0\in\CX_\sing$, it is sufficient to construct an \'etale neighborhood $Z\to X_\sing$ of $[x_0]$ and a $G/\BC^*$-equivariant map
	\[
	Z\times_{X_\sing} \CX_\sing \to G/\BC^*.
	\]
	
	We view points of $\CX_\sing$ as irreducible representations of the free non-commutative algebra $F_{2g+k}$ on $2g+k$ generators. For a point $x\in\CX_\sing$ and an element $P$ of $F_{2g+k}$ denote the corresponding matrix by $P_x$. By the density theorem (see \cite{etingof2011introduction}, the map $F_{2g+k}\to \Mat_{n\times n}$ is surjective for each representation. We can choose a non-commutative polynomial $P\in F_{2g+k}$ such that $P_{x_0}$ has distinct roots. The coefficients of the characteristic polynomial of $P_x$ are functions on $X_\sing$. Let $\Delta$ be the locus where the discriminant of the characteristic polynomial vanishes, $[x_0]\notin \Delta$. Let $Z\to X_\sing\setminus\Delta$ be the finite \'etale cover over which the characteristic polynomial has a root given by a regular function $f$ on $Z$. Let us pass to this cover and replace $x_0$ by one of its preimages in $Z\times_{X_\sing} \CX_\sing$. So $P_{x}-f(x)$ has one-dimensional kernel $K(x)$ for each $x\in Z\times_{X_\sing} \CX_\sing$. Choose elements $Q_2,\ldots,Q_n$ of $F_{2g+k}$ so that 
	\[
	K(x_0),Q_{2x_0} K(x_0),\ldots,Q_{nx_0} K(x_0)
	\]
	are linearly independent. Let $Z'\subset Z$ be the Zariski open subset where 
	\[
	K(x),Q_{2x} K(x),\ldots,Q_{nx} K(x)
	\]
	are linearly independent. Given a non-zero local section $k(x)$ of the line bundle $K(x)$, we construct a $G$-valued function by forming a matrix with columns 
	\[
	k(x),Q_{2x} k(x),\ldots,Q_{nx} k(x).
	\]
	Changing the section multiplies this matrix by a scalar function, so we obtain a well-defined function $Z'\times_{X_\sing} \CX_\sing \to G/\BC^*$, which is clearly $G/\BC^*$-equivariant.
\end{proof}

\subsection{Smooth character variety}
The \emph{smooth representation variety} $\CX_{\sm}$ is defined by 
\[
\CX_{\sm} = \Big\{\alpha_1,\ldots,\alpha_g, \beta_1,\ldots,\beta_g, M_1,\ldots,M_{k-1},M_k \in G,\;F\in G/B\;:
\]
\[
	[\alpha_1,\beta_1]\cdots [\alpha_g, \beta_g] M_1 M_2 \cdots M_k = \Id,\;M_i\sim C_i\,(i<k),\; M_k\in B_F,\; \chi_F(M_k)\in V \Big\},
\]
where $F$ is a flag, $B_F\subset G$ is the subgroup of elements preserving $F$, $\chi_F:B_F\to T$ is the natural projection. This variety has an action of $G$ by conjugation. The \emph{smooth character variety} is the quotient $X_{\sm}=\CX_\sm/G$. We will see that this is a good quotient. We have a map $\CX_\sing \to G$ given by reading off $M_k$. This induces a map
\[
\CX_\sing \times_{T/W} T \to G\times_{T/W} T.
\]
The Grothendieck-Springer resolution $\tilde G$ is the space of pairs
\[
\tilde G = \{g\in G, F\in G/B \;:\; g\in B_F\}.
\]
Forgetting $F$ gives a map $\tilde G\to G$ and it is clear that $\CX_\sm$ is the fiber product 
\[
\CX_\sm = \CX_\sing\times_G \tilde G.
\]
Note that $\CX_\sm$ is not affine.

\begin{prop}\label{prop:smooth quotient}
	Each point of $\CX_\sm$ has a Zariski open neighborhood $G$-equivariantly isomorphic to $V\times G/\BC^*$ where $V$ is a quasi-projective variety. Hence geometric quotient $X_\sm=\CX_\sm/(G/\BC^*)$ exists as a scheme and $\CX_\sm\to X_\sm$ is a Zariski locally trivial fibration. Moreover, $X_\sm$ is quasi-projective.
\end{prop}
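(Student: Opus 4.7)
The plan is to adapt the construction used to prove Proposition \ref{prop:singular quotient}, with the key improvement that the tautological line bundle associated to the flag $F$ replaces the étale cover by a genuine Zariski local trivialization. Given $y_0 \in \CX_\sm$, let $F_0$ be the corresponding flag and $F_0^1 \subset \BC^n$ its first line. Since $\chi_{F_0}(M_k) \in V$, the tuple $(C_1,\ldots,C_{k-1},\chi_{F_0}(M_k))$ is generic and Theorem \ref{thm:generic char var} forces the underlying representation at the image $x_0 \in \CX_\sing$ to have stabilizer exactly $\BC^*$; in particular it is irreducible. Burnside's theorem then supplies noncommutative polynomials $Q_2,\ldots,Q_n$ in the $\alpha_i,\beta_i,M_i$ generators such that, for any $0\neq v_1 \in F_0^1$, the vectors $v_1, Q_{2,x_0} v_1, \ldots, Q_{n,x_0} v_1$ form a basis of $\BC^n$.

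Next I would consider the $G$-equivariant line bundle $\mathcal{L}$ on $\CX_\sm$ with fiber $\mathcal{L}_y = F_y^1$. The condition that $v_1, Q_{2,y} v_1, \ldots, Q_{n,y} v_1$ are linearly independent is a Zariski-open condition independent of the local trivialization of $\mathcal{L}$, so it defines a $G$-invariant Zariski open neighborhood $U \subset \CX_\sm$ of $y_0$. Define a map
\[
\phi: U \to G/\BC^*,\quad y \mapsto \left[v_1(y),\, Q_{2,y} v_1(y),\, \ldots,\, Q_{n,y} v_1(y)\right],
\]
regarded as the class of a matrix modulo $\BC^*$. Rescaling $v_1$ by $\lambda \in \BC^*$ rescales all columns by $\lambda$, so $\phi$ is well-defined modulo $\BC^*$ and becomes a regular $G/\BC^*$-equivariant map (this matching of ambiguities is the essential use of the flag, and the reason the trivialization is Zariski rather than étale). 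Setting $\psi(y) = \phi(y)^{-1} \cdot y$ yields a $G/\BC^*$-invariant retraction with image the closed subvariety $V := \phi^{-1}([\Id]) \subset U$, and $(\phi,\psi)$ is a $G$-equivariant isomorphism $U \xrightarrow{\sim} G/\BC^* \times V$.

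The slice $V$ is quasi-projective because $\CX_\sm$ itself is quasi-projective: setting $\CX_\sing^0 = \{x \in \CX_\sing : M_k(x) \in B,\; \chi(M_k(x)) \in V\}$ (a quasi-affine subset of the affine variety $\CX_\sing$), one has a natural identification $\CX_\sm \cong G \times^B \CX_\sing^0$ exhibiting $\CX_\sm$ as a locally trivial bundle with quasi-affine fibers over the projective variety $G/B$. Letting $y_0$ range over $\CX_\sm$ produces a cover by such $U$'s, and the slices $V$ glue to a scheme $X_\sm$ via the uniqueness (up to left translation by $G/\BC^*$) of the trivialization data; the map $\CX_\sm \to X_\sm$ then is the asserted Zariski-locally trivial principal $G/\BC^*$-bundle. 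Quasi-projectivity of $X_\sm$ follows from GIT applied to the free, linearizable $G/\BC^*$-action on the quasi-projective $\CX_\sm$.

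The main point beyond Proposition \ref{prop:singular quotient} is the observation that the $\BC^*$-scaling freedom in choosing a generator of the flag-line $F^1$ exactly matches the $\BC^*$ one must quotient out of $G$; this matching is what allows the trivialization to be constructed without an étale cover. The only real technical check is the consistent gluing of the slices $V$ across the cover by $U$'s into a separated scheme, which follows from $G$-equivariance and the uniqueness (up to $G/\BC^*$) of the trivialization data on overlaps.
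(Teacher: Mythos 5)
Your local trivialization argument is essentially the paper's: you use the line of the flag as the canonical replacement for the kernel line bundle $K(x)$ of Proposition \ref{prop:singular quotient}, pick $Q_2,\ldots,Q_n$ by the density (Burnside) theorem, and observe that the $\BC^*$-ambiguity in a generator of $F^1$ cancels against the $\BC^*$ quotient of $G$, so no \'etale cover is needed. That part is correct and matches the paper.

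The gap is in the last step. The assertion that ``quasi-projectivity of $X_\sm$ follows from GIT applied to the free, linearizable $G/\BC^*$-action on the quasi-projective $\CX_\sm$'' is not a theorem: GIT produces a quasi-projective quotient only of the semistable locus for a chosen linearization, and for a non-affine quasi-projective variety there is no guarantee that some linearization makes \emph{every} point (semi)stable, even when the action is free and a geometric quotient exists as a scheme. Likewise, knowing that $X_\sm$ is covered by quasi-projective slices $V$ does not make $X_\sm$ quasi-projective (a scheme covered by quasi-projective opens need not be quasi-projective). What is actually needed --- and what the paper supplies --- is an explicit ample class on the quotient: one takes a $G$-equivariant very ample line bundle $L_0$ pulled back from $G/B$, observes that the determinant of the matrix $\bigl(k(x),Q_{2x}k(x),\ldots,Q_{nx}k(x)\bigr)$ is a $G$-equivariant section $s$ of $K^{\otimes -n}$ not vanishing at the chosen point, covers $\CX_\sm$ by finitely many such loci $U_{s_i}$, and shows that for large $m$ the products $f_{ij}s_i^m$ extend to global $G$-equivariant sections of $L_0\otimes K^{\otimes -mn}$, which then embed $X_\sm$ into a projective space. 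Your proof needs this (or an equivalent descent-of-ampleness argument) to close.
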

\begin{proof}
	The proof of the first part is similar to the proof of Proposition \ref{prop:singular quotient}, but simpler because for $K(x)$ we can use the one-dimensional subspace of the flag $F$. In particular, we do not need to pass to a covering. We use notations $F_{2g+k}$, $P_x$ from the proof of Proposition \ref{prop:singular quotient}.

	Let us denote by $K(x)$ the subspace mentioned above. Let $x_0\in \CX_\sing$.	Choose elements $Q_2,\ldots,Q_n$ of $F_{2g+k}$ so that 
	\[
	K(x_0),Q_{2x_0} K(x_0),\ldots,Q_{nx_0} K(x_0)
	\]
	are linearly independent. Let $Z\subset \CX_\sm$ be the Zariski open subset where 
	\[
	K(x),Q_{2x} K(x),\ldots,Q_{nx} K(x)
	\]
	are linearly independent. Given a non-zero local section $k(x)$ of the line bundle $K(x)$, we construct a $G$-valued function by forming a matrix with columns 
	\[
	k(x),Q_{2x} k(x),\ldots,Q_{nx} k(x).
	\]
	Changing the section multiplies this matrix by a scalar function, so we obtain a well-defined function $f:Z \to G/\BC^*$, which is clearly $G/\BC^*$-equivariant. So we have $Z=V\times G/\BC^*$ where $V=f^{-1}(\Id)$.
	
	To see that $X_\sm$ is quasi-projective, note that the determinant of the matrix with columns $k(x),Q_{2x} k(x),\ldots,Q_{nx} k(x)$ is a $G$-equivariant morphism of $G$-equivariant line bundles $K^{\otimes n} \to O$. Let $L_0$ be a $G$-equivariant very ample line bundle on $G/B$. What we have shown means that for any point $x_0\in \CX_\sing$ there exists a $G$-equivariant section $s\in \Gamma(\CX_\sing, K^{\otimes -n})$ such that $s(x_0)\neq 0$, and the restriction of $L_0$ to $U_s$ embeds $U_s/G$ into a projective space, where $U_s$ is the set where $s$ is not zero. Cover $\CX_\sm$ by opens sets $U_{s_i}$ corresponding to sections $s_1,\ldots,s_N$. For each $i$ let $f_{i1},\ldots,f_{ir}$ be sections of $L_0$ which give embedding of $U_s/G$ into $\BP^{r-1}$. For sufficiently large $m$ the product $f_{ij} s_i^m$ can be extended to a section of $L_0 \otimes K^{\otimes -mn}$ on the whole of $\CX_\sm$ for all $i,j$. So the collection of sections $f_{ij}s_i^m$ embeds $X_\sm$ into $\BP^{rN-1}$.
\end{proof}

\subsection{Springer action}
There are maps $X_\sing\to V/W$, $X_\sm\to V$, given by $\chi$ and $\chi_F$ respectively and we need to understand how the cohomology groups of fibers of these maps are related. We will be working with the following diagram:
\begin{equation}\label{eq:cd of smooth and singular}
\begin{tikzcd}
X_\sm \arrow{d}{\pi_X} & \CX_\sm \arrow{r} \arrow{d}{\pi_\CX} \arrow{l} & \tilde G \arrow{d}{\pi}\\
X_\sing & \CX_\sing \arrow{l} \arrow{r} & G
\end{tikzcd}
\end{equation}
\begin{prop}
	Both squares in the above diagram are cartesian.
\end{prop}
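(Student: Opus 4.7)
The right square is essentially tautological: the paper has just observed that $\CX_\sm = \CX_\sing \times_G \tilde G$, which is precisely the statement that the right square is cartesian. So nothing to do here beyond unpacking the definitions; one only needs to confirm that the horizontal maps are induced by reading off $M_k$ and the pair $(M_k,F)$ respectively, which is immediate from how $\CX_\sm$ and $\CX_\sing$ are defined.

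For the left square, my plan is to exploit the two GIT-quotient statements just proven. By Proposition \ref{prop:singular quotient}, the map $\CX_\sing \to X_\sing$ is a principal $G/\BC^*$-bundle (\'etale locally trivial), and by Proposition \ref{prop:smooth quotient}, the map $\CX_\sm \to X_\sm$ is a principal $G/\BC^*$-bundle (Zariski locally trivial). Both horizontal maps on the left are $G/\BC^*$-equivariant, as all the constructions are equivariant and the action on the flag variety factor commutes with the conjugation action on the matrix tuple part.

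Consider the natural map
\[
\Phi:\CX_\sm \;\longrightarrow\; X_\sm \times_{X_\sing} \CX_\sing.
\]
Both source and target carry a $G/\BC^*$-action: on the source it is the action on the flag $F$ together with conjugation, and on the target it is the action on the second factor. The map $\Phi$ is $G/\BC^*$-equivariant with respect to these actions, and both spaces become principal $G/\BC^*$-bundles over $X_\sm$ (the target because the base change of a principal bundle along a morphism is again a principal bundle). A $G/\BC^*$-equivariant morphism of principal $G/\BC^*$-bundles over a common base is automatically an isomorphism, so $\Phi$ is an isomorphism and the left square is cartesian.

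The main (and essentially only) point to verify carefully is the $G/\BC^*$-equivariance of $\Phi$ and the identification of the $G/\BC^*$-actions on the two bundles, which is where one has to keep track of the fact that the flag $F$ and the matrix tuple transform simultaneously under conjugation. Once this bookkeeping is set up, the conclusion follows from the standard fact about morphisms of torsors. I do not anticipate any substantive obstacle beyond this verification.
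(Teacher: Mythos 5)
Your proposal is correct and follows essentially the same route as the paper: the right square is cartesian by the definition $\CX_\sm=\CX_\sing\times_G\tilde G$, and for the left square both arguments reduce to showing the natural map $\CX_\sm\to X_\sm\times_{X_\sing}\CX_\sing$ is an isomorphism using the principal $G/\BC^*$-bundle structures from Propositions \ref{prop:singular quotient} and \ref{prop:smooth quotient}. The paper phrases the last step as ``isomorphism locally \'etale on $X_\sing$, hence an isomorphism by faithfully flat descent,'' which is just the proof of the torsor fact you invoke.
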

\begin{proof}
	The square formed by $\CX_\sm, \CX_\sing, \tilde G, G$ is clearly cartesian. It remains to show that the square formed by $X_\sm, X_\sing, \CX_\sm, \CX_\sing$ is cartesian. Consider the natural map $\CX_\sm \to X_\sm\times_{X_\sing} \CX_\sing$. This is an isomorphism locally \'etale on $X_\sing$ by Propositions \ref{prop:singular quotient} and \ref{prop:smooth quotient}. Since \'etale covers are faithfully flat, this is an isomorphism.
\end{proof}

Let $G_\reg\subset G$ resp. $T_\reg\subset T$ be the locus of semisimple regular elements in $G$ resp. $T$. The map $\pi|_{\pi^{-1}(G_\reg)}$ is a Galois covering with structure group $W$. Thus $W$ acts on the direct image of the constant sheaf $\pi_* \BC_{\pi^{-1}(G_\reg)}$. This action can be extended to the derived direct image $R \pi_* \BC_{\tilde G}$ (see \cite{achar2014weyl}). By base change, since $\pi$ is proper, the derived direct image $R \pi_{\CX*} \BC_{\CX_\sm}$ equals to the pull-back of $R \pi_* \BC_{\tilde G}$ from $G$, so it also inherits a $W$-action. Both the sheaf $R \pi_{\CX*} \BC_{\CX_\sm}$ and its $W$ action are $G/\BC^*$-equivariant. Since $\CX_\sm$ and $\CX_\sing$ are principal homogeneous spaces, the categories of equivariant sheaves are simply the categories of sheaves on the quotient, so we obtain
\begin{prop}
	$\pi_X$ restricted to $\chi_F^{-1}(T_\reg)$ is a Galois covering with image $\chi^{-1}(T_\reg/W)$, so $\pi_{X*}\BC_{X_\sm}$ restricted to $\chi^{-1}(T_\reg/W)$ has a natural action of $W$. This action extend to a $W$-action on $R \pi_{X*} \BC_{X_\sm}$.
\end{prop}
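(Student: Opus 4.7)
The plan is straightforward: both claims follow by descent from analogous (and largely already-established) statements on the framed representation varieties $\CX_\sing,\CX_\sm$, which in turn come from the Cartesian square on the right of \eqref{eq:cd of smooth and singular} and classical Grothendieck-Springer theory.

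For the Galois covering assertion, let $f:\CX_\sing\to G$, $x\mapsto M_k(x)$, be the map on the bottom of the right-hand square. An element of $G$ is regular semisimple iff its characteristic polynomial lies in $T_\reg/W$, so $f^{-1}(G_\reg)=\chi^{-1}(T_\reg/W)$, and its preimage in $\CX_\sm$ is $\chi_F^{-1}(T_\reg)$. Since the classical restriction $\pi^{-1}(G_\reg)\to G_\reg$ is a finite étale Galois $W$-cover and this property is stable under base change, the restriction of $\pi_\CX$ to $\chi_F^{-1}(T_\reg)\subset\CX_\sm$ is a $W$-Galois cover of $\chi^{-1}(T_\reg/W)\subset\CX_\sing$. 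Everything is $G/\BC^*$-equivariant, so by Propositions \ref{prop:singular quotient} and \ref{prop:smooth quotient} (which give that $\CX_\sing\to X_\sing$ and $\CX_\sm\to X_\sm$ are étale-locally trivial principal $G/\BC^*$-bundles) this Galois cover descends to the desired Galois cover $\pi_X:\chi_F^{-1}(T_\reg)\to\chi^{-1}(T_\reg/W)$.

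For the extension of the $W$-action, the paragraph just before the proposition already equips $R\pi_{\CX*}\BC_{\CX_\sm}$ with a $G/\BC^*$-equivariant $W$-action, obtained by pulling back the $W$-action on $R\pi_*\BC_{\tilde G}$ along $f$ (applying proper base change, since $\pi$ and hence $\pi_\CX$ is proper). I would conclude as follows: since $\CX_\sing\to X_\sing$ is an étale-locally trivial principal $G/\BC^*$-bundle, pullback gives an equivalence between sheaves on $X_\sing$ and $G/\BC^*$-equivariant sheaves on $\CX_\sing$. Applying proper base change to the left-hand Cartesian square in \eqref{eq:cd of smooth and singular} (noting that $\pi_X$ is also proper, by descent from $\pi_\CX$) identifies the pullback of $R\pi_{X*}\BC_{X_\sm}$ with $R\pi_{\CX*}\BC_{\CX_\sm}$. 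Hence the $G/\BC^*$-equivariant $W$-action descends uniquely to a $W$-action on $R\pi_{X*}\BC_{X_\sm}$, and it agrees over $\chi^{-1}(T_\reg/W)$ with the deck-transformation action from the first assertion.

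The step I expect to require the most care is the compatibility of the derived equivariant descent along $\CX_\sing\to X_\sing$ with proper base change and with the $W$-action. Since the Grothendieck-Springer construction is fully $G$-equivariant and the $G/\BC^*$-action commutes with $W$, these compatibilities are ultimately formal, but executing them rigorously at the level of derived categories of constructible sheaves is the point at which one has to be careful.
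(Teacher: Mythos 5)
Your proposal is correct and follows essentially the same route as the paper: the $W$-action comes from the classical Springer situation $\pi:\tilde G\to G$ (Galois over $G_\reg$, extended to $R\pi_*\BC_{\tilde G}$), is transported to $R\pi_{\CX*}\BC_{\CX_\sm}$ by proper base change along $\CX_\sing\to G$, and then descends to $X_\sing$ via the equivalence between $G/\BC^*$-equivariant sheaves on the principal homogeneous space $\CX_\sing$ and sheaves on the quotient. The only difference is that you spell out the identification $f^{-1}(G_\reg)=\chi^{-1}(T_\reg/W)$ and the base-change/descent compatibilities in more detail than the paper, which states them in one line.
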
 

Denote $X_{\sm}^t=\chi_F^{-1}(t)$, $X_{\sing}^t=\chi^{-1}(t)$ and similarly for $\CX_\sm^t$, $\CX_\sing^t$. In particular, $X_{\sm}^1$ and $X_{\sing}^1$ correspond to the condition that $M_k$ is unipotent. We denote by $X_\sm^{=1}, X_\sing^{=1}, \CX_\sm^{=1}, \CX_\sing^{=1}$ the corresponding varieties with $M_k=1$. Note that $X_\sing^{=1}=X^{=1}$ is the variety we are interested in. The diagram \eqref{eq:cd of smooth and singular} base changes to the following diagram:
\begin{equation}\label{eq:cd of smooth and singular unipotent}
\begin{tikzcd}
X_\sm^1 \arrow{d}{\pi_X} & \CX_\sm^1 \arrow{r} \arrow{d}{\pi_\CX} \arrow{l} & \tilde N \arrow{d}{\pi}\\
X_\sing^1 & \CX_\sing^1 \arrow{l} \arrow{r} & N
\end{tikzcd}
\end{equation}
Here $N\subset G$ denotes the set of unipotent matrices and $\tilde N$ its preimage in $\tilde G$. The last diagram further base changes to 
\begin{equation}\label{eq:cd of smooth and singular unipotent2}
\begin{tikzcd}
X_\sm^{=1} \arrow{d}{\pi_X} & \CX_\sm^{=1} \arrow{r} \arrow{d}{\pi_\CX} \arrow{l} & G/B \arrow{d}{\pi}\\
X_\sing^{=1} & \CX_\sing^{=1} \arrow{l} \arrow{r} & 1
\end{tikzcd}
\end{equation}
Let $i$, $i_X$, $i_{\CX}$ denote the closed embedding of $1$, $X_\sing^{=1}$, $\CX_\sing^{=1}$ in $N$, $X_\sing^1$, $\CX_\sing^1$ respectively:
\begin{equation}\label{eq:cd of smooth and singular embeddings}
\begin{tikzcd}
X_\sing^{=1} \arrow{d}{i_X}& \CX_\sing^{=1} \arrow{l} \arrow{r} \arrow{d}{i_\CX}& 1 \arrow{d}{i}\\
X_\sing^1  & \CX_\sing^1 \arrow{r} \arrow{l} & \tilde N 
\end{tikzcd}
\end{equation}

There are Gysin maps
\[
R \pi_{X*} \BC_{X_\sm^{=1}} \to \BC_{X_\sing^{=1}}[-2\dim G/B], \quad R \pi_{*} \BC_{G/B}=H^*(G/B,\BC) \to \BC[-2\dim G/B],
\]
pulling back the latter one along $\CX_\sing^{=1}\to 1$ produces the former one. 
Applying $i_{X*}$ to the Gysin map we obtain a map between complexes of sheaves on $X_\sing^1$.
\begin{equation}\label{eq:sheaves on uni sing}
R (i_X \pi_X)_* \BC_{X_\sm^{=1}} \to i_{X*} \BC_{X_\sing^{=1}}[-2\dim G/B].
\end{equation}
Notice that $i_X \pi_X=\pi_X i_X$, where on the right hand side we have also denoted by $i_X$ the embedding of $X_\sm^{=1}$ into $X_\sm^1$:
\begin{equation}\label{eq:i and pi}
\begin{tikzcd}
X_\sm^{=1} \arrow{r}{i_X} \arrow{d}{\pi_X} & X_\sm^1 \arrow{d}{\pi_X}\\
X_\sing^{=1} \arrow{r}{i_X} & X_\sing^1
\end{tikzcd}
\end{equation}

We have a restriction map (adjunction morphism)
\begin{equation}\label{eq:sheaves on uni sm}
\BC_{X_\sm^1} \to i_{X*} \BC_{X_\sm^{=1}}.
\end{equation}
Composing $R\pi_{X*}$ applied to \eqref{eq:sheaves on uni sm} with \eqref{eq:sheaves on uni sing}, we obtain a natural map of sheaves on $X_\sing^1$
\begin{equation}\label{eq:sheaves on uni sing 2}
R \pi_{X*} \BC_{X_\sm^1} \to R (\pi_X i_X)_* \BC_{X_\sm^{=1}} =  R (i_X \pi_X)_* \BC_{X_\sm^{=1}}\to i_{X*} \BC_{X_\sing^{=1}}[-2\dim G/B].
\end{equation}
Denote by $(\cdot)^-$ the operation of projection to the sign representation of $W$ and by $(\cdot)^{\neq -}$ the projection to the rest. We have
\begin{prop}
	The composition of maps \eqref{eq:sheaves on uni sing 2} vanishes on $\left(R \pi_{X*} \BC_{X_\sm^\uni}\right)^{\neq -}$ and restricts to an isomorphism of sheaves on $X_\sing^1$
	\[
	\left(R \pi_{X*} \BC_{X_\sm^1}\right)^- \;\xrightarrow{\sim}\; i_{X*} \BC_{X_\sing^{=1}}[-2\dim G/B].
	\]
\end{prop}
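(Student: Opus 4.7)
The plan is to reduce the statement, by base change along the diagram (\ref{eq:cd of smooth and singular unipotent}), to the analogous statement for the Springer sheaf on $N$, and then invoke classical Springer theory. First, since $\CX_\sing^1 \to X_\sing^1$ is a Zariski locally trivial principal $G/\BC^*$-bundle (Proposition \ref{prop:singular quotient}), a map of complexes of sheaves on $X_\sing^1$ is an isomorphism if and only if its pullback along $p_\sing:\CX_\sing^1\to X_\sing^1$ is. Using smooth base change for $p_\sing$ on the left square of (\ref{eq:cd of smooth and singular unipotent}) and proper base change for the closed immersion $i_X$, the two sheaves in the statement pull back to $R\pi_{\CX *}\BC_{\CX_\sm^1}$ and $i_{\CX *}\BC_{\CX_\sing^{=1}}[-2\dim G/B]$, and our map pulls back to the analogous composition of restriction and Gysin on $\CX_\sing^1$. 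So it suffices to prove the analogous assertion on $\CX_\sing^1$.

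Next, observe that the right square of (\ref{eq:cd of smooth and singular unipotent}) is cartesian, and $\pi$ is proper. Let $p:\CX_\sing^1\to N$ be the map reading off $M_k$. Proper base change (compatible with the $W$-action, since the $W$-action on $R\pi_*\BC_{\tilde N}$ is defined via the Galois cover on the regular locus, which is stable under pullback) gives a $W$-equivariant isomorphism $R\pi_{\CX *}\BC_{\CX_\sm^1}\cong p^*R\pi_*\BC_{\tilde N}$. Likewise, base change along the cartesian square involving $i$ and $i_\CX$ yields $i_{\CX*}\BC_{\CX_\sing^{=1}}\cong p^*i_*\BC_{\{1\}}$. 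Thus everything in sight is the $p^*$-pullback of the corresponding data on $N$, and it is enough to prove the base case: the composition
\[
\varphi: R\pi_*\BC_{\tilde N} \to i_*i^*R\pi_*\BC_{\tilde N}=i_* H^*(G/B) \to i_*\BC[-2\dim G/B]
\]
vanishes on the $(\neq-)$-part and is an isomorphism on the sign part.

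For the base case, invoke the Springer correspondence: $R\pi_*\BC_{\tilde N}$ decomposes (up to shift) as a semisimple perverse sheaf whose isotypic components are indexed by pairs $(\CO,\CL)$ of a nilpotent orbit and an equivariant irreducible local system, with the sign representation of $W$ corresponding to the trivial pair $(\{1\},\BC)$. The corresponding summand is therefore supported at $\{1\}$, and its stalk is determined by the stalk of $R\pi_*\BC_{\tilde N}$ at $1$, which by proper base change equals $H^*(G/B)$. By Borel's theorem $H^*(G/B)$ is the regular representation of $W$, and its sign-isotypic part is one-dimensional and concentrated in top degree $2\dim G/B$; hence $(R\pi_*\BC_{\tilde N})^- \cong i_*\BC[-2\dim G/B]$. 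The map $\varphi$ on the stalk at $1$ is the Gysin map $H^*(G/B)\to \BC[-2\dim G/B]$, which precisely picks out the top-degree (sign) component, so $\varphi$ is an isomorphism on the sign part and annihilates the $(\neq-)$-part (whose image would have to lie in the $(\neq-)$-part of the target, which is zero).

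The main conceptual input is the Springer-theoretic identification of the sign-isotypic component; the main technical point to handle carefully is the compatibility of the $W$-action with the two base changes (the smooth one by $p_\sing$ and the proper one by $\pi$), together with verifying the Gysin-map description of $\varphi$ at the stalk over $1$. Everything else is a formal propagation of the base-case isomorphism along $p^*$.
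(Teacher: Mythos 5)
Your proposal is correct and follows essentially the same route as the paper: reduce via the principal-bundle structure of $\CX_\sing^1\to X_\sing^1$ and proper base change to the corresponding statement for the Springer sheaf on $N$, identify $(R\pi_*\BC_{\tilde N})^-$ with the skyscraper $i_*\BC[-2\dim G/B]$ via the Springer decomposition, and check the map on the stalk at $1$, where the Gysin map is projection to top degree. The only (immaterial) difference is that you justify the degree count via Borel's theorem on $H^*(G/B)$ as the regular representation, whereas the paper deduces it from the supports and shifts of the $IC$ summands; note also that Proposition \ref{prop:singular quotient} gives only \'etale (not Zariski) local triviality of $\CX_\sing\to X_\sing$, which still suffices for your descent step.
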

\begin{proof}
	We first convert the statement to the statement about equivariant sheaves on $\CX_\sing^1$. There the map in question is the pullback from the corresponding map of $G$-equivariant sheaves on $N$ via the map $\CX_\sing^1\to N$. So we have to show that the corresponding map of sheaves on $N$ is zero on $(R\pi_{*} \BC_{\tilde N})^{\neq -}$ and is an isomorphism on $(R\pi_{*} \BC_{\tilde N})^{-}$:
	\begin{equation}\label{eq:maps over N}
R\pi_{*} \BC_{\tilde N} \to R (\pi i)_* \BC_{G/B} =  R (i \pi)_* \BC_{G/B}\to i_{*} \BC[-2\dim G/B],
	\end{equation}
	where $\pi$ and $i$ are given in the following diagram analogous to \eqref{eq:i and pi}:
	\[
	\begin{tikzcd}
	G/B \arrow{r}{i} \arrow{d}{\pi} & \tilde N \arrow{d}{\pi}\\
	1 \arrow{r}{i} & N
	\end{tikzcd}
	\]
	We know that the direct image $R\pi_{*} \BC_{\tilde N}$ has a decomposition as follows:
	\[
	R\pi_{*} \BC_{\tilde N} = \bigoplus_{\lambda} s_\lambda\otimes IC_{\bar N_\lambda}[-\codim N_\lambda],
	\]
	where $\lambda$ is a partition, $N_\lambda$ is the corresponding unipotent orbit, $\bar N_\lambda$ its closure, $IC$ is its intersection cohomology sheaf, $\codim N_\lambda$ the codimension, and $s_\lambda$ is the corresponding irreducible $W$-representation. The sign representation corresponds to the orbit $1$, so we obtain 
	\[
	(R\pi_{*} \BC_{\tilde N})^-\cong i_* \BC[-2\dim G/B],
	\]
	Let us compute the stalks at $1\in N$. We obtain a decomposition
	\[
	i^* R\pi_{*} \BC_{\tilde N} = H^*(G/B,\BC) = \bigoplus_{\lambda} s_\lambda\otimes i^* IC_{\bar N_\lambda}[-\codim N_\lambda].
	\]
	Since $\dim N = 2 \dim G/B$, we see that $i^*(R\pi_{*} \BC_{\tilde N})^-$ can only be isomorphically mapped to the top degree cohomology of $G/B$, which is one-dimensional. Hence all the other $W$-representations are mapped to smaller cohomological degrees. Note that the Gysin map in this case simply reads off the top degree cohomology.
	
	The stalk at $1$ of the map $(R\pi_{*} \BC_{\tilde N})^{\neq -} \to i_{*} \BC[-2\dim G/B]$ is zero, and the other stalks are zero automatically. Hence this map is zero. Both sheaves $(R\pi_{*} \BC_{\tilde N})^{-}$ and $i_* \BC[-2\dim G/B]$ are supported at $1$, and the map on the stalks at $1$ is an isomorphism, so the map $(R\pi_{*} \BC_{\tilde N})^{-} \to i_* \BC[-2\dim G/B]$ is an isomorphism.
\end{proof}

Note that the maps $\pi$ and $i$ are proper, so the direct image can be replaced by the direct image with compact support. Applying the functor $R\chi_!$ for the map $\chi:X_\sing\to T/W$, we obtain:
\begin{cor}\label{cor:W action on sm at 1}
	\begin{enumerate}
		\item The complex $R(\chi\pi_X)_! \BC_{X_\sm}$ on $T/W$ has a $W$-action.
		\item There is a natural morphism from the stalk of the above complex at $1$ to the cohomology of the character variety for $C_k=1$: $(R(\chi\pi_X)_! \BC_{X_\sm})_1 \to H^*_c(X^1,\BC)[-2\dim G/B]$.
		\item The morphism above vanishes on $(R(\chi\pi_X)_! \BC_{X_\sm})_1^{\neq -}$ and is an isomorphism on the sign component $(R(\chi\pi_X)_! \BC_{X_\sm})_1^-$.
	\end{enumerate}
\end{cor}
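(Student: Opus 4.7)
The plan is to deduce the corollary from the sheaf-level identification in the preceding proposition by applying $R\chi_!$ and passing to the stalk at $[1]\in T/W$.

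For part (i), the key preliminary observation is that $\pi_X$ is proper: it is the induced map on quotients of the base change $\pi_\CX:\CX_\sm\to\CX_\sing$ of the proper Grothendieck--Springer map $\pi:\tilde G\to G$, and by Propositions \ref{prop:singular quotient} and \ref{prop:smooth quotient} the quotient maps $\CX_\bullet\to X_\bullet$ are \'etale locally trivial principal bundles, so properness descends. Hence $R\pi_{X*}=R\pi_{X!}$, and applying $R\chi_!$ to the $W$-equivariant complex $R\pi_{X*}\BC_{X_\sm}$ transports the $W$-action functorially to $R(\chi\pi_X)_!\BC_{X_\sm}=R\chi_!\,R\pi_{X*}\BC_{X_\sm}$.

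For part (ii), I would upgrade the morphism \eqref{eq:sheaves on uni sing 2} on $X_\sing^1$ to a morphism of sheaves on the whole of $X_\sing$ by the same two-step construction: the adjunction $\BC_{X_\sm}\to i_{X*}\BC_{X_\sm^{=1}}$ followed by the Gysin map for the smooth proper fibration $\pi_X:X_\sm^{=1}\to X^{=1}$, yielding
\[
R\pi_{X*}\BC_{X_\sm}\To i_{X*}\BC_{X^{=1}}[-2\dim G/B]
\]
with $i_X:X^{=1}\hookrightarrow X_\sing$ the closed embedding. Applying $R\chi_!$ and taking the stalk at $[1]\in T/W$, proper base change identifies the left hand side with $R\Gamma_c(X_\sing^1,R\pi_{X*}\BC_{X_\sm^1})=R\Gamma_c(X_\sm^1,\BC)$ --- using that $(\chi\pi_X)^{-1}([1])=X_\sm^1$ since the genericity of $(C_1,\ldots,C_{k-1},1)$ ensures $1\in V$ --- and the right hand side with $H^*_c(X^{=1},\BC)[-2\dim G/B]$ because $X^{=1}$ is supported over $[1]$. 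For part (iii), the preceding proposition identifies this morphism on $X_\sing^1$ with the projection onto the sign isotypic component composed with an isomorphism onto the target; since $W$ acts compatibly with all functors involved, and $R\Gamma_c(X_\sing^1,-)$ is exact on the decomposition into $W$-isotypic pieces, the statement descends verbatim to the stalk.

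The only genuine technical obstacle --- as opposed to formal unpacking --- is verifying the properness of $\pi_X$ that legitimates both the identification $R\pi_{X*}=R\pi_{X!}$ and the proper base change in part (ii); once this is in hand, the entire corollary is a direct pushforward of the sheaf-level proposition just proved.
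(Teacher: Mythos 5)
Your proposal is correct and follows essentially the same route as the paper: the paper's own argument is the one-line observation that $\pi$ (hence $\pi_X$, by descent along the locally trivial quotient maps) and $i$ are proper, so $R\pi_{X*}=R\pi_{X!}$, after which one applies $R\chi_!$ to the $W$-equivariant complex and to the morphism \eqref{eq:sheaves on uni sing 2} and reads off the stalk at $1$ by proper base change. Your additional detail (descent of properness, identification of the stalk with $R\Gamma_c(X_\sm^1,\BC)$, exactness on $W$-isotypic pieces) just makes explicit what the paper leaves implicit.
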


\subsection{Locally constant property}
It is not clear how to describe the Springer $W$-action explicitly. Instead we will use a special property of our situation which guarantees that the $W$-action is unique.
\begin{prop}\label{prop:locally constant}
For the family $\chi_F:X_\sm\to V\subset T$, the cohomology sheaves of $R \chi_{F!} \BC_{X_\sm}$ are locally constant. Also for any $i,k\in\BZ$ the associated graded sheaves for the weight filtration $\Gr^W_k H^i(R \chi_{F!} \BC_{X_\sm})$ are locally constant.
\end{prop}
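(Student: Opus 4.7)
The plan is to use the cell decomposition from Theorem~\ref{thm:stratification genus g} in families over $V$, and to show that each cell of $X_\sm$ is a direct product with $V$ as one factor, with $\chi_F$ the projection. Local constancy of $R^i\chi_{F!}\BC_{X_\sm}$ and of $\Gr^W_k R^i\chi_{F!}\BC_{X_\sm}$ will then follow by a routine long-exact-sequence induction.

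First I would build a relative stratification of $X_\sm$ over $V$. Running the arguments of Sections~\ref{ssec:puncture contributions} and~\ref{ssec:genus contributions} verbatim with the fixed matrix $C_k$ replaced by the tautological diagonal matrix over $V$ (with the flag $F$ of $X_\sm$ playing the role of the standard flag, which is forced since $t \in V$ has distinct eigenvalues), one obtains a stratification of $X_\sm$ indexed by tuples $(\bar\pi, p)$ of permutations and walks on $b_{\bar\pi}$. Each stratum is built from the braid TSVs, the surface description of Section~\ref{sec:seifert}, the vector bundle factors $\BC^{U_p}$, and the unipotent factors $U_{\pi}^+$, none of which depend on $t$, together with the constraint $\prod_k \varphi_k(a_k) = u \, t^{-1} \, C^*_{\bar\pi}$ in which $t \in V$ enters only multiplicatively through a single diagonal factor $t^{-1}$, while $C^*_{\bar\pi}$ depends only on $C_1,\ldots,C_{k-1}$.

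Next I would exhibit an algebraic trivialization of each stratum over $V$. By Proposition~\ref{prop:generic implies connected}, at every $t\in V$ only strata with connected Seifert surface contribute; for these, the boundary map $\varphi : H_1(\Sigma, A) \to H_0(A) = \BZ^n$ surjects onto $\ker(\det)$, which is the cocharacter lattice of $T_1 \supset V$. Choosing an integral sublattice $L \subset \BZ^{S_p}$ complementary to $\ker\varphi$ and mapping isomorphically onto $\ker(\det)$ furnishes, after exponentiating, an algebraic isomorphism between the torus factor of the cell and $V \times T_0$, where $T_0 = \BC^* \otimes \ker\varphi$; this isomorphism is equivariant for $V$ acting on itself by translation and commutes with the $T^\pi/\BC^*$ action, which acts only on $T_0$. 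Combining with the $t$-independent affine factor $\BC^{U_p}$ and the unipotent factors, we obtain a $V$-equivariant isomorphism of the stratum with $V \times Z_{\bar\pi, p}$ over $V$, so that $R\chi_{F!}\BC$ restricted to the stratum is the pullback of the constant sheaf with stalk $H^*_c(Z_{\bar\pi, p}, \BC)$.

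Finally, I would linearly refine the poset of strata into a filtration of $X_\sm$ by open subvarieties whose successive differences are the individual strata. The attached long exact sequences in compactly supported cohomology are morphisms of sheaves on $V$; since the two outer terms at each step are locally constant, so is the middle one, and the induction yields local constancy of every $R^i\chi_{F!}\BC_{X_\sm}$. The same argument carried out in the category of mixed Hodge modules on $V$, using that morphisms of MHSs are strict with respect to $W_\bullet$ so that taking $\Gr^W_k$ preserves exactness, gives local constancy of $\Gr^W_k R^i\chi_{F!}\BC_{X_\sm}$. The main technical point to nail down is that the trivialization $V \times Z_{\bar\pi, p}$ is genuinely algebraic and $V$-equivariant at every stage of the iterative Bruhat-decomposition construction; this should follow by tracing through the changes of variable in Sections~\ref{ssec:puncture contributions}--\ref{ssec:genus contributions} and observing that $t$ never enters any substitution producing the $U$-factors or the affine factors, only the final constraint that becomes the $V$-coordinate.
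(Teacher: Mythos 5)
Your overall strategy is the same as the paper's: decompose into cells each of which splits as $V\times(\text{something})$ with $\chi_F$ the projection, then propagate local constancy (and its $\Gr^W_k$ version) through the long exact sequences attached to a linear refinement of the stratification. The trivialization of the torus factor via Propositions \ref{prop:generic implies connected} and \ref{prop:connected surface} and the closing extension argument are exactly right.

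There is, however, one genuine error at the start. You claim that the flag $F$ of $X_\sm$ ``is forced since $t\in V$ has distinct eigenvalues,'' and on that basis you run the constructions of Sections \ref{ssec:puncture contributions}--\ref{ssec:genus contributions} directly on $X_\sm$. But $V$ is defined only by genericity of $(C_1,\ldots,C_{k-1},t)$ together with $\det t=1$; it contains non-regular elements, and in particular $t=1$, which is the whole point of Section \ref{sec:monodromic action}. Over such $t$ the flag is genuinely extra data (the fiber $X_\sm^{=1}\to X^{=1}$ is a full flag bundle), so your stratification of $X_\sm$ itself does not exist as described. The correct move, which your argument needs, is to first replace $X_\sm$ by the affine bundle $\tilde X_\sm=\CX_\sm^B/T$ of relative dimension $\binom{n}{2}$, where $\CX_\sm^B\subset\CX_\sm$ is the locus where $F$ is the standard flag; this uses Proposition \ref{prop:smooth quotient} (Zariski local triviality of $\CX_\sm\to X_\sm$) to see that $R\epsilon_!\BC_{\tilde X_\sm}\cong\BC_{X_\sm}[-2\binom{n}{2}]$, and it is $\tilde X_\sm$ — not $X_\sm$ — that carries the equation \eqref{eq:product3} with $C_k$ replaced by $ut$, $u\in U$, $t\in V$, and hence the cell decomposition you invoke. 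Once this reduction is inserted, the rest of your argument goes through and coincides with the paper's proof.
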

\begin{proof}
By Proposition \ref{prop:smooth quotient}, the representation variety $\CX_\sm$ is a Zariski locally trivial $G/\BC^*$-fibration over $X_\sm$. Hence the subset 
\[
\CX_{\sm}^B = \Big\{(\alpha_1,\ldots,F)\in\CX_\sm\;:\; F=\text{standard flag}\Big\}
\]
 is a Zariski locally trivial $B/\BC^*$-fibration over $X_\sm$. Hence the quotient $\tilde X_\sm=\CX_{\sm}^B/T$ is an affine bundle over $X_\sm$ of relative dimension $\binom{n}{2}$. Denote the structure map by $\epsilon:\tilde X_\sm \to X_\sm$. In particular, $\epsilon$ is a smooth morphism and there is a morphism of sheaves
\[
R \epsilon_! \BC_{\tilde X_\sm} \to \BC_{X_\sm}\left[-2\dim \tilde X_\sm/X_\sm\right].
\]
It is an isomorphism locally on $X_\sm$, hence an isomorphism everywhere. So we can replace $X_\sm$ by $\tilde X_\sm$.

The space $\tilde X_\sm$ has a description given by \eqref{eq:product3} and in Sections \ref{ssec:puncture contributions} and \ref{ssec:genus contributions} we explained how to decompose such space into cells which are vector bundles over braid varieties, and these braid varieties are further decomposed into cells which look like $Z=\BC^a \times \BC^{*b}\cap \chi_F^{-1}(V)$ so that the map $\chi_F:Z\to T$ is given by a homomorphism $\BC^{*b}\to T$. The genericity assumption \ref{def:generic} and Proposition \ref{prop:generic implies connected} implies that only cells whose surfaces are connected have a non-empty intersection with $\tilde X_\sm$. Proposition \ref{prop:connected surface} implies that the image of $\BC^{*b}$ is the full subtorus $T_1=\{t\in T:\det t=1\}$, and $C^{*b}$ can be factored as $\BC^{*(b-n+1)}\times T_1$ so that the map to $T$ is just the projection to $T_1$. Therefore $Z$ can be factored as a product $Z=\BC^a \times \BC^{*(b-n+1)} \times V$ so that $\chi_F|_Z$ is a projection to $V$. So the pushforward with compact supports has constant cohomology sheaves.

So the complex $R \epsilon_! \BC_{\tilde X_\sm}$ can be obtained by taking successive cones of complexes whose cohomology sheaves are constant. Since the category of locally constant sheaves is abelian and closed under extensions, we obtain that the resulting object also has locally constant cohomology sheaves. The same holds for the associated graded sheaves.
\end{proof}

The property is important because of the following
\begin{lem}
	Suppose a finite group $\Gamma$ acts on a Hausdorff topological space $X$, and suppose on an open dense $\Gamma$-invariant subset $X_\reg\subset X$ the action is free. 
	Let $\CF$ be a locally constant sheaf on $X$ endowed with a $\Gamma$-equivariant structure over $X_\reg$. Let $p:X\to X/W$ be the natural map. The direct image $(p_* \CF)|_{X_\reg}$ has a natural $\Gamma$ action. We have a bijection between
	\begin{enumerate}
		\item extensions of the equivariant structure to the whole $\CF$ on $X$, and
		\item extensions of the $\Gamma$ action to the whole $p_*\CF$ on $X/\Gamma$.
	\end{enumerate}
	In particular, since there is at most one extension of the equivariant structure, the same is true for the extension of $\Gamma$-action.
\end{lem}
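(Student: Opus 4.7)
The plan is to reduce the lemma to the general principle that $\Gamma$-equivariant structures on $\CF$ correspond bijectively to $\Gamma$-actions on $p_*\CF$, and to verify that this correspondence is compatible with restriction to the open $\Gamma$-invariant subset $X_\reg$. First I would construct the forward map from (i) to (ii): given an equivariant structure $\{\phi_\gamma:\gamma^*\CF\to\CF\}_{\gamma\in\Gamma}$ on $\CF$ over $X$, the formula $\gamma\mapsto p_*(\phi_\gamma)\circ b_\gamma$, where $b_\gamma:p_*\CF\toiso p_*\gamma^*\CF$ is the base-change isomorphism arising from $p\circ\gamma=p$, defines a $\Gamma$-action on $p_*\CF$ over $X/\Gamma$. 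A direct unraveling shows that over $X_\reg/\Gamma$ this reproduces the natural $\Gamma$-action described in the statement, so the forward map preserves the extension condition.

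Next, for the inverse I would construct, from a $\Gamma$-action $\{\rho_\gamma\}$ on $p_*\CF$, an equivariant structure on $\CF$ by working locally. Since $\Gamma$ is finite and $X$ is Hausdorff, every point $x\in X$ has a $\Gamma_x$-invariant open neighborhood $V$ with $\gamma V\cap V=\varnothing$ for $\gamma\notin\Gamma_x$, so that the $\Gamma$-invariant open set $U=\Gamma V$ satisfies $p^{-1}(p(U))=U$ and $(p_*\CF)|_{p(U)}$ decomposes as a direct sum indexed by $\Gamma/\Gamma_x$. The permutation action of $\Gamma$ on these summands, together with the component maps of $\rho_\gamma$, determines isomorphisms $\phi_\gamma|_U:\gamma^*\CF|_U\to\CF|_U$. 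A check of cocycle compatibility on overlaps $U\cap U'$ (which reduces to the cocycle condition already satisfied by $\{\rho_\gamma\}$) shows that these local data glue to a global equivariant structure. Restriction compatibility then gives the desired bijection between extensions.

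For the ``in particular'' clause, the uniqueness of the extension of the equivariant structure follows from the fact that $\gamma^*\CF$ and $\CF$ are locally constant on $X$. Two extensions $\phi_\gamma,\phi_\gamma'$ agree on the dense open $\Gamma$-invariant subset $X_\reg$, and the sheaf $\underline{\Hom}(\gamma^*\CF,\CF)$, being locally constant on the locally connected space $X$ with every connected component meeting $X_\reg$, has its global sections injecting into its sections over $X_\reg$; hence $\phi_\gamma=\phi_\gamma'$. Transferring along the bijection established above, the same uniqueness transfers to extensions of the $\Gamma$-action on $p_*\CF$.

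The main obstacle is the construction of the inverse map in the second paragraph: while the bijection between $\Gamma$-equivariant structures and $\Gamma$-actions on $p_*$ is clear stalkwise, checking that the stalkwise isomorphisms extracted from $\rho_\gamma$ assemble into a morphism of sheaves requires the local direct sum decomposition of $p_*\CF$ near points with nontrivial stabilizer, which is where the Hausdorff hypothesis is used. Once the local structure is set up, the verification of cocycle and compatibility conditions is routine.
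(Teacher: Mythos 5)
Your overall strategy --- construct the forward map by pushing forward the equivariant structure, and invert it by exhibiting a local direct sum decomposition of $p_*\CF$ near each point using Hausdorffness and finiteness of $\Gamma$ --- is the same as the paper's. But there is a genuine gap at the crux of the inverse direction. When you write that ``the permutation action of $\Gamma$ on these summands, together with the component maps of $\rho_\gamma$, determines isomorphisms $\phi_\gamma|_U$,'' you are assuming that $\rho_\gamma$ acts on $(p_*\CF)(p(U))=\bigoplus_{i}\CF(g_iV)$ by permuting the summands (composed with isomorphisms). A priori an automorphism of $p_*\CF$ over $p(U)$ is an arbitrary matrix of morphisms between the summands, with possibly nonzero ``off-diagonal'' components $\CF(g_jV)\to\CF(g_iV)$ for $g_i\Gamma_x\neq\gamma g_j\Gamma_x$; if these do not vanish, the maps $\phi_\gamma$ you extract neither satisfy the cocycle condition nor recover $\rho_\gamma$, so the claimed bijection fails. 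You cannot dismiss this by saying a morphism vanishing on a dense open vanishes: the summands are \emph{not} locally constant at $p(x)$ (their stalks jump at images of points with nontrivial stabilizer), so some argument is required, and your plan labels everything past the local decomposition as routine.

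This is exactly where the paper uses the hypotheses that you invoke only for the uniqueness clause. Shrink $U=\Gamma V$ so that $\CF$ is constant on it, pick a regular point $y\in V\cap X_\reg$, and note that restriction gives a $\Gamma$-equivariant injection $\bigoplus_i\CF(g_iU)\hookrightarrow\bigoplus_{g\in\Gamma}\CF_{gy}$. Because $\rho_\gamma$ extends the natural action over $X_\reg/\Gamma$, its effect on the right-hand side is the tautological permutation of stalks, which respects the block decomposition indexed by $\Gamma x$; injectivity then forces $\rho_\gamma$ to respect the decomposition of $\CF(\Gamma U)$ as well. With that step supplied, the rest of your plan (extraction of $\phi_\gamma$, gluing, and the uniqueness argument via density of $X_\reg$ and local constancy) goes through as in the paper; as a minor point, uniqueness does not require local connectedness of $X$ --- it suffices that two locally constant sections agreeing on a dense set agree near every point.
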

\begin{proof}
	Each extension of the equivariant structure clearly provides us with extension of the action. To give the opposite construction, suppose the action extends to $p_*\CF$. Let $x\in X$. Let $U$ be a neighborhood of $x$ such that $gU=U$ if $gx=x$ and $gU\cap U=\varnothing$ if $gx\neq x$ for all $g\in \Gamma$. By making $U$ smaller if necessary we can assume that $\CF$ is constant on $\Gamma U$. Let $y\in U\cap X_\reg$. Let $g_1,\ldots,g_m\Gamma$ be such that $g_i x\neq g_j x$ for $i\neq j$ and $\{g_1 x,\ldots,g_m x\}=\Gamma x$. Then we have a $\Gamma$-action on
	\[
	\CF(\Gamma U) = \bigoplus_{i=1}^m \CF(g_i U).
	\]
	This action respects the direct sum decomposition because the restriction map to the stalks at $\Gamma y$ imply that there is a $\Gamma$-equivariant injective map
	\[
	\bigoplus_{i=1}^m \CF(g_i U) \to \bigoplus_{g\in\Gamma} \CF_{gy},
	\]
	and the action respects the decomposition on the right. So we obtain a $\Gamma$-equivariant structure over $\Gamma U$. These structure clearly glue together to a $\Gamma$-equivariant structure over $X$.
	
	Since $\CF$ is locally constant and $X_\reg$ is dense, any map $\CF\to g^*\CF$ is uniquely determined by its restriction to $X_\reg$. So the uniqueness statement follows.
\end{proof}

Over $V_\reg=V\cap T_\reg$ the natural maps $X_\sm \to X_\sing\times_{T/W} T$ and $\tilde G\to G\times_{T/W} T$ become isomorphisms, so the Spinger action coincides with the action coming from the equivariant structure obtained by $W$-action on $T$. So the above Lemma applies.

\begin{cor}
	There exists a $W$-equivariant structure on the cohomology sheaves $H^i(R \chi_{F!} \BC_{X_\sm})$ extending the natural structure over $V_\reg$.
\end{cor}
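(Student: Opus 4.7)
The plan is to apply the Lemma preceding the corollary to the action of $W$ on $V$ induced from its action on $T$ (note $V$ is $W$-invariant by construction), with $X_\reg = V_\reg = V \cap T_\reg$, on which $W$ acts freely, and with $\CF = H^i(R\chi_{F!}\BC_{X_\sm})$. By Proposition \ref{prop:locally constant} the sheaf $\CF$ is locally constant, so the hypotheses of the Lemma are satisfied. Over $V_\reg$ the map $\pi_X$ restricts to a Galois $W$-cover $\chi_F^{-1}(V_\reg) \to \chi^{-1}(V_\reg/W)$, and this endows $\CF|_{V_\reg}$ with a natural $W$-equivariant structure covering the action of $W$ on $V_\reg$; call this structure the \emph{standard} one.

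By the Lemma, constructing an extension of the standard equivariant structure to all of $V$ is equivalent to extending the corresponding $W$-action on $p_*\CF|_{V_\reg/W}$ to all of $V/W$, where $p\colon V \to V/W$ is the quotient map. To produce such an extension I will use the Springer action. Since $p \circ \chi_F = \chi \circ \pi_X$ as maps $X_\sm \to V/W \subset T/W$ and $p$ is finite (so that $Rp_! = p_* $ is exact), proper base change gives
\[
R(\chi\pi_X)_!\BC_{X_\sm}\big|_{V/W} \;\cong\; p_* R\chi_{F!}\BC_{X_\sm}.
\]
The Springer $W$-action constructed in the preceding subsection (Corollary \ref{cor:W action on sm at 1}) equips the left-hand side with a $W$-action. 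Because $p_*$ is exact it commutes with taking cohomology sheaves, so we get a $W$-action on $p_*\CF = p_* H^i(R\chi_{F!}\BC_{X_\sm})$ on $V/W$.

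It remains to check that over $V_\reg/W$ this Springer action coincides with the action on $p_*\CF|_{V_\reg/W}$ induced by the standard equivariant structure. This follows because the Grothendieck--Springer resolution $\pi\colon \tilde G \to G$ is a genuine Galois $W$-cover over $G_\reg$, and the Springer $W$-action is by definition the one inherited from this cover; base-changing along $X_\sing \to G$ and restricting to $\chi^{-1}(V_\reg/W)$ recovers precisely the covering action that defines the standard equivariant structure. Invoking the Lemma then produces the desired $W$-equivariant structure on $\CF$ over all of $V$, extending the standard one on $V_\reg$.

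The only place that requires care is the compatibility over $V_\reg$ in the final paragraph: one must verify that the Weyl-group action extended from $G_\reg$ to $G$ via the decomposition theorem (as used in the construction of the Springer action) restricts over the regular locus to the naive covering action. This is the standard description of the Springer action on $\pi_*\BC_{\tilde G|_{G_\reg}}$ and poses no real obstacle; the uniqueness half of the Lemma then automatically guarantees that there is no ambiguity in the extension.
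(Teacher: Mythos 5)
Your proposal is correct and follows essentially the same route as the paper: Proposition \ref{prop:locally constant} gives local constancy, the preceding Lemma reduces the problem to extending the $W$-action on $p_*\CF$ over $V/W$, the Springer action on $R(\chi\pi_X)_!\BC_{X_\sm}$ supplies that extension, and the two actions agree over $V_\reg$ because $X_\sm\to X_\sing\times_{T/W}T$ (equivalently $\tilde G\to G\times_{T/W}T$) is an isomorphism there. The paper states this compatibility in one sentence; you spell out the same verification in more detail, but there is no substantive difference in approach.
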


The action of $w\in W$ on a local section $f$ of $\CF=H^i(R \chi_{F!} \BC_{X_\sm})$ over a value $t\in V$ can be explicitly computed as follows. Choose a point $t_0\in V_\reg$ and $\gamma:[0,1]\to V$ be a path from $t$ to $t_0$. Parallel transport along $\gamma$ gives a map $\CF_{t}\to \CF_{t_0}$. Then we use isomorphism of $X_{\sm}^{t_0}$ with $X_{\sm}^{w t_0}$ to obtain a map $\CF_{t_0} \to \CF_{w t_0}$. Finally, we trace $w\gamma$ backwards to obtain a map $\CF_{w t_0}\to \CF_{wt}$. The result is a composition
\[
\CF_t \to \CF_{t_0} \to \CF_{w t_0} \to \CF_{wt}.
\]
Parallel transport maps preserve the weight filtration. The Hodge filtration varies continuously and the point $t_0$ can be chosen arbitrarily close to $t$, so we obtain
\begin{cor}
	The action $H^i_c(X_{\sm}^t)\to H^i_c(X_{\sm}^{wt})$ preserves the mixed Hodge structure for any $w\in W$, $t\in V$.
\end{cor}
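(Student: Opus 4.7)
The plan is to exploit the explicit factorization of the $w$-action given just before the statement:
\[
\CF_t \xrightarrow{\text{transport along }\gamma} \CF_{t_0} \xrightarrow{\cong} \CF_{wt_0} \xrightarrow{\text{transport along }(w\gamma)^{-1}} \CF_{wt},
\]
where $t_0 \in V_\reg$ is chosen arbitrarily close to $t$, and verify that each arrow respects the three filtrations $W_\bullet$, $F^\bullet$, $\bar F^\bullet$.

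First I would handle the middle map. Since $t_0$ is regular, both $X_\sm^{t_0}$ and $X_\sm^{wt_0}$ are smooth, and the map between them is induced by the algebraic isomorphism coming from the Galois covering $\chi_F^{-1}(V_\reg) \to \chi^{-1}(V_\reg/W)$ (concretely, the simultaneous reordering of the eigenvalues of $M_k$ by $w$). Such an algebraic isomorphism induces a morphism of mixed Hodge structures on compactly supported cohomology, so $W_\bullet$, $F^\bullet$ and $\bar F^\bullet$ are all preserved by this map.

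Next I would treat the parallel transport maps. Proposition~\ref{prop:locally constant} asserts that for every $k$ the associated graded sheaf $\Gr^W_k H^i(R\chi_{F!}\BC_{X_\sm})$ is locally constant on $V$, which is equivalent to saying that the weight filtration on the local system $H^i(R\chi_{F!}\BC_{X_\sm})$ is itself locally constant. Consequently, the parallel transport maps $\CF_t \to \CF_{t_0}$ and $\CF_{wt_0}\to \CF_{wt}$ send $W_\bullet$ to $W_\bullet$. Combining with the previous step, the composed $w$-action preserves $W_\bullet$.

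Finally, for the Hodge filtration the plan is a continuity argument: the $w$-action on $\CF_t$ is independent of the auxiliary choice of $t_0\in V_\reg$ and of the path $\gamma$, so I am free to let $t_0$ tend to $t$ along a shrinking path. The Hodge filtration on a smooth family of algebraic varieties fits into a coherent subsheaf of the relative cohomology that varies holomorphically, and in particular continuously, in a Zariski neighbourhood of $t$; taking $t_0$ sufficiently close to $t$ and comparing the subspaces $F^p \CF_t$ and the preimage of $F^p \CF_{wt}$ under the composition, their dimensions coincide, and the inclusion becomes an equality in the limit. The same reasoning applies verbatim to $\bar F^\bullet$, which proves that the $w$-action preserves all three filtrations and therefore is a morphism of MHSs.

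The main obstacle will be formalising the last continuity step, since $\chi_F$ is not proper and so the classical framework of variations of Hodge structure does not apply off the shelf; one has to argue that $R\chi_{F!}\BC_{X_\sm}$ carries enough of a graded-polarised admissible variation structure near $t$ to justify the continuous dependence of $F^\bullet$, or alternatively pick a fixed compactification of $X_\sm/V$ and transfer the statement to the compactly supported setting via the associated long exact sequence. Once this point is granted, the verification is essentially formal.
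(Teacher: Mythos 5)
Your proposal follows exactly the paper's argument: factor the $w$-action as parallel transport, the algebraic isomorphism $X_\sm^{t_0}\cong X_\sm^{wt_0}$ over a regular point, and inverse parallel transport; use Proposition \ref{prop:locally constant} for the weight filtration; and invoke continuity of the Hodge filtration together with the freedom to take $t_0$ arbitrarily close to $t$. The technical point you flag at the end (non-properness of $\chi_F$ and the justification of continuous variation of $F^\bullet$) is precisely the step the paper itself leaves implicit, so your write-up matches the paper's proof in both structure and level of detail.
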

In particular, we have a well-defined action on the associated graded:
\[
\Gr^W_k H^i_c(X_{\sm}^t)\to \Gr^W_k H^i_c(X_{\sm}^{wt}).
\]

Setting $t=1$ we obtain an action of $W$ on $H_c^*(X_\sm^1)$ which preserves the mixed Hodge structure. We have the restriction map 
\[
H_c^*(X_\sm^1)\to H_c^*(X_\sm^{=1})
\]
followed by the Gysin map
\[
H_c^*(X_\sm^{=1}) \to H_c^{*-2\dim G/B}(X^{=1})
\]
coming from the fact that $X_\sm^{=1}$ is a relative flag variety over $X^{=1}$. The restriction map preserves the mixed Hodge structure, while the Gysin map shifts weights, so we obtain a morphism of mixed Hodge structures
\[
H_c^i(X_\sm^1) \to H_c^{i-2\dim G/B}(X^{=1})(-\dim G/B).
\]
By (iii) of Corollary \ref{cor:W action on sm at 1} this is a surjective map. Hence the mixed Hodge structure on $H_c^{*-2\dim G/B}(X^{=1})(-\dim G/B)$ is induced from the mixed Hodge structure on $H_c^*(X_\sm^1)$. On the other hand since the $W$-action respects the mixed Hodge structure, the direct sum decomposition $H^i_c(X_{\sm}^1)=H^i_c(X_{\sm}^1)^-\oplus H^i_c(X_{\sm}^1)^{\neq -}$ is compatible with the mixed Hodge structures, so the mixed Hodge structure induced on 
\[
H^i_c(X_{\sm}^1)^-\cong H_c^{i-2\dim G/B}(X^{=1})(-\dim G/B)
\]
viewed as a quotient coincides with the filtration obtained by viewing $H^i_c(X_{\sm}^1)^-$ as a sub.
\begin{cor}
	The isomorphism $H^i_c(X_{\sm}^1)^- \cong H_c^{i-2\dim G/B}(X^{=1})(-\dim G/B)$ respects the mixed Hodge structures and we have an isomorphism
	\[
	(\Gr^W_k H^i_c(X_{\sm}^1))^- \cong \Gr^W_{k-2\dim G/B} H_c^{i-2\dim G/B}(X^{=1}).
	\]
\end{cor}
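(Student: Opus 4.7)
The plan is to derive this corollary as a largely formal consequence of the structures assembled in the preceding paragraphs. Three ingredients are needed: Corollary \ref{cor:W action on sm at 1}(iii), which provides the surjection from $H^i_c(X_\sm^1)$ onto $H_c^{i-2\dim G/B}(X^{=1})(-\dim G/B)$ and identifies its kernel as the non-sign isotypic subspace; the previous corollary, establishing that the $W$-action on $H^i_c(X_\sm^1)$ preserves the mixed Hodge structure; and the explicit factorisation of the surjection as the restriction along the closed embedding $X^{=1}_\sm \hookrightarrow X_\sm^1$ followed by the Gysin map for the smooth proper $G/B$-bundle $X^{=1}_\sm \to X^{=1}$.

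First I would observe that the idempotent projecting onto the sign isotypic component of $H_c^i(X_\sm^1)$, being a $\BQ$-linear combination of elements of $W$, is a morphism of MHS by the previous corollary. Hence the decomposition $H_c^i(X_\sm^1) = H_c^i(X_\sm^1)^- \oplus H_c^i(X_\sm^1)^{\neq -}$ is a direct sum in the abelian category of MHS (Proposition \ref{prop:MHSs}). Next I would check that each map in the factorisation is a morphism of MHS: the closed-embedding restriction preserves MHS in compactly supported cohomology, while the Gysin map for a Zariski locally trivial $G/B$-bundle, whose local triviality on the unipotent locus follows from Proposition \ref{prop:smooth quotient}, is a morphism of MHS once the Tate twist $(-\dim G/B)$ is incorporated to absorb the weight shift.

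Combining these observations, the composite map from Corollary \ref{cor:W action on sm at 1}(iii) is a morphism of MHS that is a linear bijection on the sign isotypic summand and zero on its complement; its restriction to $H_c^i(X_\sm^1)^-$ is therefore an isomorphism of MHS, by the corollary to Proposition \ref{prop:MHSs} that a filtration-preserving linear bijection between MHS is automatically an isomorphism of MHS. This yields the first assertion. For the graded version I would apply the exact functor $\Gr^W_k$ to both sides, using that the $W$-action commutes with the weight filtration so that $(\Gr^W_k V)^- = \Gr^W_k (V^-)$ for any MHS carrying a $W$-action; the weight shift by $2\dim G/B$ is inherited from the Tate twist in the Gysin map.

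I do not anticipate any substantive obstacle in this final step: the real work has already been done in establishing the $W$-equivariant structure on $R\chi_{F!}\BC_{X_\sm}$ (through the local-constancy argument of Proposition \ref{prop:locally constant} and the uniqueness lemma) and in verifying via parallel transport that this action respects the MHS. The only minor point that warrants explicit confirmation is the MHS-compatibility of the Gysin map with the stated Tate twist, which is standard because $X^{=1}_\sm \to X^{=1}$ is a Zariski locally trivial bundle with smooth projective fiber $G/B$.
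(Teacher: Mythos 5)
Your proposal is correct and follows essentially the same route as the paper: factor the map as restriction to $X_\sm^{=1}$ followed by the Gysin map for the relative flag bundle, note that with the Tate twist this composite is a morphism of MHS which by Corollary \ref{cor:W action on sm at 1}(iii) kills $H^i_c(X_\sm^1)^{\neq -}$ and is bijective on the sign component, and use that the $W$-action preserves the MHS so that the sign/non-sign decomposition is one of mixed Hodge structures. The only cosmetic difference is that you invoke the ``filtered bijection is an isomorphism of MHS'' corollary where the paper phrases the same point as the induced quotient structure agreeing with the sub structure.
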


\subsection{Flatness of the $2$-form}
As another ingredient, we need to check that the operator of multiplication by the cohomology class of the $2$-form commutes with $W$. The $W$-action is given by a composition of parallel transport and the isomorphism $X_{\sm,t_0} \cong X_{\sm,w t_0}$ for $t_0\in V_\reg$. The latter preserves $\omega$ because the form on a semisimple orbit does not depend on the choice of order of eigenvalues. In notations of Section \ref{ssec:2 forms} we have:
\[
(g|w C_i w^{-1}| g^{-1}) = (g|w |C_i |w^{-1}| g^{-1}) = (g w |C_i| (g w)^{-1}),
\]
for $g$ a variable with values in $G$, $C_i$ constant diagonal matrix and $w\in W$. So the only problem is to verify that operator of multiplication by $\omega$ is flat with respect to the Gauss-Manin connection. It turns out, $\omega$ is only flat modulo $W_2$, the second step in weight filtration, which is sufficient for our purposes.

We explicitly compute the covariant derivative of $\omega$. The form $\omega$ in the fibers of $\chi_F$ can be represented by the following form on $\CX_\sm^B$:
\[
\omega=(\alpha_1|\beta_1|\cdots|\alpha_g^{-1}|\beta_g^{-1}|\gamma_1^{-1}|C_1|\gamma_1|\cdots|\gamma_{k-1}^{-1}|C_{k-1}|\gamma_{k-1}||u t)
\]
Using formulas from Section \ref{ssec:d omega}, we see that the form is closed. On the other hand, the form is equivariant in the fibers, but not globally, because for a variable $b$ on $B$ we have
\[
b^* \omega = (b \alpha_1 b^{-1}|b\beta_1 b^{-1}|\cdots|b \gamma_{k-1}^{-1}|C_{k-1}|\gamma_{k-1} b^{-1}| b u t b^{-1})
\]
\[
= (b |\alpha_1 |b^{-1}|b|\beta_1|b^{-1}|\cdots|b |\gamma_{k-1}^{-1}|C_{k-1}|\gamma_{k-1}| b^{-1}| b| u t |b^{-1}) - (b|u t|b^{-1})
\]
\[
= \omega - (b|u t|b^{-1}),
\]
\[
(b|u t|b^{-1}) = \sum_{i=1}^n d\log b_{ii} \wedge d\log t_i - \sum_{i=1}^n d\log t_i \wedge d\log b_{ii} = 2\sum_{i=1}^n d\log b_{ii} \wedge d\log t_i.
\]
We obtain
\[
b^*\omega = \omega + 2\sum_{i=1}^n d\log t_i \wedge d\log b_{ii},
\]
which means that the covariant derivative of the cohomology class of $\omega$ is given by 
\[
\nabla [\omega] = 2\sum_{i=1}^n d\log t_i\otimes  c_1(L_i),
\]
where $L_i=F_i/F_{i-1}$ is the canonical line bundle on $X_\sm$. Since characteristic classes are pure, we obtain $\nabla [\omega]$ belongs to $W^2$, and therefore parallel transport preserves the class of $\omega$ in $\Gr^W_4 H^2(X_{\sm}^t,\BC)$. We have established
\begin{cor}
	The action of $w\in W$ sends the class of $\omega$ in $\Gr^W_4 H^2(X_{\sm}^t,\BC)$ to the class of $\omega$ in $\Gr^W_4 H^2(X_{\sm}^{wt},\BC)$.
\end{cor}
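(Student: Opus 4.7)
\medskip

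\noindent\textbf{Proof proposal for the final corollary.}

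The plan is to factor the $W$-action into the three maps given in the explicit description just above the corollary, and check that each of them sends the class $[\omega]$ at the source fiber to $[\omega]$ at the target fiber, modulo $W_3$. Concretely, for $w\in W$ and $t\in V$, pick an auxiliary base point $t_0\in V_\reg$ very close to $t$ and a short path $\gamma:[0,1]\to V$ from $t$ to $t_0$. The $W$-action is the composition
\[
H^2(X_\sm^t,\BC)\;\xrightarrow{\;P_\gamma\;}\;H^2(X_\sm^{t_0},\BC)\;\xrightarrow{\;\sigma_w\;}\;H^2(X_\sm^{wt_0},\BC)\;\xrightarrow{\;P_{w\gamma}^{-1}\;}\;H^2(X_\sm^{wt},\BC),
\]
where the two outer arrows are Gauss--Manin parallel transports and the middle arrow is the isomorphism coming from the Galois structure of $\pi_X|_{\chi_F^{-1}(V_\reg)}\to \chi^{-1}(V_\reg/W)$. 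I will show that each arrow takes $[\omega]$ to $[\omega]$ in the respective $\Gr^W_4$.

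First I would handle the middle map $\sigma_w$. Over $V_\reg$ it is induced by precomposing the map $\gamma_k\mapsto 1$ (i.e. reindexing the eigenvalues of the conjugacy class at the $k$-th puncture) with conjugation by a permutation matrix. Using the cocycle identity and Proposition~\ref{prop:form vanishes} exactly as in the displayed computation
\[
(g\,|\,wC_iw^{-1}\,|\,g^{-1})\;=\;(g\,|\,w\,|\,C_i\,|\,w^{-1}\,|\,g^{-1})\;=\;(gw\,|\,C_i\,|\,(gw)^{-1}),
\]
one sees that the pullback of the form \eqref{eq:form on charvar} by $\sigma_w$ is again the canonical form, so $\sigma_w^\ast[\omega]=[\omega]$ at the level of cohomology, not merely in $\Gr^W_4$.

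For the two parallel transport maps I would use the computation of the Gauss--Manin covariant derivative that appears in the paragraphs just before the statement:
\[
\nabla [\omega]\;=\;2\sum_{i=1}^n d\log t_i\otimes c_1(L_i).
\]
The classes $c_1(L_i)\in H^2(X_\sm,\BC)$ are Chern classes of algebraic line bundles, hence pure of weight $2$, i.e. they lie in $W_2 H^2$. Therefore $\nabla[\omega]$ takes values in $W_2 H^2\otimes \Omega^1_{V_\reg}$, which is precisely the statement that the class of $[\omega]$ in the quotient local system whose fiber is $\Gr^W_4 H^2(X_\sm^s,\BC)$ is flat with respect to the induced connection. Consequently parallel transport along $\gamma$ sends $[\omega]_t$ to $[\omega]_{t_0}$ in $\Gr^W_4$, and parallel transport along $w\gamma$ backwards sends $[\omega]_{wt_0}$ to $[\omega]_{wt}$ in $\Gr^W_4$.

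Composing the three compatibilities gives the corollary. The only non-routine input is the weight bound on $\nabla[\omega]$; once one knows that the tautological line bundles $L_i$ have Chern classes of pure weight $2$ (which follows, as in the rest of Section~\ref{sec:symplectic form}, by expressing $c_1(L_i)$ as a pullback of a character of a Levi), the rest is essentially formal. I expect no real obstacle here: the geometric $W$-action on $V_\reg$ clearly commutes with the canonical construction of $\omega$ as a form pulled back from $BG_\parab$ (which is independent of the ordering of eigenvalues), so the true content is the weight estimate on the covariant derivative, which is handled by the single computation quoted above.
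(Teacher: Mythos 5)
Your proposal is correct and follows essentially the same route as the paper: the middle isomorphism preserves $\omega$ on the nose via the cocycle identity $(g|wC_iw^{-1}|g^{-1})=(gw|C_i|(gw)^{-1})$, and the two parallel transports preserve its class in $\Gr^W_4$ because $\nabla[\omega]=2\sum_i d\log t_i\otimes c_1(L_i)$ lands in $W_2$ by purity of Chern classes. No gaps.
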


For $t=1$ we obtain that $\omega$ is preserved by the $W$-action. The form $\omega$ when restricted to $X_\sm^{=1}$ equals to the pullback of the corresponding form from $X^1$, so we have a commutative square
\[
\begin{tikzcd}
\Gr^W_k H_c^i(X_\sm^1) \arrow{r} \arrow{d}{\cup \omega}& \Gr^W_{k-2\dim G/B} H_c^{i-2\dim G/B}(X^1) \arrow{d}{\cup \omega}\\
\Gr^W_{k+4} H_c^{i+2}(X_\sm^1) \arrow{r} & \Gr^W_{k-2\dim G/B+4} H_c^{i-2\dim G/B+2}(X^1)\\
\end{tikzcd}
\]
which induces a commutative square
\[
\begin{tikzcd}
(\Gr^W_k H_c^i(X_\sm^1))^- \arrow{r}{\sim} \arrow{d}{\cup \omega}& \Gr^W_{k-2\dim G/B} H_c^{i-2\dim G/B}(X^1) \arrow{d}{\cup \omega}\\
(\Gr^W_{k+4} H_c^{i+2}(X_\sm^1))^- \arrow{r}{\sim} & \Gr^W_{k-2\dim G/B+4} H_c^{i-2\dim G/B+2}(X^1)\\
\end{tikzcd}
\]
It is clear now that curious Lefschetz property for $X_\sm^1$ with middle weight $d$ implies curious Lefschetz for $X^1$ with middle weight $d-2\dim G/B$.

The curious Lefschetz property for $X_\sm^1$ can be proved in two ways. In one way, as explained in the proof of Proposition \ref{prop:locally constant}, we can cover $X_\sm^1$ by affine bundle of dimension $\binom{n}2$, and then use the cell decomposition, as we did in the proof in the case when $C_k$ is regular semisimple, Theorem \ref{thm:stratification genus g}. Another approach is to use parallel transport to a point $t\in V_\reg$. The parallel transport (solving Picard-Fuchs differential equations) preserves the weight filtration, and by flatness of the class of the form in $\Gr^W_4 H^2(X_{\sm}^t,\BC)$, the action of $\omega$ on the associated graded is preserved by parallel transport.

Note that the difference of dimensions $\dim X^1_\sm-\dim X^1$ is $n^2-n=2\dim G/B$. So the proof of the following is complete:
\begin{thm}
	For arbitrary genus and any $k\geq 0$, any tuple of diagonal matrices $C_1,\ldots,C_k$ satisfying the genericity assumption, the corresponding character variety satisfies curious Lefschetz with middle weight equal to the dimension.
\end{thm}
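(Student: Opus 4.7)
The plan is to reduce the general case to the case already handled in Theorem \ref{thm:stratification genus g}, which requires $C_k$ to have distinct eigenvalues. If all $C_i$ have repeated eigenvalues one first introduces a fictitious puncture with trivial local monodromy $C_{k+1}=\Id$; this does not change the homotopy type of the character variety (up to an explicit bundle with acyclic fibers that can be absorbed into Poincaré duality shifts) and preserves the genericity assumption. So I may assume $C_k=\Id$, and the target is to prove curious Lefschetz for $X^{=1}=X_\sing^{=1}$. The idea is to realize $X^{=1}$ cohomologically as the sign-isotypic piece, under a Springer-type $W$-action, of the ``smooth'' character variety $X^1_\sm$ with a flag preserved by $M_k$ at the last puncture, and to transfer curious Lefschetz from $X^1_\sm$ to its sign component.

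To construct the $W$-action, I consider the family $\chi_F:X_\sm\to V\subset T$. By Proposition~\ref{prop:locally constant}, the derived pushforward $R\chi_{F!}\BC_{X_\sm}$ has locally constant cohomology sheaves, and its associated gradeds for the weight filtration are also locally constant. Over the regular locus $V_\reg=V\cap T_\reg$, the map $\pi_X$ in the cartesian diagram \eqref{eq:cd of smooth and singular} is a Galois $W$-cover, so the cohomology sheaves carry a tautological $W$-equivariant structure. The uniqueness lemma for extensions of equivariant structures on locally constant sheaves then shows this structure extends uniquely to all of $V$. Parallel transport gives the explicit description of $w\in W$ acting on $H^i_c(X_\sm^t)$ as the composition of a Gauss--Manin transport, the obvious identification $X_\sm^{t_0}\cong X_\sm^{wt_0}$ at a nearby generic point, and transport back. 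Since parallel transport preserves the weight filtration and the Hodge filtration is continuous, the $W$-action respects mixed Hodge structures. Combining this with Corollary~\ref{cor:W action on sm at 1} from the Grothendieck--Springer analysis, the natural map $H^*_c(X^1_\sm)\to H^{*-2\dim G/B}_c(X^{=1})(-\dim G/B)$ is a surjection of mixed Hodge structures that vanishes on the $\neq -$ isotypic part and induces an isomorphism of mixed Hodge structures on the sign component.

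The next step is to show the class $[\omega]$ is compatible with the $W$-action. The identification $X_\sm^{t_0}\cong X_\sm^{wt_0}$ preserves $\omega$ because reordering the eigenvalues of the semisimple factor does not change the 2-form. For Gauss--Manin flatness, a direct computation on $\CX^B_\sm$ shows
\[
\nabla[\omega]=2\sum_{i=1}^n d\log t_i\otimes c_1(L_i),
\]
so the obstruction lies in $W_2$, hence $[\omega]\in\Gr^W_4 H^2$ is flat. Consequently cup product with $\omega$ commutes with the $W$-action on associated gradeds, and the isomorphism $(\Gr^W_k H^i_c(X^1_\sm))^-\cong \Gr^W_{k-2\dim G/B}H^{i-2\dim G/B}_c(X^{=1})$ intertwines the two $\cup\omega$ operators.

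It now remains to establish curious Lefschetz on $X^1_\sm$ itself. The hard step, and the one where everything hinges on the preceding sections, is to produce this. I would parallel transport to a point $t\in V_\reg$: by the proof of Proposition~\ref{prop:locally constant}, an affine bundle $\tilde X_\sm\to X_\sm$ of rank $\binom{n}{2}$ admits the cell decomposition of Theorem~\ref{thm:stratification genus g} uniformly over $V$, so the fiber $X^t_\sm$ for $t\in V_\reg$ satisfies curious Lefschetz with middle weight $\dim X^t_\sm$. Since the Gauss--Manin connection preserves the weight filtration and the class of $\omega$ modulo $W_3$, curious Lefschetz transports along $V$ and holds in particular for $t=1$. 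Taking the sign component and accounting for the Tate twist by $\dim G/B$, and using the equality $\dim X^1_\sm-\dim X^{=1}=2\dim G/B$, yields curious Lefschetz on $X^{=1}$ with middle weight equal to $\dim X^{=1}$. The main obstacle is verifying that the family-level cell decomposition is well-behaved in a neighborhood of $t=1$ and that the resulting Lefschetz isomorphism genuinely specializes; the locally constant property of Proposition~\ref{prop:locally constant} is precisely what makes this rigorous.
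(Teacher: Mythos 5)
Your proposal is correct and follows essentially the same route as the paper's Section~\ref{sec:monodromic action}: reduction to $C_k=\Id$ via a fictitious puncture, the Springer $W$-action on $R\chi_{F!}\BC_{X_\sm}$ extended uniquely using the locally constant property, identification of $H^*_c(X^{=1})$ with the sign component, the flatness computation $\nabla[\omega]=2\sum_i d\log t_i\otimes c_1(L_i)$ placing the obstruction in $W_2$, and transfer of curious Lefschetz from $X^1_\sm$ (via the cell decomposition or parallel transport to $V_\reg$) to the sign component with the shift by $2\dim G/B$. No substantive differences from the paper's argument.
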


\section*{Acknowledgments}
This work started as an attempt to find cell decompositions for character varieties analogous to the decompositions in \cite{gunnels2018torus}. I am grateful to Fernando Rodriguez-Villegas for bringing my attention to this work. Vivek Shende explained some parts of the paper \cite{shende2017legendrian} to me and suggested the approach to prove non-degeneracy of the $2$-form (Section \ref{sec:seifert}). I am indebted to him for his advices. I am very grateful to Sasha Beilinson for answering my questions about perverse sheaves and mixed Hodge structures. I discussed various parts of this work with Marco Bertola, Ben Davison, Iordan Ganev, Tamas Hausel, Quoc Ho, Emmanuel Letellier, Penghui Li, Andrei Negut, Alexei Oblomkov, Carlos Simpson and they gave me good advices, for which I am thankful.

Parts of this work were performed at the ICTP and SISSA in Trieste and IST Austria in Maria Gugging. I am grateful to them for hospitality. 

I am grateful to the Austrian Science Fund (FWF) for supporting this work through the projects Y963-N35 and P-31705.


\bibliographystyle{amsalpha}
\bibliography{../refs}

\end{document}